% ----------------------------------------------------------------
% AMS-LaTeX Paper - modified
% **** -----------------------------------------------------------
\documentclass[11pt, reqno]{amsart}
\usepackage{amssymb}
\usepackage{graphicx}
\usepackage{tensor}

\usepackage{fullpage} % Sets all margins to 1 in.  

\usepackage[english]{babel}
\usepackage[utf8]{inputenc}

\usepackage{amsfonts}
\usepackage{amssymb}
\usepackage{amsmath}
\usepackage{amsthm}

\usepackage{pdfsync}

\usepackage{amssymb}
\usepackage{epstopdf}
\usepackage{hyperref}

 \usepackage{accents}

\usepackage{enumerate}
\usepackage{enumitem}

\usepackage{cases}

%-------
\newtheorem{theorem}{Theorem}[section]
\newtheorem{corollary}[theorem]{Corollary}
\newtheorem{lemma}[theorem]{Lemma}
\newtheorem{proposition}[theorem]{Proposition}
\theoremstyle{definition}
\newtheorem{definition}[theorem]{Definition}

\theoremstyle{remark}
\newtheorem{remark}[theorem]{Remark}
\numberwithin{equation}{section}

%--------

% MATH -----------------------------------------------------------

\newcommand{\tr}{\textrm{tr}}
\newcommand{\supp}{{\mathrm{supp}}}

%\newcommand{\BX}{\mathbf{B}(X)}
%\newcommand{\A}{\mathcal{A}}

% ---------------------------------------------------------------- 

\newcommand{\mb}{\mathbb}
\newcommand{\be}{\begin{equation}}
\newcommand{\ee}{ \end{equation}}

\renewcommand{\Re}{\mathrm{Re}}
\renewcommand{\Im}{\mathrm{Im}}

\newcommand{\ep}{\varepsilon}

\newcommand{\sg}{\sigma}
\newcommand{\al}{\alpha}

\newcommand{\dd}{\,\mathrm{d}}

\newcommand{\vn}[1]{\|#1\|}
\newcommand{\vm}[1]{\left|#1\right|}

\newcommand{\tht}{\theta}

\newcommand{\lpr}{ \left( }
\newcommand{\rpr}{ \right) } 

\newcommand{\lng}{\langle}
\newcommand{\rng}{\rangle}

\newcommand{\lmd}{\lambda}
\newcommand{\lpp}{\left[}
\newcommand{\rpp}{\right]}

\newcommand{\ls}{\lesssim}

\newcommand{\dbtilde}[1]{\accentset{\approx}{#1}}

\newcommand*\jb[1]{\left\langle #1 \right\rangle} % Japanese Bracket

%------ newcommands 

\newcommand{\pt}{\partial}

\usepackage{mathtools}
\newcommand{\defeq}{\vcentcolon=}
%-------------

\usepackage{enumitem}
\usepackage{verbatim}

\begin{document}

\title[Yang-Mills]{The gauge-invariant I-method for Yang-Mills}%: Title of the article
\author{Cristian Gavrus}%
\email{gcd2006@gmail.com}%

\thanks{The author thanks Ben Dodson for suggesting the paper \cite{keel2011global} and for discussions on the I-method, and thanks Sung-Jin Oh for discussions on his Yang-Mills papers. 
The author expresses gratitude to the John Hopkins University, where a large part of this work was carried.}

\begin{abstract}   
We prove global well-posedness of the $ 3d $ Yang-Mills equation in the temporal gauge in $ H^{\sg} $ for $ \sg > \frac{5}{6} $. 

Unlike related equations, Yang-Mills is not directly amenable to the method of almost conservation laws (I-method) in its Fourier and global version. 
We propose a modified energy which: 
\begin{enumerate}
\item Is gauge-invariant and easy to localize
\item Provides local gauges which give control of local Sobolev norms (through an Uhlenbeck-type lemma for fractional regularities)
\item Is slightly smoother in time compared to the classical I-method energy for related systems. 
\end{enumerate}

The spatial smoothing is realized via the Yang-Mills heat flow instead of the multiplier $ I $.

Due to the temporal condition and its finite speed of propagation, the local gauge selection is compatible with recent initial data extension results. Therefore, smoothened energy differences $  \mathcal{E}(t_1,s) -  \mathcal{E}(t_0,s) $ can be partitioned into local pieces whose (appropriately extended) bounds can be square summed. After revealing the null structure within the trilinear integrals, these can be estimated using known methods.   

In an appendix we show how an invariant modified energy for Maxwell-Klein-Gordon can extend previous results to regularities $ \sg > \frac{5}{6} $. 
\end{abstract}

%\thanks{}%
%\subjclass{}%
%\keywords{}%

%\date{\today}%
%\dedicatory{}%
%\commby{}%
% ----------------------------------------------------------------

\maketitle
% ----------------------------------------------------------------

\setcounter{tocdepth}{1}
\tableofcontents

\section{Introduction}

In this paper we consider the problem of low regularity global existence for the initial value problem for the Yang-Mills equations on $ \mb{R}^{3+1} $ with a non-abelian structural group $ G $. 
We say that the connection $ A_{\al}:  \mb{R}^{3+1} \to  \mathfrak{g} $ solves the \emph{Yang-Mills equation} if 
\be \label{YM} \tag{YM}
\mathbf{D}^{\al} F_{\al \beta} = 0 
\ee 
where 
$$ F_{\al \beta} \defeq \pt_{\al} F_{\beta} - \pt_{\beta} F_{\alpha} + [A_{\al}, A_{\beta}] \qquad \text{and} \qquad  \mathbf{D}_{\al} B \defeq \pt_{\al} B + [A_{\al}, B]   $$        
denote the \emph{curvature} and the \emph{covariant derivative}, while  $ \mathfrak{g} $ is the associated Lie algebra. The background on Lie theory notions used is presented in section \ref{LieDefinitions}. 

Expanding, \eqref{YM} takes the form 
$$ \partial^{\alpha} F_{\alpha \beta}+\left[A^{\alpha},\partial_{\alpha} A_{\beta}-\partial_{\beta} A_{\alpha}+\left[A_{\alpha}, A_{\beta}\right]\right]=0
$$
An important feature of the \eqref{YM} equations is its \emph{gauge invariance}. Given a $ G $ -valued  potential $ U $, \eqref{YM} is invariant under the following gauge transformations 
\be \label{GaugeTransformation}
A_{\al} \mapsto U A_{\al} U^{-1} - \partial_{\al} U U^{-1}, \quad F_{\al \beta} \mapsto U F_{\al \beta}U^{-1}.
\ee 
For this reason, the Cauchy problem cannot be well-posed until a choice of gauge is made. If we impose the \emph{Temporal gauge} condition $ A_0 = 0 $ the equations become 
\be \label{YMTemporal}
\begin{aligned}
 \pt_t \text{div} A & = [ \pt_t A_j, A_j ] \\
 \Box A_i - \pt_i \text{div} A & = \pt_j [A_i, A_j] +  [  \pt_j A_i, A_j] + [A_j, \pt_i A_j ] - [A_j, [A_j, A_i ]] 
\end{aligned}
\ee
These equations can be rewritten as \eqref{YMTemporal2} or \eqref{YMTemporal3}. 

The initial data sets consist of $ (\bar{A}_{i}, \bar{E}_i ) = (A_i, F_{0i})(t=0) $ for $ i=1,2,3 $. Note that in the temporal gauge $ F_{0i}= \pt_t A_i $. Due to the first equation in \eqref{YMTemporal}, i.e. \eqref{YM} for $ \beta=0 $, initial data sets have to satisfy the \emph{constraint} (or Gauss) \emph{equation}
\be \label{Constraint}
\bar{\mathbf{D}}^{\ell} \bar{E}_{\ell} \defeq  \partial^{\ell} \bar{E}_{\ell}+\left[\bar{A}^{\ell}, \bar{E}_{\ell}\right]=0.
\ee

The energy of a connection $ A_{\al} $ at $ t $ is defined by 
\be \label{Energy}
{\bf E}[F_{\al \beta}](t) \defeq \frac{1}{2} \sum_{\alpha<\beta}  \int_{\mathbb{R}^{3}} (F_{\alpha \beta} (t), F_{\alpha \beta}(t))  \dd x
\ee
and it is conserved for sufficiently smooth solutions of \eqref{YM}.

\subsection{Prior Yang-Mills results on  $ \mb{R}^{3+1} $} The early works of Segal \cite{segal1979cauchy}, Choquet-Bruhat and Christodoulou \cite{choquet1981existence}, and Eardley-Moncrief \cite{eardley1982global} establish local and global well-posedness with high regularity (e.g. $ H^2 $).   

Finite energy global well-posedness for \eqref{YM} was first proved by Klainerman and Machedon in \cite{klainerman1995finite}:

\begin{theorem}[\cite{klainerman1995finite}] \label{GWPKM} The Yang-Mills equation in the temporal gauge is globally well-posed for finite energy initial data which is locally in $ H^1 \times L^2 $.  
\end{theorem}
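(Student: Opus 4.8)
The plan is to prove a local existence and uniqueness theorem at the energy regularity whose lifespan depends only on the conserved energy, and then to iterate it using energy conservation; the hypothesis that the data is only \emph{locally} in $H^1\times L^2$ is handled separately by exploiting the finite speed of propagation of the temporal-gauge system. First I would fix the temporal gauge $A_0=0$ — which can always be arranged globally in time — so that the evolution reduces to the system \eqref{YMTemporal} for $(A_i,\partial_t A_i)$ with data $(\bar A_i,\bar E_i)$ constrained by \eqref{Constraint}. Because the energy space $\dot H^1\times L^2$ is \emph{subcritical} for the scaling $A\mapsto\lambda A(\lambda\cdot)$ (which is critical at $\dot H^{1/2}$), a local theory with lifespan controlled by the energy-type norm, combined with the conservation of ${\bf E}[F]$, is in principle enough; the difficulty is purely that the naive iteration for \eqref{YMTemporal} loses a derivative.

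To overcome that loss I would pass to the curvature $F_{\alpha\beta}$, which by \eqref{YM} together with the Bianchi identity satisfies a covariant wave equation of the schematic form $\mathbf{D}^{\alpha}\mathbf{D}_{\alpha}F_{\beta\gamma} = $ (bilinear in $F$ and $A$), and at the same time keep the Hodge decomposition $A_i = A_i^{df}+A_i^{cf}$ into divergence-free and curl-free parts, the curl-free part being governed directly by the constraint \eqref{Constraint} and thus better behaved. The technical heart is then to reorganize the dangerous quadratic terms in \eqref{YMTemporal} — the combination $\partial_j[A_i,A_j]+[\partial_j A_i,A_j]+[A_j,\partial_i A_j]$ and the analogous source term in the $F$-equation — so that, after using the gauge condition and the constraint to eliminate the bad components, they appear as classical null forms $Q_{ij}(\phi,\psi)=\partial_i\phi\,\partial_j\psi-\partial_j\phi\,\partial_i\psi$ (and $Q_{0j}$) acting on wave-equation solutions. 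I would then establish, in the Klainerman–Machedon spaces $H^{s,\theta}$ (i.e. $X^{s,b}$ adapted to $\Box$), bilinear estimates of the type $\|Q_{ij}(\phi,\psi)\|_{H^{0,\theta-1}}\lesssim\|\phi\|_{H^{1,\theta}}\|\psi\|_{H^{1,\theta}}$ together with their low/high-frequency variants, which recover the derivative lost by a bare Sobolev or Strichartz bound. A contraction-mapping argument in these spaces then yields local well-posedness of \eqref{YMTemporal} in $\dot H^1\times L^2$ with $T\gtrsim\big(\text{energy-space norm of the data}\big)^{-N}$.

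For the globalization I would work throughout with data of compact spatial support, which is legitimate since \eqref{YMTemporal} has finite speed of propagation: given finite-energy data locally in $H^1\times L^2$, truncate it inside a backward light cone, solve there, and patch the resulting solutions on overlaps to obtain a solution on all of $\mathbb{R}^{3+1}$. Within each truncated problem, $A$ has good integrability, so the energy conservation ${\bf E}[F](t)={\bf E}[F](0)$ controls $\|F(t)\|_{L^2}$ and hence, via the Hodge decomposition, $\|\nabla\times A(t)\|_{L^2}$; the divergence $\mathrm{div}\,A$ is then controlled by integrating its evolution equation $\partial_t(\mathrm{div}\,A)=[\partial_t A_j,A_j]$ over each short interval against the local bounds. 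Consequently the energy-space norm cannot blow up, and reapplying the local theorem on successive time intervals produces a global-in-time solution, which one checks is independent of the truncation by uniqueness and finite speed of propagation.

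I expect the main obstacle to be the bilinear null-form estimates at the \emph{exact} energy regularity, together with the algebraic bookkeeping needed to expose the null structure in the temporal gauge — in particular, controlling the curl-free/divergence component of $A$, which is invisible to the conserved energy and which, unlike in the Coulomb gauge, enjoys no elliptic smoothing here. A secondary difficulty is ensuring that the iteration spaces honestly respect the domains of dependence, so that the spatial patching of the truncated solutions is consistent.
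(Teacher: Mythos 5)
There is a genuine gap, and it sits exactly where your last paragraph gestures: the globalization loop does not close in the temporal gauge. Your scheme needs the local existence time to be controlled by a quantity that is (almost) conserved, but the conserved energy ${\bf E}[F](t)$ only controls $\|\nabla A^{df}(t)\|_{L^2}$ and $\|\partial_t A(t)\|_{L^2}$; it gives no bound whatsoever on the curl-free part $A^{cf}$, which enters both the data norm for your local theory and the non-null interaction terms $\mathcal{O}(A^{cf},\nabla A^{df})$, $\mathcal{O}(A^{cf},\nabla A^{cf})$ of \eqref{YMTemporal3}. Your proposed fix — integrating $\partial_t(\mathrm{div}\,A)=[\partial_t A_j,A_j]$ "against the local bounds" — is circular: to bound the right-hand side in a space that yields $A^{cf}(t)\in \dot H^1$ you need something like $A\in L^\infty_x$ or $A\in L^6_x$ at time $t$, i.e. control of the full $H^1$ norm of $A$ including its curl-free part, which is precisely what you are trying to prove; the product estimate \eqref{Sbvone} only places $[\partial_t A,A]$ in $\dot H^{-1}$, which does not feed back into an $\dot H^1$ bound for $A^{cf}$. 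Relatedly, the null structure you plan to exploit is not available for the $A^{cf}$ interactions in the temporal gauge: the known temporal-gauge local theory (Theorems \ref{LWPTao}--\ref{LWPHloc}) has lifespan depending on the full $H^\sigma\times H^{\sigma-1}$ size of the data, not on the energy, so iterating it requires exactly the missing curl-free control.

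The actual proof of Klainerman--Machedon supplies this missing ingredient by a gauge change rather than by an a priori estimate: on each domain of dependence one passes to a \emph{local Coulomb gauge} via Uhlenbeck's lemma (Theorem \ref{UhlenbecksL}), in which $\|A\|_{H^1(B)}\lesssim\|F\|_{L^2(B)}$, so the gauge-invariant (and conserved) energy does control the full Sobolev norm of the connection and the null-form bilinear estimates close at the energy regularity; the local solutions are then transferred back and patched using the gauge invariance of the energy and finite speed of propagation in the temporal gauge. (The alternative proofs cited in the paper replace Uhlenbeck's lemma by the Yang--Mills heat flow or by initial data surgery, but all of them hinge on some gauge-theoretic device that converts curvature bounds into connection bounds.) Without such a device — which your proposal does not contain — the iteration of the local theory cannot be closed, so the argument as written would fail at the globalization step even if the temporal-gauge null-form estimates you postulate were established.
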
 

The proof involves local Coulomb gauges based on Uhlenbeck's lemma. Two other proofs appear in \cite{oh2015finite} and \cite{oh2019hyperbolic}: one is based on the Yang-Mills heat flow for gauge selection, the other proof rests on initial data surgery techniques. In this work we need to combine these three methods together with some new ones.

Going below the energy regularity the first result was due to Tao \cite{tao2003local}:

\begin{theorem}[\cite{tao2003local}] \label{LWPTao} Let $ \sg> \frac{3}{4} $. The Yang-Mills equations in the temporal gauge \eqref{YMTemporal} are locally well-posed on  $ [-1,1] \times \mb{R}^3 $ for sufficiently small initial data in $ H^{\sg} (\mb{R}^3) \times H^{\sg-1}(\mb{R}^3) $ satisfying \eqref{Constraint}. 
\end{theorem}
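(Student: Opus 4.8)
The plan is to run a Picard iteration for \eqref{YMTemporal} in wave-Sobolev ($X^{s,b}$) spaces, after first splitting the connection by its spatial Hodge decomposition so that the genuinely hyperbolic component of $A$ and the residual-gauge component sit in different norms. Since the data is small one may work directly on the fixed slab $[-1,1]\times\mathbb{R}^3$ with no rescaling, and uniqueness together with continuous dependence will follow from the same multilinear estimates that yield existence.

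\textbf{Hodge decomposition.} Write $A_i = A_i^{\mathrm{df}} + A_i^{\mathrm{cf}}$ with $A_i^{\mathrm{cf}} = -\partial_i(-\Delta)^{-1}\,\mathrm{div}\,A$ and $\mathrm{div}\,A^{\mathrm{df}} = 0$. Applying the Leray projection $\mathbb{P}$ to the second line of \eqref{YMTemporal} (which annihilates $\partial_i\,\mathrm{div}\,A$ and commutes with $\Box$) turns it into a genuine wave equation $\Box A_i^{\mathrm{df}} = \mathbb{P}_i\big(\mathcal{Q}(A,\partial A)+\mathcal{C}(A,A,A)\big)$, with $\mathcal{Q}$ the quadratic and $\mathcal{C}$ the cubic nonlinearities. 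The first line of \eqref{YMTemporal}, i.e.\ \eqref{YM} for $\beta=0$, after inverting $\mathrm{div}$ becomes an evolution equation for $\mathrm{div}\,A$ alone,
\[
\mathrm{div}\,A(t) = \mathrm{div}\,\bar A + \int_0^t [\partial_t A_j, A_j]\dd s ,
\]
with $\mathrm{div}\,\bar A$ constrained through \eqref{Constraint} at $t=0$, and thence for the curl-free part $A^{\mathrm{cf}} = -\nabla(-\Delta)^{-1}\mathrm{div}\,A$. Thus $A^{\mathrm{df}}$ carries the dispersive/null structure while $A^{\mathrm{cf}}$ is slaved to a first-order-in-time equation with a quadratic source.

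\textbf{Null structure and multilinear estimates.} Next one reveals the null structure. Expanding $\mathcal{Q}$ and restricting the inputs to their divergence-free parts, the most dangerous quadratic interactions reorganize into Klainerman--Machedon-type null forms $Q_{ij}(\phi,\psi)=\partial_i\phi\,\partial_j\psi-\partial_j\phi\,\partial_i\psi$, precisely as in the Coulomb-gauge reductions of Yang--Mills and Maxwell--Klein--Gordon; it is the condition $\mathrm{div}\,A^{\mathrm{df}}=0$ that produces them, and an analogous cancellation must be extracted from $[\partial_t A_j, A_j]$ for the $\mathrm{div}\,A$ equation. Terms carrying a curl-free factor either benefit from this structure or are lower order. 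The estimates to prove are then: (i) a bilinear null-form bound of the schematic form $\|Q_{ij}(\phi,\psi)\|_{X^{\sigma-1,\,b-1+\ep}}\lesssim\|\phi\|_{X^{\sigma,b}}\|\psi\|_{X^{\sigma,b}}$ for $b$ slightly above $\tfrac12$, which by the Klainerman--Machedon, Foschi--Klainerman, and Tao sharp bilinear estimates holds exactly for $\sigma>\tfrac34$ (this is the source of the threshold, the quadratic derivative nonlinearity being scaling-critical at $\sigma=\tfrac12$); (ii) companion estimates controlling $A^{\mathrm{cf}}$, equivalently $\mathrm{div}\,A$, in its own norm via the time integral of $[\partial_t A_j, A_j]$, using $\partial_t A^{\mathrm{df}}\in X^{\sigma-1,b}$ and the structure just extracted; (iii) mixed product estimates for the $A^{\mathrm{df}}A^{\mathrm{cf}}$ and $A^{\mathrm{cf}}A^{\mathrm{cf}}$ contributions; and (iv) Strichartz/Sobolev bounds for the strictly subcritical cubic terms. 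Feeding these into the Duhamel map on a ball of radius comparable to $\|(\bar A,\bar E)\|_{H^\sigma\times H^{\sigma-1}}$ in the product of $X^{\sigma,b}$ with the norm chosen for the curl-free part, smallness of the data makes the map a contraction, and the same Lipschitz bounds give uniqueness in the iteration space together with continuous dependence on the data.

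\textbf{Main obstacle.} The crux is the curl-free sector. Because the temporal gauge fixes the gauge only up to a time-independent transformation and leaves the $\partial_i\,\mathrm{div}\,A$ term in the equations, $A^{\mathrm{cf}}$ obeys no wave equation at all but the slaved equation above, which by a naive product estimate controls $\mathrm{div}\,A$ only in a space much rougher than $H^{\sigma-1}$; one must therefore design its iteration norm so that it sees enough time-frequency/elliptic structure for the time integral of $[\partial_t A_j,A_j]$ to close, while simultaneously the mixed terms $[A^{\mathrm{cf}},\partial A^{\mathrm{df}}]$ and $[A^{\mathrm{cf}},[A,A]]$ remain tame and the null-form estimates for $A^{\mathrm{df}}$ keep working at the same threshold $\sigma>\tfrac34$. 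Balancing these competing demands, and keeping the $b>\tfrac12$ slack needed for the time-localization estimates so that the contraction runs on all of $[-1,1]$, is where the genuine work lies; the individual multilinear estimates, once the spaces are fixed, are by now of a standard type.
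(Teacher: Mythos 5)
Your setup matches the route actually taken in \cite{tao2003local} (and reviewed/adapted in Section \ref{SecModLWP} of this paper): Leray splitting $A=A^{df}+A^{cf}$, null structure for the projected wave equation via the divergence-free condition, the curl-free part slaved to the first-order equation $\partial_t\,\mathrm{div}\,A=[\partial_t A_j,A_j]$, and a contraction in $X^{s,b}$-type norms. However, you stop exactly at the decisive point. The curl-free sector is not closed by ``designing a norm'' in the abstract: the proof needs (a) the specific choice $A^{cf}\in X^{\sigma+\frac14,\frac12+}_{\tau=0}=H^{\sigma+\frac14}_x H^{\frac12+}_t$, i.e.\ a quarter of a derivative \emph{more} than the data, and (b) the reason this is legitimate, namely a time-independent gauge transformation (Keel's trick, Section 3 of the arXiv version of \cite{tao2003local}, reproduced here in Step 1 of the proof of Proposition \ref{ModLWP}) which preserves the temporal gauge and, for small data, can be iterated to achieve $\mathrm{div}\,A(0)=0$. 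Since $A^{cf}$ obeys no wave equation and gains nothing from dispersion, without this reduction it is only as regular as the data ($H^{\sigma}_x$), and then the quadratic interactions carrying curl-free factors (e.g.\ $A^{cf}\nabla A^{cf}$, $A^{cf}\nabla A^{df}$, and the source $[\partial_t A_j,A_j]$ of the $\mathrm{div}\,A$ equation) do not close anywhere near $\sigma>\frac34$; the estimates (15)--(21) of \cite{tao2003local} that you invoke schematically are stated precisely in the $+\frac14$-shifted norm and presuppose vanishing curl-free data. By declaring this balancing act ``where the genuine work lies'' you have identified the obstacle but not supplied the idea that overcomes it, so the proposal has a genuine gap rather than a complete argument.

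Two smaller corrections. First, the constraint \eqref{Constraint} is a condition on $\bar E=\partial_t A(0)$, not on $\mathrm{div}\,\bar A$; its role is compatibility of the $\beta=0$ equation with the spatial equations (so that solving the projected system \eqref{YMTemporal} yields a genuine Yang--Mills solution), and it does not restrict $\mathrm{div}\,\bar A$ as your sketch suggests. Second, attributing the threshold $\sigma>\frac34$ solely to the bilinear $Q_{ij}$ estimate for two divergence-free inputs is misleading: in the temporal gauge the binding constraints come at least as much from the curl-free interactions just described, which is again why the gauge reduction and the $H^{\sigma+\frac14}_x H^{\frac12+}_t$ norm are the heart of the matter.
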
 

Removing the smallness assumption by shrinking the time interval is a delicate matter. This was obtained by Oh and T\u{a}taru in \cite{oh2019hyperbolic} as a consequence of their initial data excision and extension techniques for the Gauss equation \eqref{Constraint}:

\begin{theorem}[\cite{oh2019hyperbolic}] \label{LWPHloc}
Let $ \sg> \frac{3}{4} $.
The Yang-Mills equation in the temporal gauge is locally well-posed for initial data  satisfying \eqref{Constraint} and in $ H^{\sg}_{loc} (\mb{R}^3) \times H^{\sg-1}_{loc}(\mb{R}^3) $, on a sufficiently small time interval. 
\end{theorem}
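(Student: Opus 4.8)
The plan is to deduce Theorem~\ref{LWPHloc} from Tao's small-data global result, Theorem~\ref{LWPTao}, by combining three ingredients: the excision-and-extension theory of Oh--T\u{a}taru for the Gauss constraint \eqref{Constraint}, the scaling symmetry $A_\al(t,x) \mapsto \lmd A_\al(\lmd t, \lmd x)$ of \eqref{YM}, and finite speed of propagation for \eqref{YMTemporal}.

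Fix $ x_0 \in \mb{R}^3 $ and let $ M $ denote the $ H^\sg \times H^{\sg-1} $ norm of the data $ (\bar A_i, \bar E_i) $ on the ball $ B(x_0,2) $. First I would invoke the Oh--T\u{a}taru construction to extend $ (\bar A_i, \bar E_i)|_{B(x_0,1)} $ to global data $ (\tilde A_i, \tilde E_i) \in H^\sg(\mb{R}^3) \times H^{\sg-1}(\mb{R}^3) $ that still satisfies \eqref{Constraint} everywhere, agrees with $ (\bar A_i, \bar E_i) $ on $ B(x_0,1) $, and obeys $ \vn{(\tilde A_i, \tilde E_i)}_{H^\sg \times H^{\sg-1}} \ls_M 1 $. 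Since $ \sg > \tfrac34 > \tfrac12 $ is scaling-subcritical, the rescaled data $ \big(\lmd\tilde A_i(\lmd \,\cdot\,),\, \lmd^2 \tilde E_i(\lmd \,\cdot\,)\big) $ — which again solves \eqref{Constraint}, the scaling weights being chosen so that both components and the nonlinearity scale consistently — has $ H^\sg \times H^{\sg-1} $ norm tending to $ 0 $ as $ \lmd \To 0 $, so for $ \lmd = \lmd(M) $ small it falls below the smallness threshold of Theorem~\ref{LWPTao}. Applying that theorem and undoing the rescaling produces a solution $ A^{(x_0)} $ of \eqref{YMTemporal} on $ [-\lmd,\lmd] \times \mb{R}^3 $ with data $ (\tilde A_i, \tilde E_i) $.

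Next I would restrict to the truncated cone $ \mathcal{C}_{x_0} = \{ (t,x) : \vm{t} \le \lmd,\ \vm{x-x_0} < 1 - \vm{t} \} $. By finite speed of propagation for \eqref{YMTemporal}, $ A^{(x_0)}|_{\mathcal{C}_{x_0}} $ depends only on the data in $ B(x_0,1) $, where $ (\tilde A_i,\tilde E_i) = (\bar A_i, \bar E_i) $; hence it is independent of the chosen extension and is the sought local solution near $ (0,x_0) $. Covering $ \mb{R}^3 $ by balls $ B(x_0, \tfrac12) $ and using uniqueness inside overlapping cones — from finite speed of propagation together with the uniqueness half of Theorem~\ref{LWPTao} — the local solutions patch to a solution on $ (-T,T) \times \mb{R}^3 $ lying in $ H^\sg_{loc} $ on each time slice, with $ T $ depending on the size of the data on unit balls (so uniform local bounds are what make $ T $ uniform); uniqueness and continuous dependence follow from the same localization argument.

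The main obstacle is the first step. One cannot obtain a global extension by simply multiplying $ (\bar A_i, \bar E_i) $ by a spatial cutoff, since that destroys the nonlinear constraint \eqref{Constraint}; carrying out the excision and re-extension so that \eqref{Constraint} is preserved, with quantitative control of the norms, at the fractional regularity $ \sg > \tfrac34 $, is exactly the Oh--T\u{a}taru data surgery and is where the real analysis lies. A secondary point to verify is that the iteration space of Theorem~\ref{LWPTao} (built from $ X^{s,b} $-type and null-form bounds) localizes well enough in physical space for finite speed of propagation to be available at this regularity, which is by now standard.
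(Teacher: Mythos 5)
Your overall strategy (Oh--T\u{a}taru data surgery to restore \eqref{Constraint} after excision, Tao's small-data theorem, scaling, finite speed of propagation in the temporal gauge) is exactly the one used in \cite{oh2019hyperbolic} and reviewed in Section \ref{SecProofCorLWP}, but the order in which you apply the two main ingredients creates a genuine gap. You extend first, at unit scale, and then rescale, claiming that $\big(\lmd\tilde A(\lmd\cdot),\lmd^2\tilde E(\lmd\cdot)\big)$ has small $H^{\sg}\times H^{\sg-1}$ norm as $\lmd\to 0$. That is false for the connection component: the scaling only shrinks the \emph{homogeneous} norm, $\vn{\lmd \tilde A(\lmd\cdot)}_{\dot H^{\sg}}=\lmd^{\sg-\frac12}\vn{\tilde A}_{\dot H^{\sg}}\to 0$, while the low-frequency part blows up, $\vn{\lmd \tilde A(\lmd\cdot)}_{L^2}=\lmd^{-\frac12}\vn{\tilde A}_{L^2}\to\infty$. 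Since Theorem \ref{LWPTao} requires smallness in the inhomogeneous space $H^{\sg}\times H^{\sg-1}$, your smallness step does not go through. A second, related problem is that the extension result you invoke (Theorem \ref{ExtensionHrho}, resting on Proposition \ref{SolveCovDiv}) carries the hypothesis $\vn{\bar A}_{H^{1/2}(B)}\leq\ep$; applying it directly to data of size $M$ on $B(x_0,2)$, before any rescaling, is not justified.

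The paper's argument resolves both issues by reversing the order: first rescale (with one global $\lmd$) so that the homogeneous norm is $\ll\ep_*$, then restrict to a covering by unit balls $B_j$. On a unit ball, Sobolev embedding and H\"older convert the small global $\dot H^{\sg}$ norm into a small \emph{inhomogeneous} $W^{\sg,2}(B_j)\times W^{\sg-1,2}(B_j)$ norm (and in particular a small $H^{1/2}(B_j)$ norm), so the per-ball extensions of Theorem \ref{ExtensionHrho} produce genuinely small global data to which Theorem \ref{LWPTao} applies; finite speed of propagation then patches the solutions on the cones $\mathcal{D}_j$, and undoing the scaling yields the small time interval. If you want to keep your "fix $x_0$, one extension" presentation, you would still have to cut off/re-extend \emph{after} rescaling to kill the large $L^2$ tail, which is precisely the paper's order of operations. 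Your final patching step also needs care for merely $H^{\sg}_{loc}$ data: to get one common existence time you need uniform-in-$j$ control of the unit-ball norms after rescaling, which is what the paper's square-summability/covering setup provides.
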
 

By a square summability argument one can formulate this result for global Sobolev spaces as well (see section \ref{SecProofCorLWP}):

\begin{corollary}[Local well-posedness in $  H^{\sg} \times H^{\sg-1} $]   \label{CorLWP}
Let $ \sg> \frac{3}{4} $. Suppose the initial data $  (\bar{A}_{x}, \bar{E}_{x}) $ is in $ H^{\sg}(\mb{R}^3) \times H^{\sg-1}(\mb{R}^3) $ and satisfies \eqref{Constraint}. Then the local in time solution from Theorem \ref{LWPHloc} is in $ C_t H^{\sg} \cap C_t^1 H^{\sg-1} $. For any $ t $ in the time interval the solution map $ (\bar{A}_{x}, \bar{E}_{x}) \mapsto A[t] $ is locally Lipschitz continuous on $ H^{\sg} \times H^{\sg-1} $. 
\end{corollary}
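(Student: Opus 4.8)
The plan is to descend from the global Sobolev spaces to $H^\sigma_{loc}$, invoke Theorem~\ref{LWPHloc}, and then re-ascend by a square-summation over a spatial cover, using finite speed of propagation for the temporal-gauge system to decouple the pieces. Since $H^\sigma(\mb{R}^3) \times H^{\sigma-1}(\mb{R}^3) \hookrightarrow H^\sigma_{loc} \times H^{\sigma-1}_{loc}$ with local profile dominated by the global norm, Theorem~\ref{LWPHloc} produces a unique solution $A[t]$ on an interval $[-T,T]$ with $T = T(M) > 0$, $M \defeq \|(\bar{A}_{x}, \bar{E}_{x})\|_{H^\sigma \times H^{\sigma-1}}$; the task is to show $A \in C_t H^\sigma \cap C^1_t H^{\sigma-1}$ on $[-T,T]$ with norm $\lesssim_M M$ and that this assignment is locally Lipschitz. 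I will use Theorem~\ref{LWPHloc} in its quantitative, genuinely local form: its proof produces local solutions whose $H^\sigma$ norm on a ball is controlled, linearly in a neighborhood of the origin, by the data norm on a slightly larger concentric ball.

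First I would record finite speed of propagation for \eqref{YMTemporal} — equivalently for one of the hyperbolic reformulations \eqref{YMTemporal2}--\eqref{YMTemporal3}, which is precisely why those formulations are useful — so that for any $y \in \mb{R}^3$ and $|t| \le T$, the restriction $A[t]\big|_{B(y,1)}$ is determined by $(\bar{A}_{x}, \bar{E}_{x})\big|_{B(y, 1 + |t|)}$. Next, fix a lattice $\{x_k\} \subset \mb{R}^3$ and a partition of unity $1 = \sum_k \chi_k^2$, with $\chi_k$ a smooth bump supported in $B_k \defeq B(x_k, 2(1+T))$ and $\{\chi_k\}$ of uniformly bounded overlap. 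Since multiplication by $\chi_k$ is bounded on $H^s$ for $|s| \le 1$, bounded overlap gives the norm equivalence $\|f\|_{H^s}^2 \sim \sum_k \|\chi_k f\|_{H^s}^2$ for $s \in \{\sigma, \sigma-1\}$ — the one place where fractional orders require a short argument, handled via Littlewood--Paley or the Gagliardo seminorm, but standard. Combining finite speed of propagation (which, together with uniqueness, identifies $A[t]$ near $x_k$ with the solution obtained from the excised-and-extended local data) with the local bound from Theorem~\ref{LWPHloc} yields, for $|t| \le T$,
\[
\|\chi_k A[t]\|_{H^\sigma} + \|\chi_k \pt_t A[t]\|_{H^{\sigma-1}} \lesssim_M \|(\bar{A}_{x}, \bar{E}_{x})\|_{H^\sigma \times H^{\sigma-1}(B_k)},
\]
and squaring and summing over $k$ gives the claimed global bound; continuity in $t$ is inherited from the local-in-space continuity already supplied by Theorem~\ref{LWPHloc}. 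For the Lipschitz statement I would rerun the identical scheme on the difference of two solutions with data $(\bar{A}_{x}, \bar{E}_{x})$, $(\bar{A}'_{x}, \bar{E}'_{x})$ of size $\le M$: finite speed of propagation localizes the difference, the Lipschitz dependence of Theorem~\ref{LWPHloc} bounds it on each $B_k$ by the data difference on $B_k$, and square-summation closes the estimate.

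The main obstacle is not any single inequality but the compatibility of the ingredients, i.e. extracting uniform quantitative content from what is a priori a qualitative local theorem. Concretely one must ensure: (i) finite speed of propagation is available for \eqref{YMTemporal} at regularity $\sigma > \tfrac34$, which forces one to work with the reformulations \eqref{YMTemporal2}--\eqref{YMTemporal3} and to invoke the excision/extension construction of \cite{oh2019hyperbolic} underlying Theorem~\ref{LWPHloc} in a manner that does not destroy locality; (ii) the time of existence $T$ and all implicit constants are uniform over the cover $\{B_k\}$ — this is the reason to phrase everything through the single number $M$, which dominates every local data norm; and (iii) the fractional Sobolev norm genuinely splits and reassembles across the partition of unity, including for the time derivative in $H^{\sigma-1}$. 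Once these are in place the square-summation is mechanical.
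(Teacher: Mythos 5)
Your proposal is correct and follows essentially the same route as the paper: localize to a finitely overlapping cover, use the quantitative local bounds coming from the excision/extension-plus-small-data construction underlying Theorem \ref{LWPHloc}, identify the solution on each piece via finite speed of propagation in the temporal gauge, and square-sum the fractional local Sobolev norms (Proposition \ref{PropsqSumWs2}), running the identical scheme on differences for the Lipschitz statement. The only cosmetic difference is that the paper makes the smallness explicit by rescaling (so Tao's small-data theorem and the extension theorem apply on $[0,1]$ with balls of radius $2$), whereas you absorb this into the "quantitative form" of Theorem \ref{LWPHloc} and keep $T=T(M)$.
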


Persistence of higher regularity in $ H^{\bar{\sg}} \times H^{\bar{\sg}-1} $ holds too ($ \bar{\sg} \geq \sg$).
Solutions solve \eqref{YMTemporal} in the sense of distributions. Note that $ [ A,\partial A ] \in \dot{H}_{x}^{-1} \subset \mathcal{S}^{*} $ due to \eqref{Sbvone}. 

We remark that extending Theorem \ref{LWPTao} (and thus also \ref{LWPHloc}- \ref{CorLWP}) to $ \sg > \frac{1}{2} $ (which is the critical regularity) as in the case of Maxwell-Klein-Gordon \cite{machedon2004almost} remains an open question. 

%\begin{corollary} \label{CorLWP}
%Let $ \sg> \frac{3}{4} $.
%The Yang-Mills equation in the temporal gauge is locally well-posed for initial data in $ H^{\sg}(\mb{R}^3) \times H^{\sg-1}(\mb{R}^3) $ on a sufficiently small time interval. 
%\end{corollary}

%This means a solution to \eqref{YMTemporal} in the sense of distributions which is in $ C_t H^{\sg} \cap C_t^1 H^{\sg-1} $ always exists.  Note that $ [ A,\partial A ] \in \dot{H}_{x}^{-1} \subset \mathcal{S}^{*} $ due to \eqref{Sbvone}. For any $ t $ in the time interval the solution map $ (\bar{A}_{x}, \bar{E}_{x}) \mapsto A[t] $ is locally Lipschitz continuous on $ H^{\sg} \times H^{\sg-1} $. Moreover, persistence of higher regularity holds. 

\subsection{Main result} \

The purpose of this article is to obtain global solutions with regularities below the energy: 

\begin{theorem} \label{MainThm}
Let  $ \sg> \frac{5}{6} $. The Yang-Mills equation in the temporal gauge is globally well-posed in $ H^{\sg}(\mb{R}^3) \times H^{\sg-1}(\mb{R}^3) $: the solutions in Corollary \ref{CorLWP} extend to $ C_t H^{\sg} \cap C_t^1 H^{\sg-1} (\mb{R}_t \times \mb{R}^3).  $
\end{theorem}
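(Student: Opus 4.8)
The plan is to combine the subcritical local theory of Corollary~\ref{CorLWP} with an almost-conservation law for a gauge-invariant, heat-flow-regularized energy, in the spirit of the I-method. Fix $T>0$. Since the existence time furnished by Corollary~\ref{CorLWP} depends only on $\vn{A[t]}_{H^{\sg}\times H^{\sg-1}}$, it suffices to produce a uniform bound on this norm over $[0,T]$ and then iterate. For the regularizing parameter (playing the role of $N$ in the classical I-method) we use the Yang-Mills heat flow: given data $A[t]$, flow it in a heat-time $s$ along the parabolic Yang-Mills equation in the caloric/DeTurck gauge to obtain $A(s)$, and set $\mathcal{E}(t,s)\defeq \mathbf{E}[F[A(s)]](t)$, together with an $s$-weighted version — e.g. $\mathcal{E}_{\sg}(t)\defeq \sup_{0<s\le s_0} s^{1-\sg}\,\mathcal{E}(t,s)$ or the analogous $\tfrac{\dd s}{s}$-integral — calibrated via parabolic smoothing so that $\mathcal{E}_{\sg}(t)$ is comparable to $\vn{A[t]}^{2}_{\dot H^{\sg}\times \dot H^{\sg-1}}$ up to the conserved energy $\mathbf{E}[F]$. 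The crucial structural point is that $\mathcal{E}(t,s)$ is gauge-invariant (because $\mathbf{E}[F]$ is, and the heat flow is gauge-covariant), which the Fourier multiplier $I$ cannot achieve in the non-abelian setting.

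Next I localize. Exploiting that the temporal gauge has finite speed of propagation, together with the Oh--T\u{a}taru excision/extension procedure for the Gauss constraint \eqref{Constraint} underlying Theorem~\ref{LWPHloc}, I partition $\mb{R}^3$ into unit cubes $\{Q_k\}$, extend each restricted datum $(\bar A,\bar E)|_{Q_k}$ to a global datum of comparable size solving \eqref{Constraint}, and solve. On each piece I select a local gauge (Coulomb, or the gauge induced by the heat flow) and invoke an Uhlenbeck-type lemma valid at fractional regularity to convert the local modified energy $\mathcal{E}_{\sg}^{(k)}(t)$ into control of $\vn{A}_{H^{\sg}(Q_k)}$. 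Square-summing over $k$ recovers $\mathcal{E}_{\sg}(t)\sim \sum_k \mathcal{E}_{\sg}^{(k)}(t)$, hence the global $H^{\sg}$ norm, modulo errors dominated by the (conserved) energy of the original global solution; this is where the localizability of the heat-flow energy, and its compatibility with the data extension, is used.

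The analytic heart is the increment estimate. Differentiating $\mathcal{E}(t_1,s)-\mathcal{E}(t_0,s)$ in $t$ and using the hyperbolic evolution of the flowed curvature $F[A(s)]$, after integration by parts the difference is a sum of space-time trilinear (and higher-order) integrals of $A(s)$ and its derivatives over $[t_0,t_1]\times\mb{R}^3$. Expanding these and isolating the null structure hidden inside the trilinear forms (as in Klainerman--Machedon and Tao), I estimate them in heat-flow-adapted $X^{s,b}$ and null-frame norms. The parabolic weight renders the dangerous high$\times$high$\to$low interactions smoother in the $t$-variable than in the classical modified energy, and this extra slack is precisely what pushes the threshold down to $\sg>\tfrac56$. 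The outcome is a bound of the form $\vm{\mathcal{E}_{\sg}^{(k)}(t)-\mathcal{E}_{\sg}^{(k)}(0)}\ls s_0^{\theta}\,P\big(\mathcal{E}_{\sg}^{(k)},\mathbf{E}[F]\big)$ over a unit time step for some $\theta>0$, structured so the local contributions square-sum to a global increment bound.

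Finally I run the bootstrap. On $[0,T]$ assume $\mathcal{E}_{\sg}(t)\le 2\mathcal{E}_{\sg}(0)$; the square-summed increment estimate then yields $\mathcal{E}_{\sg}(t)\le \mathcal{E}_{\sg}(0)+C(T)\,s_0^{\theta}\,P\big(\mathcal{E}_{\sg}(0),\mathbf{E}[F]\big)$, which is strictly below $2\mathcal{E}_{\sg}(0)$ once the heat-time regularization $s_0$ is chosen small depending on $T$ and the data; a continuity argument closes the bootstrap on all of $[0,T]$. Feeding the resulting uniform $H^{\sg}\times H^{\sg-1}$ bound back into Corollary~\ref{CorLWP} extends the solution past $T$, and since $T$ is arbitrary the solution is global. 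I expect the main obstacle to be the trilinear null-form estimate for the energy increment in the heat-flow-adapted spaces — the non-abelian commutator terms $[A,\cdot]$ do not commute with frequency projections, which is exactly why the ordinary Fourier I-method fails — with the construction of the local gauges via the fractional Uhlenbeck lemma, and their compatibility with the Oh--T\u{a}taru extension so that everything square-sums, as the second delicate point.
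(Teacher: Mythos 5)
Your overall architecture is the paper's: a gauge-invariant, heat-flow regularized energy in place of the Fourier multiplier $I$, localization via finite speed of propagation in the temporal gauge, fractional Uhlenbeck-type local gauges combined with the Oh--T\u{a}taru constraint-preserving extension, square summation of local contributions, and a null-form trilinear estimate for the energy increment. However, the step that actually closes the global argument has a genuine gap. First, you repeatedly invoke ``the conserved energy $\mathbf{E}[F]$ of the original global solution'' as a controlling quantity (in the comparability claim for $\mathcal{E}_{\sg}$ and in the increment bound $s_0^{\theta}P(\mathcal{E}_{\sg},\mathbf{E}[F])$); for data merely in $H^{\sg}\times H^{\sg-1}$ with $\sg<1$ the energy is not finite, so it cannot appear in any estimate --- avoiding exactly this is the raison d'\^etre of the method. (Relatedly, a gauge-invariant quantity such as $\mathcal{E}_{\sg}$ cannot be ``comparable'' to the non-invariant norm $\vn{A[t]}_{\dot H^{\sg}\times\dot H^{\sg-1}}^2$; it controls that norm only after the local gauge fixing you describe later.)

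Second, and more seriously, you drop the rescaling step and propose to close the bootstrap by ``choosing $s_0$ small depending on $T$ and the data.'' This does not work as stated: the increment estimate (the paper's Proposition \ref{AClaw}, resting on Propositions \ref{ModLWP} and \ref{TrilinearProp}) is perturbative and is proved under the hypothesis $\mathcal{IE}(t_0)\le 2\eta^2\ll 1$; with unrescaled large data the $X^{s,b}$ space-time control on unit time intervals, and hence the trilinear bounds, are unavailable, so your claimed bound $\vm{\mathcal{E}^{(k)}_{\sg}(t)-\mathcal{E}^{(k)}_{\sg}(0)}\ls s_0^{\theta}P(\cdot)$ is unsubstantiated. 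In the paper the required smallness is manufactured by the scaling $A^{\lmd}(t,x)=\lmd^{-1}A(t/\lmd,x/\lmd)$ with $\lmd$ tied to $N=s_0^{-1/2}$ through \eqref{InDataScale}--\eqref{lmdNrel}, at the price of having to iterate the almost conservation law over $O(\lmd T)$ unit time steps; the argument closes only when $\lmd T\ll N^{\frac12-}$, i.e.\ \eqref{CondTN}, and it is precisely this competition between the number of steps and the per-step gain $N^{-\frac12+}$ that produces the threshold $\sg>\frac56$. Your scheme never confronts this numerology: as written it would ``prove'' globality for any $\sg$ for which an increment bound with some $\theta>0$ holds, which is a sign the argument cannot be complete --- the regularity restriction must enter through the balance created by the rescaling you omitted.
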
 

\

This result was motivated by \cite{keel2011global} which proves a global result for the Maxwell-Klein-Gordon (MKG) equations in the Coulomb gauge for regularities $ \sg > \frac{\sqrt{3}}{2} $ . Indeed, results on Yang-Mills usually follow in the footsteps of similar results for MKG, although the transition can often be highly non-trivial. The paper \cite{keel2011global} uses the I-method 
%(also called the method of almost conservation laws) 
which proceeds by inserting a smoothing Fourier multiplier operator $ I=I_N $ (see \eqref{Ioperator}) into the MKG Hamiltonian (Energy) $ H $. The main technical step is showing that $ H[ I \Phi(t)] $ is "almost conserved" - i.e. it varies very slowly in time. Letting $ N \to \infty $  allows control of the size of the solution for long times. 
See also: \cite{bourgain}, \cite{keel1998local}, \cite{kenig2000global}, \cite{colliander2001},  \cite{colliander2001global},  \cite{colliander2002almost}, \cite{colliander2004multilinear}, \cite{colliander2007resonant}, \cite{roy2007adapted}, \cite{dodson2013global}, \cite{hani2012global}. For an introduction to the I-method for the cubic wave equation we refer to \cite{roy2008global}. 

\

However, applying the usual I-method directly to \eqref{YM} is unlikely to work and a new approach is needed for the modified energy. We discuss the reasons below.  

\

To obtain a global in time result for \eqref{YM} the (global or local) choices of gauge are essential. In particular, the energy $ {\bf E}[F](t) $ or modified energies (such as $ {\bf E}[I F](t) $%or $ {\bf E}[F[IA]](t) $
) should provide control of (global or local) Sobolev norms such as $ \vn{\nabla_{t,x} A_i}_{L^2} $ or $ \vn{I \nabla_{t,x} A_i}_{L^2} $. We must consider:
\begin{enumerate}
\item In general, the curvature $ F_{ij} $ can control only the curl $ \pt_i A_j - \pt_j A_i $ part of $ \nabla_x A_i $. For instance, in the temporal gauge $ F_{0i} = \pt_t A_i $ and $  {\bf E}[F](t) \approx \vn{\nabla_x A^{df} (t)  }_{L^2_x}^2 +  \vn{\pt_t A(t)  }_{L^2_x}^2 $ so the energy gives no Sobolev control of the curl-free part $ \nabla_x A^{cf} $. Setting the Coulomb condition $ \text{div} A_x = 0 $ would make $ A^{cf}=0 $ automatically. However, 
\item Global Coulomb gauges cannot be imposed for large data when $ G $ is non-abelian \cite{klainerman1995finite}, \cite{gribov1978quantization}. This is a fundamental difference to the MKG case \cite{klainerman1994maxwell}, \cite{keel2011global}. Local Coulomb gauges were used instead in \cite{klainerman1995finite} where Uhlenbeck's lemma provided control of local $ H^1 $ norms. 
\item Unless one fixes a global gauge (and does all the analysis in that gauge), setting the local gauges and patching the solutions together using finite speed of propagation (in the temporal gauge) requires the gauge invariance of the energy, \cite{klainerman1995finite}, \cite{oh2019hyperbolic}. However, 
\item The classical I-method modified energy $ {\bf E}[F[IA]](t) $ or $ {\bf E}[I F](t) $ fails to be gauge-invariant or easy to localize, like the original energy. This is due to the presence of the Fourier multiplier $ I $, see \eqref{Ioperator}. 
\end{enumerate}

\

To deal with these considerations we propose a modified energy which:
\begin{enumerate}[label=\Roman*.]
\item Is gauge invariant and fairly easy to localize, addressing (3),(4)
\item Addresses issues (1),(2) through Proposition \ref{LocalGauge1} (which is an Uhlenbeck-type lemma below $ H^1 $) providing gauge selections which give control of local Sobolev norms, see Remark \ref{UhlenbeckType}.
\item Is smoother in time compared to the classical I-method modified energy\footnote{
The structure of the differentiated energy effectively shifts a derivative from high to low frequency (Remark \ref{ComparisonAC})}, slightly improving the numerology compared to the MKG case in \cite{keel2011global}, allowing lower regularities $ \sg > \frac{5}{6} $ rather than $ \sg > \frac{\sqrt{3}}{2} $, see Remark \ref{ComparisonAC}. 

 \end{enumerate}

\

Instead of using the Fourier multiplier $ I $ in the definition, the same degree of spatial smoothing will be realized geometrically using the Yang-Mills heat flow 
% \be \label{cYMHF} \tag{cYMHF} F_{si}= \mathbf{D}^{\ell} F_{\ell i}  \ee

\begin{minipage}{0.3\textwidth}
\begin{equation}
F_{si}= \mathbf{D}^{\ell} F_{\ell i}  \label{cYMHF} \tag{cYMHF}
\end{equation} 
    \end{minipage}%
    \begin{minipage}{0.2\textwidth}\centering
    and 
    \end{minipage}%
    \begin{minipage}{0.4\textwidth}
\begin{equation}
F_{s \al}= \mathbf{D}^{\ell} F_{\ell \al}.  \label{dYMHF} \tag{dYMHF}
\end{equation}
    \end{minipage}\vskip1em
    
introduced by Donaldson \cite{donaldson1985anti} and developed extensively by Sung-Jin Oh in \cite{oh2014gauge}, \cite{oh2015finite} and in \cite{oh2017yang}, \cite{oh2020hyperbolic} for the analysis of finite energy \eqref{YM}, along with its dynamic version \eqref{dYMHF}, together with Oh's related notion of caloric gauges for \eqref{YM}, paralleling the caloric gauges for Wave Maps \cite{tao2004geometric} and Schr\"odinger Maps \cite{TaoGauges}, \cite{bejenaru2011global} introduced by Tao. 
% \be \label{dYMHF} \tag{dYMHF} F_{s \al}= \mathbf{D}^{\ell} F_{\ell \al}. \ee
The smoothing is realized in a parabolic way, effectively implementing a covariant Littlewood-Paley decomposition, see Remark \ref{RmkSmoothingLP}.

\

As an alternative to II one could address (1),(2) by opting for global in space caloric gauges like in \cite{oh2014gauge}, \cite{oh2015finite} instead of local ones. Unfortunately, that approach requires integration in time which would worsen the numerology and would also be more technically difficult to implement in the present context.

\

If $ A_{\al}:  \mb{R} \times \mb{R}^{3} \times [0,s_0] \to  \mathfrak{g} $ is a regular connection, using the bi-invariant inner product from \eqref{InvMetric}, we denote the energies at each $ s \in [0,s_0] $ by 
\be \label{EnergiesS}
\mathcal{E}(t,s) \defeq  \frac{1}{2} \sum_{\alpha<\beta}  \int_{\mathbb{R}^{3}} (F_{\alpha \beta} , F_{\alpha \beta})(t,s)  \dd x
\ee
\begin{definition}[\bf{Modified energy}]
Let $ N^2 s_0=1 $. We define the following \emph{modified energy}
\be \label{ModifiedEnergy}
\mathcal{IE}(t) \defeq \sup_{s \in [0,s_0]} (N^2 s)^{1-\sigma} \mathcal{E}(t,s) + \int_0^{s_0} (N^2 s)^{1-\sigma}  \mathcal{E}(t,s) \frac{\dd s}{s} 
\ee
\end{definition}
%- This is a substitute for the usual I-method modified energy $ {\bf E}[ I F_{\al \beta}] $ obtained by plugging the $ I $ multiplier applied to $ F_{\al \beta} $ in the energy functional \eqref{Energy}. The main advantage of $ \mathcal{IE}(t) $ is that it is invariant under \eqref{GaugeTransformation} while $ {\bf E}[ I F_{\al \beta}]  $ is not. See Remark X. 

\

The similarity of $ \mathcal{IE}(t) $ with the classical modified energy $  {\bf E}[F[IA]](t) $ is discussed in Remark \ref{RkComparisonModEn} based on the heat flow evolution relation to Littlewood-Paley theory (Remark \ref{RmkSmoothingLP}).

The key to the proof of the main result is the following:

\begin{proposition}[Almost conservation law] \label{AClaw}
Let  $ \sg> \frac{5}{6} $. Let $ A_{t,x} $ be a global regular solution to \eqref{YM} in the temporal gauge. Suppose $ A_{t,x,s} $ solves \eqref{dYMHF} on $ \mb{R} \times \mb{R}^3 \times [0,s_0] $. Let $ t_0 $ be a time such that 
$$
\mathcal{IE}(t_0) \leq 2 \eta^2 , \qquad \qquad \vn{A[t_0]}_{H^{\sg} \times H^{\sg-1}} \leq M_0
$$
Then for any $ t \in [t_0,t_1] $, $ t_1 = t_0+1 $ and any $ s \in [0,s_0] $, $ s_0 = N^{-2} $ one has 
\be \label{deltaEnergy}
(N^2 s)^{1-\sigma} \vm{  \mathcal{E}(t,s) -  \mathcal{E}(t_0,s) }   \ls  \frac{s^{\frac{1}{4}-}}{  (N s^{\frac{1}{2}})^{(1-\sg)}}  \eta^2  \leq \frac{ \eta^2}{N^{\frac{1}{2}-}}
\ee
%and the integrated version 
%\be \label{IntdeltaEnergy}
%\int_0^{s_0} (N^2 s)^{1-\sigma} \vm{  \mathcal{E}(t,s) -  \mathcal{E}(t_0,s) }  \frac{\dd s}{s} \ls  %\frac{1}{N^{\frac{1}{2}-}} \eta^2
%\ee
as well as 
\be  \label{SobolevBound}
 \vn{A[t]}_{H^{\sg} \times H^{\sg-1}} \leq C(M_0).
\ee 
In particular, $ \vm{\mathcal{IE}(t) - \mathcal{IE}(t_0) } \ls \eta^2 / N^{\frac{1}{2}-}  $. 
\end{proposition}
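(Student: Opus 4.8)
The heart of the matter is to bound the energy difference $\mathcal{E}(t,s) - \mathcal{E}(t_0,s)$ for a fixed heat-time $s$, after which \eqref{SobolevBound} and the bound on $\vm{\mathcal{IE}(t)-\mathcal{IE}(t_0)}$ follow by integrating the weight $(N^2 s)^{1-\sigma} s^{-1}$ over $s\in[0,s_0]$ and by a continuity/bootstrap argument using the hypothesis $\mathcal{IE}(t_0)\le 2\eta^2$. So the plan is: (i) differentiate $\mathcal{E}(t,s)$ in $t$ along the solution of \eqref{YM} (with the caloric/heat-flow extension in $s$ solving \eqref{dYMHF}) to obtain a spacetime identity expressing $\partial_t \mathcal{E}(t,s)$ — or better, the difference directly — as a sum of trilinear (and higher) integrals in $F_{\alpha\beta}(t,s)$ and $A_\alpha(t,s)$; (ii) use the heat-flow parabolic smoothing estimates (the covariant Littlewood-Paley heuristic of Remark~\ref{RmkSmoothingLP}, together with the Sobolev control from the local gauges of Proposition~\ref{LocalGauge1}) to convert $H^\sigma\times H^{\sigma-1}$ data control at $s=0$ into pointwise-in-$s$ bounds of the form $\|F_{\alpha\beta}(t,s)\|_{L^2_x} \lesssim s^{-(1-\sigma)/2}\eta$ with extra smoothing for the curl-free/low-frequency constituents; (iii) localize in space using finite speed of propagation in the temporal gauge, select local gauges compatible with the initial-data extension results, estimate each local trilinear piece, and square-sum; (iv) reveal the null structure inside the trilinear integrals and close the estimates by the known bilinear/$X^{s,b}$ machinery, harvesting the gain $s^{1/4-}/(Ns^{1/2})^{1-\sigma}$ claimed in \eqref{deltaEnergy}.

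For step (i), I expect the key cancellation to be the same one that makes $\mathbf{E}[F](t)$ conserved: integrating $\mathbf{D}^\alpha F_{\alpha\beta}=0$ against $F^{0\beta}$ and using the Bianchi identity $\mathbf{D}_{[\alpha}F_{\beta\gamma]}=0$ plus antisymmetry makes the leading quadratic term vanish, leaving genuinely cubic (and quartic) spacetime integrals of schematic form $\int\int (F, [A,F])$. The insertion of the $s$-weight $(N^2 s)^{1-\sigma}$ and the heat flow is what replaces the Fourier multiplier $I$; following Remark~\ref{ComparisonAC}, the differentiated modified energy should effectively shift a derivative from the high-frequency to the low-frequency factor, which is precisely the mechanism that improves the numerology. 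I would first carry this out formally for smooth, compactly supported (global) solutions, where all manipulations are justified, then pass to the regularity class of Corollary~\ref{CorLWP} by approximation and persistence of regularity.

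For step (iii)–(iv), the localization is where the structural assumptions pay off: since $A_0=0$, the equations \eqref{YMTemporal} have finite speed of propagation, so restricting to a ball of unit size and using the Oh–T\u{a}taru excision/extension of the Gauss constraint \eqref{Constraint}, one obtains on each ball a local gauge (via Proposition~\ref{LocalGauge1}) in which $\|\nabla_{t,x}A_i\|_{L^2}$ on that ball is controlled by the local energy $\mathcal{E}(t,s)$ restricted there. Because $\mathcal{E}(t,s)$ is gauge-invariant, the energy difference genuinely decomposes as a sum over a bounded-overlap cover, and the square-summability of the local bounds (with the extension estimates) reassembles the global statement. Inside each ball the trilinear integral, once its null structure is exposed, is estimated by the methods already available for Yang–Mills/MKG at $\sigma>3/4$ (as in Theorems~\ref{LWPTao}, \ref{LWPHloc}); the heat-flow smoothing supplies the extra $s$-powers needed to integrate $\int_0^{s_0}\cdots\frac{\dd s}{s}$ and produce the decay $N^{-1/2+}$.

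\textbf{Main obstacle.} The hardest part will be step (iii)–(iv) done simultaneously: organizing the trilinear spacetime integrals of $F(t,s)$ and $A(t,s)$ so that, after localizing and choosing local gauges, the null structure is visible \emph{and} survives the localization, and then verifying that the resulting local bounds are genuinely square-summable against the Oh–T\u{a}taru extension estimates — all while keeping track of the $s$-weights so that the $\frac{\dd s}{s}$ integral converges with the stated power. A secondary difficulty is proving the requisite heat-flow smoothing estimates for $F_{\alpha\beta}(t,s)$ at fractional regularity $\sigma<1$ (the ``Uhlenbeck-type lemma below $H^1$''), since the standard caloric-gauge parabolic theory of Oh is developed at energy regularity and must be pushed below it; this is presumably the content of Proposition~\ref{LocalGauge1} and the surrounding heat-flow analysis that I would invoke here.
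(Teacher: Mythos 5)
There is a genuine gap in your step (i), and it is the step on which the whole numerology rests. You claim that, as in the proof of exact energy conservation, pairing $\mathbf{D}^{\al}F_{\al\beta}=0$ against $F^{0\beta}$ and using Bianchi makes the quadratic term vanish, "leaving genuinely cubic integrals of schematic form $\int\!\!\int (F,[A,F])$". But for $s>0$ the connection $A_{\al}(s)$ does \emph{not} solve \eqref{YM}: the heat flow \eqref{dYMHF} and \eqref{YM} do not commute, and the failure is exactly the tension field $w_{\al}(s)=\mathbf{D}^{\beta}F_{\al\beta}(s)$, which is nonzero for $s>0$. The Bianchi/Leibniz computation therefore does not produce a cancellation; it produces the identity $\frac{d}{dt}\mathcal{E}(t,s)=\sum_{\ell}\int (w_{\ell},F_{0\ell})(s)\,dx$, i.e. \eqref{DeltaE}, and the entire gain comes from the structure of $w$: since $w(s=0)=0$ and $w$ solves a covariant parabolic equation with a bilinear source in $F$, its leading part $w^{(2)}$ is a heat-Duhamel bilinear expression $\mathbf{W}(\pt_t A,\nabla\pt_t A)$ with symbol \eqref{Wdef}, whose frequency concentration (Lemma \ref{LemmaWfreq}) supplies the weight $2^{-2k_{\max}}$ and, after Leray projection, a null form \eqref{SpecW}--\eqref{SpecNull}. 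This "heat-wave commutator" is what shifts a derivative from high to low frequency (Remark \ref{ComparisonAC}) and yields $s^{\frac14-}/(Ns^{\frac12})^{1-\sg}$, hence $N^{-\frac12-}$ and $\sg>\frac56$. Your expected cubic term $(F,[A,F])$ is not what arises (and is not even gauge-invariant as written); estimating such a term with the $\sg>\frac34$ machinery you invoke would at best reproduce the classical I-method balance of derivatives, not the improved one claimed in \eqref{deltaEnergy}.

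A second, more technical gap is in your localization scheme (iii). After restricting to a ball, applying Proposition \ref{LocalGauge1} and the constraint-preserving extension (Proposition \ref{Extension}), one solves a \emph{new} Yang–Mills evolution and a \emph{new} heat flow from the extended data; finite speed of propagation identifies it with the original solution only at $s=0$ on the cone, while for $s>0$ the two heat flows differ because parabolic propagation is not finite-speed. One must therefore quantify the local difference of the two flows (this is Proposition \ref{PropErrorDif}, giving $\|\chi[F-\bar F](s)\|_{L^2}\ls s^{2}c_j$ etc.) before the local trilinear pieces \eqref{deltaEnergyReduced} can be estimated and square-summed; your plan omits this entirely, and without it the reduction of \eqref{deltaEnergyPartition} to estimates for the extended solutions does not go through. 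Relatedly, \eqref{SobolevBound} does not follow from "integrating the weight and bootstrap": the energy controls $F$, not $A$, so the $H^{\sg}\times H^{\sg-1}$ bound requires the Uhlenbeck-type gauge selections, the bound on the gauge transformations (Lemma \ref{UtransfBound}), and the square-summability of local Sobolev norms (Proposition \ref{PropsqSumWs2}), as a separate argument.
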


\

The error $ 1/N^{\frac{1}{2}-} $ corresponds to the condition $ \sg> \frac{5}{6} $, which appears to be the limit of this method in the current form. 

\begin{remark} \label{RmkSqSumAC}
A feature of the proof is that we do not prove the almost-conservation law for the whole solution at once (this is due to the fact that we only control the solution in local gauges). Instead, we partition $  \mathcal{E}(t_1,s) -  \mathcal{E}(t_0,s) $ into local pieces (see \eqref{deltaEnergyPartition}) and then we square sum the estimates obtained for the (appropriately extended) local solutions. 
\end{remark}

Finally, let 
\be  \label{wDef}
w_{\al}(s) \defeq \mathbf{D}^{\beta} F_{\al \beta}(s) 
\ee
be the Yang-Mills tension field, which is a measure of the failure of $ A_{\al}(s) $ to satisfy the \eqref{YM} equation for $ s>0 $, since the heat flow and \eqref{YM} do not exactly commute. The gauge transformation \eqref{GaugeTransformation} makes $ w_{\al} \mapsto U w_{\al} U^{-1} $. 

\

\subsection{Overview of the paper and methods} \

We now outline the key results which play a role in the proof and their organization. 

\

Section \ref{SecPreliminaries} introduces the basic definitions, notations, properties and preliminaries. 

Section \ref{SecLWPYM} is devoted to local well-posedness for \eqref{YM}. It begins by addressing $ H^{\sg} \times H^{\sg-1} $ approximations by regular initial data sets satisfying \eqref{Constraint}. Then in Prop. \ref{PropsqSumWs2} we discuss square summability for fractional local Sobolev spaces $ \vn{u}_{H^s}^2 \simeq \sum_j \vn{u}_{W^{s,2}(B_j)}^2 $. We continue by recalling initial data extension results from \cite{oh2019hyperbolic} before reviewing Theorem \ref{LWPHloc} and discussing Corollary \ref{CorLWP}.

\

Section \ref{SecModLWP} develops space-time control of temporal solutions in: hyperbolic $ X^{s,b} $ spaces for the divergence free part $ A^{df} = {\bf P} A $ and in $ X^{r,\tht}_{\tau=0} = H_x^r H^{\tht}_t $ spaces for the curl free part $ A^{cf} = {\bf P}^{\perp} A $. As revealed in  \cite{tao2003local}, small solutions can enjoy $ \frac{1}{4} $ more regularity for $ A^{cf} $ after a suitable change of gauge. Locally in time, by Proposition \ref{ModLWP} one will have $ I A^{df} \in X^{1,\frac{3}{4}+}, \ I A^{cf} \in  H_x^{1+\frac{1}{4}} H^{\frac{1}{2}+}_t $ provided one can somehow make the solution small. 

\

Section \ref{SecYMHF} is reserved for the Yang-Mills heat flow in deTurck's and caloric gauges including: well-posedness in $ I^{-1}H^1 $ and $ H^{\rho} $, control of the modified energy \eqref{ModifiedEnergy} from the initial data and the change between the two gauges as in \cite{oh2014gauge} (deTurck's trick).  

\

A bound on the modified energy $ \mathcal{IE}(t) $ allows us to control higher covariant derivatives of $ F_{\al \beta}(s) $ in $ L^2 $. This is proved in Proposition \ref{CovBdsF1} using covariant energy estimates for parabolic equations. The same is true for $ w_x(s) $, see Section \ref{EnEstSec}. To implement Remark \ref{RmkSqSumAC},
we show that these bounds can be localized while maintaining square summability:

\begin{proposition} \label{PropCoeff} 

Let $ A_{t,x,s} $ be a regular solution to \eqref{YM} and \eqref{dYMHF} on an interval $ J \times \mb{R}^3 \times [0,s_0] $ satisfying $ \quad 
 \mathcal{IE}(t) \leq C  \eta^2 \ll 1 \quad \forall \ t \in J.  $

Let $ ( B_j)_{j \in \mathcal{J}} $ be a finitely overlapping covering of $ \mathbb{R}^3 $ by balls of radius $ \simeq 1 $.  

Then there exist square summable coefficients $ (c_j)_{j \in \mathcal{J}} $ associated to  $ ( B_j)_{j \in \mathcal{J}} $ with
\begin{align}
\label{BallHighCovDer1}
 \vn{  (N s^{\frac{1}{2}})^{1-\sigma} (s^{\frac{1}{2}} \mathbf{D}_x)^m F_{\al \beta}(t,s) }_{L^{\infty}_s L^2_x \cap L^{2}_{\frac{\dd s}{s}} L^2_x ( [0,s_0]\times B_j) } \leq c_j  \\
 \label{BallHighCovDer4}
 \vn{ (N^2 s)^{1-\sigma} s^{\frac{1}{4}} (s^{\frac{1}{2}} \mathbf{D}_x)^m w_x(t,s) }_{L^{\infty}_s L^2_x \cap L^{2}_{\frac{\dd s}{s}} L^2_x ( [0,s_0]\times B_j)} \leq c_j   
 %\\ \label{BallHighCovDer2}
 %\vn{  (N s^{\frac{1}{2}})^{1-\sigma} (s^{\frac{1}{2}} \mathbf{D}_x)^m F_{\al \beta}(t,s) }_{L^{2}_{\frac{\dd s}{s}} L^2_x ( [0,s_0]\times B_j)} \leq c_j   
 \end{align}
% and 
% \begin{align}
% \label{BallHighCovDer3}
% \vn{  (N^2 s)^{1-\sigma} s^{\frac{1}{4}} (s^{\frac{1}{2}} \mathbf{D}_x)^m w_x(t,s) }_{L^{\infty}_s L^2_x( [0,s_0]\times B_j) } \leq c_j  \\
% \label{BallHighCovDer4}
 %\vn{ (N^2 s)^{1-\sigma} s^{\frac{1}{4}} (s^{\frac{1}{2}} \mathbf{D}_x)^m w_x(t,s) }_{L^{2}_{\frac{\dd s}{s}} L^2_x ( [0,s_0]\times B_j)} \leq c_j   
% \end{align}
holding for all $ t \in J $, $ j \in \mathcal{J} $, $ 0 \leq m \leq 8 $ and 
\be \label{sqsummcj}
\sum_{j \in \mathcal{J}} c_j^2 \ls \eta^2.  
\ee
\end{proposition}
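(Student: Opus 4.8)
The plan is to combine the global covariant bounds from Proposition \ref{CovBdsF1} (and the corresponding statement for $w_x$ from Section \ref{EnEstSec}), which control the $L^2_x$-type norms appearing in \eqref{BallHighCovDer1}--\eqref{BallHighCovDer4} over all of $\mb{R}^3$ in terms of $\mathcal{IE}(t) \ls \eta^2$, with the fractional square-summability machinery of Proposition \ref{PropsqSumWs2}. Concretely, for fixed $t \in J$, $s \in [0,s_0]$ and $0 \le m \le 8$, Proposition \ref{CovBdsF1} gives us
\be \label{GlobalCov}
\int_0^{s_0} (N s^{\frac12})^{2(1-\sigma)} \vn{(s^{\frac12}\mathbf{D}_x)^m F_{\al\beta}(t,s)}_{L^2_x(\mb{R}^3)}^2 \, \frac{\dd s}{s} + \sup_{s} (\cdots) \ls \eta^2,
\ee
and similarly for the $w_x$ quantity with the extra $(N^2 s)^{1-\sigma}s^{\frac14}$ weight. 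The idea is then to set $c_j$ equal to the left-hand side of \eqref{BallHighCovDer1}--\eqref{BallHighCovDer4} restricted to $B_j$ (taking the max over $m$ and over the two displays, which only costs a harmless constant since there are boundedly many choices of $m$), and to verify \eqref{sqsummcj} by square-summing the local contributions back up to the global bound.

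First I would fix notation: let $\chi_j$ be a partition-of-unity/cutoff subordinate to the finitely overlapping cover $(B_j)$. The norms in \eqref{BallHighCovDer1}--\eqref{BallHighCovDer4} are built from $L^\infty_s L^2_x \cap L^2_{\frac{\dd s}{s}} L^2_x$ over $[0,s_0]\times B_j$; since for the $L^2_x$-in-space and $L^2_{\frac{\dd s}{s}}$ (or $L^\infty_s$) parts everything is an honest $L^2$ or sup in the relevant variables, the square-summability over $j$ reduces — after interchanging $\sum_j$ with the $s$-integral/sup and with the $x$-integral by Tonelli — to the pointwise-in-$(t,s)$ statement $\sum_j \vn{(s^{\frac12}\mathbf{D}_x)^m F_{\al\beta}(t,s)}_{L^2_x(B_j)}^2 \simeq \vn{(s^{\frac12}\mathbf{D}_x)^m F_{\al\beta}(t,s)}_{L^2_x(\mb{R}^3)}^2$, which is exactly the finite-overlap property of the cover (no fractional derivatives are hidden in the $x$-variable here — the covariant derivatives $\mathbf{D}_x$ are local differential operators, so localizing them to $B_j$ is immediate and does not require the subtler Proposition \ref{PropsqSumWs2}). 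For the $L^\infty_s$ component one uses $\sup_s \sum_j (\cdots) \le \sum_j \sup_s(\cdots)$ in one direction and finite overlap in the other. Combining, $\sum_j c_j^2 \ls \eta^2$ follows from \eqref{GlobalCov}.

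The one genuinely delicate point — and the place I expect to spend real effort — is that the covariant derivatives $\mathbf{D}_x = \pt_x + [A_x(s), \cdot]$ involve the heat-flow connection $A_x(t,s)$, so "localizing $(s^{\frac12}\mathbf{D}_x)^m F$ to $B_j$" is only innocuous if one already knows $A_x(s)$ (and its derivatives up to order $7$) are controlled in $L^2$-based norms on each $B_j$ with square-summable constants; otherwise commuting a cutoff $\chi_j$ past $\mathbf{D}_x^m$ produces terms like $(\pt^a \chi_j)(\mathbf{D}_x^{m-a}F)$ that are fine, but iterating requires bookkeeping of the connection coefficients. Here I would invoke the caloric/deTurck gauge control of $A_x(s)$ established in Section \ref{SecYMHF} together with Proposition \ref{CovBdsF1} (which already packages $\mathbf{D}_x^m F$, and via $w_x$ the connection evolution, into the $\mathcal{IE}$-controlled quantities), so that each commutator term is itself one of the quantities being estimated, up to lower order. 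The induction on $m$ (from $m=0$ up to $m=8$, the bound $8$ being exactly what later trilinear estimates will consume) then closes, with the square-summable coefficients at each stage absorbed into the $c_j$ at the next. Finally one checks the weights match: the $(Ns^{\frac12})^{1-\sigma}$ in \eqref{BallHighCovDer1} and $(N^2 s)^{1-\sigma}s^{\frac14}$ in \eqref{BallHighCovDer4} are precisely the weights under which Proposition \ref{CovBdsF1} and the $w_x$-estimate of Section \ref{EnEstSec} give $O(\eta^2)$, so \eqref{sqsummcj} holds with the stated implicit constant.
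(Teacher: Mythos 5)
There is a genuine gap, and it sits exactly where the paper flags the difficulty. Your reduction works fine for the $L^{2}_{\frac{\dd s}{s}} L^2_x$ part of the norms: there everything is $L^2$-based in both $x$ and $s$, so Tonelli plus finite overlap of the balls gives $\sum_j \vn{\cdot}_{L^{2}_{\frac{\dd s}{s}} L^2_x([0,s_0]\times B_j)}^2 \ls \vn{\cdot}_{L^{2}_{\frac{\dd s}{s}} L^2_x([0,s_0]\times \mb{R}^3)}^2 \ls \eta^2$ directly from Propositions \ref{CovBdsF1} and \ref{CovBdsw1}. But for the $L^{\infty}_s L^2_x$ component you need $\sum_j \sup_s \vn{\cdot}_{L^2(B_j)}^2 \ls \eta^2$, while the global bound only controls $\sup_s \sum_j \vn{\cdot}_{L^2(B_j)}^2$. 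The inequality you quote, $\sup_s \sum_j (\cdots) \le \sum_j \sup_s (\cdots)$, goes in the useless direction: the supremum may be attained at a different $s$ for each ball $j$, so no interchange of $\sup_s$ and $\sum_j$ closes the estimate. This is precisely why the paper writes that "if one would only have $L^2$-type norms the square summability would be obvious" and then does extra work for $L^\infty_s L^2_x$: it applies the localized covariant parabolic energy estimate \eqref{EnergyEst} to $\chi_j F$ and $\chi_j \mathbf{D}_x^{(m)} F$ (and to $\chi_j w_x$), whose equations follow from \eqref{0CovPara}, \eqref{mCovPara}, \eqref{0CovParaw}, \eqref{mCovParaw} with extra terms $\Delta\chi_j\, F$ and $\partial_\ell \chi_j\, \mathbf{D}^\ell F$ coming from the cutoff. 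This converts the local $L^\infty_s L^2_x$ quantities $\al_j^m,\delta_j^m$ into local $L^2_{\frac{\dd s}{s}} L^2_x$ quantities $\beta_j^m, \rho_j^m$ (and their variants with $\Delta\chi_j$, $\nabla\chi_j$), all of which are square summable by the easy argument; the nonlinear terms are absorbed using the covariant Gagliardo--Nirenberg bounds and the global estimates \eqref{Highdereqq}, \eqref{derbdsw2}. Your proposal contains no substitute for this step, so as written it does not prove \eqref{sqsummcj} for the $L^\infty_s L^2_x$ part of \eqref{BallHighCovDer1}--\eqref{BallHighCovDer4}.

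A secondary remark: the "genuinely delicate point" you raise about commuting cutoffs past $\mathbf{D}_x^m$ and needing gauge-dependent control of $A_x(s)$ on each ball is not actually an issue. The cutoff $\chi_j$ is scalar, so $\mathbf{D}_x(\chi_j G) = \partial_x\chi_j\, G + \chi_j\, \mathbf{D}_x G$; commutators produce only derivatives of $\chi_j$ hitting lower-order covariant derivatives of $F$ or $w$, which are again among the gauge-invariant quantities controlled by Propositions \ref{CovBdsF1} and \ref{CovBdsw1}. The whole proof can and should remain gauge invariant; no deTurck or caloric gauge information about $A_x$ is needed here. The real work you must add is the localized energy-estimate step described above.
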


\

Once these coefficients are defined, section \ref{SecGauge} shows that on each ball we can define local gauges using:

\begin{proposition} \label{LocalGauge1}
Let $ B$ be a ball of radius $ \simeq 1 $ and $ N^2 s_0=1 $. 

(1)
Let $ A_{x,s} $ be a smooth solution to \eqref{cYMHF} in the caloric gauge $ A_s = 0 $ on $ B\times [0,s_0] $. There is a sufficiently small $ \delta > 0 $ such that if the curvature $ F_{ij} $ obeys
\be \label{bddFx}
\sup_{0 \leq m \leq 3} \sup_{s \in [0,s_0]}  (N s^{\frac{1}{2}})^{1-\sg} \vn{ (s^{\frac{1}{2}} \mathbf{D}_x)^m F(s) }_{L^2(B)} \leq \delta 
\ee
then there exists a spatial field $ U: B\to G $,  $ \ U=U(x) $such that the transformation 
\be \label{Uchange}
\tilde{A}_i = U A_i U^{-1} - \partial_i U U^{-1}
\ee 
satisfies $  \tilde{A}_i (s_0) \in \delta H^{1}(B) $ and 
\be \label{localsumbdd1}
\delta^{-1} \tilde{A}_i (\cdot ,0) \in  H^{1}(B) +  N^{\sg-1} H^{\sg}(B) 
\ee

(2) 
Consider a smooth solution $ A_{t,x,s} $ of \eqref{dYMHF} on $ [t_0,t_1] \times B\times [0,s_0] $ in the temporal-caloric gauge $ A_0(t,x,0) = 0, A_s(t,x,s)=0 $ for which the curvature $ F_{\al \beta}(t_0) $ obeys \eqref{bddFx}. Apply Part (1) to $ A_{x}(t_0,\cdot) $ and use $ U=U(x) $ to make the transformation \eqref{Uchange} on $ [t_0,t_1] \times B$. Then in addition to \eqref{localsumbdd1} one also has 
\be \label{localsumbdd2}
\delta^{-1}  \partial_t \tilde{A}_i (t_0,\cdot ,0) \in  L^2(B) +  N^{\sg-1} H^{\sg-1}(B) 
\ee
\end{proposition}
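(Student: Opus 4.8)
The plan is to prove Proposition~\ref{LocalGauge1} as a fractional, localized version of Uhlenbeck's lemma, run via a continuity argument in the heat-flow parameter $s$. First I would set up Part (1). The caloric gauge condition $A_s = 0$ means that along the flow \eqref{cYMHF} we have $\pt_s A_i = F_{si} = \mathbf{D}^\ell F_{\ell i}$, a covariant heat equation for $A_i$, and the hypothesis \eqref{bddFx} gives quantitative smallness of the covariant derivatives $(s^{1/2}\mathbf{D}_x)^m F(s)$ in $L^2(B)$ weighted by $(Ns^{1/2})^{1-\sg}$, uniformly on $[0,s_0]$. The key observation is that $F_{ij}(0)$ controls only the \emph{curl} of $A_i(0)$, so the plan is not to bound $A_i(0)$ directly, but to first estimate $A_i(s_0)$ — where parabolic smoothing has turned $F(s_0)$-smallness into genuine $H^1(B)$-smallness of the connection after a good gauge choice — and then integrate the flow backwards from $s_0$ to $0$ to transfer control down to $s=0$, picking up the claimed $N^{\sg-1}H^\sg(B)$ correction from the low-frequency ($s \gtrsim s_0$-scale) tail.

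The construction of $U=U(x)$ proceeds as in the classical Uhlenbeck argument but adapted to the scale-$1$ ball $B$: one solves an elliptic boundary-value problem to gauge-transform $\tilde A_i(s_0)$ into (approximate) Coulomb form $\mathrm{div}\,\tilde A_i(s_0) = 0$, which combined with curl-control from $F_{ij}(s_0)$ gives $\tilde A_i(s_0) \in \delta H^1(B)$; the smallness $\delta$ enters so that the nonlinear term $[A_\ell, A_\beta]$ in the curvature can be absorbed by a fixed-point/continuity argument. For this I would invoke a continuity (bootstrap) argument in $s \in [0,s_0]$: the set of $s$ for which the transformed connection $\tilde A_i(s)$ satisfies the target bound $\tilde A_i(s) \in C\delta H^1(B)$ is open, closed, and nonempty, using the heat-flow estimates together with the localized square-summable covariant bounds from Proposition~\ref{PropCoeff} (the $c_j$ coefficients) to keep the nonlinear feedback small. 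Integrating $\pt_s \tilde A_i = \tilde F_{si}$ from $s_0$ down to $0$, splitting the $s$-integral at the scale $s \simeq N^{-2}$, yields \eqref{localsumbdd1}: the part of the integral near $s=s_0$ is harmless and lands in $H^1(B)$, while the portion near $s=0$ is exactly where the $(Ns^{1/2})^{1-\sg}$ weight degenerates, producing the $N^{\sg-1}H^\sg(B)$ term by Bernstein/Littlewood-Paley-type interpolation (Remark~\ref{RmkSmoothingLP}).

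For Part (2), the same $U=U(x)$ (time-independent, built only from the data at $t_0$) is used on $[t_0,t_1]\times B$, which is legitimate because the temporal-caloric gauge condition $A_0(t,x,0)=0$ is preserved by $x$-only gauge transformations. Then $\pt_t \tilde A_i(t_0,\cdot,0)$ is controlled by transporting the bound on $F_{0i}(t_0,s) = \pt_t A_i(t_0,s)$ (in the temporal gauge $F_{0i}=\pt_t A_i$) down the heat flow in exactly the same way as in Part (1), now also using the bounds \eqref{BallHighCovDer4} on the tension field $w_x(s)$ to handle the failure of the flow and \eqref{YM} to commute; this gives \eqref{localsumbdd2} with the parallel $L^2(B) + N^{\sg-1}H^{\sg-1}(B)$ splitting. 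The main obstacle I expect is making the continuity argument genuinely closed at the low-regularity endpoint: one must show the nonlinear terms in both the curvature relation and the covariant heat equation do not destroy the smallness as $s \to 0$, which requires carefully tracking how the weights $(Ns^{1/2})^{1-\sg}$ interact with the fractional ($H^\sg$ rather than $H^1$) correction — this is precisely the place where the threshold $\sg > \frac{5}{6}$ (equivalently the error $N^{-(1/2)-}$ in Proposition~\ref{AClaw}) is forced, and keeping the localization square-summable throughout (so the $c_j$ from Proposition~\ref{PropCoeff} survive the gauge change) is the delicate bookkeeping.
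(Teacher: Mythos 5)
Your overall skeleton for Part (1) — apply an Uhlenbeck-type Coulomb gauge at the smoothed level $s=s_0$, then integrate the caloric flow $\pt_s \tilde A_i = \tilde{\mathbf{D}}^{\ell}\tilde F_{i\ell}$ down to $s=0$ and read off the splitting $H^1(B)+N^{\sg-1}H^{\sg}(B)$ — is the paper's strategy, but two of your intermediate steps do not work as described. First, the bootstrap ``$\tilde A_i(s)\in C\delta H^1(B)$ for all $s\in[0,s_0]$'' is false: the $\dot H^1(B)$ norm of $\int_s^{s_0}\tilde{\mathbf{D}}^{\ell}\tilde F_{i\ell}\,\dd s'$ behaves like $N^{\sg-1}s^{(\sg-1)/2}$, which diverges as $s\to 0$ for $\sg<1$ — this is exactly why the conclusion at $s=0$ is only the sum space, so a continuity argument with that target cannot close (and none is needed; the paper estimates the integral directly). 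Second, you cannot jump from the covariant bounds \eqref{bddFx} to a Bernstein/Littlewood--Paley estimate: the paper must first convert $\tilde{\mathbf{D}}_x$ into $\pt_x$ by establishing $L^{\infty}(B)$ bounds on $\tilde A_i(s)$ — obtained from elliptic regularity for the Coulomb-gauge connection at $s_0$ (Lemma \ref{AiElliptic}) plus covariant Gagliardo--Nirenberg along the flow — and then extend $\tilde F$ from $B$ to $\mb{R}^3$ before running the frequency-space double integral that yields the $N^{\sg-1}\dot H^{\sg}$ bound. Your appeal to Proposition \ref{PropCoeff} inside this proof is also misplaced: the proposition is a standalone statement on one ball with hypothesis \eqref{bddFx}; the $c_j$ enter only later, when \eqref{bddFx} is verified with $\delta=c_j$.

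Part (2) contains the most serious gap. You use $F_{0i}(t_0,s)=\pt_t A_i(t_0,s)$ along the flow, but the temporal condition holds only at $s=0$; for $s>0$ one has $A_0(t,x,s)\neq 0$ in general, so $\pt_t\tilde A_i(t_0,s_0)$ is \emph{not} $\tilde F_{0i}(t_0,s_0)$ and cannot be bounded by \eqref{bddFx} directly. The paper resolves this by a second gauge transformation $V(t,x)$ with $\pt_t V=V\tilde A_0(\cdot,s_0)$, $V(t_0)=Id$, which makes the connection temporal on the slice $s=s_0$, then controls $\dbtilde A_0(t_0,s)=\int_s^{s_0}\tilde{\mathbf{D}}^{\ell}\tilde F_{0\ell}\,\dd s'$, rewrites $\pt_t\dbtilde A_i(t_0,0)$ using Bianchi's identity \eqref{Bianchi} to trade $\mathbf{D}_t\mathbf{D}^{\ell}\tilde F_{i\ell}$ for spatial covariant derivatives of $\tilde F_{0x}$ (estimated by the same extension/frequency argument), and finally returns to $\tilde A$ via $\pt_t\tilde A_i(t_0,0)=\pt_t\dbtilde A_i(t_0,0)-\pt_i\dbtilde A_0(t_0,0)+[\tilde A_i,\dbtilde A_0](t_0,0)$. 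None of this requires the tension field: your invocation of \eqref{BallHighCovDer4} would import hypotheses on $w_x$ that are not part of the proposition's assumptions, so as written your argument proves a different (weaker) statement. Finally, the threshold $\sg>\frac56$ is not forced in this lemma — it comes from the trilinear/almost-conservation numerology elsewhere; here only $\sg$ in the stated range and the weight bookkeeping matter.
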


\

\begin{remark} \label{UhlenbeckType}
This Proposition can be thought of as an Uhlenbeck-type lemma for fractional regularities below $ L^2 $. It essentially says that if $ \vn{I F}_{L^2(B)} < \delta $ then one can find gauge-equivalent $ \tilde{A} \sim A, \  \tilde{E} \sim E  $ for which $ \vn{I \tilde{A}}_{H^1(B)} \ls \delta $ and $  \vn{I \tilde{E}}_{L^2(B)} \ls \delta   $. \end{remark}

\

Of course, the multiplier $ I $ is not defined on a domain, so the remark above is simply a heuristic. However, once one has the bounds \eqref{localsumbdd1}, \eqref{localsumbdd2} it is possible to use an extension result (Proposition \ref{Extension}) to extend $ \tilde{A} $ to the whole space $ \mb{R}^3 $ such that $ I \tilde{A} \in \delta H^1, I \pt_t \tilde{A} \in \delta L^2 $ while still maintaing the Gauss equation \eqref{Constraint}. 

\

Once we restrict a solution to a ball $ B $ and perform the extension procedure, one obtains two heat flows: one for the original solution and one for the new one, which coincide at $ s=0 $ on $ B $. 
The need arises to compare them locally for parabolic times $ s>0 $. Due to infinite speed of parabolic propagation, they will not coincide, but one can obtain difference bounds of type 
$  \vn{ \chi [ F(s) - F'(s)] }_{ L^2(B)} \ll s^{M}  $. This is proved by Proposition \ref{PropErrorDif} to which Section \ref{SecErrorSM} is devoted. 
 
 \
 
Eventually, the result is reduced to the space-time bound \eqref{deltaEnergyReduced} which represents a trilinear estimate proved in Proposition \ref{TrilinearProp}. Building upon the decompositions and estimates from Section \ref{SecDecAndEst} (in particular the main Fourier bilinear component of $ w $ identified in \cite{oh2017yang}), the leading term of the trilinear integral is revealed in Section \ref{SecTrilinear} to consist of a null form interaction with favorable frequency weights arising from the "heat-wave commutator" $ w $, see \eqref{YMTrilcomp} and Remark \ref{ComparisonAC}.

\

In the Appendix we formulate the invariant modified energy for Maxwell-Klein-Gordon and show how it can be used to increase the range of regularities for which global well-posedness is known to hold, from $ \sg > \frac{\sqrt{3}}{2} $ in \cite{keel2011global} to $ \sg> \frac{5}{6} $.

\subsection{Reduction of Theorem \ref{MainThm} to the almost conservation law Prop. \ref{AClaw} }  \label{RedMainThm} \

Consider initial data sets of size $ \vn{ (\bar{A}_{x}, \bar{E}_{x})}_{ H^{\sg} \times H^{\sg-1}} \leq M $. Fix an arbitrary time $ T>0 $. By the well-posedness statement in Corollary \ref{CorLWP}, it suffices to prove 
\be \label{CMTbound}
\vn{A[T]}_{H^{\sg} \times H^{\sg-1}} \leq C_{M,T} < \infty  
\ee
for all such solutions which exist on $ [0,T] $. Applying Corollary \ref{CorLWP} again
together with the approximation statement in Proposition \ref{ApproxID} implies that it suffices to show \eqref{CMTbound} for regular solutions $ A \in C^{\infty}_t H^{\infty}_x (\mb{R} \times \mb{R}^3 ) $, which are known to be global. 

We will choose a large number $ N=N(M,T) $ to be specified later, with $  T \ll_M N^{0+} $  as usual with the I-method.   
Rescale the solution by $ A^{\lmd}(t,x) \defeq \lmd^{-1} A(t/\lmd, x/\lmd) $ 
where we choose $ \lmd, N $ 
% $ \bar{E}^{\lmd} (x) = \lmd^{-2} \bar{E}(x/\lmd ) $ 
such that the rescaled initial data satisfies 
\be \label{InDataScale}
\vn{ ( \bar{A}^{\lmd},  \bar{E}^{\lmd}) }_{   \dot{H}^{\sg} \times \dot{H}^{\sg-1}} \ls \lmd^{ \frac{1}{2}- \sg  } M  \ll  \frac{\eta}{ N^{1-\sg}}, \qquad \vn{\bar{A}^{\lmd}}_{  \dot{H}^{\frac{1}{2}+} } \ls \lmd^{0-} M \ll 1,
\ee
so $ \lmd, N $ are related by 
\be \label{lmdNrel}
\lmd = N^{\frac{1-\sg}{\sg-\frac{1}{2}}} \lpr  \frac{C M}{\eta} \rpr^{\frac{1}{\sg-\frac{1}{2}}} 
\ee
and $ \eta \ll 1 $ is a constant chosen small enough to overpower a number of universal constants in different estimates. Now we need to obtain a bound on $  A^{\lmd}[\lmd T] $. 
 
 We extend $ A^{\lmd}_{t,x}(t,x) $ to $  A^{\lmd}_{t,x,s}(t,x,s) $ by the heat flow using Theorem \ref{ThmRegSol}. We obtain a regular solution $ A^{\lmd} \in C^{\infty}_{t,s}  H^{\infty}_x ( [0,\lmd T] \times \mb{R}^3 \times [0,1]) $ to \eqref{dYMHF} in the caloric gauge $ A^{\lmd}_s = 0 $, i.e. 
$$ \pt_s A_{\al}^{\lmd} = D^{\ell} F_{\ell \al}^{\lmd}, \qquad A_{\al}^{\lmd}(t,x,s=0)= A_{\al}^{\lmd}(t,x) $$
Let $ s_0 = N^{-2} $. Denote the energies \eqref{EnergiesS} determined by $ A_{\al}^{\lmd} $ at $ (t,s) $ by $ \mathcal{E}(t,s) $ and consider the modified energy \eqref{ModifiedEnergy}. Begin with the initial data:

%$$
%\mathcal{E}(t,s)=\frac{1}{2} \sum_{\alpha<\beta}  \int_{\mathbb{R}^{3}} (F_{\alpha \beta}^{\lmd} , F_{\alpha \beta}^{\lmd})(t,s)  \dd x  $$

By Corollary \ref{CorInitData} with $ \ep \ll  \eta / N^{1-\sg} $  and \eqref{InDataScale} we obtain $ \mathcal{IE}(0) \leq \eta^2 $.

%\begin{proposition}
%The curvature at $ t=0 $ satisfies 
%$$
%\sup_{s \in [0,1] } (N^2 s)^{1-\sigma} \vn{F_{\al \beta}(s)}_{L^2}^2 + \int_0^{1} (N^2 s)^{1-\sigma} \vn{F_{\al \beta} (s)}_{L^2}^2 \frac{\dd s}{s} \ls \eta^2 
%$$
%\end{proposition}
%[[Change the notation to be gauge invariant]]

From this and $ O(\lmd T) $ application of Proposition \ref{AClaw} we obtain $ \mathcal{IE}(t) \leq 2 \eta^2 $ for all $ t \in [0,\lmd T ] $ and $ \vn{ A^{\lmd}[\lmd T]}_{H^{\sg} \times H^{\sg-1}} < \infty $ provided that 
\be \label{CondTN}
\lmd T \ll N^{\frac{1}{2}-}, \quad \text{i.e.} \quad T \ll_M N^{\frac{1}{2}- \frac{1-\sg}{\sg-1/2}-} = N^{0+}.
\ee
It is seen that it is possible to choose $ N, \lmd $ satisfying \eqref{InDataScale}, \eqref{lmdNrel}, \eqref{CondTN} if $ \sg > \frac{5}{6} $. Undoing the scaling we obtain \eqref{CMTbound}.

\

\subsection{Proof of the almost conservation Proposition \ref{AClaw}} 

\

By a continuity argument one can assume that $ \mathcal{IE}(t) \leq 3 \eta^2 $
holds for all $ t \in [t_0,t_1] $. 

Without loss of generality we prove \eqref{deltaEnergy} for $ t=t_1 $.

By Theorem \ref{ThmRegSol} we may assume the caloric gauge condition $ A_s(t,x,s) = 0 $.

\

We note that by using Bianchi's identity \eqref{Bianchi} and the Leibniz rule \eqref{Lrule} one can write the differentiated energy at $ s $ as 
$$  \label{DiffEner}
\frac{d}{dt} \mathcal{E}(t,s)=\sum_{\ell=1}^3 \int_{\mathbb{R}^{3}}  (w_\ell , F_{0\ell} )(s)  \dd x
$$
where $ w_\ell $ is given by \eqref{wDef} and therefore 
\be \label{DeltaE}
 \mathcal{E}(t_1,s) -  \mathcal{E}(t_0,s) = \sum_{\ell=1}^3  \int_{t_0}^{t_1} \int_{\mathbb{R}^{3}}  (w_\ell(s), F_{0\ell}(s) )  \dd x \dd t
\ee
The strategy we use consists of localizing this integral and estimating it in appropriate local gauges, crucially using the fact that \eqref{DeltaE} is gauge-invariant. 

\

Let $ ( B_j^{(1)})_{j \in \mathcal{J}} $ be a finitely overlapping covering of $ \mathbb{R}^3 $ with balls of radius $ 1 $ centered at $ (y_j)_{j \in \mathcal{J}} $.  Let $ (\chi_j)_{j \in \mathcal{J}} $ be a spatial partition of unity with $ \supp \chi_j \subset  B_j^{(1)} $. 

Let $ ( B_j)_{j \in \mathcal{J}} $ consist of balls with same centers $ (y_j)_{j \in \mathcal{J}} $ but radius $ 2 $, which are also finitely overlapping. Let $ ( \mathcal{D}_j)_{j \in \mathcal{J}} $ be domains of dependency with bases $ ( B_j)_{j \in \mathcal{J}} $, which are truncated cones 
$$
 \mathcal{D}_j = \{ (t,x) \ | \ t-t_0+\vm{x-y_j} < 2, \ t \in [t_0,t_1] \}
$$
which form a covering of $  [t_0,t_1] \times \mb{R}^3 $. Note that $ [t_0,t_1] \times B_j^{(1)} \subset  \mathcal{D}_j $. 

\

By Proposition \ref{PropCoeff} we define square summable coefficients $ (c_j)_{j \in \mathcal{J}} $ associated to  $ ( B_j)_{j \in \mathcal{J}} $ such that
\eqref{BallHighCovDer1}-\eqref{BallHighCovDer4} and \eqref{sqsummcj} hold. 

\

For each $ j \in \mathcal{J} $ we invoke Propositions \ref{LocalGauge1} on $ B_j $ with $ \delta = c_j $ which provide spatial $ U^{(j)} : B_j \to G $ and corresponding  transformations on $ [t_0,t_1] \times B_j \times [0,s_0] $: 
\be \label{gaugeTransf}
A_{\al}^{(j)} \defeq U^{(j)} A_{\al} U^{(j)-1} - \partial_{\al} U^{(j)} U^{(j)-1}, \qquad F_{\al \beta}^{(j)} = U^{(j)} F_{\al \beta} U^{(j)-1} 
\ee
with  
\be \label{tildeAs0}
\vn{ A^{(j)}(t_0,\cdot,s_0)}_{H^{1}(B_j)} \ls c_j 
\ee 
 and 
$$
\vn{ A^{(j)}(t_0,\cdot,0) }_{H^1(B_j) +  N^{\sg-1} H^{\sg}(B_j)}   + \vn{ \pt_t  A^{(j)}(t_0,\cdot,0) }_{L^2(B_j) +  N^{\sg-1} H^{\sg-1}(B_j) } \ls c_j  
$$ 

Since the $ U^{(j)}(x) $ are independent of $ t $ and $ s $, the $ A_i^{(j)} $'s remain smooth solutions of \eqref{YM} and \eqref{dYMHF} on $ [t_0,t_1] \times B_j \times [0,s_0] $ in the temporal and caloric gauges
$ A_0^{(j)}(t,x,0) = 0, \ A_s^{(j)}(t,x,s) = 0$. 
Due to gauge-invariance the bounds \eqref{BallHighCovDer1}-\eqref{BallHighCovDer4}
hold for $ F_{\al \beta}^{(j)}, w^{(j)} $ too (replacing $ c_j $ by $ C c_j $ if needed) and, moreover
\be  \label{localzwF}
 \chi_j (w_\ell(s), F_{0\ell}(s) ) =  \chi_j ( w^{(j)}_\ell(s), F^{(j)}_{0\ell}(s) ).  
\ee 

\

We now use Proposition \ref{Extension} to extend $ (  A^{(j)}(t_0,\cdot,0), \pt_t  A^{(j)}(t_0,\cdot,0) ) $ from $ B_j $ to a regular $ ( \bar{a}^{(j)},\bar{e}^{(j)} ) $ on $ \mb{R}^3 $
in such a way that the constrain equation \eqref{Constraint}
% $ \partial^{\ell} \bar{e}_{\ell}^{(j)} + [\bar{a}^{(j) \ell}, \bar{e}_{\ell}^{(j)} ] =0 $
holds and we have the bound (see Lemma \ref{IEquivSum})
$$
\vn{(I \bar{a}^{(j)}, I \bar{e}^{(j)} ) }_{H^1 \times L^2} \simeq  \vn{\bar{a}^{(j)}}_{ H^1 +  N^{\sg-1} H^{\sg}}  + \vn{\bar{e}^{(j)}}_{L^2 +  N^{\sg-1} H^{\sg-1}} \ls c_j 
$$ 

\

Let $ \bar{A}^{(j)} $ be the regular Yang-Mills solution in the temporal gauge on $ [t_0,t_1] \times \mb{R}^3 $ with initial data $ (  \bar{A}^{(j)}(t_0), \pt_t \bar{A}^{(j)}(t_0) )= (\bar{a}^{(j)},\bar{e}^{(j)} ) $. By Proposition \ref{ModLWP} Part (1):
\be \label{ISobbdatt2}
\vn{I  \bar{A}^{(j)}[t]}_{L^{\infty}_t (H^1 \times L^2)(  [t_0,t_1]\times \mb{R}^3 ) } \ls c_j 
\ee

Due to finite speed of propagation we have that 
\be \label{FiniteSpeed}
A^{(j)}(\cdot,\cdot,0)_{\mkern 1mu \vrule height 2ex\mkern2mu \mathcal{D}_j}  =  \bar{A}^{(j)}_{\mkern 1mu \vrule height 2ex\mkern2mu \mathcal{D}_j} 
\ee 
From this and \eqref{ISobbdatt2} one has 
\be  \label{Atildt1B1}
\vn{ A^{(j)}[t_1]_{\mkern 1mu \vrule height 2ex\mkern2mu s=0 }}_{ (H^1 +  N^{\sg-1} H^{\sg})(B_j^{(1)}) \times (L^2 +  N^{\sg-1} H^{\sg-1})(B_j^{(1)})}  \ls c_j 
\ee

\

We now extend $  \bar{A}^{(j)} $ from $[t_0,t_1] \times \mb{R}^3 $ to $[t_0,t_1] \times \mb{R}^3 \times [0,s_0] $ as the solution of the \eqref{dYMHF} in the caloric gauge $ \bar{A}^{(j)}_s=0 $ given by Theorem \ref{ThmRegSol}. By Proposition \ref{ChangeCaloricDeTurck} and Lemma \ref{IEquivSum}
\be \label{barAjbddts}
\vn{ \bar{A}^{(j)}_i }_{L^{\infty}_t L^{\infty}_s (H^1 +  N^{\sg-1} H^{\sg}) }   \ls c_j 
\ee
From Corollary \ref{CorHFModEn} we control the modified energy of $  \bar{A}^{(j)} $  as  $  \mathcal{IE}[\bar{F}_{\al \beta}^{(j)} ](t) \ls  c_j^2  $ for all $  t \in [t_0,t_1]  $. From Propositions \ref{CovBdsF1} and \ref{CovBdsw1} we control higher derivatives of $ \bar{F}^{(j)} $ and $ \bar{w}^{(j)} $, which restricted to the ball $ B_j $ provide \eqref{BallHighCovDer1}-\eqref{BallHighCovDer4} for $ \bar{F}^{(j)} $ and $ \bar{w}^{(j)} $ as well. These bounds together with \eqref{barAjbddts} and \eqref{FiniteSpeed} allows us to apply Proposition \ref{PropErrorDif} to conclude, for all $ s \in [0,s_0] $, that 
\begin{align}
\label{FDiffA}
(N s^{\frac{1}{2}})^{9(1-\sigma) } \vn{ \chi_j [ F^{(j)} (t,s) - \bar{F}^{(j)}(t,s)] }_{L^{\infty}_t L^2_x} & \ls s^{2} c_j \\
\label{wDiffA}
(N s^{\frac{1}{2}})^{8(1-\sigma) } \vn{ \chi_j [ w_{\ell}^{(j)}(t,s) -  \bar{w}_{\ell}^{(j)}(t,s)  ] }_{L^{\infty}_t L^2_x} & \ls s^{\frac{5}{4}} c_j
\end{align}

\

By \eqref{DeltaE} and \eqref{localzwF}
\be \label{deltaEnergyPartition}
 \mathcal{E}(t_1,s) -  \mathcal{E}(t_0,s) =\sum_{j \in \mathcal{J}}  \int_{t_0}^{t_1} \int_{\mathbb{R}^{3}} \chi_j(x)  ( w^{(j) \ell}(s), F^{(j)}_{0\ell}(s) )   \dd x \dd t
\ee 
Using \eqref{FDiffA}, \eqref{wDiffA} and \eqref{BallHighCovDer1}, \eqref{BallHighCovDer4} with $ m= 0 $ for $ \bar{F}^{(j)} $ and $ w^{(j)} $ to bound 
$$
\chi_j   ( w^{(j)}_{ \ell}, F^{(j)}_{0\ell}- \bar{F}^{(j)}_{0\ell})(s)  \qquad \text{and} \qquad \chi_j ( w^{(j)}_{ \ell} - \bar{w}^{(j)}_{ \ell}, \bar{F}^{(j)}_{0\ell})(s) 
$$
in $ L^1_t L^1_x $, together with \eqref{sqsummcj} allows us to reduce \eqref{deltaEnergy} to summing
\be \label{deltaEnergyReduced}
(N^2 s)^{1-\sigma} \vm{ \int_{t_0}^{t_1} \int_{\mathbb{R}^{3}} \chi_j(x)  ( \bar{w}^{(j) \ell}(s), \bar{F}^{(j)}_{0\ell}(s) )   \dd x \dd t
 }   \ls  \frac{s^{\frac{1}{4}-} }{  (N s^{\frac{1}{2}})^{(1-\sg)}}  c_j^2  
\ee

\

We now use Proposition \ref{ModLWP} Part (2) to place $  \bar{A}^{(j)}(t_0,\cdot,0) $ in the Coulomb gauge (only at $ t=t_0 $), as this results in the improved regularity from \eqref{curlfree} for the curl-free part. We make this transformation on $ [t_0,t_1] \times \mb{R}^3 \times [0,s_0] $ using a spatial $ \bar{U}^{(j)}(x) $, so we maintain a regular caloric solution which at $ s=0 $ is temporal, but now at $ s=0 $ satisfying \eqref{divfree}, \eqref{curlfree} with $ \ep=c_j $. Finally we make another change of gauge \eqref{UchangeCaloricDeTurck2} to deTurck's gauge using Proposition \ref{ChangeCaloricDeTurck} and Remark \ref{RkChangeCaloricDeTurck} which leaves the solution invariant at $ s=0 $. Due to gauge invariance \eqref{deltaEnergyReduced} now follows from Proposition \ref{TrilinearProp}. 

%\be \label{gaugeTransf2}
%\tilde{A}_{\al}^{(j)} \defeq \bar{U}^{(j)} \bar{A}_{\al}^{(j)} \bar{U}^{(j)-1} - \partial_{\al} \bar{U}^{(j)} \bar{U}^{(j)-1}, \qquad \tilde{F}_{\al \beta}^{(j)} = \bar{U}^{(j)} \bar{F}^{(j)}_{\al \beta} \bar{U}^{(j)-1}
%\ee

\subsubsection{Proof of the Sobolev bound \eqref{SobolevBound}} \

We apply Proposition \ref{ImprovedWPwAC} to obtain $  \vn{A_i (t_0, \cdot, s_0) }_{H^{\sg}} \leq C_0(M_0) $, as we have \eqref{bddFxN} from Proposition \ref{CovBdsF1}. Without loss of generality we prove \eqref{SobolevBound} for $ t=t_1 $.

Applying Lemma \ref{UtransfBound} for \eqref{gaugeTransf} at $ s=s_0, \ t=t_0$, using \eqref{tildeAs0} we obtain
\be \label{UjXbds}
 \vn{\nabla U^{(j)}}_{ H^{\sg}(B_j)} \ls d_j (1+ C_0(M_0) );  \qquad U^{(j)}, U^{(j)-1} \in C_1(M_0) X_{\sg}(B_j) 
\ee
where we denote 
$$ d_j = c_j +  \vn{A_i (t_0, \cdot, s_0) }_{H^{\sg}(B_j)}, \qquad \sum_j d_j^2 \ls C_0(M_0)^2 +1  $$ 
The last bound follows from Proposition \ref{PropsqSumWs2}. 
From \eqref{Atildt1B1} we have 
$$
\vn{  A^{(j)}(t_1)}_{ H^{\sg}(B_j^{(1)})} + \vn{ \pt_t A^{(j)}(t_1)}_{ H^{\sg-1}(B_j^{(1)})}  \ls c_j 
$$
Combining this with \eqref{UjXbds}, from \eqref{Algebraproducts2} and  \eqref{gaugeTransf} at $ s=0,\ t=t_1 $ we obtain 
$$
\vn{  A(t_1)}_{ H^{\sg}(B_j^{(1)})} + \vn{ \pt_t A(t_1)}_{ H^{\sg-1}(B_j^{(1)})}  \ls_{M_0} c_j + d_j. 
$$
Applying Proposition \ref{PropsqSumWs2} again we conclude $ \vn{A[t_1]}_{H^{\sg} \times H^{\sg-1}} \leq C(M_0) < \infty  $.  \qed

\

\subsection{Heuristic remarks and other works} \ 

\begin{remark}[Smoothing heuristics and connection to Littlewood-Paley theory] \label{RmkSmoothingLP}
The heat flows solutions to \eqref{cYMHF}, \eqref{dYMHF} satisfy the covariant parabolic equations \eqref{0CovPara}. Therefore, in the first approximation one can think of the nonlinear $ F_{\al \beta}(s) $ as roughly $ F_{\al \beta}(s) \approx e^{s \Delta} F_{\al \beta}(0) $. Expanding \eqref{cYMHF} it is seen that in general, the evolution of $ A_i(s) $ is only
degenerately parabolic. With an appropriate choice of gauge (De Turck $ A_s = \pt^{\ell} A_{\ell} $) one can make the $ A_i $ equations genuinely parabolic, specifically \eqref{DeTurck}. Then the approximation $ A_i (s)  \approx e^{s \Delta} A_i (0) $ is still expected. 

The Gaussian multipliers $ e^{s \Delta} $ make the bulk of the frequencies to become concentrated to $ \vm{\xi} \ls s^{-\frac{1}{2}} $. In connection to Littlewood-Paley projections this can be stated as $ e^{s \Delta} \approx P_{\leq k(s)}  $ and  $ \pt_x e^{s \Delta} \approx 2^{k(s)}P_{k(s)}  $ where $ 2^{k(s)} = s^{-\frac{1}{2}} $. Since $ F_{\al \beta} $ contains derivatives, in a certain gauge one heuristically has $  F(s) \approx P_{k(s)} F(0) $. 

Now to decompose $ A_i (0) $ one can set the caloric condition $ A_s =0$, turning \eqref{cYMHF}  into $ \pt_s A_i = \mathbf{D}^{\ell} F_{\ell i} $, which integrates to 
\be \label{CaloricAiRepr}
 A_i (s) =  A_i (s_0) + \int_{s}^{s_0} s \mathbf{D}^{\ell} F_{i \ell} \frac{\dd s}{s} 
\ee
Since $ \dd k \simeq  -\dd s / s $, $ 2^{2k} s= 1$  and taking $ s_0 = N^{-2} $ one can interpret \eqref{CaloricAiRepr} as a continuous Littlewood-Paley decomposition 
\be \label{CaloricAiRepr2}
A (0) \approx P_{\leq N} A (0) + \int_{
\log N}^{\infty} 2^{-k} P_k F(0) \dd k, \qquad P_k  A_i(0) \approx s \mathbf{D}^{\ell} F_{i \ell}(s) 
\ee
\end{remark}

\

\begin{remark}[Comparison of $ \mathcal{IE}(t) $ with the classical I-method modified energy]
\label{RkComparisonModEn}
The I-method suggests applying the Fourier multiplier $ I $  inside the energy and trying to prove almost conservation. 
This is explained in detail in \cite{keel2011global} for the closely related MKG and in \cite{roy2008global} for the simpler cubic wave equation. 

The $ I $ operator is defined in \eqref{Ioperator}. Roughly speaking 
\be \label{Imthapprx1}
 {\bf E}[F[IA]](t) \approx {\bf E}[I F](t) \approx \vn{P_{<N} F}_{L^2}^2 +  N^{2(1-\sg)} \sum_{2^k \geq N}  
 \vn{\vm{D}^{\sg-1}  P_k F}_{L^2}^2 
 %\Big( \frac{N}{2^k} \Big)^{2(1-\sg)} \vn{P_k F}_{L^2}^2 
 \ee

Recall the definition of $ \mathcal{IE}(t) $ and $ \mathcal{E}(t,s) $ in \eqref{ModifiedEnergy}, \eqref{EnergiesS}. In light of Remark \ref{RmkSmoothingLP},
\be \label{Imthapprx2}
\mathcal{E}(t,s_0) \approx \vn{P_{<N} F}_{L^2}^2, \quad \int_0^{s_0} (N^2 s)^{1-\sigma}  \mathcal{E}(t,s) \frac{\dd s}{s} \approx \sum_{2^k \geq N} \Big( \frac{N}{2^k} \Big)^{2(1-\sg)} \vn{ P_k F}_{L^2}^2 
\ee
where we denote $ F=F_{\al \beta}(0) $. Since $ s^{\frac{1}{2}} \approx 2^{-k} \approx \vm{D}^{-1} $ the similarity between \eqref{Imthapprx1} and \eqref{Imthapprx2} is clear, suggesting $ \mathcal{IE}(t) $ could serve as a substitute to $  {\bf E}[F[IA]](t) $. 
\end{remark}

\

\begin{remark}[Comparison of almost conservation laws] \label{ComparisonAC}
We have claimed above in III that $ \mathcal{IE}(t) $ enjoys better almost conservation properties than $ {\bf E}[F[IA]](t) $ or $ H[I \Phi](t) $ - the latter being the Maxwell-Klein-Gordon \eqref{MKG} classical I-method modified energy from \cite{keel2011global} ($ \Phi=(A_x, \phi) $),  while the former would be the YM I-method modified energy in the Coulomb gauge (if it could be implemented for large data). We consider $ {\bf E}[F[IA]](t) $ and $ H[I \Phi](t) $ as essentially the same, since we can ignore the $ [A_i, A_j] $ commutators because they have better regularity. 

The dominant part of $ H[I \Phi](t_1) - H[I \Phi](t_0) $ is proved in \cite[(106),(124)]{keel2011global} to be 
\be \label{MKGTril}
\sum_{k,k_i} \int_{t_0}^{t_1} \int_{\mb{R}^3} \big( \pt_t I \Phi_k, I {\bf N} (\Phi_{k_1}, \Phi_{k_2}) - {\bf N}(I \Phi_{k_1}, I \Phi_{k_2}) \big) \dd x \dd t 
\ee
where $ {\bf N} $ is a null form defined in \eqref{DefNullForm}, \eqref{Qij}. Consider the more difficult case $ N<2^{k_1} < 2^{k_2} \simeq 2^k $, in which the commutator in \eqref{MKGTril} does not provide any cancelation and can be effectively written as 
\be \label{MKGTril2}
 \frac{1}{2^{k_{\min}+k_{\max}}} \lpr \frac{N}{2^k} \rpr^{2(1-\sg)}   \int_{t_0}^{t_1} \int_{\mb{R}^3} \big( \pt_t  \Phi_k, {\bf N} ( \nabla_{x,t} \Phi_{k_1}, \nabla_{x,t} \Phi_{k_2}) \big) \dd x \dd t 
\ee

On the other hand, $ \mathcal{E}(t_1,s) -  \mathcal{E}(t_0,s) $ can be written as \eqref{DeltaE}, of which \eqref{3dfNullTerm} is the leading part. The similarity to \eqref{MKGTril} begins by noting that $ w $ is also an essentially bilinear term with some cancelations, representing a "heat-wave commutator". 

 In \eqref{SpecW}, \eqref{SpecNull} it is revealed that the main contribution to $  \mathcal{IE}(t_1) -  \mathcal{IE}(t_0) $ comes roughly from terms 
\be \label{YMTrilcomp}
(N^2 s)^{1-\sg} \sum_{k,k_i} \int_{t_0}^{t_1} \int_{\mb{R}^3}  \big( \pt_t A_k^{df} , {\bf N} (  \pt_t A^{df}_{k_1}, \pt_t A^{df}_{k_2} ) \big)     
\dd x \dd t \times weight 
\ee
The weight in \eqref{YMTrilcomp} is from \eqref{SpecNull} and in the same case $ N<2^{k_1} < 2^{k_2} \simeq 2^k $ it becomes concentrated at $ 2^k \simeq s^{-\frac{1}{2}} $ turning \eqref{YMTrilcomp} into (with the notation $ \Phi = A^{df} $ )
\be \label{YMTrilcomp2}
 \frac{1}{2^{2k_{\max}}} \lpr \frac{N}{2^k} \rpr^{2(1-\sg)}   \int_{t_0}^{t_1} \int_{\mb{R}^3} \big( \pt_t  \Phi_k, {\bf N} ( \nabla_{x,t} \Phi_{k_1}, \nabla_{x,t} \Phi_{k_2}) \big) \dd x \dd t 
\ee
Comparing \eqref{YMTrilcomp2} with \eqref{MKGTril}-\eqref{MKGTril2} shows that the structure of  $  \mathcal{IE}(t_1) -  \mathcal{IE}(t_0) $ effectively shifts a derivative from a high frequency to low frequency, compared to  $ H[I \Phi](t_1) - H[I \Phi](t_0) $. This translates into a bound of $ O(N^{-\frac{1}{2}}) $ in \eqref{deltaEnergy} compared to $ O(N^{\frac{1}{2}-\sg}) $ in \cite[(45)]{keel2011global}, leading to a result for lower regularities $ \sg > \frac{5}{6} $ rather than $ \sg > \frac{\sqrt{3}}{2} $. 
\end{remark} 

\

\subsubsection{Other works}

Local well-posedness in the Lorenz gauge is given in \cite{selberg2016null}, \cite{tesfahun2015local}. Global existence for critical power Yang-Mills-Higgs equations at the energy regularity is proved in \cite{keel1996global}. Global existence of solutions on globally hyperbolic Lorentzian manifolds, under a higher regularity assumption, is considered in \cite{chrusciel1997global} following the classical approach of \cite{eardley1982global}.

We now mention notable results in $ 4+1 $-dimensions, which is the energy-critical case, beginning with \cite{klainerman1999optimal}. The global result for small energy was established in \cite{krieger2017global}  building upon the high-dimensional case in \cite{krieger2013global}. The large data theory culminated with the proof of the Threshold Conjecture in \cite{oh2021threshold}.

%%%%%%%%%%%%%%%%%%%%%%%%%%%%%%%%%%%%%%%
%%%%%%%%%%%%%%%%%%%%%%%%%%%%%%%%%%%%%%%
%%%%%%%%%%%%%%%%%%%%%%%%%%%%%%%%%%%%%%%
%%%%%%%%%%%%%%%%%%%%%%%%%%%%%%%%%%%%%%%
%%%%%%%%%%%%%%%%%%%%%%%%%%%%%%%%%%%%%%%
%%%%%%%%%%%%%%%%%%%%%%%%%%%%%%%%%%%%%%%
%%%%%%%%%%%%%%%%%%%%%%%%%%%%%%%%%%%%%%%
%%%%%%%%%%%%%%%%%%%%%%%%%%%%%%%%%%%%%%%
%%%%%%%%%%%%%%%%%%%%%%%%%%%%%%%%%%%%%%%
%%%%%%%%%%%%%%%%%%%%%%%%%%%%%%%%%%%%%%%
%%%%%%%%%%%%%%%%%%%%%%%%%%%%%%%%%%%%%%%
%%%%%%%%%%%%%%%%%%%%%%%%%%%%%%%%%%%%%%%
%%%%%%%%%%%%%%%%%%%%%%%%%%%%%%%%%%%%%%%
%%%%%%%%%%%%%%%%%%%%%%%%%%%%%%%%%%%%%%%
%%%%%%%%%%%%%%%%%%%%%%%%%%%%%%%%%%%%%%%

\section{Preliminaries} \label{SecPreliminaries}

\subsubsection{Space-time}
We will work on the Minkowski space $ \mb{R}^{1+3} $ equipped with the Minkowski metric, according to which indices will be raised and lowered. We adopt the Einstein summation convention. Greek letters run over the space-time variables, while latin indices are used only for spatial variables. 

We use $ A[t] $ as a short-hand for the pair $ (A(t), \pt_t A(t) ) $.

\subsubsection{Lie theory} \label{LieDefinitions}

Let $ G $ be a Lie group and $\mathfrak{g}$ its associated Lie algebra. We assume the existence of a bi-invariant inner product $ ( \cdot, \cdot) : \mathfrak{g} \times \mathfrak{g} \to [0,\infty) $. This means 
\be \label{InvMetric} 
([A,B],C)=(A, [B,C]) \quad \text{or equivalently}, \quad (U A U^{-1}, U B U^{-1} )=(A,B)  
\ee
for all $ A, B, C \in \mathfrak{g}$, $ U \in G $. The Leibniz rule
\be \label{Lrule}
\partial_{\al}(B, C)=\left(\mathbf{D}_{\al} B, C\right)+\left(B, \mathbf{D}_{\al} C\right)
\ee 
holds, as well as Bianchi's identity 
\be \label{Bianchi}
\mathbf{D}_{\al} F_{\beta \gamma} + \mathbf{D}_{\beta} F_{ \gamma \al} + \mathbf{D}_{\gamma} F_{  \al \beta} = 0. 
\ee
Covariant derivatives are commuted using 
\be \label{CDComm}
\mathbf{D}_{\al} \mathbf{D}_{\beta}  - \mathbf{D}_{\beta} \mathbf{D}_{\al} = [ F_{  \al \beta}, \cdot ] 
\ee 

The covariant laplacian is denoted by $ \Delta_A \defeq \mathbf{D}^{\ell} \mathbf{D}_{\ell} $. 

Due to \eqref{InvMetric}, the inner-product can be used to define the $ L^p $ norms of $\mathfrak{g}$ -valued functions in a gauge-invariant way. 

For concreteness, it is common to take $ G $ to be a matrix group such as $ SO(n, \mb{R}) $ or $ SU(n, \mb{C}) $, with the associated Lie Algebras $ \mathfrak{so}(n, \mb{R}) $ and $ \mathfrak{su}(n) $. In these cases the Lie bracket is $ [A, B]= AB-BA $ given by matrix multiplication, while the bi-invariant scalar products are $ (A,B)= \tr( A B^{T}) $, respectively $ (A,B)=\tr( A B^{*}) $.

\

\subsubsection{Regular functions} 

A function $ f $ on $ \mb{R}^3 $ (such as an initial data $ \bar{A}_i $) is called \emph{regular} if $ f \in H^{\infty}(\mb{R}^3 ) \defeq \bigcap_{n \geq 0} H^n(\mb{R}^3 ) $. Solutions $ A_{\al}(t,x) $ or  $ A_i(x,s) $ defined on intervals are called regular if $ A_{\al} \in C^{\infty}_t  H^{\infty}_x (I \times \mb{R}^3) $ , respectively  $ A_i \in C^{\infty}_s  H^{\infty}_x ( \mb{R}^3 \times J) $. Similarly for 
$ A_{\al} \in C^{\infty}_{t,s}  H^{\infty}_x ( I \times \mb{R}^3 \times J) $. 
Gauge transformations are called regular if $ \nabla U, \nabla U^{-1}  \in H^{\infty} ( \mb{R}^3) $. 

\subsubsection{Boundedness notations}

We use $ A \ls B $ to denote $ A \leq C B $ and $ A \ll B $ to denote $ A C \leq B $, where $ C $ is a large constant. We use $ A+ $ or $ A- $ to denote $ A+\ep $ and $ A-\ep $ where $ 0 < \ep \ll 1 $ is a large number which can depend on $ \sg>0 $. We denote $ \jb{x}^2 = 1+ \vm{x}^2 $. 

For injective operators $ T $ on $ X $ we denote by $ TX $ the image space with norm $ \vn{T^{-1} u}_X $. The embedding $ X \subseteq Y $ as an estimate denotes $ \vn{u}_Y \ls \vn{u}_X $. The mapping $ T: X \to Y $ as an estimate means $ \vn{Tu}_Y \ls \vn{u}_X $.
We say $ u \in \delta X $ if $ \vn{u}_X \ls \delta $. Product estimates $ X \times Y \to Z $ denote $ \vn{uv}_Z \ls \vn{u}_X \vn{v}_Y $. The space of sums $ X+Y $ is endowed with the norm $ \vn{u}_{X+Y} \defeq \inf \{ \vn{u_1}_X + \vn{u_2}_Y \ | \ u=u_1+u_2   \} $.

\subsubsection{Fourier operators}

The Fourier transform of $ f $ is denoted by $ \hat{f} $ or $ \mathcal{F} f $. Littlewood-Paley projections are denoted by $ P_k $. Whether the frequencies $ \vm{\xi} \ls 1 $ are included in $ P_0 $ or not will be clear from the context. Moreover, $ \tilde{P}_k $ denotes a similar multiplier such that $  \tilde{P}_k P_k = P_k  $. 

Linear and bilinear operators which are invariant to spatial translations are understood either through their spatial kernels, or as multipliers through their Fourier symbols. For instance: the smoothing $ I $ operator is defined by \eqref{Ioperator}, fractional derivative operators $ \vm{D}^{p} $, $ \jb{D}^{p} $ are defined through their symbols $ \vm{\xi}^p $, $ \jb{\xi}^{p} $.  

When the fields $ A, B $ are $\mathfrak{g}$-valued a bilinear operator with symbol $ m^{ij}(\xi_1,\xi_2) $ takes the form 
\be \label{BilMultOp}
\begin{aligned}
\mathcal{M}(A,B)(x) & = \int_{\mb{R}^3  \times \mb{R}^3} e^{i x \cdot (\xi_1+\xi_2)}  m^{ij}(\xi_1,\xi_2)  [ \hat{A}_i(\xi_1),  \hat{B}_j (\xi_2)]  \dd \xi_1 \dd \xi_2  \\ 
& =  \int_{\mb{R}^3  \times \mb{R}^3} K^{ij}(x-y_1, x-y_2) [A_i(y_1),  B_j (y_2)]  
\dd y_1 \dd y_2 
\end{aligned}
\ee
where $ m(\xi_1,\xi_2)= \hat{K} (\xi_1,\xi_2) $. 

A linear or bilinear multiplier operator is called disposable when its kernel is a function or measure with bounded mass. Minkowski's inequality insures that disposable operators
are bounded on translation-invariant normed spaces.

\subsubsection{Leray projections and null forms} \ 
Denote by $ {\bf P} $ and $ {\bf P}^{\perp} $ the projections on divergence free, respectively curl-free vector fields:
\be \label{Leray} 
{\bf P}_j A \defeq \Delta^{-1} \pt^{\ell} (\pt_{\ell} A_j - \pt_j A_{\ell}), \quad {\bf P}_j^{\perp} A \defeq (1-{\bf P}_j) A =  \Delta^{-1} \pt_j \text{div} A. 
\ee
Recall the classical identities (\cite{klainerman1994maxwell}, \cite{klainerman1995finite})
\be
\begin{aligned} \label{Nullformidentities}
{\bf P}_{j}\left(\phi \nabla_{x} \varphi\right)=\Delta^{-1} \nabla^{i} Q_{i j}\left(\phi, \varphi \right) \\
{\bf P} A^{i} \partial_{i} \phi=Q_{i j}\left(\Delta^{-1} \nabla^{i}  A^{j}, \phi\right)
\end{aligned}
\ee
which follow from the definitions by simple computations.

By $ {\bf N} (f,g) $ we denote a null form, meaning a linear combination of 
\be \label{DefNullForm}
\Delta^{-1} \nabla^i Q_{ij} (f, g), \quad \text{and} \quad  Q_{ij} (\Delta^{-1} \nabla^i  f, g) \quad \text{or} \quad 
\ee
where 
\be \label{Qij}
Q_{ij} (f, g) \defeq \pt_i f \pt_j g - \pt_j f \pt_i g 
\ee
In the context of $\mathfrak{g}$-valued fields \eqref{Qij} is replaced by \eqref{QijLie}. 

\subsubsection{The $ I$ multiplier}

Fix a number $ N \gg 1 $ and let $ m(\xi) $ and $ I $ be a smooth positive radial symbol and the associated Fourier multiplier operator such that 
\be \label{Ioperator}
 m(\xi) =
\begin{cases}
1, \qquad \qquad \vm{\xi} \leq N \\
\big( \frac{N}{\vm{\xi}} \big)^{1-\sg} , \quad \vm{\xi} \geq 2N
\end{cases}; 
\qquad \widehat{I f} (\xi) \defeq m(\xi) \hat{f} (\xi) 
\ee
The $ I $ operator smoothens the high frequencies and acts as the identity on relatively lower frequencies. It has an integrable kernel. It is straightforward to verify 

\begin{lemma} \label{IEquivSum}
One has  $ \quad \ I f \in H^1 \ \iff \  f \in H^1 + N^{\sg-1} H^{\sg} \quad $ and 
$$    \qquad \qquad \quad  I g \in L^2 \ \ \iff \ \ g \in L^2 + N^{\sg-1} H^{\sg-1}= L^2 + N^{\sg-1} \dot{H}^{\sg-1} $$
with equivalent norms.  
\end{lemma}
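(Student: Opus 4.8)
\textbf{Plan for the proof of Lemma \ref{IEquivSum}.} The statement is a routine but useful reformulation of the defining property \eqref{Ioperator} of the multiplier $ m(\xi) $. The plan is to decompose frequency space into the low band $ \{ \vm{\xi} \leq 2N \} $ and the high band $ \{ \vm{\xi} \geq 2N \} $ (say via a smooth Littlewood-Paley cutoff), observe that on the low band $ m(\xi) $ and $ \jb{\xi}^{p} $ differ by an invertible disposable factor, and that on the high band $ m(\xi) = (N/\vm{\xi})^{1-\sg} $ exactly, so that $ m(\xi) \jb{\xi} $ is comparable to $ N^{1-\sg} \jb{\xi}^{\sg} $, and $ m(\xi) $ is comparable to $ N^{1-\sg} \jb{\xi}^{\sg-1} $. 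Both claimed equivalences follow by combining these two regimes.

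\textbf{Key steps, in order.} First, fix a partition $ 1 = \chi_{\mathrm{lo}}(\xi) + \chi_{\mathrm{hi}}(\xi) $ with $ \chi_{\mathrm{lo}} $ supported in $ \vm{\xi} \leq 2N $ and equal to $ 1 $ on $ \vm{\xi} \leq N $, and $ \chi_{\mathrm{hi}} $ supported in $ \vm{\xi} \geq N $ and equal to $ 1 $ on $ \vm{\xi} \geq 2N $; write $ f = P_{\mathrm{lo}} f + P_{\mathrm{hi}} f $ accordingly. Second, for the low part: on $ \supp \chi_{\mathrm{lo}} $ the symbol $ m(\xi) $ is smooth, positive, and comparable to $ 1 $, so $ I P_{\mathrm{lo}} $ and $ P_{\mathrm{lo}} $ are each other's bounded perturbations (both have integrable kernels, being Schwartz-class localized), hence $ I P_{\mathrm{lo}} f \in H^1 \iff P_{\mathrm{lo}} f \in H^1 $, and likewise at the $ L^2 $ level; moreover $ P_{\mathrm{lo}} f \in H^1 \subset H^1 + N^{\sg-1} H^{\sg} $ trivially and similarly $ P_{\mathrm{lo}} f \in L^2 \subset L^2 + N^{\sg-1} H^{\sg-1} $. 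Third, for the high part: on $ \supp \chi_{\mathrm{hi}} $ one has $ m(\xi) = N^{1-\sg} \vm{\xi}^{\sg-1} $ with $ \vm{\xi} \simeq \jb{\xi} \gtrsim N $, so
\be \label{HighBandEquiv}
m(\xi) \jb{\xi} \simeq N^{1-\sg} \jb{\xi}^{\sg}, \qquad m(\xi) \simeq N^{1-\sg} \jb{\xi}^{\sg-1} \simeq N^{1-\sg} \vm{\xi}^{\sg-1}
\ee
uniformly on this band, the implicit constants depending only on $ \sg $; this gives $ \vn{I P_{\mathrm{hi}} f}_{H^1} \simeq N^{1-\sg} \vn{P_{\mathrm{hi}} f}_{H^{\sg}} $ and $ \vn{I P_{\mathrm{hi}} g}_{L^2} \simeq N^{1-\sg} \vn{P_{\mathrm{hi}} g}_{H^{\sg-1}} \simeq N^{1-\sg} \vn{P_{\mathrm{hi}} g}_{\dot H^{\sg-1}} $ (the last comparison being legitimate since the frequencies are bounded away from zero, so $ \jb{\xi}^{\sg-1} \simeq \vm{\xi}^{\sg-1} $ there). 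Finally, combine: $ I f \in H^1 $ iff both pieces are in $ H^1 $ iff $ P_{\mathrm{lo}} f \in H^1 $ and $ P_{\mathrm{hi}} f \in N^{\sg-1} H^{\sg} $, which is precisely the assertion $ f \in H^1 + N^{\sg-1} H^{\sg} $ (one direction by summing the two decomposed pieces, the converse by applying the uniform bounds to an arbitrary splitting $ f = f_1 + f_2 $ with $ f_1 \in H^1 $, $ f_2 \in N^{\sg-1} H^{\sg} $ and using boundedness of $ P_{\mathrm{lo}}, P_{\mathrm{hi}} $ on each space); the $ L^2 $ statement is identical with $ H^1 \leadsto L^2 $ and $ H^{\sg} \leadsto H^{\sg-1} $.

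\textbf{Main obstacle.} There is essentially no deep obstacle here — this is a "straightforward to verify" lemma as the text says. The only point requiring slight care is bookkeeping around the transition annulus $ N \leq \vm{\xi} \leq 2N $, where $ m(\xi) $ interpolates smoothly between $ 1 $ and $ (N/\vm{\xi})^{1-\sg} $: on this compact (in the rescaled variable $ \xi/N $) region $ m $ is smooth and bounded above and below by constants, with symbol bounds uniform in $ N $ after rescaling, so it contributes a disposable factor and can be absorbed into either band. The other mild subtlety is the passage between $ H^{\sg-1} $ and $ \dot H^{\sg-1} $ in the last displayed equivalence, which is valid exactly because $ I g $ having $ H^1 $-controlled (equivalently $ L^2 $-controlled after removing the $ H^1 $ part) high-frequency content forces $ g $'s relevant frequencies to lie in $ \vm{\xi} \gtrsim N $, where the homogeneous and inhomogeneous weights agree up to constants; one should note that the low-frequency part of $ g $ is already in $ L^2 $, so the $ \dot H^{\sg-1} $ norm of the high part is all that is being measured.
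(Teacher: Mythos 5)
Your proof is correct. The paper gives no argument for this lemma at all — it is stated as "straightforward to verify" — and your low/high frequency-band decomposition, with $ m(\xi) \simeq 1 $ for $ \vm{\xi} \lesssim N $ and $ m(\xi)\jb{\xi} \simeq N^{1-\sg}\jb{\xi}^{\sg} $, $ m(\xi) \simeq N^{1-\sg}\vm{\xi}^{\sg-1} $ for $ \vm{\xi} \gtrsim N $, is exactly the standard verification the author had in mind, including the observation that on the high band $ \jb{\xi}^{\sg-1} \simeq \vm{\xi}^{\sg-1} $, which justifies the identification $ L^2 + N^{\sg-1} H^{\sg-1} = L^2 + N^{\sg-1} \dot{H}^{\sg-1} $.
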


The advantage of these second formulations is that they can be directly localized to a domain.

\subsubsection{Covariant Gagliardo-Nirenberg inequalities and elliptic equations} 

We recall the following \emph{diamagnetic}, or \emph{Kato's}, \emph{inequality}. For a proof see, for example \cite[Lemma 4.2]{oh2015finite} or \cite[Lemma 3.9]{tao2008global}. One has
$$
\vm{ \pt_x \vm{ \varphi } } \leq \vm{\mathbf{D}_{x} \varphi }
$$
for regular $ \mathfrak{g} $-valued functions $ \varphi $ on $ \mb{R}^3 $. As a consequence one has the following covariant Gagliardo-Nirenberg and Sobolev inequalities on $ \mb{R}^3 $

\be  \label{SobGN}
\begin{aligned}
&\|\varphi\|_{L^{p}} \ls \|\varphi\|_{L^{2}}^{1-\al}   \left\|\mathbf{D}_{x} \varphi\right\|_{L^{2}}^{\al} , \qquad \al = 3(2^{-1}-p^{-1}) \\
&\|\varphi\|_{L^{3}} \ls \|\varphi\|_{L^{2}}^{\frac{1}{2}}   \left\|\mathbf{D}_{x} \varphi\right\|_{L^{2}}^{\frac{1}{2}}  \\
&\|\varphi\|_{L^{6}} \ls \left\|\mathbf{D}_{x} \varphi\right\|_{L^{2}} \\
&\|\varphi\|_{L^{\infty}} \ls \left\|\mathbf{D}_{x} \varphi\right\|_{L^{2}}^{\frac{1}{2}}  \left\|\mathbf{D}_{x}^{2} \varphi\right\|_{L^{2}}^{\frac{1}{2}} 
\end{aligned}
\ee

If $ \varphi $ is defined on a unit ball $ B $ instead, one has 

\be  \label{CovGNonB}
\vn{\varphi}_{L^p(B)} \ls \vn{\varphi}_{L^2(B)}^{1-\al} \vn{\mathbf{D}_{x} \varphi }_{L^2(B)}^{\al} + \vn{\varphi}_{L^2(B)}, \qquad \qquad \qquad \frac{1}{p} = \frac{\al}{6}  + \frac{1-\al}{2}.
\ee

\be  \label{CovGNonBinf}
\vn{\varphi}_{L^{\infty}(B)} \ls \big(  \vn{\mathbf{D}_{x} \varphi }_{L^2(B)} + \vn{\varphi}_{L^2(B)} \big)^{\frac{1}{2}} 
 \big(  \vn{\mathbf{D}_{x}^2 \varphi }_{L^2(B)} +  \vn{\mathbf{D}_{x} \varphi }_{L^2(B)}  \big)^{\frac{1}{2}}
+ \vn{\varphi}_{L^2(B)}
\ee 

\

By the argument in \cite[Theorem 4.1]{oh2017yang} one has 
\begin{proposition} \label{EllipticBdd}
Assume $ A \in \dot{H}^{\frac{1}{2}} (\mb{R}^3) $. Then the covariant elliptic equation $ \Delta_A B = F $ is solved by the $  \mathfrak{g} $-valued function $ B $ with the bound 
$$ \vn{B}_{\dot{H}^{\rho}} \ls  \vn{F}_{\dot{H}^{\rho-2 }} , \qquad \qquad \rho \in (0,2).  $$
\end{proposition}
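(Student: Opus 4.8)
The plan is to treat $\Delta_A$ as a perturbation of the flat Laplacian and invert by a fixed-point argument, following \cite[Theorem 4.1]{oh2017yang}. Expanding the covariant Laplacian,
\[
\Delta_A B=\mathbf{D}^\ell\mathbf{D}_\ell B=\Delta B+L_A B,\qquad L_A B:=\partial^\ell[A_\ell,B]+[A^\ell,\partial_\ell B]+[A^\ell,[A_\ell,B]],
\]
so that $\Delta_A B=F$ is equivalent to the fixed-point equation $B=\Delta^{-1}F-\Delta^{-1}L_A B$ on $\dot H^\rho(\mb{R}^3)$. Since $\Delta^{-1}\colon\dot H^{\rho-2}\to\dot H^\rho$ is an isometry, the proposition follows once we show that $T_A:=-\Delta^{-1}L_A$ maps $\dot H^\rho(\mb{R}^3)$ to itself with operator norm $\ls\vn{A}_{\dot H^{1/2}}+\vn{A}_{\dot H^{1/2}}^2$: for $\vn{A}_{\dot H^{1/2}}$ small --- the situation in all our applications --- a Neumann series (equivalently, the contraction principle) then produces the unique solution $B$ and the bound $\vn{B}_{\dot H^\rho}\le 2\vn{F}_{\dot H^{\rho-2}}$, while in general the implicit constant is to be read as depending on $\vn{A}_{\dot H^{1/2}}$.

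Using $\Delta^{-1}=|D|^{-2}$ and $\Delta^{-1}\partial^\ell=R^\ell|D|^{-1}$, with $R^\ell$ the (bounded) Riesz transform, the bound on $T_A$ reduces to the product estimates $[A,\,\cdot\,]\colon\dot H^\rho\to\dot H^{\rho-1}$, $[A,\partial\,\cdot\,]\colon\dot H^\rho\to\dot H^{\rho-2}$ and $[A,[A,\,\cdot\,]]\colon\dot H^\rho\to\dot H^{\rho-2}$, with gains $\vn{A}_{\dot H^{1/2}}$, $\vn{A}_{\dot H^{1/2}}$, $\vn{A}_{\dot H^{1/2}}^2$ respectively, valid for $\rho\in(0,2)$. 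The first two amount to the Sobolev product laws $\dot H^{1/2}\cdot\dot H^\rho\hookrightarrow\dot H^{\rho-1}$ and $\dot H^{1/2}\cdot\dot H^{\rho-1}\hookrightarrow\dot H^{\rho-2}$ (recall $\dot H^{1/2}(\mb{R}^3)\hookrightarrow L^3$), and the cubic one follows by iterating. Away from the endpoints of $(0,2)$ these are all textbook facts, and for small $\vn{A}_{\dot H^{1/2}}$ they immediately close the contraction.

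The main obstacle --- and essentially the only point requiring genuine work --- is the behavior as $\rho\to 0^+$ and $\rho\to 2^-$, where one factor falls into a negative-index or supercritical homogeneous Sobolev space and the off-the-shelf product law degenerates (and where the low-frequency pathology of homogeneous spaces must be kept in mind). This is handled by a Littlewood--Paley/paraproduct decomposition of each bracket into high$\times$low, low$\times$high and high$\times$high pieces: in the first two regimes the output frequency is comparable to the larger input, so the prefactor $|D|^{-1}$ (resp.\ $|D|^{-2}$) supplies exactly the dyadic gain needed to $\ell^2$-sum the pieces, using only the Bernstein-type bound $\vn{A_{<k}}_{L^\infty}\ls 2^k\vn{A}_{\dot H^{1/2}}$ and its analogue for the other factor; in the high$\times$high regime one sums over the output frequency $2^m$ with the Bernstein gain $2^{3m/2}$ from the $L^1\to L^2$ estimate and closes by Schur's test on the resulting upper-triangular dyadic kernel. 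This yields $\vn{T_A}_{\dot H^\rho\to\dot H^\rho}\ls\vn{A}_{\dot H^{1/2}}(1+\vn{A}_{\dot H^{1/2}})$ on the whole range, after which the fixed-point identity delivers both $\vn{B}_{\dot H^\rho}\ls\vn{F}_{\dot H^{\rho-2}}$ and uniqueness; the remaining details follow \cite{oh2017yang}.
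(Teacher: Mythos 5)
Your scheme has a genuine gap, and it sits exactly at the point you describe as "the only point requiring genuine work." The proposition would follow from your fixed-point argument only if $T_A=-\Delta^{-1}L_A$ were bounded on $\dot H^{\rho}$ for all $\rho\in(0,2)$, but this mapping property is \emph{false} once $\rho\ge 3/2$ or $\rho\le 1/2$, even for $A$ smooth and arbitrarily small in $\dot H^{1/2}$, so no paraproduct bookkeeping or Schur test can rescue it. For $\rho\in(3/2,2)$ take $A$ a fixed $\mathfrak{g}$-valued bump at frequency $\simeq 1$ and $B_j(x)=2^{j(3/2-\rho)}\chi(2^j x)\,g$ with $\hat\chi$ supported at frequency $\simeq 1$, so $\vn{B_j}_{\dot H^{\rho}}\simeq 1$ while $\vn{B_j}_{L^\infty}\simeq 2^{j(3/2-\rho)}\to\infty$ as $j\to-\infty$; the high--low pieces $[\partial^\ell A_\ell,B_j]$ and $[A^\ell,[A_\ell,B_j]]$ sit at output frequency $\simeq1$ with $L^2\simeq\dot H^{\rho-2}$ size comparable to $\vn{B_j}_{L^\infty}$ (no cancellation for generic $\mathfrak g$, e.g. $\mathfrak{su}(2)$), hence $\vn{T_AB_j}_{\dot H^{\rho}}\to\infty$. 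For $\rho\in(0,1/2)$ the high--high piece of $\Delta^{-1}[A^\ell,\partial_\ell B]$ with both inputs at frequency $2^k$ and output at frequency $\ls 1$ carries the dyadic weight $2^{(k-m)(1/2-\rho)}$, which grows; with modulated bumps of unit $\dot H^{1/2}$ and $\dot H^{\rho}$ size this produces an output of size $\simeq 2^{k(1/2-\rho)}\to\infty$, so your "Schur's test on the upper-triangular kernel" step fails because the kernel is not summable there. These examples do not contradict the proposition itself (for such $B$ the corresponding $F=\Delta_A B$ is large in $\dot H^{\rho-2}$); they show that the solution operator is not given by your Neumann series, and that your argument proves the statement only on $\rho\in(1/2,3/2)$, where the product laws \eqref{SobMultHomog} apply directly.

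Two further points. First, the proposition carries no smallness hypothesis, and the paper applies it in Proposition \ref{ApproxID} to connections of arbitrary size; your closing remark that "in general the implicit constant depends on $\vn{A}_{\dot H^{1/2}}$" is not something the contraction delivers: $\dot H^{1/2}(\mb{R}^3)$ is scale-invariant, so large data cannot be made small by rescaling or subdivision, and the Neumann series simply diverges. Second, the paper contains no expansion-based proof to compare with -- it defers entirely to the argument of \cite[Theorem 4.1]{oh2017yang} -- but whatever the details of that argument, it cannot reduce to the flat perturbative property you invoke, since that property is false on part of the claimed range; the natural substitutes are the covariant tools already quoted in the paper (diamagnetic/Kato inequality, covariant Gagliardo--Nirenberg, covariant heat-semigroup/energy estimates), which see $\Delta_A$ as a whole rather than as $\Delta$ plus a perturbation. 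As written, your proposal establishes a strictly weaker statement than the one claimed.
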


\

\subsection{Sobolev spaces} \ 

The global Sobolev spaces $H^{s} = H^{s} (\mb{R}^n) $ and $\dot{H}^{s} = \dot{H}^{s}  (\mb{R}^n)  $  are defined using the Fourier transform, for any $s \in \mathbb{R}$, by 
\be \label{GlobalSobolev}
 \|u\|_{H^{s}} \defeq \left\|  \jb{\xi}^{s} \hat{u}\right\|_{L^{2}(\mb{R}^n)} , \quad \quad 
\|u\|_{\dot{H}^{s}} \defeq \left\||\xi|^{s} \hat{u}\right\|_{L^{2}(\mb{R}^n)} 
\ee
We denote $ H^{\infty}(\mb{R}^n)  \defeq \bigcap_{m \geq 0} H^m (\mb{R}^n) $. For $ s>0 $ one has $ H^s = L^2 \cap \dot{H}^{s} $ while for $ s<0 $ one has $ H^s = L^2 + \dot{H}^{s}$. By duality one may identify $ H^{-s} $ with $ (H^s)^* $.

\

We next define Sobolev spaces $ W^{s,2}(D) $ (also denoted $ H^s(D) $) when $ D $ is either a ball $  B \subset \mb{R}^n$ of radius $ \simeq 1 $ or  $  D= \mb{R}^n $. When $ s \in \{ 0 ,1 \} $ we use the classical definition. When $ s \in (0,1) $ we use the norm 
\be \label{LocalSobolev}
\vn{u}_{ W^{s,2}(D) }^2 \defeq \vn{u}_{L^2(D)}^2 + \vm{u}_{   \dot{W}^{s,2}(D) }^2
\ee
where $ \vm{\cdot}_{   \dot{W}^{s,2}(D) } $ denotes the Gagliardo seminorm defined by 
\be \label{Gagliardoseminorm}
\vm{u}_{   \dot{W}^{s,2}(D) }^2 \defeq \int_D \int_D \frac{\vm{u(x)-u(y)}^2}{\vm{x-y}^{n+2s}}    \dd x \dd y 
\ee 
Due to \eqref{HsCoincide} we will use the notations $ H^s(D) $ and $ W^{s,2}(D) $ (resp. $ \dot{H}^s(D) $ and $ \dot{W}^{s,2}(D) $) interchangeably for both $ D=B $ and $ D = \mb{R}^n $. 

We denote $ W^{s,2}_0(B) $ the closure of $ C_c^{\infty}(B) $ in the norm $ \vm{\cdot}_{ W^{s,2}(B)}$. Of course $ W^{s,2}_0( \mb{R}^n )= W^{s,2}( \mb{R}^n ) $. 

When $  s \in (-1,0) $ we define $ W^{s,2}(D) $, as a subspace of distributions, by duality  
$$
W^{s,2}(D)  \defeq W^{-s,2}_0(D)^{*}. 
$$

\

We recall some properties from \cite{di2012hitchhiker?s}. For $ D=\mb{R}^n $ the two definitions \eqref{GlobalSobolev}, \eqref{LocalSobolev} coincide 
\be \label{HsCoincide}
 \|u\|_{H^{s}} \simeq \vn{u}_{ W^{s,2}(\mb{R}^n ) }  ,  \quad \quad  \|u\|_{\dot{H}^{s}} \simeq \vm{u}_{   \dot{W}^{s,2}(\mb{R}^n) } 
 \ee

If $ B $ is a ball, then any $ u \in W^{s,2}(B) $ has an extension $ \tilde{u} \in W^{s,2}(\mb{R}^n ) $ with comparable norm 
$$   \vn{u}_{W^{s,2}(B)}   \simeq \vn{\tilde{u}}_{W^{s,2}(\mb{R}^n )} \simeq \inf \{ \vn{\bar{u}}_{W^{s,2}(\mb{R}^n )} \ | \ \  \bar{u} \restriction_{B} = u \}. $$
If $ \psi \in C^{0,1}(B) $, $ 0 \leq \psi \leq 1 $ then 
\be  \label{cutoffWs2}
\vn{\psi u}_{ W^{s,2}(B) } \ls \vn{u}_{ W^{s,2}(B) }, \qquad s \in (0,1). 
\ee 

\subsubsection{Sobolev multiplication laws} \

We recall from \cite{d2012atlas}, \cite{tao2001multilinear} that:
if  $  s_1+s_2 > s+ \frac{3}{2} $,  $ s_1+s_2 \geq 0 $ and $ s \leq \min(s_1,s_2) $ then one has 
\be \label{SobMult}
\| f g \|_{ H^s} \lesssim\|f \|_{H_{x}^{s_1}}\|g\|_{H_{x}^{s_2}}.
\ee
In a similar spirit, one has the homogeneous version \cite[ Lemma 3.2]{oh2014gauge} 
\be \label{SobMultHomog}
\| f g \|_{ \dot{H}^s} \lesssim \|f \|_{\dot{H}_{x}^{s_1}}\|g\|_{\dot{H}_{x}^{s_2}}.
\ee
provided $  s_1+s_2 = s+ \frac{3}{2} $ and $ -s,s_1,s_2 <  \frac{3}{2} $. 

For $ r>3/4 $ we will use the version \cite[Lemma 2.1]{keel2011global}:
\be \label{Sbvone}
\| f g\|_{\dot{H}_{x}^{-1}} \lesssim\|f\|_{H_{x}^{r}}\|g\|_{H_{x}^{r-1}}.
\ee
One also has
\be \label{SobMI}
\vn{I(fg)}_{\dot{H}^{-1}} \ls \vn{If}_{H^1} \vn{I g}_{L^2} 
\ee 
which follows immediately from \eqref{Sbvone} with $ r =\sg $

\subsubsection{Norms for gauge transformations}

We now define some  further norms. Let $ X_{\sg} = \dot{H}^1 \cap \dot{H}^{1+\sg} \cap L^{\infty} (\mb{R}^3) $, and the similar definition on a ball, i.e. 
$$
\vn{U}_{X_{\sg}} \defeq \vn{\nabla U}_{H^{\sg} (\mb{R}^3)} + \vn{U}_{L^{\infty} (\mb{R}^3) }, \quad \vn{U}_{X_{\sg}(B)} \defeq \vn{\nabla U}_{H^{\sg}(B)} + \vn{U}_{L^{\infty}(B) }.
$$
Similarly one defines 
\be  \label{XnormDef}
\vn{V}_X \defeq \vn{I \nabla V}_{H^1 (\mb{R}^3)}+ \vn{V}_{L^{\infty} (\mb{R}^3) }
\ee
One has the following algebra and product estimates
\begin{align}
\label{Algebraproducts1}
& X_{\sg} \times X_{\sg} \to X_{\sg}, \qquad & H^{\sg}  \times X_{\sg} \to H^{\sg}, \qquad &H^{\sg-1}\times X_{\sg} \to H^{\sg-1}
\\
\label{Algebraproducts}
& X  \times X \to X, \qquad & I^{-1} H^1  \times X \to I^{-1} H^1, \qquad &I^{-1} L^2 \times X \to I^{-1} L^2
\end{align}
These are proved using the Littlewood-Paley trichotomy, \eqref{Algebraproducts1} is \cite[Eq. (10)]{tao2003local}. 
By an extension argument, \eqref{Algebraproducts1} holds on domains as well 
\be
\label{Algebraproducts2}
\vn{UV}_{X_{\sg}(B)} \ls \vn{U}_{X_{\sg}(B)} \vn{V}_{X_{\sg}(B)}, \quad \vn{f U}_{ H^{r}(B)} \ls \vn{f}_{ H^{r}(B)} \vn{U}_{ X_{\sg}(B)}
\ee
for $ r \in \{ \sg, \sg-1 \} $. 

\

One may obtain $ X_{\sg}(B) $ control for the gauge transformation between two $ H^{\sg}(B) $ fields.  This is \cite[Lemma 1.2]{uhlenbeck1982connections}  for integer regularities and it easily generalizes.
\begin{lemma} \label{UtransfBound}
Let $ U :B \to G $ be such that $ A_i  =  U \tilde{A}_i U^{-1} - \partial_i U U^{-1} $ on $ B $. If $ M = \vn{A_i}_{ H^{\sg}(B)} + \vn{ \tilde{A}_i}_{ H^{\sg}(B)} $, then $ \vn{U}_{X_{\sg}(B)} - 1 \ls  \vn{\nabla U}_{H^{\sg}(B)}  \ls M (1+M) $. 
\end{lemma}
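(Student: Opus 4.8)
\textbf{Proof plan for Lemma \ref{UtransfBound}.}

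The plan is to reduce the statement to the integer-regularity case established in \cite[Lemma 1.2]{uhlenbeck1982connections} by an interpolation-type argument, using the relation $ \partial_i U U^{-1} = U \tilde{A}_i U^{-1} - A_i $ as the central identity. First I would observe that from this identity one gets $ \nabla U = (U \tilde{A}_i U^{-1} - A_i) U $ pointwise, so bounding $ \nabla U $ in $ H^{\sg}(B) $ amounts to bounding the products $ U \tilde{A}_i U^{-1} U $ and $ A_i U $ in $ H^{\sg}(B) $. The key preliminary step is the $ L^{\infty}(B) $ bound on $ U $ and $ U^{-1} $: since $ G $ is (taken to be) a matrix group with a bi-invariant metric, $ U $ takes values in a compact set, so $ \vn{U}_{L^{\infty}(B)}, \vn{U^{-1}}_{L^{\infty}(B)} \ls 1 $ automatically, and the content of the $ X_{\sg}(B) $ bound is entirely in the $ \vn{\nabla U}_{H^{\sg}(B)} $ estimate.

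Next I would run a bootstrap/continuity argument on a parameter. The cleanest route: first establish the $ \dot{H}^1(B) $ bound, i.e. $ \vn{\nabla U}_{L^2(B)} \ls M $, which is immediate from $ \nabla U = (U\tilde A_i U^{-1} - A_i)U $, the $ L^\infty $ bound on $ U $, and $ \vn{A_i}_{L^2(B)}, \vn{\tilde A_i}_{L^2(B)} \leq M $ (here one should be slightly careful: $ H^{\sg}(B) \hookrightarrow L^2(B) $ and also $ H^\sg(B)\cdot H^\sg(B) \hookrightarrow L^2(B) $ since $ \sg > 3/4 > 0 $). Then, to upgrade from $ L^2 $ to $ H^\sg $, I would use the fractional Leibniz rule for $ \jb{D}^{\sg} $ applied to the product $ (U\tilde A_i U^{-1} - A_i)U $. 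The factors $ U, U^{-1} $ are not themselves in any good Sobolev space a priori, but their \emph{derivatives} are: $ \nabla(U^{\pm 1}) $ can be expressed back in terms of $ \nabla U $ and the connections, closing the estimate. Concretely, writing $ P := \nabla U \in L^2(B) $, differentiating the identity once more and commuting, one gets an inclusion of the form $ P \in L^2(B) $ and $ \nabla P \in H^{\sg-1}(B) + (\text{quadratic in } P, A_i, \tilde A_i) $, and then the Sobolev product laws \eqref{SobMult} (localized via the extension property of $ W^{s,2}(B) $ recorded before \eqref{cutoffWs2}) together with $ \sg > 3/4 $ give $ P \in H^\sg(B) $ with the stated quadratic bound $ \ls M(1+M) $. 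The appearance of $ M(1+M) $ rather than just $ CM $ is exactly the quadratic feedback from the $ U\tilde A_i U^{-1} $ term, where one pays an extra factor of $ \vn{\nabla U}_{H^\sg} $ hidden inside the Sobolev product estimate, absorbed by the bootstrap.

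The main obstacle I expect is the low-regularity product estimate on the ball: one needs $ \vn{f U}_{H^\sg(B)} \ls \vn{f}_{H^\sg(B)}\vn{U}_{X_\sg(B)} $, which is precisely \eqref{Algebraproducts2}, but to even make sense of ``$ U \in X_\sg(B) $'' during the bootstrap one must know $ \nabla U \in H^\sg(B) $ is finite, i.e. one needs an \emph{a priori qualitative} regularity statement for $ U $. Since $ A_i, \tilde A_i $ are regular ($ H^\infty $, or at least smooth enough), $ U $ solving the first-order ODE system $ \nabla U = (U\tilde A - A)U $ with smooth coefficients is itself smooth on $ B $, so $ \vn{\nabla U}_{H^\sg(B)} < \infty $ is guaranteed and the bootstrap is legitimate; the quantitative bound then follows by the argument above. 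The only other delicate point is making the Sobolev product laws and the fractional Leibniz rule work on the bounded domain $ B $ rather than on $ \mb{R}^3 $, which is handled by the Sobolev extension operator for $ W^{s,2}(B) $ together with the cutoff bound \eqref{cutoffWs2}, exactly as indicated by the parenthetical ``by an extension argument'' preceding \eqref{Algebraproducts2} in the text.
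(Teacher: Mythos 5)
Your overall skeleton (the identity $\pt_i U = U\tilde A_i - A_i U$, the automatic $L^\infty$ bound on $U,U^{-1}$ from compactness of $G$, localization by extension) matches the paper, but the quantitative core of your argument has a genuine gap: the bootstrap you propose does not close. You plan to bound $\vn{\nabla U}_{H^\sg(B)}$ by a product estimate in which the rough factor $U$ is measured in $X_\sg(B)$, so the right-hand side contains a term of size $M\,\vn{\nabla U}_{H^\sg(B)}$ that you intend to ``absorb by the bootstrap.'' Absorption of such a term requires $M\ll 1$, which the lemma does not assume and which fails in the paper's application: in the proof of \eqref{SobolevBound} the lemma is invoked with $M$ of size $c_j+\vn{A_i(t_0,\cdot,s_0)}_{H^\sg(B_j)}$, which can be as large as $C_0(M_0)$. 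An inequality of the form $X\ls M+MX$ gives no information when $M\gtrsim 1$, so your scheme cannot produce the stated bound $\ls M(1+M)$ in general. Moreover, your proposed fallback of first proving only $\nabla U\in L^2(B)$ and then applying a fractional Leibniz rule does not supply enough integrability: interpolating $\nabla U\in L^2$ with $U\in L^\infty$ gives $\vm{D}^{\sg}U\in L^{2/\sg}$, and the dual exponent one then needs for $\tilde A_i$ lies strictly above the Sobolev exponent of $H^\sg$, so the high-frequency-on-$U$ term cannot be closed from $L^2$ information alone.

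The paper avoids any bootstrap by inserting one intermediate step that your plan is missing. From $\pt_i U = U\tilde A_i - A_i U$, the $L^\infty$ bounds on $U,U^{-1}$, and $A_i,\tilde A_i\in H^\sg(B)\subset L^p(B)$ with $p$ the Sobolev exponent of $H^\sg$, one gets directly $\vn{U}_{W^{1,p}(B)}\ls 1+M$ — a linear, non-circular estimate. One then uses the bilinear estimate $\vn{Vg}_{H^\sg(\mb{R}^3)}\ls \vn{V}_{\dot W^{1,p}\cap L^\infty(\mb{R}^3)}\vn{g}_{H^\sg(\mb{R}^3)}$ (Littlewood--Paley trichotomy: the high-frequency-$V$ interactions only require $\nabla V\in L^p$, never $\nabla V\in H^\sg$), transferred to $B$ by extension, and applies it to $U\tilde A_i$ and $A_i U$. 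This yields $\vn{\pt_i U}_{H^\sg(B)}\ls (1+M)M$ in a single pass, which is exactly the quadratic bound in the statement; no smallness and no continuity argument are needed. If you replace your ``$L^2$ plus absorption'' step by this $W^{1,p}$ intermediate bound and the corresponding product estimate, your argument becomes essentially the paper's proof.
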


\begin{proof} 
The $ L^{\infty}(B) $ bound is automatic, so we prove  $ \vn{\pt_i U}_{H^{\sg}(B)} \ls M (1+M) $. Write $ \pt_i U = U \tilde{A}_i - A_i U $, which implies $ \vn{U}_{W^{1,p}(B)} \ls 1+M $ where $ p $ is the Sobolev exponent for $ H^{\sg} $. We use  
 $$
 \vn{V g}_{H^{\sg}(\mb{R}^3) } \ls \vn{V}_{\dot{W}^{1,p} \cap L^{\infty}(\mb{R}^3) } \vn{g}_{H^{\sg}(\mb{R}^3)  } \   \Rightarrow \ \vn{U f}_{H^{\sg}(B)} \ls \vn{U}_{{W^{1,p}(B)}} \vn{f}_{H^{\sg}(B)} 
 $$ 
The first inequality follows from Littewood-Paley theory, while the second follows from the first by an extension argument. Apply this inequality for $ U \tilde{A}_i  $ and $ A_i U $ to obtain  $ \vn{\pt_i U}_{H^{\sg}(B)} \ls M (1+M) $.
\end{proof}

\subsection{$ X^{s,b} $ spaces} We outline the main properties, for more details we refer to \cite{selberg1999multilinear}, \cite{klainerman2002bilinear}. 
The $ X^{s,b} $ spaces associated to the wave equation are defined by the norm
$$
\vn{u}_{X^{s,b}} = \vn{ \jb{\xi}^s \jb{\vm{\tau} - \vm{\xi}}^b \mathcal{F} u (\tau,\xi) }_{L^2_{\tau,\xi}}
$$
where $ \mathcal{F} u $ denotes the space-time Fourier transform on $ \mb{R} \times \mb{R}^{3} $. 

Following \cite{tao2003local} we also use the $ X^{r,\theta}_{\tau=0}= H^r_x H^{\theta}_t $ spaces adapted to solutions of the equation $ \pt_t v = F $, with norm 
$$
\vn{v}_{X^{r,\theta}_{\tau=0}} = \vn{ \jb{\xi}^r \jb{ \tau}^{\theta} \mathcal{F} v (\tau,\xi) }_{L^2_{\tau,\xi}}
$$
For a bounded interval $ J $ and $ X = X^{s,b} $ or $ X^{r,\theta}_{\tau=0} $  one defines the restricted space $ X_J $ consisting of functions or tensors $ u $ defined on $ J \times \mb{R}^3 $ for which the following norm is finite:
$$
\vn{u}_{X_J} = \inf \{  \vn{v}_{X} \ ; \  v_{\mkern 1mu \vrule height 2ex\mkern2mu J \times \mb{R}^3 }=u      \ \} 
$$ 
To say $ u \in X $ on $ J \times \mb{R}^3 $ means $ u \in X_J $, i.e. it has an extension $ v \in X $ to $ \mb{R} \times \mb{R}^3 $.

One has the well-known embeddings
$$
X^{s,\frac{1}{2}+} \subseteq  L^{\infty}_t H^s_x, \qquad X^{r,\frac{1}{2}+}_{\tau=0}  \subseteq  L^{\infty}_t H^r_x.
$$
Moreover, one has the embedding 
\be \label{X00emb}
X^{0,0+ } \subset L^{2+}_t L^2_x  
\ee
which follows by interpolation between $ X^{0,0}=L^2_t L^2_x $ and $  X^{0,\frac{1}{2}+} \subset L^{\infty}_t L^2_x $.

Multiplications by cutoff functions interact well with these spaces:
\begin{lemma} \label{cutoffXsb}
Denote $ \vn{u}_{\mathcal{X}^{1,b} } \defeq \vn{u}_{X^{1,b}} + \vn{\pt_t u}_{X^{0,b}} $ and let $ \chi $ be a bump function. Then, for $ 0 \leq b \leq 1  $ one has
$
\vn{ \chi u}_{\mathcal{X}^{1,b} } \ls \vn{ u}_{\mathcal{X}^{1,b} } $
\end{lemma}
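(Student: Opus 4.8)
The plan is to reduce the statement to the single physical-space multiplier bound
\[
\vn{\chi v}_{X^{s,b}} \ls \vn{v}_{X^{s,b}}, \qquad s\in\{0,1\},\ \ 0\leq b\leq 1,
\]
valid for any bump function $\chi=\chi(t,x)$ (a purely spatial cutoff being a special case of the argument below), and then to read off the $\mathcal{X}^{1,b}$ estimate via the Leibniz rule. Granting this bound, $\vn{\chi u}_{X^{1,b}}\ls\vn{u}_{X^{1,b}}$ is immediate; and since $\pt_t(\chi u)=(\pt_t\chi)u+\chi\,\pt_t u$, with $\pt_t\chi$ again a bump function, applying the bound with $s=0$ gives
\[
\vn{\pt_t(\chi u)}_{X^{0,b}}\leq \vn{(\pt_t\chi)u}_{X^{0,b}}+\vn{\chi\,\pt_t u}_{X^{0,b}}\ls \vn{u}_{X^{0,b}}+\vn{\pt_t u}_{X^{0,b}}\leq \vn{u}_{X^{1,b}}+\vn{\pt_t u}_{X^{0,b}},
\]
where the last step uses $\jb{\xi}^0\leq\jb{\xi}$. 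Adding the two displays yields $\vn{\chi u}_{\mathcal{X}^{1,b}}\ls\vn{u}_{\mathcal{X}^{1,b}}$, which is the assertion.

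For the core bound I would argue on the Fourier side, where multiplication by $\chi$ becomes convolution of $\mathcal{F}v$ with $\mathcal{F}\chi$. The key is a pointwise \emph{frequency-transfer} estimate comparing the $X^{s,b}$ weight at the output frequency $(\tau,\xi)$ with the weight at the input frequency $(\sigma,\eta)$, at the cost of a polynomial factor in the transferred frequency $(\tau-\sigma,\xi-\eta)$. Using the reverse triangle inequalities $\vm{\,\vm{\tau}-\vm{\sigma}\,}\leq\vm{\tau-\sigma}$ and $\vm{\,\vm{\eta}-\vm{\xi}\,}\leq\vm{\xi-\eta}$ together with $\jb{a+b}\ls\jb{a}\jb{b}$, one obtains $\jb{\vm{\tau}-\vm{\xi}}\ls\jb{\tau-\sigma}\jb{\,\vm{\sigma}-\vm{\eta}\,}\jb{\xi-\eta}$; raising to the power $b\in[0,1]$ (monotonicity of $x\mapsto x^b$, which needs $b\geq 0$) and then crudely bounding $\jb{\tau-\sigma}^b\jb{\xi-\eta}^b\leq\jb{\tau-\sigma}\jb{\xi-\eta}$ (using $b\leq 1$) gives $\jb{\vm{\tau}-\vm{\xi}}^b\ls\jb{\tau-\sigma}\jb{\xi-\eta}\jb{\,\vm{\sigma}-\vm{\eta}\,}^b$. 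Combining with $\jb{\xi}^s\ls\jb{\xi-\eta}^s\jb{\eta}^s\leq\jb{\xi-\eta}\jb{\eta}^s$ (valid since $0\leq s\leq 1$) produces
\[
\jb{\xi}^s\jb{\vm{\tau}-\vm{\xi}}^b\ls \jb{\tau-\sigma}\jb{\xi-\eta}^2\cdot\jb{\eta}^s\jb{\,\vm{\sigma}-\vm{\eta}\,}^b .
\]

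Inserting this into the convolution representation of $\mathcal{F}(\chi v)$ and taking absolute values, $\jb{\xi}^s\jb{\vm{\tau}-\vm{\xi}}^b\vm{\mathcal{F}(\chi v)}$ is dominated pointwise by the convolution of $G(\tau',\xi')\defeq\jb{\tau'}\jb{\xi'}^2\vm{\mathcal{F}\chi(\tau',\xi')}$ with $\jb{\eta}^s\jb{\,\vm{\sigma}-\vm{\eta}\,}^b\vm{\mathcal{F}v(\sigma,\eta)}$. Since $\chi$ is a bump function, $\mathcal{F}\chi$ is Schwartz (if $\chi$ depends only on $x$, $\mathcal{F}\chi$ is a Schwartz function of $\xi$ tensored with $\delta_0$ in $\tau$), so $G\in L^1$ (resp.\ $G$ is a finite measure), and Young's convolution inequality gives $\vn{\chi v}_{X^{s,b}}\ls\vn{G}_{L^1}\vn{v}_{X^{s,b}}$, completing the core estimate and hence the proof. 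There is no genuine obstacle here; the only point requiring care is the elementary weight-transfer inequality above, with $b\geq 0$ entering through the monotonicity step and $b\leq 1$ keeping the transferred weights at integer powers that are harmlessly absorbed by the rapid decay of $\mathcal{F}\chi$ (an alternative route would be to verify the endpoints $b=0,1$ directly and interpolate, but the direct argument is cleaner and uniform in $b$).
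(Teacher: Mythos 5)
Your proposal is correct, but it proves the lemma by a genuinely different route than the paper. The paper first observes that $\vn{u}_{\mathcal{X}^{1,b}}$ is equivalent to $\vn{\jb{\vm{\tau}+\vm{\xi}}\jb{\vm{\tau}-\vm{\xi}}^b\mathcal{F}u}_{L^2_{\tau,\xi}}$, notes that at the endpoints $b=0$ and $b=1$ this is comparable to the physical-space quantities $\vn{u}_{L^2}+\vn{\nabla_{t,x}u}_{L^2}$, respectively the same plus $\vn{\Box u}_{L^2}$, for which boundedness of multiplication by a cutoff is just the Leibniz/chain rule, and then obtains the full range $0\le b\le 1$ by complex interpolation (citing Gr\"unrock's interpolation lemma). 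You instead prove the multiplier bound $\vn{\chi v}_{X^{s,b}}\ls\vn{v}_{X^{s,b}}$ directly on the Fourier side: the weight-transfer inequality $\jb{\xi}^s\jb{\vm{\tau}-\vm{\xi}}^b\ls\jb{\tau-\sigma}\jb{\xi-\eta}^2\jb{\eta}^s\jb{\vm{\sigma}-\vm{\eta}}^b$ (valid for $0\le s,b\le1$, and in the purely spatial case with the $\tau$-variable collapsed to a delta) reduces everything to Young's inequality against the rapidly decaying $\mathcal{F}\chi$, and the $\pt_t$ component of $\mathcal{X}^{1,b}$ is then handled by the Leibniz rule with $s=0$. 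Both arguments are sound. Yours is self-contained, uniform in $b$ (in fact it works for all $b\ge 0$ since the transferred powers are absorbed by the Schwartz decay of $\mathcal{F}\chi$), and sidesteps the interpolation of these weighted spaces; the paper's is shorter modulo the cited interpolation lemma and makes transparent why the endpoints are the classical norms $\nabla_{t,x}u$ and $\Box u$. No gaps in your argument.
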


\begin{proof}
It is easily seen that an equivalent norm for $ \mathcal{X}^{1,\tht} $ is 
$$
 \vn{ \jb{\vm{\tau} + \vm{\xi}} \jb{\vm{\tau} - \vm{\xi}}^b \mathcal{F} u (\tau,\xi) }_{L^2_{\tau,\xi}}
$$
When $ b=0 $, respectively $ b=1 $ this is equivalent to 
$$
\vn{u}_{L^2_{t,x}} + \vn{\nabla_{t,x} u}_{L^2_{t,x}}, \quad \text{respectively} \quad \vn{u}_{L^2_{t,x}} + \vn{\nabla_{t,x} u}_{L^2_{t,x}}+ \vn{\Box u}_{L^2_{t,x}}
$$
and then the property follows from the chain rule. The general case follows from these by complex interpolation, see \cite[Lemma 1.4]{grunrock2002new}. 
\end{proof}

\subsubsection{$ X^{s,b} $ and $ X^{r,\theta}_{\tau=0} $ energy estimates}

Suppose $ t_0 \in J $ with $ \vm{J} \ls 1 $ and $ b, \tht \in (\frac{1}{2},1) $, $ s, r \in \mb{R} $. If $ \Box u =F $ then
\be \label{XEnEst1}
\vn{u}_{X^{s,b}_J} + \vn{\pt_t u}_{ X^{s-1,b}_J} \ls \vn{u[t_0]}_{H^s \times H^{s-1}} + \vn{F}_{ X^{s-1,b-1}_J}
\ee
If $ \pt_v = G $ then 
\be \label{XEnEst2}
\vn{v}_{X^{r,\theta}_{\tau=0},J} \ls \vn{v(t_0)}_{H^r} + \vn{G}_{X^{r,\theta-1}_{\tau=0},J}
\ee

\subsubsection{Strichartz estimates} We recall
\be \label{Strichartzz}
X^{s, \frac{1}{2}+} \subseteq L_{t}^{q} L_{x}^{r} 
\ee
when $ q \in (2, \infty] $, $ s \geq 0 $ and 
$  \frac{1}{q}+\frac{1}{r} \leq \frac{1}{2}, \quad \frac{1}{q}+\frac{3}{r} \geq \frac{3}{2}-s. 
$ Moreover, we will use the following Bilinear Strichartz estimate (see e.g. \cite{klainerman1999optimal}):
\be \label{BilStrichartz}
D^{-\left(\frac{1}{2}-\right)}\left(X^{\frac{1}{4} \cdot \frac{1}{2}+} \times X^{\frac{1}{4}, \frac{1}{2}+} \right) \subseteq L_{t}^{2} L_{x}^{2}
\ee

%%%%%%%%%%%%%%%%%%%%%%%%%%%%%%%%%%%%%%%
%%%%%%%%%%%%%%%%%%%%%%%%%%%%%%%%%%%%%%%
%%%%%%%%%%%%%%%%%%%%%%%%%%%%%%%%%%%%%%%
%%%%%%%%%%%%%%%%%%%%%%%%%%%%%%%%%%%%%%%
%%%%%%%%%%%%%%%%%%%%%%%%%%%%%%%%%%%%%%%
%%%%%%%%%%%%%%%%%%%%%%%%%%%%%%%%%%%%%%%
%%%%%%%%%%%%%%%%%%%%%%%%%%%%%%%%%%%%%%%
%%%%%%%%%%%%%%%%%%%%%%%%%%%%%%%%%%%%%%%
%%%%%%%%%%%%%%%%%%%%%%%%%%%%%%%%%%%%%%%
%%%%%%%%%%%%%%%%%%%%%%%%%%%%%%%%%%%%%%%
%%%%%%%%%%%%%%%%%%%%%%%%%%%%%%%%%%%%%%%
%%%%%%%%%%%%%%%%%%%%%%%%%%%%%%%%%%%%%%%
%%%%%%%%%%%%%%%%%%%%%%%%%%%%%%%%%%%%%%%
%%%%%%%%%%%%%%%%%%%%%%%%%%%%%%%%%%%%%%%

\section{Local well-posedness for Yang-Mills}  \label{SecLWPYM}

\subsection{Approximation by regular initial data sets} \ 

We begin with a $ H^{\sg} \times H^{\sigma-1} $ approximation result, recalling that initial data sets for Yang-Mills have to satisfy the Gauss equation \eqref{Constraint} in order to be admissible. 
The similar statement for $ H^1 \times L^2 $ appears in \cite[Proposition 1.2]{klainerman1995finite}.

\begin{proposition} \label{ApproxID}
Any initial data $ (A_i, E_i) \in H^{\sg} \times H^{\sigma-1} $ satisfying the constraint equation  \eqref{Constraint} can be approximated in $ H^{\sg} \times H^{\sigma-1} $ by a sequence of regular initial data $ (A_i^n, E_i^n) \in H^{\infty} \times H^{\infty}  $  satisfying the constraint equation \eqref{Constraint}. Moreover, $ A_i^n $ can be chosen compactly supported.  
\end{proposition}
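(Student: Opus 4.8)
The plan is to mollify and truncate $A$ and $E$ separately and then \emph{correct} $E$ by an elliptic term so that the covariant Gauss constraint \eqref{Constraint} is restored exactly. First I would set $A_i^n \defeq \rho_{1/n} * \big(\chi(\cdot/n)\, A_i\big)$, where $\rho_\ep$ is a standard mollifier and $\chi$ a smooth compactly supported cutoff equal to $1$ near the origin; then each $A_i^n$ is smooth and compactly supported, $A_i^n \in \dot{H}^{1/2}\cap H^\infty$, and $A_i^n \to A_i$ in $H^\sg$. Next I would set $\tilde E_i^n \defeq \rho_{1/n} * E_i \in H^\infty$, so $\tilde E_i^n \to E_i$ in $H^{\sg-1}$. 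The pair $(A_i^n,\tilde E_i^n)$ need not satisfy \eqref{Constraint}; using $\pt^\ell E_\ell = -[A^\ell,E_\ell]$ its defect can be written as
\be
w^n \defeq \pt^\ell \tilde E^n_\ell + [A^{n,\ell},\tilde E^n_\ell] = \pt^\ell(\tilde E^n-E)_\ell + \big([A^{n,\ell},\tilde E^n_\ell] - [A^\ell,E_\ell]\big).
\ee
The first term tends to $0$ in $H^{\sg-2}$, and by the bilinear estimates \eqref{Sbvone} and \eqref{SobMult} (the product map on $H^\sg\times H^{\sg-1}$ is continuous both into $\dot{H}^{-1}$ and, for $\sg>\tfrac12$, into $H^{\sg-2}$) the commutator difference tends to $0$ in $\dot{H}^{-1}\cap H^{\sg-2}$. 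Hence $w^n\to 0$ in $\dot{H}^{-1}$ and in $H^{\sg-2}$.

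Then I would correct: let $\phi^n$ solve the covariant Poisson equation $\Delta_{A^n}\phi^n = -w^n$ using Proposition \ref{EllipticBdd} (applicable since $A^n\in\dot{H}^{1/2}\cap H^\infty$), and set $E^n_i \defeq \tilde E^n_i + \mathbf{D}^n_i\phi^n$. Since $\Delta_{A^n}=\mathbf{D}^{n,\ell}\mathbf{D}^n_\ell$, the covariant divergence of $E^n$ is $w^n + \Delta_{A^n}\phi^n = 0$, so $(A^n,E^n)$ satisfies \eqref{Constraint} exactly; elliptic regularity with the smooth, compactly supported coefficient $A^n$ gives $\phi^n\in H^\infty_{\mathrm{loc}}$ with the appropriate decay at infinity, hence $E^n\in H^\infty$. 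For convergence $E^n\to E$ in $H^{\sg-1}$ it suffices that $\mathbf{D}^n\phi^n\to 0$ in $H^{\sg-1}$: splitting $w^n = P_{<1}w^n + P_{\ge 1}w^n$, I would apply Proposition \ref{EllipticBdd} with $\rho=1$ to the low-frequency piece (using $\|P_{<1}w^n\|_{\dot{H}^{-1}}\to 0$) and with $\rho=\sg$ to the high-frequency piece (using $\|P_{\ge 1}w^n\|_{\dot{H}^{\sg-2}}\ls\|P_{\ge 1}w^n\|_{H^{\sg-2}}\to 0$); this yields $\|\phi^n\|_{\dot{H}^1} + \|\phi^n\|_{\dot{H}^\sg}\to 0$. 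The Sobolev product estimates \eqref{SobMultHomog} (with $A^n$ bounded in $\dot{H}^{1/2}\cap L^3$, uniformly in $n$, since $A\in H^\sg$ for $\sg>\tfrac12$) then give $\|\mathbf{D}^n\phi^n\|_{L^2}\ls\|\phi^n\|_{\dot{H}^1}$ and $\|\mathbf{D}^n\phi^n\|_{\dot{H}^{\sg-1}}\ls\|\phi^n\|_{\dot{H}^\sg}$, so $\mathbf{D}^n\phi^n\to 0$ in $L^2 + \dot{H}^{\sg-1}\subseteq H^{\sg-1}$ (valid for $\sg\le 1$; for $\sg\ge 1$ the argument only simplifies). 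Hence $E^n\to E$ in $H^{\sg-1}$.

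The main obstacle is exactly that the constraint \eqref{Constraint} is nonlinear, so independent mollification breaks it and a genuine elliptic correction is unavoidable; once Proposition \ref{EllipticBdd} is in hand this is routine, but two points require care. First, the homogeneous-versus-inhomogeneous Sobolev bookkeeping at the negative regularity $\sg-2$: one cannot simply feed $w^n\to 0$ in $\dot H^{-1}\cap H^{\sg-2}$ into a single application of Proposition \ref{EllipticBdd}, which is why the low/high frequency split with two different choices of $\rho$ is used. Second, the order of operations: $A$ must be cut off to compact support \emph{before} solving for $\phi^n$, so that the correction is performed with respect to the final $A^n$ and the constraint $\mathbf{D}^{n,\ell}E^n_\ell=0$ holds for the pair actually produced.
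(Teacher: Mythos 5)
Your proposal is correct and follows essentially the same route as the paper's proof: mollify the data, express the failure of the Gauss constraint through the constraint satisfied by $(A,E)$, and restore it exactly via the covariant elliptic correction $E^n_i=\tilde E^n_i+\mathbf{D}^n_i\phi^n$ with $\Delta_{A^n}\phi^n$ equal to minus the defect, invoking Proposition \ref{EllipticBdd} at the two regularities $\rho=1$ and $\rho=\sigma$ (which the paper packages as convergence of the defect in $\dot H^{-1}+\dot H^{\sigma-2}$, obtained by a duality pairing, rather than your low/high frequency split) and then the Sobolev product estimates to get $\mathbf{D}^n\phi^n\to 0$ in $L^2+\dot H^{\sigma-1}=H^{\sigma-1}$. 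One phrasing nit: the full defect $w^n$ need not tend to $0$ in $\dot H^{-1}$, since $\partial^\ell(\tilde E^n-E)_\ell$ is only controlled in $H^{\sigma-2}$ at high frequencies, but this is harmless because your argument only uses $P_{<1}w^n\to 0$ in $\dot H^{-1}$ and $P_{\ge 1}w^n\to 0$ in $\dot H^{\sigma-2}$, both of which do hold.
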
 

\begin{proof}
Begin with a sequence of smooth compactly supported sequence $ (A^n, \tilde{E}^n) $ converging to $ (A,E) $ in $ H^{\sg} \times H^{\sigma-1} $. Denote the covariant derivatives by $ D^{n} \defeq \pt + [A^n, \cdot] $.  
Using \eqref{Constraint}  and writing, for any $ f \in H^{1-\sg} \dot{H}^1= \dot{H}^1 \cap  \dot{H}^{2-\sg} $ 
$$
\lng D^{n,i} \tilde{E}^n_i  , f \rng = \lng E_i - \tilde{E}^n_i, \partial^i f \rng + \lng [A^n - A, \tilde{E}^n ], f \rng + \lng [A, \tilde{E}^n - E ], f \rng
$$
one obtains $ D^{n,i} \tilde{E}^n_i \to 0 $ in $ H^{\sg-1} \dot{H}^{-1} = \dot{H}^{-1} +  \dot{H}^{\sg-2} $ after using $ H^{\sg} \times  H^{1-\sg} \dot{H}^1 \to H^{1-\sg} $, which is a version of \eqref{SobMult}. 

Define $ E^n_i = \tilde{E}^n_i + D^n_i \phi^n $ which will satisfy \eqref{Constraint} provided we choose 
$$
\Delta_{A^n} \phi^n = -  D^{n,i} \tilde{E}^n_i  
$$
It remains to show $ D^n_i \phi^n \to 0 $ in $ H^{\sigma-1}= L^2 + \dot{H}^{\sigma-1}  $. 
By Proposition \ref{EllipticBdd} we obtain $ \phi^n \to 0 $ in $  \dot{H}^{1} +  \dot{H}^{\sg} $ so it remains to use the following easy version of \eqref{SobMult} 
$$ 
\vn{ [A^n, \phi^n]}_{H^{\sigma-1}} \ls \vn{A^n}_{H^{\sg}} \vn{\phi^n}_{  \dot{H}^{1} +  \dot{H}^{\sg}} \to 0 
$$
\end{proof}

\subsection{Square summability of fractional Sobolev spaces} \ 

We will need the following square-summability property of $ W^{s,2} $ norms.  

\begin{proposition} \label{PropsqSumWs2}
Let $ (B_j)_j $ be a finitely overlapping uniform covering of $ \mb{R}^n $ by balls of radius $ \simeq 1 $ and let  $ s \in (-1,1) $. Then, for any $ u \in  W^{s,2}(\mb{R}^n ) $  one has 
\be \label{sqSumWs2}
\sum_j \vn{u}_{W^{s,2}( B_j)}^2 \simeq \vn{u}_{ W^{s,2}(\mb{R}^n ) }^2.
\ee
\end{proposition}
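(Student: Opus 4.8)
The plan is to treat the three cases $s \in \{0\}$, $s \in (0,1)$, and $s \in (-1,0)$ separately, as the structure of the $W^{s,2}$ norm is genuinely different in each. The case $s=0$ is immediate: since $(B_j)_j$ is a finitely overlapping uniform cover, $\sum_j \int_{B_j} |u|^2 \simeq \int_{\mb{R}^n} |u|^2$, the implicit constant depending only on the overlap multiplicity and on a lower bound for how much of a neighborhood of each point is covered. For $s \in (0,1)$, recall that $\vn{u}_{W^{s,2}(B_j)}^2 = \vn{u}_{L^2(B_j)}^2 + \vm{u}_{\dot W^{s,2}(B_j)}^2$ with the Gagliardo seminorm from \eqref{Gagliardoseminorm}, and similarly for $D = \mb{R}^n$ using \eqref{HsCoincide}.

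For the inequality $\sum_j \vn{u}_{W^{s,2}(B_j)}^2 \ls \vn{u}_{W^{s,2}(\mb{R}^n)}^2$ with $s \in (0,1)$: the $L^2$ part is handled as in the $s=0$ case. For the Gagliardo part, for each $j$ we have $B_j \times B_j \subseteq \{(x,y) : |x-y| \le C\} \cap (\mb{R}^n \times \mb{R}^n)$ since the $B_j$ have radius $\simeq 1$, so $\vm{u}_{\dot W^{s,2}(B_j)}^2 \le \int_{B_j}\int_{B_j} \tfrac{|u(x)-u(y)|^2}{|x-y|^{n+2s}}\dd x\dd y$ and summing in $j$ gives, by finite overlap, a bound by $C \int\int_{|x-y|\le C} \tfrac{|u(x)-u(y)|^2}{|x-y|^{n+2s}}\dd x\dd y \le C\vm{u}_{\dot W^{s,2}(\mb{R}^n)}^2$. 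The reverse inequality $\vn{u}_{W^{s,2}(\mb{R}^n)}^2 \ls \sum_j \vn{u}_{W^{s,2}(B_j)}^2$ is the part requiring care: one must split $\int\int_{\mb{R}^n\times\mb{R}^n} \tfrac{|u(x)-u(y)|^2}{|x-y|^{n+2s}}$ into the near-diagonal region $|x-y| \le R$ (with $R$ a fixed constant larger than the ball diameter), which is controlled by $\sum_j \vm{u}_{\dot W^{s,2}(\tilde B_j)}^2$ for suitably enlarged but still finitely overlapping balls $\tilde B_j$ — itself reduced back to the $B_j$ by a standard enlargement/comparability argument — and the far region $|x-y| > R$, where $\tfrac{1}{|x-y|^{n+2s}}$ is integrable in one variable, so that region is bounded by $C\vn{u}_{L^2(\mb{R}^n)}^2 \simeq C\sum_j \vn{u}_{L^2(B_j)}^2$. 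I would invoke the enlargement comparability $\vm{u}_{\dot W^{s,2}(\tilde B)} \simeq \vm{u}_{\dot W^{s,2}(B)}$-type bounds via the extension property quoted from \cite{di2012hitchhiker?s} right after \eqref{cutoffWs2}, or prove it directly by a chaining argument.

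For $s \in (-1,0)$, I would argue by duality using $W^{s,2}(D) = W^{-s,2}_0(D)^*$. Fix a partition of unity $(\chi_j)_j$ subordinate to $(B_j)_j$ with $\sum_j \chi_j = 1$ and $\vn{\chi_j}_{C^{0,1}} \ls 1$; by \eqref{cutoffWs2} (applied to $-s \in (0,1)$) multiplication by $\chi_j$ is bounded on $W^{-s,2}$. Then for $u \in W^{s,2}(\mb{R}^n)$ and a test function $\varphi \in W^{-s,2}(\mb{R}^n)$, write $\langle u, \varphi\rangle = \sum_j \langle u, \chi_j \varphi\rangle = \sum_j \langle \chi_j u, \varphi\rangle$ and estimate each term by $\vn{u}_{W^{s,2}(B_j)}\vn{\chi_j\varphi}_{W^{-s,2}_0(B_j)}$, then Cauchy–Schwarz in $j$ together with the $s>0$ case already proven applied to $\varphi$ (square summing $\sum_j \vn{\chi_j \varphi}_{W^{-s,2}(B_j)}^2 \ls \vn{\varphi}_{W^{-s,2}(\mb{R}^n)}^2$). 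This gives $\vn{u}_{W^{s,2}(\mb{R}^n)} \ls (\sum_j \vn{u}_{W^{s,2}(B_j)}^2)^{1/2}$. The reverse direction is dual: given $\varphi_j \in W^{-s,2}_0(B_j)$ realizing each local norm, assemble $\varphi = \sum_j c_j \varphi_j$ for appropriate $\ell^2$ coefficients $c_j$ and pair with $u$, again using the $s>0$ square-summability for the $W^{-s,2}_0$ side. The main obstacle is precisely this $s<0$ case — keeping the duality bookkeeping honest, in particular verifying that $\chi_j \varphi \in W^{-s,2}_0(B_j)$ (compact support inside $B_j$) with norm controlled by $\vn{\varphi}_{W^{-s,2}(B_j)}$, and that the local-to-global square summability for the positive exponent $-s$ can legitimately be fed back in; everything else is routine once the near/far diagonal splitting for $s \in (0,1)$ is set up.
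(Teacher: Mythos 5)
Your overall route coincides with the paper's: the $s=0$ case by finite overlap, the $s\in(0,1)$ case by splitting the Gagliardo double integral \eqref{Gagliardoseminorm} into a near-diagonal and a far region (the far region bounded via $|u(x)-u(y)|\le |u(x)|+|u(y)|$ and the $L^2(B_j)$ norms), and the $s\in(-1,0)$ case by duality with a Lipschitz partition of unity, the cutoff bound \eqref{cutoffWs2}, and the already-proven positive-exponent case — including the converse direction, obtained by writing the $\ell^2$ sum as a supremum over coefficients and assembling a global test function out of the local optimizers $\varphi_j\in W^{-s,2}_0(B_j)$. This is exactly the structure of the paper's proof.

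The one step that does not survive as written is your near-diagonal reduction for $s\in(0,1)$: the single-ball ``enlargement comparability'' $\vm{u}_{\dot{W}^{s,2}(\tilde{B})}\simeq \vm{u}_{\dot{W}^{s,2}(B)}$ for nested balls $B\subset\tilde{B}$ is false (take $u$ constant on $B$ and oscillating wildly on $\tilde{B}\setminus B$), and the extension property from \cite{di2012hitchhiker?s} cannot deliver it, because the extension of $u\restriction_B$ need not agree with $u$ on $\tilde{B}\setminus B$. What is true, and what your ``chaining argument'' would actually have to establish, is the summed statement $\sum_j \vm{u}^2_{\dot{W}^{s,2}(\tilde{B}_j)}\ls \sum_j \vn{u}^2_{W^{s,2}(B_j)}$, which must use the full $W^{s,2}$ norms of all the original balls $B_{j'}$ meeting $\tilde{B}_j$, and whose proof again reduces to a near/far splitting inside each $\tilde{B}_j$. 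The paper sidesteps the enlargement detour entirely by exploiting the Lebesgue-number property of the uniform cover: choosing $\delta$ small enough that any pair with $|x-y|\le\delta$ lies in a common ball $B_j$, the near-diagonal region of \eqref{Gagliardoseminorm} is bounded directly by $\sum_j\vm{u}^2_{\dot{W}^{s,2}(B_j)}$, while the region $|x-y|>\delta$ is handled exactly as you propose. With that adjustment (or with the summed chaining statement proved honestly) your argument is complete and in substance identical to the paper's; the $s\in(-1,0)$ duality part needs no change.
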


\begin{proof}
When $ s=0 $ the property is obvious. Suppose first that $ s \in (0,1) $. Clearly, from \eqref{Gagliardoseminorm} 
$$
\sum_j \vm{u}_{\dot{W}^{s,2}(B_j)}^2 \ls \vm{u}_{\dot{W}^{s,2}(\mb{R}^n)  }^2.
$$
Conversely, by splitting the $\mathbb{R}^{n} \times \mathbb{R}^{n}$ integral in \eqref{Gagliardoseminorm}  into regions where there exists $j$ such that both $x, y \in B_{j}$ and regions where $|x-y|>\delta$ 
(where we bound $|u(x)-u(y)| \leq|u(x)|+|u(y)|$ and use the $L^{2}(B_{j})$ norms), we obtain
$$
|u|_{\dot{W}^{s,2}\left(\mathbb{R}^{n}\right)}^{2} \lesssim \sum_{j}\|u\|_{W^{s,2}\left(B_{j}\right)}^{2}.
$$
Now assume $ s\in (-1,0) $. Let $ (\psi_j)_j $ be a partition of unity associated to $ (B_j)_j $. 

Let $ u $ be a tempered distribution and let $ \phi $ be a Schwartz function. Using \eqref{cutoffWs2} and what we already proved
\begin{align*}
| \lng  u, \phi \rng | & \ls \sum_j \vn{u}_{W^{s,2}\left(B_{j}\right)}  \vn{\psi_j \phi}_{W^{-s,2}_0(B_j)}\\
& \ls  \big( \sum_j \vn{u}_{W^{s,2}\left(B_{j}\right)}^2 \big)^{\frac{1}{2}} \vn{\phi}_{ W^{-s,2}(\mb{R}^n) }
\end{align*}
This shows  RHS \eqref{sqSumWs2} $\ls $ LHS \eqref{sqSumWs2}. Conversely, 
\begin{align*}
LHS \eqref{sqSumWs2} ^{\frac{1}{2}} & = \sup_{\vn{c}_{\ell^2} \leq 1} \sum_j c_j  \vn{u}_{W^{s,2}( B_j)} = \sup_{\vn{c}_{\ell^2} \leq 1} \sum_j \sup_{\vn{ \phi_j }_{W^{-s,2}_0(B_j) } \leq 1}   c_j  \lng u, \phi_j \rng \\
& \ls \sup_{\vn{\phi}_{ W^{-s,2}(\mb{R}^n) } \leq 1}
 \lng u, \phi \rng =  \vn{u}_{ W^{s,2}(\mb{R}^n ) } = RHS \eqref{sqSumWs2} ^{\frac{1}{2}}
\end{align*}
\end{proof}

\subsection{Initial data extension} \

We recall the following result from \cite{oh2019hyperbolic}, which we state only for a ball $ B $ of radius $ \simeq 1$. 

\begin{theorem}[\cite{oh2019hyperbolic} - Theorem 5.4.] \label{ExtensionHrho}
Let $ \rho \in (\frac{1}{2},\frac{5}{2}) $ and let $ (\bar{A}_{x}, \bar{E}_{x}) \in H^{\rho}(B) \times H^{\rho-1}(B) $ satisfy the Gauss equation    \eqref{Constraint}. 
If $ \vn{\bar{A}_x}_{H^{\frac{1}{2}}(B)} \leq \ep $ for a sufficiently small $ \ep>0 $, then there exist 
$ (\tilde{A}_{x}, \tilde{E}_{x}) \in H^{\rho}(\mb{R}^3 ) \times H^{\rho-1}(\mb{R}^3) $ which satisfies the Gauss equation \eqref{Constraint}, which coincides with $ (\bar{A}_{x}, \bar{E}_{x}) $ on $ B $ such that 
$$
\vn{ (\tilde{A}_{x}, \tilde{E}_{x}) }_{ H^{\rho} \times H^{\rho-1}} \ls \vn{ (\bar{A}_{x}, \bar{E}_{x})   }_{  H^{\rho}(B) \times H^{\rho-1}(B)  }
$$ 
and such that the map $   (\bar{A}_{x}, \bar{E}_{x})  \mapsto (\tilde{A}_{x}, \tilde{E}_{x}) $ is locally Lipschitz continuous. If $  (\bar{A}_{x}, \bar{E}_{x}) $ is smooth then so is $ (\tilde{A}_{x}, \tilde{E}_{x}) $. 
\end{theorem}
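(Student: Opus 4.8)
The plan is to reduce the problem to solving the Gauss constraint for a \emph{small correction supported away from $ B $}, so that the extension is left untouched on $ B $.

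First I would extend $ \bar{A}_{x} $ and $ \bar{E}_{x} $ separately, ignoring \eqref{Constraint}, by any bounded linear Sobolev extension operator for the ball $ B $; composing with a fixed cutoff equal to $ 1 $ on $ B $, the extensions $ \tilde{A}_{x}, E^{(0)}_{x} $ may be taken compactly supported, coinciding with $ (\bar{A}_{x}, \bar{E}_{x}) $ on $ B $, with $ \vn{\tilde{A}_{x}}_{H^{\rho}} + \vn{E^{(0)}_{x}}_{H^{\rho-1}} \ls \vn{(\bar{A}_{x}, \bar{E}_{x})}_{H^{\rho}(B) \times H^{\rho-1}(B)} $ and $ \vn{\tilde{A}_{x}}_{\dot{H}^{1/2}} \ls \ep $; linearity makes these maps automatically locally Lipschitz and smoothness-preserving. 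The defect of the constraint for $ (\tilde{A}_{x}, E^{(0)}_{x}) $, namely $ w_{0} \defeq \pt^{\ell} E^{(0)}_{\ell} + [\tilde{A}^{\ell}, E^{(0)}_{\ell}] $, then has two key properties: it is supported in $ \mb{R}^{3} \setminus \mathrm{int}(B) $, because on $ \mathrm{int}(B) $ all fields agree with the original data for which \eqref{Constraint} holds and which enters through a local operator; and $ w_{0} \in H^{\rho-2} $ with $ \vn{w_{0}}_{H^{\rho-2}} \ls \vn{(\bar{A}_{x}, \bar{E}_{x})}_{H^{\rho}(B) \times H^{\rho-1}(B)} $, where the commutator is estimated by \eqref{SobMult} (or, to gain a factor of $ \ep $, by the homogeneous law \eqref{SobMultHomog} with $ \tfrac{1}{2} + (\rho-1) = (\rho-2) + \tfrac{3}{2} $), admissible precisely because $ \rho \in (\tfrac{1}{2}, \tfrac{5}{2}) $.

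The core of the proof is then to construct a correction $ v_{x} $ supported in $ \mb{R}^{3} \setminus \mathrm{int}(B) $ solving $ \pt^{\ell} v_{\ell} + [\tilde{A}^{\ell}, v_{\ell}] = -w_{0} $ on $ \mb{R}^{3} $, with $ \vn{v_{x}}_{H^{\rho-1}} \ls \vn{w_{0}}_{H^{\rho-2}} $; setting $ \tilde{E}_{x} \defeq E^{(0)}_{x} + v_{x} $ one then has $ \mathbf{D}^{\ell} \tilde{E}_{\ell} = 0 $ on $ \mb{R}^{3} $, $ \tilde{E}_{x} = \bar{E}_{x} $ on $ B $, and the stated norm bound. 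To build $ v_{x} $ I would first fix a right inverse $ \mathcal{R} $ of the flat divergence adapted to the exterior of $ B $ — a Bogovskii-type operator mapping $ H^{\rho-2} \to H^{\rho-1} $, with output supported in $ \mb{R}^{3} \setminus \mathrm{int}(B) $ and with vanishing trace on $ \pt B $ — after splitting off from $ w_{0} $ an explicit smooth field carrying the (possibly nonzero) net flux through $ \pt B $, so that the remaining source has zero mean on the annular region. I would then solve the full equation by the Picard iteration $ v_{n+1} = -\mathcal{R}\big( w_{0} + [\tilde{A}^{\ell}, v_{n,\ell}] \big) $, which contracts since $ v \mapsto [\tilde{A}^{\ell}, v_{\ell}] $ is bounded $ H^{\rho-1} \to H^{\rho-2} $ with operator norm $ \ls \ep $ by \eqref{SobMultHomog}. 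Local Lipschitz dependence of $ (\tilde{A}_{x}, \tilde{E}_{x}) $ on $ (\bar{A}_{x}, \bar{E}_{x}) $ follows because every ingredient is linear except this uniformly contractive iteration, whose fixed point is Lipschitz in the data; smoothness is preserved since the extension operators, $ \mathcal{R} $ and the iteration all send smooth inputs to smooth outputs — when the data is smooth, $ w_{0} $ is smooth and vanishes to infinite order on $ B $, so $ \mathcal{R} w_{0} $ is smooth up to $ \pt B $ — and the iteration converges in every $ H^{m} $.

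I expect the main obstacle to be the construction of $ \mathcal{R} $ at low regularity. When $ \rho $ is close to $ \tfrac{1}{2} $ the source $ w_{0} $ lives in $ H^{\rho-2} $ with $ \rho-2 $ near $ -\tfrac{3}{2} $, so $ \mathcal{R} $ must gain a full derivative on such a negative-order space while keeping its output supported outside $ \mathrm{int}(B) $ and its trace on $ \pt B $ zero; since sharp cutoffs do not commute with $ \nabla \Delta^{-1} $, one cannot simply localize a global solution of the divergence equation, and must instead use a genuine Bogovskii operator on the bounded Lipschitz annulus between $ \pt B $ and a large sphere (invoking its mapping properties on negative Sobolev spaces and into the subspace with vanishing boundary trace), or iterate a ``cut off, then re-solve the smaller error'' scheme. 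The remaining steps are routine elliptic theory and the Sobolev product estimates recalled above.
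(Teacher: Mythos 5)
Your overall scheme is the same as the paper's: extend $\bar{A}_x$ and $\bar{E}_x$ by a universal Sobolev extension operator, note that the Gauss defect is supported outside $\mathrm{int}(B)$, and then add to the electric field an exterior-supported solution of the covariant divergence equation. This is exactly how Proposition \ref{Extension} is proved, with the correction produced by the solution operator $T_{\bar a}$ of Proposition \ref{SolveCovDiv}, whose three properties (boundedness $\dot{W}^{r-1,2}\to\dot{W}^{r,2}$ for $-\tfrac12<r<\tfrac32$, support preservation outside convex sets, smoothness preservation) deliver at once the range $\rho\in(\tfrac12,\tfrac52)$, the norm bound, the Lipschitz dependence and the regularity statement. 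Your product estimates via \eqref{SobMultHomog}, the smallness of $v\mapsto[\tilde A^\ell,v_\ell]$ in operator norm, and the Lipschitz/smoothness bookkeeping all match.

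The genuine gap is in your hand-built replacement for that operator. A Bogovskii operator on the bounded annulus between $\partial B$ and a large sphere, with vanishing trace on the whole boundary, requires the ($\mathfrak{g}$-valued) source to have zero mean over the annulus, and here $\int w_0 = \int [\tilde A^\ell, E^{(0)}_\ell]\,\dd x$ is generically nonzero in the nonabelian setting. Your proposed fix, splitting off ``an explicit smooth field carrying the net flux through $\partial B$,'' cannot work as stated: any corrector supported in the annulus with vanishing (normal) trace on its boundary has divergence of zero total integral, so it cannot remove a nonzero mean, while a corrector with nonzero flux through the outer sphere either produces a surface-layer divergence when extended by zero or must carry a tail all the way to spatial infinity --- at which point you have left the bounded-annulus framework and are, in effect, rebuilding the outward-supported solution operator of Proposition \ref{SolveCovDiv}, which has no compatibility condition precisely because its output is allowed to live on all of $\mb{R}^3\setminus \mathrm{int}(B)$. (One could instead let the commutator absorb the charge, which is only $O(\ep)$ by the $\dot H^{1/2}\times\dot H^{-1/2}$ pairing, but then the simple contraction and the uniform bound $\vn{v}_{H^{\rho-1}}\ls\vn{w_0}_{H^{\rho-2}}$ have to be reworked.) Since you yourself flag the construction of $\mathcal{R}$ as the main obstacle, note that this is exactly the content the paper imports as Proposition \ref{SolveCovDiv}; without it, or a corrected version of your annulus argument, the proof is not complete.
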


The proof is based from the following solvability result for the inhomogeneous Gauss equation.

\begin{proposition}[\cite{oh2019hyperbolic} - Prop. 4.2.] \label{SolveCovDiv}
Suppose $ a $ satisfies $\|a\|_{\dot{H}^{\frac{1}{2}}(\mathbb{R}^3)} \leq \epsilon_{*}$. Let $ K $ be a convex domain. If $ \epsilon_{*}>0 $ is small enough there exists a solution operator $T_{a}$ for the equation $ \mathbf{D}^{\ell} e_{\ell}=h $
with the following properties:

(1) $ \left\|T_{a} h\right\|_{\dot{W}^{r, p}} \lesssim_{K,r,p} \|h\|_{\dot{W}^{r-1, p}} $ holds for $2 \leq p<\infty$ and $1-\frac{3}{p}< r <\frac{3}{2}$.

(2)  If $h=0$ in $\lambda K$, then $T_{a} h=0$ in $\lambda K$.

(3)  If $ a $ and $h$ are smooth, so is $T_{a} h$.
\end{proposition}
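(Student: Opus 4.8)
The plan is to reduce $\mathbf{D}^\ell e_\ell = h$ to the flat equation $\partial^\ell e_\ell = h$ by a perturbative fixed-point argument built on a carefully chosen solution operator for the flat case. Note first that the ``minimal'' solution $e_\ell = \mathbf{D}_\ell \phi$ with $\Delta_a \phi = h$ — which Proposition~\ref{EllipticBdd} would hand us — is \emph{not} admissible here: inverting the covariant Laplacian destroys all support information, so property~(2) forces us to work with an explicit Bogovskii-type operator instead of a gradient.

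\emph{Step 1: the flat operator.} Fix a bump $\theta \geq 0$ with $\int \theta = 1$, supported in a small ball $B_0 \subset K$, and let $T_0$ be the associated Bogovskii operator, so that $\partial^\ell (T_0 h)_\ell = h$ and $T_0$ is given by an explicit kernel $K_\ell(x,y)$ of the form ``principal-value piece homogeneous of degree $-2$ in $x-y$ (of $\partial_\ell \Delta^{-1}$ type) $+$ faster-decaying remainders''. Three facts must be extracted. (i) $T_0 \colon \dot{W}^{r-1,p}(\mb{R}^3) \to \dot{W}^{r,p}(\mb{R}^3)$ is bounded for $2 \leq p < \infty$ and $1 - \tfrac{3}{p} < r < \tfrac{3}{2}$: this is a Calderón--Zygmund / fractional-integration estimate for the Bogovskii kernel, with the endpoints of the $r$-range and the passage to the homogeneous (rather than bounded-domain) setting being exactly where care is needed. (ii) $T_0$ maps smooth inputs to smooth outputs, immediate by differentiating the kernel. (iii) Support localisation: choosing the variant of the Bogovskii construction so that $K_\ell(x,y)$ is supported on configurations in which $y$ lies on a segment joining $x$ to a point of $B_0$, convexity of $\lambda K \supset B_0$ then forces $y \in \lambda K$ whenever $x \in \lambda K$; hence $h = 0$ on $\lambda K$ implies $T_0 h = 0$ on $\lambda K$.

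\emph{Step 2 and 3: perturbation and the three properties.} Seek $e$ with $e = T_0 h - T_0\big([a^\ell, e_\ell]\big)$, i.e. invert $\mathrm{Id} + T_0 \circ \mathrm{ad}_a$, where $\mathrm{ad}_a(e) \defeq [a^\ell, e_\ell]$. The key estimate is that pointwise multiplication against $a$ is bounded $\dot{W}^{r,p}(\mb{R}^3) \to \dot{W}^{r-1,p}(\mb{R}^3)$ with operator norm $\ls \|a\|_{\dot{H}^{1/2}}$ — a scaling-critical paraproduct bound whose precise range of validity is the stated $1 - \tfrac3p < r < \tfrac32$, using $\dot{H}^{1/2}(\mb{R}^3) \hookrightarrow L^3$. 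Composing with (i), $T_0 \circ \mathrm{ad}_a$ has norm $\ls_K \|a\|_{\dot{H}^{1/2}} \leq C_K \epsilon_*$ on $\dot{W}^{r,p}$, so for $\epsilon_*$ small enough it is a contraction and $T_a \defeq (\mathrm{Id} + T_0 \circ \mathrm{ad}_a)^{-1} T_0 = \sum_{n \geq 0} (-T_0 \circ \mathrm{ad}_a)^n T_0$ converges, with $\mathbf{D}^\ell (T_a h)_\ell = h$. Property~(1) is then the convergent geometric series of the bounds in (i). Property~(2) follows since multiplication by $a$ does not enlarge supports and $T_0$ satisfies (iii), so an induction on the partial sums shows each term — hence $T_a h$ — vanishes on $\lambda K$ when $h$ does. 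Property~(3): for smooth $a, h$ one already has $e = T_a h \in \dot{W}^{r,p}$; then $e = T_0(h - [a,e])$, and since $T_0$ gains a derivative and multiplication by the smooth $a$ preserves local regularity, a bootstrap pushes $e$ into $\dot{W}^{r+1,p}_{\mathrm{loc}}$, and iterating gives $e \in C^\infty$.

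\emph{Main obstacle.} The crux is Step~1: producing a single explicit operator that simultaneously obeys the homogeneous fractional-Sobolev bounds on the full range $1 - \tfrac3p < r < \tfrac32$ (a genuine Calderón--Zygmund computation, delicate at the endpoints and in the homogeneous setting on $\mb{R}^3$) and the convex support-localisation property~(iii); these two requirements constrain the kernel in somewhat opposite ways, and balancing them is the heart of the matter. After that, the only remaining technical point is the scaling-critical product estimate of Step~2, everything else being soft.
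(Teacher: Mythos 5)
The paper itself contains no proof of this statement: it is quoted as an external result of Oh--T\u{a}taru (\cite{oh2019hyperbolic}, Prop.\ 4.2) and used as a black box in the proof of Proposition \ref{Extension}, so there is no internal argument to compare against. Measured against the cited proof, your proposal follows essentially the same route: build a solution operator $T_0$ for the flat equation $\partial^\ell e_\ell = h$ with the dilation-compatible support property, then set $T_a = (\mathrm{Id} + T_0 \circ \mathrm{ad}_a)^{-1} T_0$ via a Neumann series, the contraction coming from the scaling-critical product bound $\|[a,e]\|_{\dot W^{r-1,p}} \lesssim \|a\|_{\dot H^{1/2}} \|e\|_{\dot W^{r,p}}$ together with the smallness of $\epsilon_*$; properties (1)--(3) are then inherited term by term, exactly as you describe.

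Two points to tighten. First, the support geometry is the crux, and your naming is off even though your stated kernel condition is right: the genuine Bogovskii kernel is supported where $x$ lies on the segment from $y$ to a point of $B_0$, which preserves ``$\mathrm{supp}\,h \subset \Omega \Rightarrow \mathrm{supp}\,T_0h \subset \Omega$'' for star-shaped $\Omega$ but \emph{fails} property (2): a source $y$ outside $\lambda K$ lies on segments through points $x$ inside $\lambda K$, so $T_0 h$ need not vanish there. The operator you actually want is the reversed (regularized Poincar\'e / averaged ray) operator, i.e.\ the average over $z \in B_0$ of $h \mapsto (x-z)_\ell \int_0^1 h(z+\mu(x-z))\,\mu^2\,d\mu$, whose kernel samples $h$ only on segments joining $B_0$ to $x$; with $B_0 \subset \lambda K$ and convexity this gives (2), and it still solves $\partial^\ell e_\ell = h$. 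Since you specified the correct kernel support condition, this is a labelling slip rather than a gap, but the distinction is exactly where a wrong choice would sink the argument. Second, for property (3) note that the Neumann series only converges in norms with $r < \frac{3}{2}$, so smoothness cannot be obtained by summing high-order global bounds; your local bootstrap is the right idea, but it relies on the operator being pseudolocal of order $-1$ locally (beyond the stated homogeneous global range), which deserves an explicit word in a complete write-up.
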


In the proof of the almost conservation law we need the following version of Theorem \ref{ExtensionHrho}. The proof carries over almost verbatim from Theorem 5.4. from \cite{oh2019hyperbolic}. We present the argument for the sake of completeness. 

\begin{proposition} \label{Extension}
Suppose $ (a,e) $ satisfy the constraint equation \eqref{Constraint} on a ball $ B \subset \mathbb{R}^3 $ of radius $ \simeq 1 $ and 
$$
(a,e) \in ( H^{1} +  N^{\sg-1} H^{\sg} ) (B) \times ( L^2 +  N^{\sg-1} H^{\sg-1}) (B)    
$$
Suppose $\|a\|_{H^{\frac{1}{2}}(B)} \leq \epsilon $ for $ \epsilon >0 $ small enough. Then there exist $ (\bar{a}, \bar{e} ) $ solving  \eqref{Constraint} on $ \mathbb{R}^3 $ which coincide with $ (a,e) $ on $ B $ with 
$$
\vn{\bar{a}}_{ H^1 +  N^{\sg-1} H^{\sg}}  + \vn{\bar{e}}_{L^2 +  N^{\sg-1} H^{\sg-1}} \ls \vn{a}_{ H^{1} +  N^{\sg-1} H^{\sg} (B)  } + \vn{e}_{L^2 +  N^{\sg-1} H^{\sg-1} (B)}. 
$$
If $ (a,e) $ are smooth, then $(\bar{a}, \bar{e} ) $ are regular.
\end{proposition}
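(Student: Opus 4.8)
The plan is to mimic the proof of Theorem \ref{ExtensionHrho} (= Theorem 5.4 of \cite{oh2019hyperbolic}), replacing the single Sobolev space $H^{\rho}$ by the sum space $Y \defeq H^1 + N^{\sg-1} H^{\sg}$ for $\bar a$ and $Z \defeq L^2 + N^{\sg-1} H^{\sg-1}$ for $\bar e$. First I would extend the connection coefficient: since $a \in Y(B)$ and in particular $a \in H^1(B)$ with $\vn{a}_{H^{1/2}(B)} \leq \ep$, one extends $a$ to $\bar a \in Y(\mb{R}^3)$ using the elementary (linear, bounded) extension operator for $W^{s,2}$ on a ball recalled after \eqref{Gagliardoseminorm}; applying it on each piece of a decomposition $a = a_1 + N^{\sg-1} a_2$ with $a_1 \in H^1(B)$, $a_2 \in H^{\sg}(B)$ gives $\vn{\bar a}_{Y} \ls \vn{a}_{Y(B)}$, and one can arrange $\vn{\bar a}_{\dot H^{1/2}(\mb{R}^3)} \leq \ep_*$ after shrinking $\ep$ (using $Y \hookrightarrow \dot H^{1/2}$ locally and the smallness hypothesis). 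The extended $\bar a$ is smooth if $a$ is, and the extension operator is linear hence Lipschitz.

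Next comes the electric field. Write $e = e_1 + N^{\sg-1} e_2$ on $B$ with $e_1 \in L^2(B)$, $e_2 \in H^{\sg-1}(B)$, and let $e^{\mathrm{ext}}$ be any bounded $Z$-extension of $e$ to $\mb{R}^3$ (same elementary extension, applied componentwise on the decomposition, noting $H^{\sg-1} = L^2 + \dot H^{\sg-1}$ allows the same Gagliardo-type extension). This $e^{\mathrm{ext}}$ will not satisfy the constraint \eqref{Constraint} off $B$, so I would correct it: set
$$
h \defeq -\,\mathbf{D}_{\bar a}^{\ell} e^{\mathrm{ext}}_{\ell} \in \dot H^{-1} + N^{\sg-1} \dot H^{\sg-2} \quad (\text{vanishing on } B),
$$
where the regularity of $h$ follows from $\mathbf{D}_{\bar a} = \pt + [\bar a, \cdot]$ together with the product law \eqref{SobMI} (i.e. $\vn{[\bar a, e]}_{\dot H^{-1}} \ls \vn{I\bar a}_{H^1}\vn{Ie}_{L^2}$, which is exactly \eqref{Sbvone} with $r = \sg$, phrased via Lemma \ref{IEquivSum}). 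Then apply the solution operator $T_{\bar a}$ from Proposition \ref{SolveCovDiv} with $K = B$: because $h = 0$ on $B$, property (2) gives $T_{\bar a} h = 0$ on $B$; property (1) with $(r,p)$ chosen to realize $\dot W^{-1,2} = \dot H^{-1}$ and $\dot W^{\sg-2,2} = \dot H^{\sg-2}$ (both admissible since $\sg > \tfrac56 > 1 - \tfrac32 + \dots$ keeps $r$ in the allowed window; strictly one interpolates/sums the two bounds) gives $\vn{T_{\bar a} h}_{Z} \ls \vn{h}_{\dot H^{-2} + N^{\sg-1}\dot H^{\sg-3}}$... — here I must be slightly careful and instead apply (1) to produce $T_{\bar a} h$ one derivative smoother than $h$, landing in $L^2 + N^{\sg-1}\dot H^{\sg-1} = Z$. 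Finally set $\bar e \defeq e^{\mathrm{ext}} + \mathbf{D}_{\bar a}(T_{\bar a} h)$... — more precisely $\bar e_{\ell} \defeq e^{\mathrm{ext}}_{\ell} + (\text{the correction whose covariant divergence is } h)$, so that $\mathbf{D}_{\bar a}^{\ell}\bar e_{\ell} = \mathbf{D}_{\bar a}^{\ell} e^{\mathrm{ext}}_{\ell} + h = 0$ on all of $\mb{R}^3$, and $\bar e = e$ on $B$ since both the extension and the correction agree with $e$, resp. vanish, there.

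The norm bound $\vn{\bar a}_{Y} + \vn{\bar e}_{Z} \ls \vn{a}_{Y(B)} + \vn{e}_{Z(B)}$ then follows by chaining the extension bounds, the product estimate for $h$, and the $T_{\bar a}$ bound from Proposition \ref{SolveCovDiv}(1); smoothness of $(\bar a, \bar e)$ when $(a,e)$ is smooth follows from Proposition \ref{SolveCovDiv}(3) and smoothness of the linear extension operators. \textbf{The main obstacle} I anticipate is bookkeeping the sum-space structure through the covariant (nonlinear-in-$\bar a$) operations: one must verify that $\mathbf{D}_{\bar a}$ maps $Z$ into $\dot H^{-1} + N^{\sg-1}\dot H^{\sg-2}$ with constants \emph{uniform in $N$} (so that the final constant does not degrade), which is where the small-data hypothesis $\vn{a}_{H^{1/2}(B)} \leq \ep$ and the homogeneous product law \eqref{SobMultHomog}/\eqref{Sbvone} must be invoked carefully — in particular checking that the $N$-weights are consistent on each summand and that Proposition \ref{SolveCovDiv} may legitimately be applied at the two regularities $\dot W^{-1,2}$ and $\dot W^{\sg-2,2}$ simultaneously (equivalently, on the sum space) rather than only at a single $\dot W^{\rho-1,2}$. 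Everything else is a routine transcription of the argument in \cite{oh2019hyperbolic}.
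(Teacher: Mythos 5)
Your proposal is correct and follows essentially the same route as the paper: decompose into the two summands, extend each with a universal (linear) extension operator, compute the covariant-divergence error of the extended data (supported off $B$), and correct it with the solution operator $T_{\bar{a}}$ of Proposition \ref{SolveCovDiv}, whose properties (1)--(3) give the bound, the vanishing on $B$, and smoothness. The "main obstacle" you flag is resolved exactly as you suspect: one never needs a sum-space version of Proposition \ref{SolveCovDiv} — the paper simply applies $T_{\bar{a}}$ separately to the two summands of the error, at the regularities $r=0$ and $r=\sg-1$ (both inside the admissible window $-\tfrac12<r<\tfrac32$ for $p=2$), and adds the two corrections by linearity, with the $N$-weight carried unchanged on the second summand.
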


\begin{proof}
Decompose $ a=a^1 +N^{\sg-1} a^{\sg} $, $ e=e^0+N^{\sg-1} e^{\sg-1} $ where $ a^1 \in H^{1}(B) $, $ a^{\sg} \in  H^{\sg} (B) $, $ e^0 \in L^2(B) $, $ e^{\sg-1} \in  H^{\sg-1} (B) $. One begins by applying a (universal) extension operator to $ a^1, a^{\sg} , e^0, e^{\sg-1} $ obtaining $ \bar{a}^1, \bar{a}^{\sg} , \bar{e}'^{0}, \bar{e}'^{\sg-1} $ on $ \mb{R}^3  $ with comparable norms. Let $ \bar{a}=\bar{a}^1 +N^{\sg-1}  \bar{a}^{\sg} $, $ \bar{e}' = \bar{e}'^{0} +N^{\sg-1} \bar{e}'^{\sg-1}  $. 

Since the constraint equation \eqref{Constraint} is violated in general outside $ B $ by $ (\bar{a}, \bar{e}' ) $, one must correct $ \bar{e}' $ by adding a term. Let  $  h^{-1}= (\mathbf{D}^{\bar{a}})^{\ell} \bar{e}'^{0}_{\ell} \in H^{-1} $ and $ h^{\sg-2}= (\mathbf{D}^{\bar{a}})^{\ell} \bar{e}'^{\sg-1} _{\ell} \in H^{\sg-2}  $ be the errors of the Gauss equations, which are supported outside $ B $. Now apply Proposition \ref{SolveCovDiv} to obtain $ d^0 = - T_{\bar{a} } h^{-1} $ and $ d^{\sg-1} = - T_{\bar{a} } h^{\sg-2} $ which solve $ (\mathbf{D}^{\bar{a}})^{\ell} d^0_{\ell} = - h^{-1} $ and $  (\mathbf{D}^{\bar{a}})^{\ell} d^{\sg-1}_{\ell} = -h^{\sg-2}  $ and satisfy 
$ \vn{d^0}_{L^2} \ls \vn{ \bar{e}'^{0}}_{L^2} $, $ \vn{d^{\sg-1}}_{  H^{\sg-1} }\ls \vn{\bar{e}'^{\sg-1}}_{  H^{\sg-1} } $.

Now let $ d = d^0 + N^{\sg-1} d^{\sg-1} $ and define $ \bar{e} =  \bar{e}' + d $. Then $ (\bar{a},\bar{e}) $ satisfy the conditions. 
\end{proof}

\subsection{Review of Theorem \ref{LWPHloc} and proof of Corollary \ref{CorLWP}} \label{SecProofCorLWP}
 \ 

For the sake of completeness and to set up the notation we review the argument for Theorem \ref{LWPHloc} from \cite[Theorem 1.27]{oh2019hyperbolic}. The key technical tool is their initial data surgery result in Theorem \ref{ExtensionHrho} (\cite[Theorem 5.4]{oh2019hyperbolic}).

We begin with an initial data set $ (\bar{A}_{x}, \bar{E}_{x}) \in H^{\sg}(\mb{R}^3) \times H^{\sg-1}(\mb{R}^3) $ satisfying \eqref{Constraint}.  It suffices to consider $ \frac{3}{4} < \sg < 1 $ and to obtain a solution for positive times. 

Rescale the initial data by $ \bar{A}^{\lmd}(x) = \lmd^{-1} \bar{A}( x/\lmd) $ , $ \bar{E}^{\lmd} (x) = \lmd^{-2} \bar{E}(x/\lmd ) $ so that 
$$
\vn{ ( \bar{A}^{\lmd},  \bar{E}^{\lmd}) }_{   \dot{H}^{\sg} \times \dot{H}^{\sg-1}} \ls \lmd^{ \frac{1}{2}- \sg  }  \vn{ ( \bar{A},  \bar{E}) }_{   \dot{H}^{\sg} \times \dot{H}^{\sg-1}} \ll \ep_{*} 
$$
where $ \ep_{*}>0 $ is chosen small enough to be able to apply Theorem \ref{LWPTao} and Theorem \ref{ExtensionHrho} to obtain modified initial data sets as follows. 

Let  $ (B_j)_j $ be a finitely overlapping uniform covering of $ \mb{R}^3 $ by balls of radius $ 2 $ centered at $ (y_j)_{j} $ such that the domains of dependency $ ( \mathcal{D}_j)_{j} $ with bases $ (B_j)_j $ form a covering of $  [0,1] \times \mb{R}^3 $. 
These are truncated cones 
$ \mathcal{D}_j \defeq \{ (t,x) \ | \ t+\vm{x-y_j} < 2, \ t \in [0,1] \}. $

 One may apply Theorem \ref{ExtensionHrho} for each $ B_j $ to obtain an extension $ ( \bar{A}^{(j)},  \bar{E}^{(j)}) $ to $ \mb{R}^3 $ satisfying the constraint equation \eqref{Constraint} and which coincide with $  ( \bar{A}^{\lmd},  \bar{E}^{\lmd}) $ on $ B_j $ with the bounds
$$
\vn{( \bar{A}^{(j)},  \bar{E}^{(j)})}_{H^{\sg} \times H^{\sg-1}} \ls \vn{ ( \bar{A}^{\lmd},  \bar{E}^{\lmd}) }_{W^{\sg,2} \times  W^{\sg-1,2} (B_j)   } \ll \ep_{*}. 
$$
The map $ ( \bar{A}^{\lmd},  \bar{E}^{\lmd}) \mapsto ( \bar{A}^{(j)},  \bar{E}^{(j)}) $ is locally Lipschitz on these spaces. 

To each $ ( \bar{A}^{(j)},  \bar{E}^{(j)}) $ one applies Theorem \ref{LWPTao} (i.e. Theorem 1.1. in \cite{tao2003local}) to obtain a Yang-Mills solution $ A^{(j)} $ in the temporal gauge on $ [0,1] \times \mb{R}^3 $ satisfying
\be \label{ajtbdAlmd}
\vn{ A^{(j)}[t] }_{ L^{\infty} ( H^{\sg} \times H^{\sg-1})( [0,1] \times \mb{R}^3) } \ls  \vn{ ( \bar{A}^{\lmd},  \bar{E}^{\lmd}) }_{W^{\sg,2} \times  W^{\sg-1,2} (B_j)   }
\ee

One obtains a solution $ A^{\lmd} $ on  $ [0,1] \times \mb{R}^3 $ defined by each $ A^{(j)} $ restricted to $  \mathcal{D}_j $ since any two of these solutions coincide on their common domain as a consequence of finite speed of propagation in the temporal gauge. 

\

We apply the square summability estimate \eqref{sqSumWs2} at the initial time and at an arbitrary time $ t \in [0,1] $, together with \eqref{ajtbdAlmd} obtaining
\begin{align*}
\vn{ A^{\lmd}[t] }_{ H^{\sg} \times H^{\sg-1} }^2 & \simeq \sum_j \vn{ A^{(j)}[t] }_{ W^{\sg,2} \times  W^{\sg-1,2}(  \mathcal{D}_j \cap \{t \} \times \mb{R}^3  ) }^2 \\
& \ls \sum_j  \vn{ ( \bar{A}^{\lmd},  \bar{E}^{\lmd}) }_{W^{\sg,2} \times  W^{\sg-1,2} (B_j)   }^2 \simeq  \vn{ ( \bar{A}^{\lmd},  \bar{E}^{\lmd}) }_{ H^{\sg} \times H^{\sg-1} }^2 . 
\end{align*}
Undoing the scaling one obtains a solution $ A \in C_t H^{\sg} \cap C_t^1 H^{\sg-1}( [0,\lmd^{-1}] \times \mb{R}^3)  $ given by $ A(t,x) = \lmd A^{\lmd}(\lmd t, \lmd x)  $. 

\

If  $ (\bar{A}', \bar{E}') \in H^{\sg} \times H^{\sg-1}(\mb{R}^3) $ is another initial data set satisfying \eqref{Constraint} one has
\be \label{LipschBj}
\vn{ A^{(j)}[\cdot] -  A'^{(j)}[\cdot] }_{ L^{\infty} ( H^{\sg} \times H^{\sg-1})} \ls  \vn{ ( \bar{A}^{\lmd} - \bar{A}'^{\lmd},  \bar{E}^{\lmd}-  \bar{E}'^{\lmd} ) }_{W^{\sg,2} \times  W^{\sg-1,2} (B_j) }
\ee
By repeating the same square summability argument for differences using \eqref{LipschBj} and undoing the scaling one obtains Lipschitz dependence 
$$
\vn{A[t]-A'[t]}_{ H^{\sg} \times H^{\sg-1} } \ls \vn{(\bar{A}, \bar{E}) - (\bar{A}', \bar{E}') }_{ H^{\sg} \times H^{\sg-1} }.
$$ 
The Lipschitz constant depends, of course, on the size of the initial data.

%%%%%%%%%%%%%%%%%%%%%%%%%%%%%%%%%%%%%%%
%%%%%%%%%%%%%%%%%%%%%%%%%%%%%%%%%%%%%%%
%%%%%%%%%%%%%%%%%%%%%%%%%%%%%%%%%%%%%%%
%%%%%%%%%%%%%%%%%%%%%%%%%%%%%%%%%%%%%%%
%%%%%%%%%%%%%%%%%%%%%%%%%%%%%%%%%%%%%%%
%%%%%%%%%%%%%%%%%%%%%%%%%%%%%%%%%%%%%%%
%%%%%%%%%%%%%%%%%%%%%%%%%%%%%%%%%%%%%%%
%%%%%%%%%%%%%%%%%%%%%%%%%%%%%%%%%%%%%%%
%%%%%%%%%%%%%%%%%%%%%%%%%%%%%%%%%%%%%%%
%%%%%%%%%%%%%%%%%%%%%%%%%%%%%%%%%%%%%%%
%%%%%%%%%%%%%%%%%%%%%%%%%%%%%%%%%%%%%%%
%%%%%%%%%%%%%%%%%%%%%%%%%%%%%%%%%%%%%%%
%%%%%%%%%%%%%%%%%%%%%%%%%%%%%%%%%%%%%%%
%%%%%%%%%%%%%%%%%%%%%%%%%%%%%%%%%%%%%%%

\section{Modified local well-posedness} \label{SecModLWP}

In this section we derive control of space-time norms that will be needed for the trilinear estimate \eqref{deltaEnergyReduced}.

\

Let $ A_i $ be a solution to \eqref{YM} in the temporal gauge $ A_0 = 0 $. 
We use the Leray projections \eqref{Leray} to split $ A=  {\bf P} A+ {\bf P}^{\perp} A $ into a divergence-free part and a curl-free part. From the definition of $ {\bf P}^{\perp} $ and the first equation in \eqref{YMTemporal}, and by applying $ {\bf P} $ to the second equation in \eqref{YMTemporal} one obtains 
\be \label{YMTemporal2}
\begin{aligned}
 \pt_t {\bf P}^{\perp} A & = \Delta^{-1} \nabla [ \pt_t A_j, A_j ] \\
 \Box \ {\bf P} A  & =    - 2 {\bf P} [A_j, \pt_j A ] +  {\bf P} [A_j, \nabla A_j ] + {\bf P} [A, \pt_j A_j]-  {\bf P}  [A_j, [A_j, A ]] 
\end{aligned}
\ee

Following \cite{tao2003local} one proceeds to uncovering the null structure in \eqref{YMTemporal2} as follows. 

Recall the definition of a null form from \eqref{DefNullForm}, \eqref{Qij}. For Lie algebra valued fields, $ {\bf N} (A,B) $ is understood as a linear combination of 
\be \label{DefNullFormLie}
\Delta^{-1} \nabla^i Q_{ij} (A_{\ell}, B_{k}), \quad \text{and} \quad  Q_{ij} (\Delta^{-1} \nabla^i  A_{\ell}, B_{k}) \quad 
\ee
where 
\be \label{QijLie}
Q_{ij} (C,D) \defeq [\pt_i C, \pt_j D] - [\pt_j C, \pt_i D]
\ee

Denote $ A^{df} = {\bf P} A $ and $ A^{cf} = {\bf P}^{\perp} A $. Using the identities \eqref{Nullformidentities} one can write the system \eqref{YMTemporal2} as
\begin{align}
 \label{YMTemporal3}
 \pt_t A^{cf} & =  \nabla^{-1} \mathcal{O}( \pt_t A, A ) \\
 \notag
 \Box  A^{df}  &=  {\bf N} (A^{df},A^{df}) +  \mathcal{O}(A^{df},\nabla A^{cf})  + \mathcal{O}(A^{cf},\nabla A^{df}) + \mathcal{O}(A^{cf},\nabla A^{cf}) +  \mathcal{O} (A^3) 
 \end{align}
where $ \mathcal{O} $ denote (matrix-valued) bilinear or trilinear expressions in the values of the inputs.  

\

Well-posedness in $ I^{-1} H^1 \times I^{-1} L^2 $ holds for this system.

\begin{proposition} \label{ModLWP}
(1) Let $ A_{x}(t,x) $ be a global regular solution to \eqref{YM} in the temporal gauge $ A_0 = 0 $ such that
$$
\vn{I A_x(t_0)}_{H^1} + \vn{I \partial_t A_x(t_0)}_{L^2} \leq \ep
$$
Let $ t_1 \leq t_0+1 $. Then, assuming $ \ep $ is small enough, one has
\be \label{ISobbdatt}
\vn{I A_x[t]}_{L^{\infty}_t (H^1 \times L^2)(  [t_0,t_1]\times \mb{R}^3 ) } \leq C \ep 
\ee
(2) Moreover, there exists a spatial field $ U : \mb{R}^3 \to G $, $ U=U(x) $ such that the change of gauge 
\be \label{gaugechange}
A_i \mapsto U A_i U^{-1} - \partial_i U U^{-1}
\ee
achieves $ \text{div} A_x (t_0)=0 $, the bound \eqref{ISobbdatt} and 
\begin{align}
\label{divfree}
& I A^{df} \in \ep X^{1,\frac{3}{4}+},   & I \pt_t A^{df} \in \ep X^{0,\frac{3}{4}+} \\
\label{curlfree}
& I A^{cf} \in \ep X_{\tau=0}^{1+\frac{1}{4}, \frac{1}{2}+},   & P_k I^2 \pt_t A^{cf} \in \ep L^{\infty}_t H^{\frac{1}{2}}
\end{align} 
hold on $ [t_0,t_1] \times \mb{R}^3 $, where $ A^{df} = {\bf P} A_x $, $ A^{cf} =   {\bf P}^{\perp} A_x $.
\end{proposition}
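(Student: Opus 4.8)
\textbf{Proof proposal for Proposition \ref{ModLWP}.}

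The plan is to treat this as a standard perturbative local well-posedness argument in the $ X^{s,b} $-type spaces, closely following \cite{tao2003local}, but carrying the $ I $-multiplier through the estimates. For Part (1), I would set up a contraction mapping (or rather an a priori estimate plus continuity argument, since the solution is assumed to exist and be regular) for the system \eqref{YMTemporal3} after applying $ I $ to it. The key point is that $ I $ behaves like a disposable operator relative to the bilinear estimates when frequencies are balanced, and when the output frequency dominates it only helps; the delicate interactions are the high$\times$low$\to$high ones where the high input carries $ I $ but the output also does, and here one uses $ m(\xi_1+\xi_2) \lesssim m(\xi_1) $ together with the mean-value-type bound $ |m(\xi_1+\xi_2) - m(\xi_1)| \lesssim \frac{|\xi_2|}{|\xi_1|} m(\xi_1) $ to absorb the commutator. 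Concretely: for the wave part $ \Box A^{df} $ I would use the null-form bilinear estimate $ {\bf N}: X^{1,\frac34+}\times X^{1,\frac34+} \to X^{0,-\frac14+} $ (available for the $ Q_{ij} $ structure in $ 3+1 $ dimensions well below $ \sigma=1 $), the mixed terms $ \mathcal{O}(A^{df},\nabla A^{cf}) $ etc. handled by pairing the $ X^{1,\frac34+} $ control of the divergence-free part with the $ X^{1+\frac14,\frac12+}_{\tau=0} = H^{1+\frac14}_x H^{\frac12+}_t $ control of the curl-free part (the extra $ \frac14 $ spatial regularity is what makes the low-regularity scheme close), and the cubic term by \eqref{SobMI} plus energy estimates. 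For the transport part $ \pt_t A^{cf} = \nabla^{-1}\mathcal{O}(\pt_t A, A) $ I would use \eqref{XEnEst2} together with the product estimate placing $ \pt_t A $ in $ X^{0,\frac34+}/L^\infty_t L^2_x $ and $ A $ in $ X^{1,\frac34+} $, gaining the $ \nabla^{-1} $. Running these through \eqref{XEnEst1}, \eqref{XEnEst2} and summing gives $ \vn{IA^{df}}_{X^{1,\frac34+}} + \vn{IA^{cf}}_{X^{1+\frac14,\frac12+}_{\tau=0}} \lesssim \ep + (\text{higher order}) $, whence \eqref{ISobbdatt} by the embeddings $ X^{s,\frac12+}\subseteq L^\infty_t H^s_x $.

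For Part (2), the change of gauge $ U=U(x) $ is chosen exactly as in \cite{tao2003local}: solve the elliptic equation making $ \text{div}\, \tilde A_x(t_0) = 0 $, i.e. $ \Delta (\log\text{-type object}) $ or rather solve for $ U $ from $ \partial^i(UA_iU^{-1} - \partial_i U U^{-1}) = 0 $, which is an elliptic equation for $ U $; smallness of $ \vn{IA_x(t_0)}_{H^1} $ guarantees solvability with $ \vn{U - \mathrm{Id}}_X \lesssim \ep $ via a fixed point in the algebra $ X $ from \eqref{Algebraproducts}. Since $ U $ is $ t $- and $ s $-independent, the transformed field still solves \eqref{YM} in the temporal gauge, and the bounds \eqref{ISobbdatt} are preserved up to constants using \eqref{Algebraproducts}. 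With the Coulomb condition at $ t_0 $ now in force, one re-examines \eqref{YMTemporal3}: $ A^{cf}(t_0)=0 $, and the transport equation $ \pt_t A^{cf} = \nabla^{-1}\mathcal{O}(\pt_t A, A) $ gains, upon integration, that $ A^{cf}\in X^{1+\frac14,\frac12+}_{\tau=0} $ — the extra $ \frac14 $ derivative is exactly the smoothing identified in \cite{tao2003local}, coming from the $ \nabla^{-1} $ in front together with the bilinear null/elliptic structure and the Coulomb normalization. The improved regularity $ I A^{df}\in \ep X^{1,\frac34+} $ and $ I A^{cf}\in \ep X^{1+\frac14,\frac12+}_{\tau=0} $ then follow by feeding this back into the energy estimates once more, and the $ P_k I^2 \pt_t A^{cf}\in \ep L^\infty_t H^{\frac12} $ bound comes from differentiating the transport equation and using the already-established space-time norms (the second $ I $ is affordable because $ \pt_t A^{cf} $ is quadratic, so two copies of $ m $ distribute onto the two inputs).

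The main obstacle is the bookkeeping of the $ I $-multiplier through the null-form bilinear estimates in the regime $ \sigma $ close to $ \frac34 $: one must verify that the combination of the $ Q_{ij} $ null structure, the weight $ m $, and the mismatch bound for $ m(\xi_1+\xi_2) $ versus $ m(\xi_i) $ all cooperate so that no derivative is lost, across every frequency configuration (high-high-low, high-low-high, low-high-high, and comparable). This is the content of the "$ I $ is essentially disposable for these estimates" heuristic, but it requires a genuine case analysis — particularly the high$\times$high$\to$low output case where both inputs carry $ m $ of a large frequency but the output does not, so the weight ratio $ m(\xi_{low})/(m(\xi_1)m(\xi_2)) = 1/(m(\xi_1)m(\xi_2)) $ is \emph{large} and must be beaten by the null form plus the extra integrability from the $ X^{s,b} $ structure. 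A secondary technical point is making the elliptic gauge-fixing step quantitative in the $ I^{-1}H^1 $-scale rather than $ H^1 $, which is where \eqref{Algebraproducts} and the mapping properties of $ I $ on products are used.
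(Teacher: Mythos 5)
There is a genuine structural gap in your Part (1). You propose to close the a priori estimate for the \emph{original} temporal-gauge solution by pairing $\vn{IA^{df}}_{X^{1,\frac34+}}$ with $\vn{IA^{cf}}_{X^{1+\frac14,\frac12+}_{\tau=0}}$, but the latter norm is not available at that stage: the energy estimate \eqref{XEnEst2} for the transport equation produces the data term $\vn{IA^{cf}(t_0)}_{H^{1+\frac14}}$, and the hypothesis only gives $IA^{cf}(t_0)\in \ep H^{1}$ — the extra $\frac14$ of spatial regularity for the curl-free part is precisely what the Coulomb normalization $A^{cf}(t_0)=0$ buys (as you yourself note in Part (2), and as the paper states explicitly: without the gauge change, \eqref{curlfree} cannot have more regularity than the data). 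As you have ordered the argument, Part (1) cannot close, since at the regularity $\sigma$ close to $\frac34$ Tao's iteration does not work with $A^{cf}$ merely in $X^{1,\frac12+}_{\tau=0}$. The paper resolves this by doing the gauge change \emph{first}, even for Part (1): one sets $A^{cf}(t_0)=0$ by an iterative construction (write $A^{cf}(t_0)=\nabla V$, apply $U=\exp(V)$, which reduces the curl-free part quadratically, and iterate, obtaining $\vn{U-1}_X\ls\ep$), proves the space-time bounds \eqref{divfree}--\eqref{curlfree} for the transformed solution, and then recovers \eqref{ISobbdatt} for the original solution by undoing the time-independent gauge transformation via the product estimates \eqref{Algebraproducts}. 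So your Part (1) should be restructured to follow Part (2), not precede it.

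Two secondary points. First, your by-hand case analysis for carrying $I$ through the null-form bilinear estimates (including the high-high-to-low configuration you flag as delicate) is not carried out and is in fact unnecessary: the paper obtains the $I$-versions of the estimates of \cite{tao2003local} directly from the non-$I$ versions at variable regularity $s>\frac34$ via the interpolation lemma \cite[Lemma 12.1]{colliander2004multilinear}, which is both shorter and safer than the commutator bookkeeping you sketch. Second, your gauge-fixing via a direct elliptic solve for $U$ is plausible for small data but vaguer than the exponential iteration used in the paper; either could be made to work, but the quantitative $X$-norm control of $U$ must be established, since it is exactly what lets you transfer the bounds back to the original connection.
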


This proposition follows from the local well-posedness theory developed in \cite{tao2003local}. 

\subsection{Proof of Proposition \ref{ModLWP}} \ 

{\bf Step 1.} 
We begin by discussing the change of gauge that sets $ A^{cf}(t_0)= 0$, i.e. the Coulomb gauge  $ \text{div} A_x (t_0)=0 $, only at the initial time. Without this transformation \eqref{curlfree} could not have more regularity than the initial data. 

The following procedure is adapted here to $ I^{-1} H^1 $ from the $ H^{s} $ case in \cite[Section 3]{tao2003local} (arxiv version), attributed there to Mark Keel. Suppose 
$$
\vn{I A_x(t_0)}_{H^1} \leq \ep, \qquad \vn{I A^{cf}(t_0)}_{H^1} \leq \delta
$$
for $ \delta \leq \ep \ll 1 $. 
Write $ A^{cf}(t_0)= \nabla V $ for a $  \mathfrak{g} $-valued field $ V $ with $ \vn{V}_X \ls \delta $ where the $ X $ norm is defined in \eqref{XnormDef} and obeys the algebra and product estimates \eqref{Algebraproducts}. 
%
%$$
%\vn{V}_X \defeq \vn{I \nabla V}_{H^1} \simeq \vn{\nabla V}_{H^1+N^{\sg-1} H^{\sg}} 
%$$

%Using the Littlewood-Paley trichotomy, noting that the $ L^{\infty} $ norm is controlled by the $ X $ norm, one has the following algebra and product estimates:
%\be \label{Algebraproducts}
%X \times X \to X, \qquad I^{-1} H^1 \times X \to I^{-1} H^1, \qquad I^{-1} L^2 \times X \to I^{-1} L^2
%\ee
Applying \eqref{gaugechange} with $ U \defeq \exp(V) $ one obtains a new temporal (because $ V $ is independent of $ t $) solution $ \tilde{A} $, with initial data written as
$$
\tilde{A}(t_0)= e^V A^{df}(t_0) e^{-V} + \big(  e^V \nabla V - \nabla(e^V)  \big) e^{-V}
$$
and 
$$
e^V A^{df}(t_0) e^{-V}  = A^{df}(t_0) + \big(  e^V A^{df}(t_0) e^{-V} -  A^{df}(t_0) \big) 
$$
Based on these equalities, Taylor expansions and \eqref{Algebraproducts} one obtains that 
$$
\vn{I \tilde{A}(t_0)}_{H^1} \leq \ep+ C \ep \delta , \qquad \vn{I \tilde{A}^{cf}(t_0)}_{H^1} \leq C \ep \delta, \qquad \vn{U-1}_X \ls \delta
$$
Iterating this procedure, one obtains in the limit a new temporal solution, which by abuse of notation we still denote by $ A_x $, such that $ A^{cf}(t_0) = 0 $. The change of gauge \eqref{gaugechange} is done with a $ U $ with $ \vn{U-1}_X \ls \ep $. The new solution has
\be \label{Iindataep}
\vn{I A_x(t_0)}_{H^1} + \vn{I \partial_t A_x(t_0)}_{L^2} \ls \ep
\ee
and \eqref{ISobbdatt} follows using \eqref{Algebraproducts} if we prove 
\be \label{Ienspbdd}
\vn{I A_x[t]}_{L^{\infty}_t (H^1 \times L^2)(  [t_0,t_1]\times \mb{R}^3 ) } \ls \ep 
\ee 
for the new solution. Thus it remains to prove the bounds in part (2) of Prop. \ref{ModLWP}.

\

{\bf Step 2.} 
We now consider a \eqref{YM} solution in the temporal gauge satisfying \eqref{Iindataep} with $ A^{cf}(t_0) = 0 $. The system of equation solved by $ I A $ is written schematically as 
\be \label{SystemIA}
\Box I A^{df} = I \mathcal{N}, \qquad \pt_t I A^{cf} = I \nabla^{-1}(A \pt_t A) 
\ee
$$
\mathcal{N} \defeq {\bf N} (A^{df}, A^{df}) + A^{df} \nabla A^{cf} + A^{cf} \nabla A^{df} +  A^{cf} \nabla  A^{cf} + A^3 
$$
For $ J =[t_0,t_1] $ let 
$$
\vn{I A}_{\mathcal{X}} \defeq \left\| I A^{df} \right\|_{X_J^{1, \frac{3}{4}+}} + \left\| I \pt_t A^{df} \right\|_{X^{0, \frac{3}{4}+}_J} + \left\| I A^{cf} \right\|_{X_{\tau=0, J}^{1+\frac{1}{4}, \frac{1}{2}+}}
$$
One obtains a-priori estimates using the following bounds: 

\begin{align*}
&\left\| I {\bf N} (A_{1}, A_{2}\right\|_{X^{0,- \frac{1}{4}+}} \ls \left\| I A_{1}\right\|_{X^{1, \frac{3}{4}+}} \left\|I A_{2}\right\|_{X^{1, \frac{3}{4}+}} \\
&\left\| I (A_{1} \nabla A_{2} ) \right\|_{ X^{0,- \frac{1}{4}+}} + 
 \left\| I(A_{2} \nabla A_{1}) \right\|_{ X^{0,- \frac{1}{4}+}}\lesssim   
\left\| I A_{1}\right\|_{X^{1, \frac{3}{4}+}} 
 \left\| I A_{2}\right\|_{X_{\tau=0}^{1+\frac{1}{4}, \frac{1}{2}+}}\\
&\left\| I(A_{1} \nabla A_{2} ) \right\|_{ X^{0,- \frac{1}{4}+}} \lesssim 
\left\| I A_{1}\right\|_{X_{\tau=0}^{1+\frac{1}{4}, \frac{1}{2}+}}
\left\| I A_{2}\right\|_{X_{\tau=0}^{1+\frac{1}{4}, \frac{1}{2}+}}\\
&\left\| I(A_{1} A_{2} A_{3} ) \right\|_{ X^{0,- \frac{1}{4}+}} \lesssim \prod_{i=1}^{3} \min \big( \left\| I A_{i}\right\|_{X^{1, \frac{3}{4}+}}, 
 \left\| I A_{i}\right\|_{X_{\tau=0}^{1+\frac{1}{4}, \frac{1}{2}+}}  \big) \\
\end{align*}
and 
\begin{align*}
&\left\| I \nabla^{-1}\left(A_{1} \partial_{t} A_{2}\right)\right\|_{X_{\tau=0}^{1+\frac{1}{4}, -\frac{1}{2}+}} \lesssim 
\left\| I A_{1}\right\|_{X^{1, \frac{3}{4}+}} \left\|I A_{2}\right\|_{X^{1, \frac{3}{4}+}} 
\\
&\left\| I \nabla^{-1}\left(A_{1} \partial_{t} A_{2}\right)\right\|_{X_{\tau=0}^{1+\frac{1}{4}, -\frac{1}{2}+}} +
\left\| I \nabla^{-1}\left(A_{2} \partial_{t} A_{1}\right)\right\|_{X_{\tau=0}^{1+\frac{1}{4}, -\frac{1}{2}+}}  \lesssim \left\| I A_{1}\right\|_{X^{1, \frac{3}{4}+}} 
\left\| I A_{2}\right\|_{X_{\tau=0}^{1+\frac{1}{4}, \frac{1}{2}+}}\\
& \left\| I \nabla^{-1}\left(A_{1} \partial_{t} A_{2}\right)\right\|_{X_{\tau=0}^{1+\frac{1}{4}, -\frac{1}{2}+}}  \lesssim \left\| I A_{1}\right\|_{X_{\tau=0}^{1+\frac{1}{4}, \frac{1}{2}+}}
\left\| I A_{2}\right\|_{X_{\tau=0}^{1+\frac{1}{4}, \frac{1}{2}+}}
\end{align*}
holding for any $  A_{1}, A_{2}, A_{3} $ on $ \mathbf{R}^{3+1} $.

These estimates are stated and proved in \cite[Eq (15)-(21)]{tao2003local} without the $ I $ multipliers and with $ X^{s, \frac{3}{4}+}, X^{s-1,- \frac{1}{4}+}, X_{\tau=0}^{s+\frac{1}{4}, \frac{1}{2}+}, X_{\tau=0}^{s+\frac{1}{4}, -\frac{1}{2}+}  $ instead of 
$ X^{1, \frac{3}{4}+} $, $ X^{0,- \frac{1}{4}+} $, $ X_{\tau=0}^{1+\frac{1}{4}, \frac{1}{2}+} $, $ X_{\tau=0}^{1+\frac{1}{4}, -\frac{1}{2}+}  $ for any $ s>\frac{3}{4} $. The version stated here follows immediately from those by doing basic Fourier decompositions. For details see the interpolation lemma in \cite[Lemma 12.1]{colliander2004multilinear}.

Combining these estimates with the energy-type inequalities \eqref{XEnEst1}, \eqref{XEnEst2} for $ X^{s,b} $, $X_{\tau=0}^{r, \tht} $ spaces and $ A^{cf}(t_0) = 0 $ one obtains
$$
\vn{I A}_{\mathcal{X}} \ls \ep + \vn{I A}_{\mathcal{X}}^2 + \vn{I A}_{\mathcal{X}}^3 
$$
from which one obtains $ \vn{I A}_{\mathcal{X}} \ls \ep $  (if $ \ep$ is small enough) by a continuity argument in $ t_1 $. This proves the first three bounds in \eqref{divfree}, \eqref{curlfree}. In particular, 
$$
\vn{I A^{df}[t]}_{L^{\infty}_t (H^1 \times L^2)} + \vn{I A^{cf}}_{L^{\infty}_t H^{1+\frac{1}{4} }} \ls \ep 
$$
From \eqref{SystemIA}, using the inequality \eqref{SobMI}, for any $ t \in [t_0,t_1] $ one obtains
$$ \vn{I \pt_t A^{cf}}_{L^2} \ls \vn{IA}_{H^1} \big( \vn{I \pt_t  A^{df}}_{L^2}  + \vn{I \pt_t  A^{cf}}_{L^2} \big) \ls \ep^2 + \ep \vn{I \pt_t  A^{cf}}_{L^2}  $$
Absorbing the last term to the LHS completes the proof of \eqref{Ienspbdd}. 

In fact, we can improve the previous inequality at high frequencies -  we obtain the last bound in \eqref{curlfree} by the Littlewood-Paley trichotomy for the equation $ \pt_t A^{cf} = \nabla^{-1}(A \pt_t A) $ from the $ H^1 $ and $ L^2_x $ bounds already obtained in \eqref{Ienspbdd} and using Bernstein's inequality for the low frequency (input or output). \qed

\subsubsection{Improved bounds for $ \pt_t A^{cf} $} 
\begin{proposition}
Let $ A_x $ be the solution from Proposition \ref{ModLWP} Part (2) on $ [t_0,t_1] \times \mb{R}^3 $. Then
\begin{align}
\label{PkDtAcfL2Linfx}
\vn{P_k \pt_t A^{cf}}_{L^2_t L^{\infty}_x} \ls \ep 2^{(\frac{1}{2}+)k} \jb{\frac{2^k}{N}}^{2(1-\sg)}  \\
\label{PkDtAcfL2L2}
\vn{P_k \pt_t A^{cf}}_{L^2_{t} L^{2}_x} \ls   \ep   2^{-\frac{3}{4}k}  \jb{\frac{2^k}{N}}^{2(1-\sg)}   \\
\label{PkDtAL2Linfx}
\vn{P_k \pt_t A}_{L^2_t L^{\infty}_x} \ls \ep 2^{(1+)k} \jb{\frac{2^k}{N}}^{1-\sg} 
\end{align}
\end{proposition}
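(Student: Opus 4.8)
The plan is to read all three bounds off the $A^{cf}$-evolution equation $\pt_t A^{cf} = \nabla^{-1}\mathcal{O}(\pt_t A, A)$ from \eqref{YMTemporal3} (together with $\pt_t A = \pt_t A^{df} + \pt_t A^{cf}$), fed by the space-time bounds already established in Proposition \ref{ModLWP}, namely $I A^{df}\in\ep X^{1,\frac{3}{4}+}$, $I\pt_t A^{df}\in\ep X^{0,\frac{3}{4}+}$, $I A^{cf}\in\ep X_{\tau=0}^{1+\frac{1}{4},\frac{1}{2}+}$, and $P_k I^2\pt_t A^{cf}\in\ep L^\infty_t H^{\frac{1}{2}}$. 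Converting the $I$-multiplier \eqref{Ioperator} into the weight $\jb{2^k/N}^{1-\sg}$ via $m(\xi)^{-1}\simeq\jb{|\xi|/N}^{1-\sg}$, these read, at output frequency $2^k$, as $\|P_k\pt_t A^{df}\|_{X^{0,\frac{3}{4}+}}\ls\ep\jb{2^k/N}^{1-\sg}$, $\|P_k A^{df}\|_{X^{0,\frac{3}{4}+}}\ls\ep 2^{-k}\jb{2^k/N}^{1-\sg}$, $\|P_k A^{cf}\|_{L^\infty_t L^2_x}\ls\ep 2^{-\frac{5}{4}k}\jb{2^k/N}^{1-\sg}$ and $\|P_k\pt_t A^{cf}\|_{L^\infty_t L^2_x}\ls\ep 2^{-\frac{1}{2}k}\jb{2^k/N}^{2(1-\sg)}$; since the time interval has length $\le 1$, I may pass freely from $L^\infty_t$ (or $L^{2+}_t$) to $L^2_t$.

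The one analytic ingredient beyond Proposition \ref{ModLWP} is a near-endpoint Strichartz estimate for the wave equation on a unit time interval: for a single dyadic block, $\|P_k u\|_{L^2_t L^\infty_x}\ls 2^{(1+)k}\|P_k u\|_{X^{0,\frac{1}{2}+}}$, obtained by using \eqref{Strichartzz} with a wave-admissible pair $(2+,r)$, passing $L^{2+}_t\to L^2_t$ on the interval, and applying Bernstein to send $r\to\infty$ (this is the source of the harmless $2^{0+k}$ loss, hence of the $2^{(1+)k}$ in \eqref{PkDtAL2Linfx}). Granting this, \eqref{PkDtAL2Linfx} is immediate: $\|P_k\pt_t A^{df}\|_{L^2_t L^\infty_x}\ls 2^{(1+)k}\ep\jb{2^k/N}^{1-\sg}$ directly, and $\|P_k\pt_t A^{cf}\|_{L^2_t L^\infty_x}$ is covered by \eqref{PkDtAcfL2Linfx}, whose exponent $\tfrac{1}{2}+$ is smaller than $1$.

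For \eqref{PkDtAcfL2Linfx} and \eqref{PkDtAcfL2L2} I would work from the equation: the operator $\Delta^{-1}\nabla$ gives $2^{-k}$ at the output, so it suffices to bound $\|P_k\mathcal{O}(\pt_t A, A)\|$ in $L^2_t L^\infty_x$ (resp.\ $L^2_t L^2_x$). Littlewood--Paley-decompose the bilinear term into high$\times$low, low$\times$high and balanced (high$\times$high $\to$ low) pieces. In the unbalanced pieces, put the high-frequency factor carrying $\pt_t$ in $L^\infty_t L^2_x$ and the low-frequency factor in $L^2_t L^\infty_x$ (via the near-endpoint Strichartz for $A^{df}$/$\pt_t A^{df}$, and via Bernstein from $L^2_x$ together with $H^{\frac{1}{2}+}_t\hookrightarrow L^\infty_t$ for $A^{cf}$/$\pt_t A^{cf}$), then sum the geometric series in the intermediate frequency; the high-frequency sums converge because $2(1-\sg)<1$ when $\sg>\tfrac{3}{4}$, and the low-frequency pile-up is absorbed into the $0+$ convention, after which the input weights collapse to $\jb{2^k/N}^{2(1-\sg)}$ with no net loss. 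In the balanced piece the output frequency is low, so the $2^{-k}$ from $\Delta^{-1}\nabla$ is no longer a gain; there I invoke the bilinear Strichartz estimate \eqref{BilStrichartz}, writing the worst interaction $\pt_t A^{df}\cdot A^{df}$ as $2^{(\frac{1}{2}-)k}$ times a product of $X^{\frac{1}{4},\frac{1}{2}+}$ norms and summing over $k_1\simeq k_2>k$. Finally, since $\pt_t A^{cf}$ itself appears inside $\mathcal{O}(\pt_t A,A)$, the estimates are a priori circular; I close them in the usual way, either replacing every occurrence of the $\pt_t A^{cf}$-norm on the right by the known quantity from \eqref{curlfree}, or treating the desired $\ell^2_k$-type bound as a bootstrap norm and absorbing the $\pt_t A^{cf}$-contributions using $\ep\ll1$.

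The main obstacle is exactly the balanced high$\times$high $\to$ low interaction in \eqref{PkDtAcfL2L2}: a naive Bernstein bound there is off by a factor $2^{k/2}$, and recovering it forces the use of the bilinear Strichartz estimate \eqref{BilStrichartz}; one must check that the $\tfrac{1}{4}$-regularity gain for $A^{cf}$ recorded in \eqref{curlfree}, together with the $b$-index $\tfrac{3}{4}$ of the wave part $A^{df}$, is just enough to feed that estimate while still producing the claimed power of $2^k$. A secondary, pervasive nuisance is the bookkeeping of the $\jb{2^k/N}^{1-\sg}$ weights and the treatment of the borderline frequency $\simeq N$ and of the logarithmic low-frequency sums via the $0+$/$0-$ notation.
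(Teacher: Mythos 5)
Your proposal is correct and follows essentially the same route as the paper: estimate $\pt_t A^{cf}$ through the equation $\pt_t A^{cf}=\nabla^{-1}\mathcal{O}(\pt_t A,A)$ with a Littlewood--Paley trichotomy, using the $L^{2+}_tL^\infty_x$ Strichartz bound for the $df$ parts, Bernstein together with the extra $\tfrac14$ regularity of $A^{cf}$ from \eqref{curlfree} (which also removes the apparent circularity), the bilinear Strichartz estimate \eqref{BilStrichartz} for the balanced $df$-$df$ interaction, and deducing \eqref{PkDtAL2Linfx} from \eqref{divfree} plus \eqref{PkDtAcfL2Linfx}. The only (harmless) discrepancy is that in the paper the $2^{-\frac34 k}$ (rather than $2^{-k}$) power in \eqref{PkDtAcfL2L2} comes from the low-high term $A^{cf}_{k_1}\pt_t A^{df}_k$ handled by Bernstein, not from the balanced piece you single out, but your scheme covers that term with the same numerology.
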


\begin{proof}

For \eqref{PkDtAcfL2Linfx}, \eqref{PkDtAcfL2L2} we use the equation  
$$ \pt_t A^{cf} = \nabla^{-1}(A^{df} \pt_t A^{df} + A^{df} \pt_t A^{cf}+ A^{cf} \pt_t A^{df} + A^{cf} \pt_t A^{cf})  $$ 
and perform a Littlewood-Paley decomposition of the RHS. 

{\bf Proof of \eqref{PkDtAcfL2Linfx}} 
First consider the case when the output is a high frequency (low-high or high-low cases). When
and at least one of the inputs is $df $ - place that input in $  L^{2+} L^{\infty} $ using  Strichartz \eqref{Strichartzz} and the other term in $ L^{\infty}L^{\infty} $ (using Bernstein and $ L^{\infty} L^2 $). When both inputs are $ cf $ use $ L^{\infty}L^{\infty} $ norms (and Bernstein with $ L^{\infty} L^2 $), based on \eqref{curlfree}.

In the case of high-high-to-low frequency interactions we use Bernstein first. When both inputs are $ cf $ - estimate both inputs in $ L^{\infty} L^2 $ using \eqref{curlfree} and the output in $ L^{2} L^1 $. When both inputs are $ df $ use the bilinear Strichartz estimate \eqref{BilStrichartz} to bound the output in $ L^2 L^2 $. 

Now assume one of the high-high inputs is $ df $ and one is $ cf $. For $ A^{df} \pt_t A^{cf} $  estimate the output in $ L^2 L^2 $, placing the $ df $ term in $ L^{2+} L^{\infty} $ and the $ cf $ term in $ L^{\infty} L^2 $. For $ A^{cf} \pt_t A^{df} $ estimate the output in $ L^2 L^{\frac{8}{5}} $, placing $ A^{cf} $ in $ L^{\infty} L^2 $ and $ \pt_t A^{df} $ in $ L^2 L^8 $ (by interpolating between $ L^{2} L^{\infty} $ and energy - $ L^2 L^2 $). This is done to ensure adequate summation even when $ \sg $ is close to $ \frac{5}{6} $. 

{\bf Proof of \eqref{PkDtAL2Linfx}} This inequality follows from \eqref{PkDtAcfL2Linfx} and from 
\eqref{divfree} together with Strichartz \eqref{Strichartzz} for $ L^{2+} L^{\infty} $ and H\"older in $ t $. 

{\bf Proof of \eqref{PkDtAcfL2L2}} For high-high-to-low frequency interactions we argue exactly the same as for \eqref{PkDtAcfL2Linfx}, only the power obtained from Bernstein's inequality changes by a factor of $ 2^{-\frac{3}{2}k} $. The same is true for low-high and high-low $ A^{cf} \pt_t A^{cf} $ interactions. 

For the remaining high-low interactions place $ A $ (high frequency) in $ L^{\infty} L^2 $ and $ \pt_t A $ (low frequency) in  $ L^{2} L^{\infty} $ using \eqref{PkDtAL2Linfx}. 

For low-high interactions, place $ A^{df} $ (low) in $ L^{2} L^{\infty} $  and $ \pt_t A $  (high) in $ L^{\infty} L^2 $. It remains to deal with $ A^{cf}_{k_1} \pt_t A_k^{df} $ for $ k_1 \leq k $. This term is responsible for the $ 2^{-\frac{3}{4}k} $  factor in \eqref{PkDtAcfL2L2}, rather than $ 2^{-k} $ (which can probably be fully obtained). Place $  A^{cf}_{k_1}$ in $ L^{\infty} L^{12} $ using  \eqref{curlfree} and Bernstein and place $ \pt_t A_k^{df} $ in $ L^{\infty} L^{\frac{12}{5}} $ using Bernstein again. 
\end{proof}

%%%%%%%%%%%%%%%%%%%%%%%%%%%%%%%%%%%%%%%
%%%%%%%%%%%%%%%%%%%%%%%%%%%%%%%%%%%%%%%
%%%%%%%%%%%%%%%%%%%%%%%%%%%%%%%%%%%%%%%
%%%%%%%%%%%%%%%%%%%%%%%%%%%%%%%%%%%%%%%
%%%%%%%%%%%%%%%%%%%%%%%%%%%%%%%%%%%%%%%
%%%%%%%%%%%%%%%%%%%%%%%%%%%%%%%%%%%%%%%
%%%%%%%%%%%%%%%%%%%%%%%%%%%%%%%%%%%%%%%
%%%%%%%%%%%%%%%%%%%%%%%%%%%%%%%%%%%%%%%
%%%%%%%%%%%%%%%%%%%%%%%%%%%%%%%%%%%%%%%
%%%%%%%%%%%%%%%%%%%%%%%%%%%%%%%%%%%%%%%
%%%%%%%%%%%%%%%%%%%%%%%%%%%%%%%%%%%%%%%
%%%%%%%%%%%%%%%%%%%%%%%%%%%%%%%%%%%%%%%
%%%%%%%%%%%%%%%%%%%%%%%%%%%%%%%%%%%%%%%
%%%%%%%%%%%%%%%%%%%%%%%%%%%%%%%%%%%%%%%

\section{Yang-Mills Heat Flow solutions} \label{SecYMHF}

This section is devoted to the Yang-Mills heat flow. We consider deTurck's and caloric gauges including: well-posedness in $ I^{-1}H^1 $ and $ H^{\rho} $, control of the modified energy \eqref{ModifiedEnergy} from the initial data and the change between the two gauges. 

\subsection{Local well-posedness in DeTurck's gauge} \ 

In the covariant \eqref{cYMHF} equation, setting the DeTurck gauge condition $ A_s = \pt^{\ell} A_{\ell} $ one obtains the parabolic equation for $ A_i $
\be \label{DeTurck}
(\partial_{s} -\Delta) A_{i}=2\left[A^{\ell}, \partial_{\ell} A_{i}\right]-\left[A^{\ell}, \partial_{i} A_{\ell}\right]+\left[A^{\ell},\left[A_{\ell}, A_{i}\right]\right]
\ee

\

In short, this equation can be written schematically as $ (\pt_s - \Delta) A = A \nabla A + A^3  $.

One can separate the linear and nonlinear parts of $A_i $:
\be
A_i(s) = e^{s \Delta} A_i(0) + A^{bil}_i(s)
\ee
where the bilinear term is written schematically as  
\be
A^{bil}_i(s_1) \defeq \int_0^{s_1} e^{(s_1-s)\Delta} ( A \nabla A + A^3 ) \dd s 
\ee 
and will enjoys more favorable estimates, such as \eqref{AbilL2Linf}.

\

This equation is locally well-posed in $ I^{-1} H^1 $:
\begin{proposition} \label{DeTurckProp}
Suppose the initial data satisfies $ I \bar{A}_i \in \ep H^1 $ on $ \mb{R}^3 $ and $ \ep>0 $ is small enough. Then there exists a solution to \eqref{DeTurck} on $ \mb{R}^3 \times [0,1] $ with $ A_i(s=0) =\bar{A}_i $ which satisfies 
\be \label{IAisH1}
 I A_i \in \ep L^{\infty}_{s} H^1( \mb{R}^3 \times [0,1]) \ee
and is the unique limit of regular solutions. 

This provides a solution to \eqref{cYMHF} with $ A_s = \pt^{\ell} A_{\ell} $. 
If the initial data is regular then also the solution is.
Moreover, 
\be \label{AsDTbds}
A_s \in  \ep L^{1}_s L^{\infty}_x, \qquad \nabla A_s \in \ep L^1_s L^3_x , \qquad A_s \in \ep L^1_s H^{\frac{3}{2}+} 
\ee
and for any $ s_1, s_2 \in [0,1] $, one has
\be \label{IDeltaAs}
\int_{s_1}^{s_2} I \Delta A_s \dd s  \in \ep L^2_x 
\ee
\end{proposition}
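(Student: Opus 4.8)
Equation \eqref{DeTurck} is semilinear parabolic of the schematic form $(\pt_s-\Delta)A = A\nabla A + A^3$, so the plan is a contraction argument on the Duhamel formulation
$$A_i(s) = e^{s\Delta}\bar A_i + \int_0^s e^{(s-s')\Delta}\bigl( A\nabla A + A^3\bigr)(s')\dd s'$$
run in a weighted space $\mathcal Y$ adapted to the multiplier $I$: a combination of $\sup_{s\in[0,1]}\vn{IA(s)}_{H^1}$ with parabolic-smoothing pieces $\sup_s s^{\gamma}\vn{IA(s)}_{\dot W^{1,p}}$ and $\sup_s s^{1/2}\vn{I\nabla^2 A(s)}_{L^2}$, chosen so that the quadratic term closes despite costing one derivative. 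The inputs are the heat smoothing bounds $\vn{e^{t\Delta}}_{L^p\to L^q}\ls t^{-\frac32(\frac1p-\frac1q)}$ and $\vn{\nabla e^{t\Delta}}_{L^p\to L^q}\ls t^{-\frac12-\frac32(\frac1p-\frac1q)}$, the Sobolev product laws \eqref{SobMult}, \eqref{SobMultHomog}, and the $I$-product estimate \eqref{SobMI} (which allow us to write $I(fg)\approx(If)(Ig)$ modulo strictly smoother commutators). Since $I$ commutes with $e^{s\Delta}$ and $\nabla$, applying $I$ to the Duhamel formula yields $\vn{A}_{\mathcal Y}\ls\ep+\vn{A}_{\mathcal Y}^2+\vn{A}_{\mathcal Y}^3$ on $[0,1]$, hence $\vn{A}_{\mathcal Y}\ls\ep$ by a continuity argument, which in particular gives \eqref{IAisH1}; the same scheme applied to differences gives contraction. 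One first carries this out for regular $\bar A_i$ (higher $H^m$-persistence following by re-running the argument in $H^m$-based spaces plus uniqueness), and then, approximating a rough $I\bar A_i\in\ep H^1$ by regular data converging in $H^1$, the corresponding solutions are Cauchy in $\mathcal Y$, whose limit is the desired unique limit of regular solutions. The identification with \eqref{cYMHF} is purely algebraic: substituting $A_s=\pt^\ell A_\ell$ into $F_{si}=\mathbf D^\ell F_{\ell i}$ and expanding $F_{si}=\pt_s A_i-\pt_i A_s-[A_i,A_s]$ and $\mathbf D^\ell F_{\ell i}=\Delta A_i+\cdots$ reproduces \eqref{DeTurck} verbatim.

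For the bounds \eqref{AsDTbds}, note $A_s=\text{div}\,A$, so each of the three claims is a parabolic-smoothing statement about $A(s)$ with one, two, and $\tfrac32{+}$ extra derivatives respectively. For the linear part $e^{s\Delta}\bar A$ one uses $I\bar A\in\ep H^1$, i.e.\ $\bar A\in\ep(H^1+N^{\sg-1}H^{\sg})$ by Lemma \ref{IEquivSum}: on the $H^1$ piece the smoothing cost is $s^{-3/4}$, $s^{-3/4}$, $s^{-(3/4+)}$, each $L^1_s$ on $[0,1]$; on the $N^{\sg-1}H^{\sg}$ piece the cost is $s^{-(5/2-\sg)/2-}$, which is $L^1_s$ precisely when $\sg>\tfrac56$ (the $N^{\sg-1}$ weight being already absorbed into the $\ep$). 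For the bilinear Duhamel term one estimates $N(s')=A\nabla A+A^3$ (and $\text{div}\,N(s')$) in suitable $L^p_x$ using the weighted norms of $\mathcal Y$, and then—rather than seeking a uniform-in-$s$ bound—integrates in $s$ first: by Fubini, $\int_0^1\bigl\|\int_0^s\nabla e^{(s-s')\Delta}N(s')\dd s'\bigr\|_{L^q}\dd s\ls\bigl(\int_0^1\vn{\nabla e^{u\Delta}}_{L^p\to L^q}\dd u\bigr)\int_0^1\vn{N(s')}_{L^p}\dd s'$, where the first factor is finite once $p>3$ (for $q=\infty$), $p>\tfrac32$ (for $q=3$), etc., and the second is $\ls\ep^2$. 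This yields all of \eqref{AsDTbds}.

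For \eqref{IDeltaAs} one uses the equation directly: $\Delta A_i=\pt_s A_i-N_i$ gives $\Delta A_s=\pt_s A_s-\text{div}\,N$, hence $\int_{s_1}^{s_2}I\Delta A_s\dd s = IA_s(s_2)-IA_s(s_1)-\int_{s_1}^{s_2}I\,\text{div}\,N\dd s$; the first two terms lie in $\ep L^2$ uniformly because $IA(s)\in H^1$, and the integral term is placed in $L^1_sL^2_x$ using the smoothing bounds together with the precise divergence structure of the nonlinearity in \eqref{DeTurck}—in particular the doubly-differentiated piece $[A^\ell,\Delta A_\ell]$ of $\text{div}\,N$ is traded, via the equation itself, for $[A^\ell,\pt_s A_\ell]$ plus strictly lower-order terms before estimating, so that no term requires more than the $(s')^{-3/4}$-type weights that remain $L^1_s$ on $[0,1]$.

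I expect the main obstacle to be exactly this bookkeeping: simultaneously choosing $\mathcal Y$ so that the single-derivative cost of the quadratic nonlinearity is absorbed by heat smoothing with $L^1_s$-integrable weights \emph{and} so that the tight derivative counts in \eqref{AsDTbds} and \eqref{IDeltaAs} close. This is where the hypothesis $\sg>\tfrac56$ enters, and where one must use the genuine structure of \eqref{DeTurck} rather than merely its schematic form $A\nabla A+A^3$.
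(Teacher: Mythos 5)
Your proposal is correct and takes essentially the same route as the paper: a Picard/contraction argument for the Duhamel formula in an $I$-adapted parabolic-smoothing norm using Sobolev product estimates, persistence of regularity plus approximation by regular data, the bounds \eqref{AsDTbds} from heat smoothing applied to $A_s=\pt^\ell A_\ell$, and \eqref{IDeltaAs} from integrating the (divergence of the) equation between $s_1$ and $s_2$, which is exactly the paper's identity. Two harmless quibbles: the smoothing weight $s^{-(5/2-\sg)/2-}$ is $L^1_s$ for all $\sg>\frac12$, not only $\sg>\frac56$, so this proposition does not really consume the hypothesis $\sg>\frac56$; and the "trading" of the doubly-differentiated piece via the equation is unnecessary, since one can bound $I\jb{D}(A\nabla A+A^3)$ directly in $L^1_sL^2_x$ using the $H^2$-level control supplied by the smoothing norm, as the paper does.
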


\begin{proof}
This is proved using standard methods, so we only sketch the proof. One sets up a contraction argument or Picard iteration, 
looking for a fixed point for the functional 
$$
\Phi A_i (s_1) = e^{s_1 \Delta} \bar{A}_i + \int_0^{s_1} e^{(s_1-s) \Delta} ( A \nabla A + A^3 ) \dd s
$$ 
in a suitable space $ S $ where one has the estimates
\begin{align*}
\vn{ \int_0^{s_1} e^{(s_1-s) \Delta} ( A \nabla B  ) \dd s}_S & \ls  \vn{A}_S \vn{B}_S \\
\vn{ \int_0^{s_1} e^{(s_1-s) \Delta} ( A_1 A_2 A_3) ) \dd s}_S & \ls  \vn{A_1}_S \vn{A_2}_S \vn{A_3}_S
\end{align*}
One may use the space
\be \label{Snorm}
\vn{A}_S \defeq \vn{ \vn{ \jb{2^{2k}s}^{10} \vn{P_k I A(s)}_{H^1} }_{L^{\infty}_s} }_{\ell^2_{k \in \mb{N}}}
\ee
to check these estimates using basic Littewood-Paley theory. 

\

One may also use the following parabolic energy estimates, \cite[Proposition 3.12]{oh2014gauge} 
\be
\begin{aligned} \label{ParabolicEnEst}
\vn{ \int_0^{s_1} e^{(s_1-s) \Delta}  \mathcal{N}(s) \dd s}_{\tilde{S}} \ls \vn{s  \mathcal{N}}_{L^1_{\frac{\dd s}{s}} \cap L^2_{\frac{\dd s}{s}}  ( L^2_x ) }  \\
\vn{f}_{\tilde{S}} \defeq \vn{f}_{L^{\infty}_s L^2_x} + \vn{s^{\frac{1}{2} }\pt_x f}_{L^{\infty}_s \cap L^2_{\frac{\dd s}{s}}(L^2_x)} + \vn{s \pt_x^2 f}_{L^2_{\frac{\dd s}{s}} L^2_x} 
\end{aligned}
\ee
used in the proof of  \cite[Proposition 5.2]{oh2014gauge} which is the corresponding well-posedness statement for $ \dot{H}^1 $. Here one writes the equation as 
\be \label{IDxAeq}
(\pt_s - \Delta)I \jb{D_x} A = I \jb{D_x}( A \nabla A + A^3 )
\ee 
One defines $ \vn{A}_S \defeq \vn{I \jb{D_x} A }_{ \tilde{S} } $. 
The needed estimates to check for this case of $ I^{-1} H^1 $, as opposed to $ \dot{H}^1 $, become
\be \label{DTbilest}
\begin{aligned}
\vn{I \jb{D} ( f \nabla g)}_{L^2} & \ls \vn{I f}_{H^{\frac{3}{2}+(1-\sg)+}} \vn{I g}_{H^2} \\
 \vn{I \jb{D} ( f_1 f_2 f_3)}_{L^2} & \ls \vn{I f_1}_{H^{2- \frac{2}{3} \sg}} \vn{I f_2}_{H^{2- \frac{2}{3} \sg}} \vn{I f_3}_{H^{2- \frac{2}{3} \sg}}
% \vn{I \jb{D} ( f_1 f_2 f_3)}_{L^2} \ls \sum_{\{ i,j,l\} = \{1,2,3\}  }  \vn{f_i}_{H^{2-\sg}}  \vn{f_l}_{H^{2-\sg}} \vn{f_k }_{H^{3-\sg}} 
\end{aligned}
\ee
which follow easily from \eqref{SobMult} or from Littewood-Paley, by splitting into different cases depending on whether the inputs have frequencies higher or lower than $ N $. 

By similar arguments for the differentiated equation one obtains persistence of regularity. 
We refer to \cite[Proposition 5.2]{oh2014gauge}  for more details. 

The bounds \eqref{AsDTbds} follow from the $  \vn{A}_S $ bound and the definition of $ A_s = \pt^{\ell} A_{\ell}$. 

Denoting by $  \mathcal{N} $ the nonlinear RHS of \eqref{DeTurck}, for our solution, the bounds above show 
\be
 \vn{ I \jb{D_x}  \mathcal{N}}_{L^1_{s}L^2_x  }  \ls \ep^2 
\ee
We use this to prove \eqref{IDeltaAs}. The equation for $ I A_s $ is of the type $ (\pt_s - \Delta)I A_s = I \partial_x \mathcal{N} $. Integrating the equation and taking $ L^2_x $ norms one obtains 
$$
\vn{ \int_{s_1}^{s_2} I \Delta A_s \dd s }_{ L^2_x} \leq \vn{I A_s(s_1)}_{ L^2_x} + \vn{I A_s(s_2)}_{ L^2_x} + \vn{ I \partial_x \mathcal{N} }_{L^1_{s}L^2_x  } \ls \ep. 
$$
\end{proof}
As a consequence one obtains gauge-invariant bounds for the curvature, such as control of the modified energy \eqref{ModifiedEnergy}.

\begin{corollary} \label{CorHFModEn}
Suppose $ A_i(t,x) $ is a regular solution of the \eqref{YM} equation in the temporal gauge on $  [t_0,t_1] \times \mb{R}^3 $ with $ (I A_i,  I F_{0i}) \in \ep L^{\infty}_t ( H^1 \times L^2 )([t_0,t_1] \times \mb{R}^3) $.

Suppose $ A_{\al} (t,x,s) $ solves \eqref{dYMHF} on $ [t_0,t_1]  \times \mb{R}^3 \times [0,s_0]$ in DeTurck's gauge $ A_s = \pt^{\ell} A_{\ell} $, where $ s_0 =N^{-2} $. Then, assuming $ \ep \ll 1 $, for any $  t \in [t_0,t_1]  $ one has 
\be \label{IL2InitDataF}
 \vn{ (N s^{\frac{1}{2}} )^{1 - \sg} F_{\al \beta}(s)}_{L^{\infty}_s L^2_x} + \vn{ (N s^{\frac{1}{2}} )^{1 - \sg} F_{\al \beta}(s)}_{L^2_{\frac{\dd s}{s}} L^2_x}  \ls \ep
\ee
As a consequence, for \eqref{ModifiedEnergy} one has the gauge-invariant bound $  \mathcal{IE}(t) \ls \ep^2  $ for all $  t \in [t_0,t_1]  $.
\end{corollary}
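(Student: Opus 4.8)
The plan is to reduce the claim to a Littlewood--Paley computation built on the heat-flow representation of Proposition \ref{DeTurckProp}. First observe that the last assertion is a formal consequence of \eqref{IL2InitDataF}: recalling \eqref{EnergiesS} and $(N^2 s)^{1-\sg} = (Ns^{\frac12})^{2(1-\sg)}$, one has $(N^2 s)^{1-\sg}\mathcal{E}(t,s)\simeq\vn{(Ns^{\frac12})^{1-\sg}F_{\al\beta}(t,s)}_{L^2_x}^2$, so by the definition \eqref{ModifiedEnergy} the bound $\mathcal{IE}(t)\ls\ep^2$ follows from squaring \eqref{IL2InitDataF} and integrating $\tfrac{ds}{s}$. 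Moreover $\vn{F_{\al\beta}}_{L^2_x}$, hence $\mathcal{E}$ and $\mathcal{IE}$, are gauge invariant by \eqref{InvMetric}, so it suffices to prove \eqref{IL2InitDataF} in DeTurck's gauge and the resulting bound on $\mathcal{IE}(t)$ then holds in any gauge. I fix $t\in[t_0,t_1]$ (all bounds below being uniform in $t$ since the hypothesis is) and suppress it.

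Next I would set up the data and the flow. At $s=0$ one has $F_{ij}(0) = \pt_i A_j(0) - \pt_j A_i(0) + [A_i(0),A_j(0)]$, and $F_{0i}(0) = \pt_t A_i(0)$ by the temporal condition. Combining $IA_i(0)\in\ep H^1$, $IF_{0i}(0)\in\ep L^2$ with the product estimates from \eqref{SobMult}, \eqref{SobMI} applied to the commutator (which place $I[A_i(0),A_j(0)]$ in $C\ep^2 L^2$) gives $IF_{\al\beta}(0)\in\ep L^2$, i.e.\ by Lemma \ref{IEquivSum} the frequency-localized bound $\vn{P_k F_{\al\beta}(0)}_{L^2_x}\ls\ep_k\max(1,(2^k/N)^{1-\sg})$ with $\sum_k\ep_k^2\ls\ep^2$. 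In DeTurck's gauge $A_i(s)$ solves \eqref{DeTurck}, so Proposition \ref{DeTurckProp} provides $A_i(s) = e^{s\Delta}A_i(0) + A_i^{bil}(s)$ with $A^{bil}$ small and rapidly heat-decaying in the sense of the $S$-norm \eqref{Snorm}; differentiating \eqref{DeTurck} in $t$ yields $\pt_t A_i(s) = e^{s\Delta}F_{0i}(0) + (\pt_t A_i)^{bil}(s)$; and from \eqref{dYMHF} together with $A_s = \pt^\ell A_\ell$ one computes that $A_0(s)$ solves $(\pt_s - \Delta)A_0 = 2[A^\ell,\pt_\ell A_0] - [A^\ell,\pt_t A_\ell] + [A^\ell,[A_\ell,A_0]]$ with $A_0(0) = 0$, so that $A_0(s)$ is entirely of bilinear-and-higher type and of size $\ls\ep^2$. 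This exhibits $F_{ij}(s)$ and $F_{0i}(s)$ as explicit bilinear-and-higher expressions in $\{A_i(s),\pt_t A_i(s),A_0(s)\}$ and their spatial derivatives.

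The core of the argument is then the main estimate. The leading term of $F_{ij}(s)$ is $e^{s\Delta}(\pt_i A_j(0)-\pt_j A_i(0))$ and that of $F_{0i}(s)$ is $e^{s\Delta}F_{0i}(0)$; for these, with $w(s):=(Ns^{\frac12})^{1-\sg}$ and $\vn{w(s)e^{s\Delta}P_k g}_{L^2_x}\ls w(s)e^{-cs2^{2k}}\vn{P_k g}_{L^2_x}$, the decisive point is the elementary inequality
\[
w(s)\,\max\big(1,(2^k/N)^{1-\sg}\big)\,e^{-cs2^{2k}} \;\ls\; \big(s^{\frac12}2^k\big)^{1-\sg}e^{-cs2^{2k}}\,\mathbf{1}_{2^k\geq N} \;+\; w(s)\,\mathbf{1}_{2^k\leq N},
\]
whose right-hand side sums to $\ls 1$ over $k$ uniformly in $s\in[0,s_0]$ and whose dyadic pieces are near-orthogonal as functions of $s$ with $\tfrac{ds}{s}\simeq dk$; square-summing against $(\ep_k)$ then yields the $L^\infty_s L^2_x$ and $L^2_{\frac{ds}{s}}L^2_x$ bounds by $\ls\ep$. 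All remaining terms --- the commutators $[A_i(s),A_j(s)]$ and $[A_0(s),A_i(s)]$, the terms $A_0(s)$, $\pt_i A_0(s)$, and the bilinear corrections $A_i^{bil}$, $(\pt_t A_i)^{bil}$ --- carry either the rapid factor $\jb{2^{2k}s}^{-10}$ of \eqref{Snorm} or extra regularity relative to scaling, and are absorbed using \eqref{SobMult}, \eqref{SobGN}, \eqref{SobMI} and the bounds of Proposition \ref{DeTurckProp}, contributing $\ls\ep^2\leq\ep$. Collecting the pieces gives \eqref{IL2InitDataF}.

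The hard part is the treatment of $F_{0i}(s)$ for $s>0$: unlike $F_{ij}(s)$, which inherits parabolic smoothing directly from \eqref{DeTurck}, the component $F_{0i}(s)$ forces one to control $\pt_t A_i(s)$ and $A_0(s)$, whose parabolic equations are driven by terms involving $\pt_t A$ --- available only at $L^2$ regularity after applying $I$ --- so one must carefully propagate the weight $w(s)$ through these auxiliary flows and verify the $L^2_{\frac{ds}{s}}$-integrability (the genuinely new part beyond $L^\infty_s L^2_x$) against the heat-kernel decay. A less computational alternative is to derive the covariant heat equation $(\pt_s-\Delta_A)F_{\al\beta} = \mathcal{O}(F\cdot F)$ and run covariant parabolic energy estimates directly on $F_{\al\beta}$; there the cost shifts to controlling the discrepancy between covariant and flat derivatives, which is precisely where the bounds \eqref{AsDTbds} on $A_s$ would be used.
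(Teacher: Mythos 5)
Your proposal is correct in substance, but for the component $F_{0i}(s)$ it takes a genuinely different route from the paper. The paper works with $F_{0i}$ itself as the unknown: from the covariant heat equation \eqref{0CovPara}, the DeTurck choice $A_s=\pt^\ell A_\ell$ cancels $[A_s,F]$ against $[\pt^\ell A_\ell,F]$, so $B=F_{0i}$ solves the flat parabolic system \eqref{FeqDeTurck}, i.e.\ an equation of the schematic form \eqref{BiEq}, which is then estimated by Picard iteration in the $I$-weighted norm $\vn{IB}_{\tilde S}$ using \eqref{ParabolicEnEst} and the two product estimates stated in the paper's proof; the weighted $L^\infty_s L^2_x\cap L^2_{\frac{\dd s}{s}}L^2_x$ bounds then follow from the frequency-envelope control exactly as for $F_{ij}$. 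You instead decompose $F_{0i}=\pt_t A_i-\pt_i A_0+[A_0,A_i]$, control $\pt_t A_i$ by differentiating \eqref{DeTurck} in $t$ (a linear parabolic system of the same schematic type, with data $\pt_t A_i(0)=F_{0i}(0)\in \ep\, I^{-1}L^2$), and control $A_0$ from its own parabolic equation with zero data --- which you compute correctly, the cancellation of $\pt_t\pt^\ell A_\ell$ there playing the role of the paper's $[A_s,F]$ cancellation. Both routes rest on the same $I$-weighted parabolic machinery and on the same final Littlewood--Paley computation for the weight $(Ns^{\frac12})^{1-\sg}$ (your "core estimate" is exactly the paper's split of the $\frac{\dd s}{s}$ integral at $2^k\simeq N$). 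What the paper's choice of unknown buys is that no separate estimates for $\pt_i A_0(s)$ and $[A_0,A_i](s)$ in the weighted spaces are needed; in your route these must be supplied (they do work out: e.g.\ $\vn{\nabla A_0(s)}_{L^2}\ls \ep^2 s^{\frac14-}(Ns^{\frac12})^{\sg-1}$ via \eqref{prodIL2} and Duhamel, and the positive power of $s$ gives the $L^2_{\frac{\dd s}{s}}$ integrability), which is precisely the extra bookkeeping you flag as "the hard part". Two minor points: \eqref{SobMI} lands in $\dot H^{-1}$, not $L^2$, so for $I[A_i(0),A_j(0)]\in \ep^2 L^2$ you should argue as the paper does, via $I^{-1}H^1\subset H^{\sg}\subset L^4$ and boundedness of $I$ on $L^2$; and your closing "covariant alternative" is in fact essentially the paper's actual argument, except that after the DeTurck cancellation the equation is flat-parabolic and no covariant-versus-flat discrepancy needs to be tracked beyond the $A\nabla B+\nabla A\,B+A^2B$ terms already present in \eqref{BiEq}.
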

\begin{proof} Fix a time $ t \in [t_0,t_1] $. For $ A_i $ we apply Proposition \ref{DeTurckProp} and so it obeys the estimates established in that proof. 

We begin with $ F_{ij}(s) $. First note that $ \vn{ [A_i,A_j](s)}_{L^2_x} \ls \ep^2 $ since from \eqref{IAisH1} and Sobolev embedding we have $ A_i(s) \in \ep L^4_x $. From the $ \vn{I \jb{D_x} A }_{ \tilde{S} } $ norm and interpolation one obtains $ (N s^{\frac{1}{2}} )^{1 - \sg} \vn{\pt_i A_j (s)}_{L^2_x} \ls \ep $ and the same for $ \pt_j A_i $ which concludes the first term in \eqref{IL2InitDataF} for $ F_{ij} $. For the second term we note that from control of \eqref{Snorm} we have $ \vn{P_k A_i(s)}_{H^1} \ls \al_k \jb{ \frac{2^k}{N}}^{1-\sg} \jb{2^{2k}s}^{-10} $ for all $ k \geq 0 $, $ s \in [0,s_0 ] $ for some sequence with $ \vn{ (\al_k)}_{\ell^2_{k \in \mb{N}}}  \ls \ep $. We bound
$$
\vn{ (N s^{\frac{1}{2}} )^{1 - \sg}  \pt_j A_i(s)}_{L^2_{\frac{\dd s}{s}} L^2_x}^2 \ls \sum_{0 \leq k \leq N} \int_0^{s_0} (N^2 s)^{1 - \sg}  \al_k^2     \frac{\dd s}{s}  + 
\sum_{ k \geq N}   \al_k^2   \int_0^{s_0} \frac{ (2^{2k}s)^{1-\sg}}{ \jb{2^{2k}s}^{20}  }  \frac{\dd s}{s} 
$$
which is $ \ls \ep^2 $. 

Now consider $  F_{0i}(s) $, which solves the covariant equation \eqref{0CovPara}. Expanding it and canceling the $ [A_s, F ]$ term with the $ [  \pt^{\ell} A_{\ell}, F ] $ term one obtains equation \eqref{FeqDeTurck}, which can be written in the form \eqref{BiEq} for $ B= F_{0i} $. As before, the solution is obtained by Picard iteration. One may use the parabolic energy estimate \eqref{ParabolicEnEst} to estimate the solution of the equation 
$$
(\pt_s - \Delta) I B = I ( A \nabla B + \nabla A  B +   A^2 B ), \qquad I B (0)\in \ep L^2
$$
in the norm $ \vn{B}_{I^{-1} \tilde{S}} = \vn{I B }_{ \tilde{S} } $. The estimates we need now are 
\begin{align*}
\vn{ I(A \nabla B)}_{L^2}+ \vn{ I(B \nabla A)}_{L^2 } & \ls \vn{ IA}_{H^{\frac{3}{2}+(1-\sg)+ }} \vn{I B}_{H^{1+} } \\
\vn{ I(A^2 B) }_{L^2 } & \ls  \vn{I A}_{H^{2-\sg}}^2    \vn{I B}_{H^{2-\sg}}  
\end{align*}
which follow from \eqref{SobMult}, \eqref{SobMultHomog} or from Littewood-Paley, by splitting into frequencies higher or lower than $ N $.
 For $ A $ we already have control of the norms from the proof of Proposition \ref{DeTurckProp}. 
This provides $  \vn{I F_{0i} }_{ \tilde{S} } \ls \ep $. 
One obtains control in the norm $ \vn{ \vn{ \jb{2^{2k}s} \vn{P_k I F_{0i}  (s)}_{L^2 } }_{L^{\infty}_s} }_{\ell^2_{k }} $ as well. Then we conclude exactly like for $ F_{ij} $. 
\end{proof}

We record some $ L^{\infty}_x $ bounds separately. 

\begin{lemma}
Suppose $ A_i $ obeys the conditions of Proposition \ref{DeTurckProp} uniformly on an interval $ I \times \mb{R}^3 \times [0,s_0] $. Then 
\be \label{ALinfty}
\vn{A_i (s)}_{L^{\infty}_t L^{\infty}_x} \ls \ep s^{-\frac{1}{4}} / (N s^{\frac{1}{2}})^{1-\sigma} 
\ee
\be \label{AbilL2Linf}
\vn{ A^{bil}_i(s)}_{L^{\infty}_t L^{\infty}_x} \ls  \ep / (N^2 s)^{1-\sigma} .
\ee

\begin{proof}
Fix a time $ t $. The first bound \eqref{ALinfty} follows immediately from the bounds in Proposition \ref{DeTurckProp}. 
Since $ (s_1-s)^{\frac{3}{4}} e^{(s_1-s)\Delta} : L^2_x \to L^{\infty}_x $, we can bound using \eqref{ALinfty} 
\begin{align*}
\vn{ A^{bil}_i(s_1)}_{L^{\infty}_x} & \ls \int_0^{s_1} (s_1-s)^{-\frac{3}{4}} \vn{A}_{L^{\infty}_x}\vn{\nabla A }_{L^2_x}+ \vn{A}_{L^{\infty}_x}^3  \dd s \\
& \ls \int_0^{s_1} (s_1-s)^{-\frac{3}{4}}  s^{-\frac{1}{4}} / (N s^{\frac{1}{2}})^{2(1-\sigma)}  + s^{-\frac{3}{4}} / (N s^{\frac{1}{2}})^{3(1-\sigma)}   \dd s. 
\end{align*}
which proves \eqref{AbilL2Linf} after integration. 
\end{proof}

\end{lemma}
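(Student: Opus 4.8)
The plan is to prove both bounds at a fixed time $t\in I$ (every estimate below is uniform in $t$, so the $L^\infty_t$ norms follow at the end), working from the sharp dyadic form of the $I^{-1}H^1$ bound supplied by Proposition \ref{DeTurckProp}, i.e. the control of the $S$-norm \eqref{Snorm}: there is a sequence $(\beta_k)_{k\ge 0}$, \emph{independent of} $s\in[0,s_0]$, with $\vn{(\beta_k)}_{\ell^2_k}\ls \ep$ and $\vn{P_k I A_i(s)}_{H^1}\ls \beta_k \jb{2^{2k}s}^{-10}$ for all $k$.

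For \eqref{ALinfty}, removing the multiplier costs $m(2^k)^{-1}\simeq \jb{2^k/N}^{1-\sg}$, so $\vn{P_k A_i(s)}_{H^1}\ls \jb{2^k/N}^{1-\sg}\beta_k\jb{2^{2k}s}^{-10}$. By Bernstein, $\vn{P_k A_i(s)}_{L^\infty_x}\ls 2^{3k/2}\vn{P_k A_i(s)}_{L^2_x}\ls 2^{k/2}\vn{P_k A_i(s)}_{H^1}$ (the block $2^k\ls1$ contributes $\ls\ep$, which is harmless); summing in $k$ and applying Cauchy--Schwarz gives $\vn{A_i(s)}_{L^\infty_x}\ls \ep\big(\sum_k 2^{k}\jb{2^k/N}^{2(1-\sg)}\jb{2^{2k}s}^{-20}\big)^{1/2}$. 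Since $s\le s_0=N^{-2}$ forces $s^{-1/2}\ge N$, the summand grows geometrically up to $2^k\simeq s^{-1/2}$, where it equals $\simeq s^{-1/2}(Ns^{1/2})^{-2(1-\sg)}$, and decays thereafter; hence the series is comparable to this peak value, and taking the square root produces \eqref{ALinfty}. Running the same estimate with $\vn{P_k\nabla A_i(s)}_{L^2}\ls \vn{P_k A_i(s)}_{H^1}$ and summing (the relevant supremum over $k$ of the frequency weight is then $\simeq (Ns^{1/2})^{-2(1-\sg)}$) yields the auxiliary bound $\vn{\nabla A_i(s)}_{L^2_x}\ls \ep\,(Ns^{1/2})^{-(1-\sg)}$, which is used below.

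For \eqref{AbilL2Linf}, I would use the Duhamel formula $A^{bil}_i(s_1)=\int_0^{s_1}e^{(s_1-s)\Delta}\big((A\nabla A+A^3)(s)\big)\dd s$. For the quadratic piece, apply $e^{(s_1-s)\Delta}\colon L^2_x\to L^\infty_x$, which has operator norm $\ls (s_1-s)^{-3/4}$ in three dimensions, together with $\vn{A\nabla A}_{L^2_x}\le \vn{A}_{L^\infty_x}\vn{\nabla A}_{L^2_x}$, \eqref{ALinfty}, and the auxiliary bound just obtained; this gives an integrand $\ls \ep^2(s_1-s)^{-3/4}s^{-1/4}(Ns^{1/2})^{-2(1-\sg)}$. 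For the cubic piece, use instead that $e^{(s_1-s)\Delta}\colon L^\infty_x\to L^\infty_x$ is bounded uniformly, so that contribution is $\ls\int_0^{s_1}\vn{A(s)}_{L^\infty_x}^3\dd s\ls \ep^3\int_0^{s_1}s^{-3/4}(Ns^{1/2})^{-3(1-\sg)}\dd s$; this is exactly the displayed reduction. Pulling out the powers of $N$, the first integral is the Beta-type integral $\int_0^{s_1}(s_1-s)^{-3/4}s^{\sg-5/4}\dd s=c\,s_1^{\sg-1}$, convergent at $s=0$ because $\sg>\frac14$, and hence $\ls \ep^2(N^2s_1)^{-(1-\sg)}$. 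The second integral equals $c\,s_1^{1/4-\frac32(1-\sg)}$ and converges at $s=0$ precisely when $\sg>\frac56$; feeding in $s_1\le s_0=N^{-2}$ together with $\sg>\frac12$ bounds it by $\ls \ep^3 N^{-1/2}(N^2s_1)^{-(1-\sg)}$, comfortably within the claimed bound. Adding the two contributions and using $\ep\ll1$ gives \eqref{AbilL2Linf}.

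The bulk of the work is routine Littlewood--Paley bookkeeping and elementary Beta-integral arithmetic; the one delicate point — and the reason this lemma, and ultimately Proposition \ref{AClaw}, is confined to $\sg>\frac56$ — is the convergence of the cubic time integral near $s=0$, where the exponent $-\frac34-\frac32(1-\sg)$ must exceed $-1$. I would check that borderline computation first, and only then fill in the rest.
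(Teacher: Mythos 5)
Your argument is correct and follows essentially the same route as the paper: for \eqref{AbilL2Linf} you use the identical Duhamel splitting, with $(s_1-s)^{-3/4}$ smoothing from $L^2_x$ to $L^\infty_x$ on the quadratic term and plain $L^\infty_x$ boundedness of the heat semigroup on the cubic term, arriving at exactly the paper's displayed integrand and Beta-integral computation (including the observation that the cubic piece is where $\sg>\frac56$ enters). Your derivation of \eqref{ALinfty} via the frequency-localized $S$-norm \eqref{Snorm}, Bernstein, and Cauchy--Schwarz, together with the auxiliary bound $\vn{\nabla A(s)}_{L^2_x}\ls \ep (Ns^{1/2})^{-(1-\sg)}$, is just the detail the paper compresses into ``follows immediately from the bounds in Proposition \ref{DeTurckProp}.''
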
 

\begin{remark} \label{rmkWP}
The same argument as in Proposition \ref{DeTurckProp} proves well-posedness of \eqref{DeTurck}  in $ \dot{H}^{\rho} $ for any $ \frac{1}{2} < \rho < 1 $. 
Instead of \eqref{IDxAeq} one considers $ (\pt_s - \Delta) \vm{D}^{\rho} A = \vm{D}^{\rho}( A \nabla A + A^3 ) $ together with the norm  $ \vn{A}_{S^{\rho}} \defeq \vn{\vm{D}^{\rho} A }_{ \tilde{S} } $. The estimates \eqref{DTbilest} are replaced by 
\be \label{Rmkprdest}
\begin{aligned}
\vn{ \vm{D}^{\rho}( f \nabla g)}_{L^2} \ls \vn{ f}_{\dot{H}^{\frac{3}{2}} \cap L^{\infty} } \vn{ g}_{\dot{H}^{1+\rho}} \\
\vn{ \vm{D}^{\rho} ( f_1 f_2 f_3)}_{L^2} \ls  \vn{f_1}_{\dot{H}^{1+\frac{\rho}{3}}}   \vn{f_2}_{\dot{H}^{1+\frac{\rho}{3}}}   \vn{f_3}_{\dot{H}^{1+\frac{\rho}{3}}}  
\end{aligned}
\ee
See \eqref{SobMultHomog}. Control in the norm $ \vn{ \vn{ \jb{2^{2k}s} \vn{P_k  A(s)}_{\dot{H}^{\rho} } }_{L^{\infty}_s} }_{\ell^2_{k }} $ holds as well.
One obtains 
\begin{proposition} \label{DeTurckWP}
Let $ \frac{1}{2} < \rho < 1 $. 
Suppose the initial data satisfies $ \bar{A}_i \in \ep \dot{H}^{\rho}(\mb{R}^3) $ and $ \ep>0 $ is small enough. Then there exists a solution to \eqref{DeTurck} on $ \mb{R}^3 \times [0,1] $ with $ A_i(s=0) =\bar{A}_i $ which is in $ S^{\rho} $ and is the unique limit of regular solutions. 
In particular, one has
\be \label{DeTurckBdd} \sup_{s \in [0,1]} s^{\frac{\rho_1 - \rho}{2}} \vn{ A_i(s)}_{\dot{H}^{\rho_1} } + \vn{s^{\frac{\rho_1 - \rho}{2}} A_i(s)}_{L^2_{\frac{\dd s}{s}} \dot{H}^{\rho_1}}  \ls \ep \qquad \forall  \ \rho_1 \in (\rho, \rho+1].    
\ee  
\end{proposition}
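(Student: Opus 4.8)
The plan is to transcribe the argument of Proposition \ref{DeTurckProp} to the homogeneous setting, replacing the Fourier multiplier $I\jb{D_x}$ by $|D|^\rho$ throughout. First I would set up a Picard iteration for the Duhamel map
$$
\Phi A_i(s_1) = e^{s_1\Delta}\bar A_i + \int_0^{s_1} e^{(s_1-s)\Delta}\big(A\nabla A + A^3\big)\dd s
$$
in the complete space $S^\rho$ with norm $\vn{A}_{S^\rho}\defeq\vn{|D|^\rho A}_{\tilde S}$, where $\tilde S$ is the parabolic norm appearing in \eqref{ParabolicEnEst}. The linear term is controlled by the standard heat-semigroup bound $\vn{e^{s\Delta}\bar A_i}_{S^\rho}\ls\vn{\bar A_i}_{\dot H^\rho}$, which moreover yields the finer frequency-localized estimate $\vn{P_k A_i(s)}_{\dot H^\rho}\ls\al_k\jb{2^{2k}s}^{-10}$ with $\vn{(\al_k)_k}_{\ell^2_k}\ls\vn{\bar A_i}_{\dot H^\rho}$ — the analogue of the bound exploited in the proof of Corollary \ref{CorHFModEn} — and this property is preserved by the iteration.

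For the nonlinear terms I would combine the parabolic energy estimate \eqref{ParabolicEnEst}, which reduces matters to bounding $\vn{s|D|^\rho(A\nabla A)}_{L^1_{\dd s/s}\cap L^2_{\dd s/s}(L^2_x)}$ and $\vn{s|D|^\rho(A^3)}_{L^1_{\dd s/s}\cap L^2_{\dd s/s}(L^2_x)}$, with the pointwise-in-$s$ product estimates \eqref{Rmkprdest}. Feeding in the interpolated consequences of the $S^\rho$ norm, namely $\vn{A(s)}_{\dot H^{\rho_1}}\ls s^{-(\rho_1-\rho)/2}\vn{A}_{S^\rho}$ for $\rho_1\in[\rho,\rho+1]$ together with $\vn{A(s)}_{L^\infty}\ls s^{-(3/2-\rho)/2}\vn{A}_{S^\rho}$ (the latter obtained by summing the frequency-localized bound, since $\dot H^{3/2}(\mb R^3)\not\hookrightarrow L^\infty$), one gets $\vn{s|D|^\rho(A\nabla A)(s)}_{L^2_x}\ls s^{\rho/2-1/4}\vn{A}_{S^\rho}^2$ and $\vn{s|D|^\rho(A^3)(s)}_{L^2_x}\ls s^{\rho-1/2}\vn{A}_{S^\rho}^3$. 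Since $\rho>\tfrac12$, both exponents are strictly positive, so the $L^1_{\dd s/s}\cap L^2_{\dd s/s}$ norms over $[0,1]$ are finite; this is precisely where the hypothesis $\rho>1/2$ is used. The same estimates applied to differences give a contraction, hence a unique fixed point with $\vn{A}_{S^\rho}\ls\ep$, and the identical argument applied to the differentiated equation gives persistence of regularity and identifies $A_i$ as the unique limit of regular solutions; passing through $A_s=\pt^\ell A_\ell$ furnishes the corresponding solution of \eqref{cYMHF} exactly as in Proposition \ref{DeTurckProp}.

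To obtain \eqref{DeTurckBdd} I would return to the frequency-localized bound: multiplying $\vn{P_kA_i(s)}_{\dot H^\rho}\ls\al_k\jb{2^{2k}s}^{-10}$ by $2^{(\rho_1-\rho)k}s^{(\rho_1-\rho)/2}$ gives
$$
s^{(\rho_1-\rho)/2}\vn{P_kA_i(s)}_{\dot H^{\rho_1}}\ls\al_k\,(2^{2k}s)^{(\rho_1-\rho)/2}\jb{2^{2k}s}^{-10},
$$
and square-summing in $k$ and integrating $\dd s/s$ over $[0,1]$ reduces to the convergent integral $\int_0^\infty u^{\rho_1-\rho}\jb u^{-20}\,\tfrac{\dd u}{u}$, convergent precisely because $\rho_1>\rho$; the $L^\infty_s$ part is the same computation without the $\dd s/s$ integration. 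This gives \eqref{DeTurckBdd} for all $\rho_1\in(\rho,\rho+1]$.

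The only genuinely delicate point I anticipate is the bookkeeping of the $s$-weights in the nonlinear estimates, so as to confirm that the resulting powers of $s$ are positive exactly when $\rho>1/2$; the rest is a routine transcription of the $I^{-1}H^1$ argument to the homogeneous scale, with the minor twist that the $L^\infty_x$-control of $A(s)$ must be obtained by summing the frequency-localized bound rather than by a Sobolev embedding.
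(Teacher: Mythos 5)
Your proposal is correct and follows essentially the same route as the paper: a Picard iteration in $S^{\rho}=\{A:\ \vm{D}^{\rho}A\in\tilde S\}$ based on the parabolic energy estimate \eqref{ParabolicEnEst} and the product estimates \eqref{Rmkprdest}, with the weight bookkeeping showing the powers $s^{\rho/2-1/4}$ and $s^{\rho-1/2}$ are positive exactly when $\rho>\tfrac12$, and with the frequency-localized envelope bound supplying the $L^2_{\frac{\dd s}{s}}$ part of \eqref{DeTurckBdd} at intermediate $\rho_1$ (which, as you implicitly realize, does not follow from naive interpolation of the $\tilde S$ components). Your only unproved assertion — that the $\jb{2^{2k}s}^{-10}$ decay persists through the iteration — is at the same level of detail as the paper's own remark, and any power $\geq 1$ of $\jb{2^{2k}s}$ suffices there.
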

\end{remark}

\

Similarly to Proposition \ref{DeTurckWP} one has a statement for the equation satisfied by $ B_i = F_{0i} $ in DeTurck's gauge: 

\begin{proposition} \label{DeTurckF0i}
Let $ -  \frac{1}{2} < \gamma <0 $. Suppose the connection coefficients $ A_i $ satisfy the conditions in Proposition \ref{DeTurckWP} and Remark \ref{rmkWP} on $ \mb{R}^3 \times [0,1] $. Suppose the initial data satisfies $  \bar{B}_i \in \ep \dot{H}^{\gamma}(\mb{R}^3) $ and $ \ep>0 $ is small enough. Then the schematic equation 
\be \label{BiEq}
(\pt_s - \Delta) B = A \nabla B + B \nabla A  +   A^2 B 
\ee
has a unique solution on $ \mb{R}^3 \times [0,1] $ with $ B_i(s=0) =\bar{B}_i $ and satisfies 
$$ \sup_{s \in [0,1]} s^{\frac{\gamma_1 - \gamma}{2}} \vn{ B_i(s)}_{\dot{H}^{\gamma_1} } + \vn{s^{\frac{\gamma_1 - \gamma}{2}} B_i(s)}_{L^2_{\frac{\dd s}{s}} \dot{H}^{\gamma_1}}  \ls \ep \qquad \forall  \ \gamma_1 \in (\gamma, \gamma+1].    $$  
\end{proposition}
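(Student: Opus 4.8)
The plan is to run the same scheme as in Proposition \ref{DeTurckWP} (Remark \ref{rmkWP}), but now treating \eqref{BiEq} as a \emph{linear} equation in $B$, so that the contraction is driven entirely by the smallness $\ep$ of the coefficients $A_i$. Applying $\vm{D}^{\gamma}$ to \eqref{BiEq} one considers $(\pt_s-\Delta)\vm{D}^{\gamma}B=\vm{D}^{\gamma}(A\nabla B+B\nabla A+A^2B)$ with $\vm{D}^{\gamma}B(0)\in\ep L^2$, and seeks a fixed point of the affine map
$$\Phi B(s_1)=e^{s_1\Delta}\bar B+\int_0^{s_1}e^{(s_1-s)\Delta}\big(A\nabla B+B\nabla A+A^2B\big)(s)\dd s$$
in the space $S^{\gamma}$ defined, exactly as in Remark \ref{rmkWP}, by $\vn{B}_{S^{\gamma}}\defeq\vn{\vm{D}^{\gamma}B}_{\tilde S}$ with $\tilde S$ from \eqref{ParabolicEnEst} (one also records the Besov-type refinement $\vn{\vn{\jb{2^{2k}s}\vn{P_kB(s)}_{\dot{H}^{\gamma}}}_{L^{\infty}_s}}_{\ell^2_k}$). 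The linear part satisfies $\vn{e^{s\Delta}\bar B}_{S^{\gamma}}\ls\vn{\bar B}_{\dot{H}^{\gamma}}\ls\ep$, and control of $\vn{B}_{S^{\gamma}}$ yields the stated bounds $\sup_s s^{(\gamma_1-\gamma)/2}\vn{B(s)}_{\dot{H}^{\gamma_1}}+\vn{s^{(\gamma_1-\gamma)/2}B(s)}_{L^2_{\frac{\dd s}{s}}\dot{H}^{\gamma_1}}\ls\ep$ for $\gamma_1\in(\gamma,\gamma+1]$ by interpolating the three components of the $\tilde S$ norm, just as for $\dot{H}^1$ in \cite[Proposition 5.2]{oh2014gauge}.

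By \eqref{ParabolicEnEst} it then suffices to establish, for each schematic term $\mathcal{T}\in\{A\nabla B,\ B\nabla A,\ A^2B\}$, the fiberwise-in-$s$ bound
$$\vn{s\,\vm{D}^{\gamma}\mathcal{T}(s)}_{L^1_{\frac{\dd s}{s}}\cap L^2_{\frac{\dd s}{s}}(L^2_x)}\ls\ep\,\vn{B}_{S^{\gamma}},$$
together with the same estimate for $\Phi B_1-\Phi B_2$ with $B$ replaced by $B_1-B_2$, so that $\Phi$ is a contraction once $\ep\ll1$. Each $\vn{\vm{D}^{\gamma}\mathcal{T}(s)}_{L^2_x}$ is bounded by the Littlewood--Paley trichotomy (equivalently by the homogeneous product law \eqref{SobMultHomog} in the non-borderline cases), feeding in \eqref{DeTurckBdd} for $\vn{A(s)}_{\dot{H}^{\rho_1}}$, $\rho_1\in(\rho,\rho+1]$, and the $\tilde S$-norm components for $B$. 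The key point is to allocate the available $\dot{H}^{\rho_1}$ and $\dot{H}^{\gamma_1}$ regularities — hence the attached negative powers of $s$ — so that, after multiplication by the extra $s$ in \eqref{ParabolicEnEst}, the resulting power is $s^{\rho/2-1/4-}$, which is both integrable and square-integrable against $\dd s/s$ on the bounded interval $[0,1]$; near $s=1$ there is nothing to check.

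The main obstacle is the low--high interaction in $A\nabla B$, where the derivative falls on the rougher factor $B$ and the output frequency matches the high frequency $\sim 2^k$ of $\nabla B$: writing $\vn{P_k\vm{D}^{\gamma}(A_{<k}\nabla B_{\sim k})}_{L^2_x}\ls 2^{\gamma k}\vn{A_{<k}(s)}_{L^{\infty}_x}\vn{\nabla B_{\sim k}(s)}_{L^2_x}$, the power $2^{\gamma k}$ is exactly cancelled (after writing $\vn{\nabla B_{\sim k}}_{L^2}\ls 2^{-\gamma k}\vn{\vm{D}^{\gamma+1}B_{\sim k}}_{L^2}$), so there is no dyadic decay and one must sum in $\ell^2_k$ using only the $\ell^2_k$ structure of $B$ and the \emph{uniform-in-}$k$ bound $\vn{A_{<k}(s)}_{L^{\infty}_x}\ls\vn{A(s)}_{\dot{H}^{\rho_1}}$, i.e. the embedding $\dot{H}^{\rho_1}\hookrightarrow L^{\infty}(\mb{R}^3)$ — which is available precisely because $\rho>\frac{1}{2}$ forces $\rho+1>\frac{3}{2}$, allowing the choice $\rho_1\in(\frac{3}{2},\rho+1]$. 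Tracking the $s$-weights ($\vn{A(s)}_{\dot{H}^{\rho_1}}\ls\ep s^{-(\rho_1-\rho)/2}$ with $\rho_1$ slightly above $\frac{3}{2}$, and $\vn{\vm{D}^{\gamma+1}B(s)}_{L^2}\ls\vn{B}_{S^{\gamma}}s^{-1/2}$) then produces exactly the gain $s^{\rho/2-1/4-}$; this is the same threshold $\rho>\frac{1}{2}$ that governs Propositions \ref{DeTurckProp} and \ref{DeTurckWP}. The remaining interactions are easier: in $B\nabla A$ the derivative lands on the smooth factor $A$ (controlled up to $\dot{H}^{\rho+1}$); in $A^2B$ there are no derivatives and the smoothing $\vm{D}^{\gamma}$, $\gamma<0$, only helps; the high--low and high--high--to--low pieces enjoy genuine dyadic decay and are summed using the $\jb{2^{2k}s}^{-1}$ refinement and Bernstein's inequality (the latter also handling $2^{\gamma k}$ on a low output frequency, which is harmless for $\gamma$ in the stated range).

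Finally, the contraction mapping principle gives the unique solution $B\in S^{\gamma}$, obtained as the limit of regular solutions; persistence of regularity (smoothness of $B$ when $\bar B$ and $A$ are regular) follows by running the same argument for the equations satisfied by $\vm{D}^{\gamma_1}B$ and its spatial derivatives, as in \cite[Proposition 5.2]{oh2014gauge}.
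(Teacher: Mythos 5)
Your proposal follows essentially the same route as the paper: apply $\vm{D}^{\gamma}$, run a Picard/contraction argument in the norm $\vn{B}_{S^{\gamma}}=\vn{\vm{D}^{\gamma}B}_{\tilde{S}}$ using the parabolic energy estimate \eqref{ParabolicEnEst}, and close via Littlewood--Paley/\eqref{SobMultHomog} product bounds (the paper records these as $\vn{A\nabla B}_{\dot{H}^{\gamma}}+\vn{B\nabla A}_{\dot{H}^{\gamma}}\ls\vn{A}_{\dot{H}^{3/2}\cap L^{\infty}}\vn{B}_{\dot{H}^{1+\gamma}}$ and $\vn{A^{2}B}_{\dot{H}^{\gamma}}\ls\vn{A}_{\dot{H}^{1}}^{2}\vn{B}_{\dot{H}^{1+\gamma}}$), with the same $s$-weight bookkeeping fed by \eqref{DeTurckBdd}. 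One small correction: $\dot{H}^{\rho_1}(\mb{R}^{3})$ does \emph{not} embed into $L^{\infty}$ for $\rho_1>\frac{3}{2}$ (the low frequencies fail), so the uniform-in-$k$ bound on $\vn{A_{<k}(s)}_{L^{\infty}}$ in your low--high case should instead be obtained from $\vn{A(s)}_{\dot{H}^{\rho}}+\vn{A(s)}_{\dot{H}^{\rho_1}}$ via Bernstein on each dyadic block (this is exactly why the paper keeps $L^{\infty}$ as a separate factor in $\dot{H}^{3/2}\cap L^{\infty}$); since $\vn{A(s)}_{\dot{H}^{\rho}}\ls\ep$ uniformly in $s$, the weight numerology and the conclusion are unchanged.
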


\begin{proof}
As before, the solution is obtained by Picard iteration. One uses the parabolic energy estimate \eqref{ParabolicEnEst} to estimate the solution of the equation 
$$
(\pt_s - \Delta) \vm{D}^{\gamma} B = \vm{D}^{\gamma}( A \nabla B + \nabla A  B +   A^2 B )
$$
in the norm $ \vn{B}_{S^{\gamma}} \defeq \vn{\vm{D}^{\gamma} B }_{ \tilde{S} } $. Recall \eqref{SobMultHomog}. The estimates we need are 
\begin{align*}
\vn{ A \nabla B}_{\dot{H}^{ \gamma }}+ \vn{ B \nabla A}_{\dot{H}^{ \gamma }} & \ls \vn{ A}_{\dot{H}^{\frac{3}{2}} \cap L^{\infty} } \vn{ B}_{\dot{H}^{1+\gamma}} \\
\vn{ A^2 B }_{\dot{H}^{ \gamma }} & \ls  \vn{A}_{\dot{H}^{1}}^2    \vn{B}_{\dot{H}^{1+\gamma}}  
\end{align*}
One obtains control in the norm $ \vn{ \vn{ \jb{2^{2k}s} \vn{P_k  B(s)}_{\dot{H}^{\gamma} } }_{L^{\infty}_s} }_{\ell^2_{k }} $ as well.
\end{proof}

\

Now we can obtain estimates for the curvature at the initial time $ t=0 $.

\begin{corollary} \label{CorInitData}
Suppose $ A_i(t,x) $ is a regular solution to the Yang-Mills equation in the temporal gauge on $  [-1,1] \times \mb{R}^3 $ with initial data $ ( \bar{A}_i,  \bar{E}_i) \in \ep ( \dot{H}^{\sg} \times \dot{H}^{\sg-1} )(\mb{R}^3) $ and $  \bar{A}_i \in  \dot{H}^{\frac{1}{2}+}(\mb{R}^3)  $.  

Suppose $ A_{\al} (t,x,s) $ solves \eqref{dYMHF} on $  [-1,1] \times \mb{R}^3 \times [0,1]$ in DeTurck's gauge $ A_s = \pt^{\ell} A_{\ell} $. Then, assuming $ \ep \ll 1 $, at $ t=0 $ one has 
\be \label{L2InitDataF}
 \vn{ s^{\frac{1 - \sg}{2}} F_{\al \beta}(s)}_{L^{\infty}_s L^2_x} + \vn{s^{\frac{1 - \sg}{2}} F_{\al \beta}(s)}_{L^2_{\frac{\dd s}{s}} L^2_x}  \ls \ep
\ee
As a consequence, at $ t=0 $, one has the gauge-invariant version
$$    \sup_{s \in [0,1]} s^{1 - \sg} \int_{\mathbb{R}^{3}} (F_{\alpha \beta}, F_{\alpha \beta})(s)  \dd x  + \int_0^1 s^{1 - \sg} \int_{\mathbb{R}^{3}} (F_{\alpha \beta}, F_{\alpha \beta})(s)  \dd x  \frac{\dd s}{s} \ls \ep^2.
$$
\end{corollary}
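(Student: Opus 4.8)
The plan is to work at the single time $t=0$, where for fixed $t$ the DeTurck-gauge heat flow decouples into a self-contained evolution in $(x,s)$, and to control the two curvature components $F_{ij}(s)$ and $F_{0i}(s)$ separately. By \eqref{DeTurck} the connection $A_i(0,\cdot,s)$ solves a parabolic Cauchy problem in $(x,s)$ with datum $\bar A_i$; and, after cancelling the $[A_s,F_{0i}]$ term against the $[\pt^\ell A_\ell,F_{0i}]$ term in the covariant heat equation for $F_{0i}$, the field $F_{0i}(0,\cdot,s)$ solves an equation of the schematic type \eqref{BiEq} with datum $F_{0i}(0,\cdot,0)=\pt_tA_i(0)=\bar E_i$ (recall $A_0=0$).

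For the connection I would apply Proposition~\ref{DeTurckWP} twice. With $\rho=\sg\in(\tfrac12,1)$ and $\bar A_i\in\ep\dot H^\sg$ it produces the heat flow $A_i(s)\in\ep S^\sg$, hence, taking $\rho_1=1\in(\sg,\sg+1]$,
\[
\vn{ s^{\frac{1-\sg}{2}}\nabla_xA_i(s)}_{L^\infty_sL^2_x\,\cap\,L^2_{\frac{\dd s}{s}}L^2_x}\ls\ep ,
\]
and, taking $\rho_1$ near $\tfrac32$, the $\dot H^{3/2}\cap L^\infty_x$ control of $A_i(s)$ with $L^1_{\dd s/s}$-integrable weight that is required as a hypothesis of Proposition~\ref{DeTurckF0i}. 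Since $\tfrac12<\tfrac34<\sg$ (as $\sg>\tfrac56$), interpolation of the two hypotheses $\bar A_i\in\dot H^{\frac12+}$ and $\bar A_i\in\ep\dot H^\sg$ gives $\bar A_i\in\dot H^{3/4}$ with norm $\ls\ep^{\theta}$ for some $\theta=\theta(\sg)>\tfrac12$, so Proposition~\ref{DeTurckWP} with $\rho=\tfrac34$ applies to the same flow and yields $\sup_s\vn{A_i(s)}_{\dot H^{3/4}}\ls\ep^{\theta}$.

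Now $F_{ij}(s)=\pt_iA_j-\pt_jA_i+[A_i,A_j]$: the linear part is exactly the display above. For the commutator, $\vn{A_i(s)}_{\dot H^{3/4}}\ls\ep^{\theta}$ gives $\vn{A_i(s)}_{L^4_x}\ls\ep^{\theta}$ by Sobolev embedding, hence $\vn{[A_i,A_j](s)}_{L^2_x}\ls\ep^{2\theta}\le\ep$ uniformly in $s$; and since $\sg<1$ one has $\int_0^1 s^{-\sg}\dd s<\infty$, so $\vn{s^{\frac{1-\sg}{2}}[A_i,A_j](s)}_{L^\infty_sL^2_x\cap L^2_{\frac{\dd s}{s}}L^2_x}\ls\ep$. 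This proves \eqref{L2InitDataF} for $F_{ij}$. For $F_{0i}(s)$, its equation has the form \eqref{BiEq}, the connection $A_i$ satisfies the hypotheses of Proposition~\ref{DeTurckWP} and Remark~\ref{rmkWP} by the previous step, and $\bar E_i\in\ep\dot H^{\sg-1}$ with $\gamma:=\sg-1\in(-\tfrac12,0)$; thus Proposition~\ref{DeTurckF0i} applies, and with $\gamma_1=0\in(\sg-1,\sg]$ it gives $\vn{s^{\frac{1-\sg}{2}}F_{0i}(s)}_{L^\infty_sL^2_x\cap L^2_{\frac{\dd s}{s}}L^2_x}\ls\ep$, completing \eqref{L2InitDataF}. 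The gauge-invariant consequence is then immediate: by \eqref{InvMetric} the density $(F_{\al\beta},F_{\al\beta})$ equals $\vm{F_{\al\beta}}^2$ and is gauge-invariant, so $\int_{\mb{R}^3}(F_{\al\beta},F_{\al\beta})\dd x=\vn{F_{\al\beta}}_{L^2_x}^2$, and the asserted bound is just \eqref{L2InitDataF} squared and summed over $\al<\beta$.

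The step I expect to be the main obstacle is the commutator $[A_i,A_j]$ in $F_{ij}$: it lies at the scaling-critical level for $\dot H^\sg$ on $\mb{R}^3$, so an estimate from the $S^\sg$-norm alone would control $\vn{A_i(s)}_{\dot H^{3/4}}$ only up to a negative power of $s$, which the weight $s^{(1-\sg)/2}$ cannot absorb when $\sg>\tfrac56$. The role of the extra hypothesis $\bar A_i\in\dot H^{\frac12+}$ is precisely to place the exponent $\tfrac34$ strictly between two homogeneous Sobolev norms controlled uniformly in $s$, yielding the uniform $L^4_x$ bound; a secondary and routine point is verifying the $\dot H^{3/2}\cap L^\infty$ input of Proposition~\ref{DeTurckF0i}.
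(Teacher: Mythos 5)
Your proof is correct and takes essentially the same route as the paper: Proposition \ref{DeTurckWP} with $\rho=\sg$, $\rho_1=1$ for the linear part of $F_{ij}$, the extra $\dot{H}^{\frac{1}{2}+}$ hypothesis to control the commutator $[A_i,A_j]$, and Proposition \ref{DeTurckF0i} with $\gamma=\sg-1$, $\gamma_1=0$ for $F_{0i}$ via the equation \eqref{FeqDeTurck}/\eqref{BiEq}. The only (harmless) variation is in the commutator step: you interpolate the data to $\dot{H}^{3/4}$ and rerun the well-posedness at $\rho=3/4$ to get a uniform-in-$s$ $L^4_x$ bound, whereas the paper pairs a weighted $\dot{H}^{1-\ep'}$ bound with a uniform $\dot{H}^{\frac{1}{2}+}$ bound via H\"older; both close the estimate under the same implicit $O(1)$ control of the $\dot{H}^{\frac{1}{2}+}$ norm used in the paper.
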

\begin{proof}
Consider $ F_{ij}(s) $ first. For $ \pt_i A_j $ and $ \pt_j A_i $ we use Proposition \eqref{DeTurckWP} with $ \rho = \sigma $ and $ \rho_1 = 1 $. For $ [A_i,A_j ] $ we use that from \eqref{DeTurckBdd} and applying again Proposition \eqref{DeTurckWP} with $ \rho=\frac{1}{2}+ $ one has
$$
s^{\frac{1 - \sg}{2}}  s^{-\frac{\ep'}{2}} \vn{ A_i(s) }_{\dot{H}^{1-\ep'}} \ls \ep,  \qquad \vn{ A_i(s) }_{ \dot{H}^{\frac{1}{2}+} } \ls 1.
$$
 The bounds for $ [A_i,A_j ] $ follow from this with H\"older and Sobolev embeddings 
$ 
\dot{H}^{1-} \times \dot{H}^{\frac{1}{2}+} \subseteq L^{6-} \times L^{3+} \subseteq L^2.
$

Now consider $  F_{0i}(s) $, which solves the covariant equation \eqref{0CovPara}. Expanding it and canceling the $ [A_s, F ]$ term with the $ [  \pt^{\ell} A_{\ell}, F ] $ term one obtains equation \eqref{FeqDeTurck}, which can be written as \eqref{BiEq}. Then \eqref{L2InitDataF} is concluded by  Proposition \ref{DeTurckF0i} with $ \gamma = \sg-1 $ and $ \gamma_1 = 0 $. 
\end{proof}

\

The next goal is to switch to caloric gauges which satisfy $ A_s = 0 $. This is accomplished by a change of gauge (DeTurck's trick) obtained by solving the ODE \eqref{CaloricODE}, see Proposition \ref{ChangeCaloricDeTurck}. 

\begin{lemma} \label{ODEBdds}
Let $ A_i, A_s= \pt^{\ell} A_{\ell} $ be given by Proposition \ref{DeTurckProp}. Let $ U(s) $ be the solution of the following ODE
\be \label{CaloricODE}
\pt_s U = U A_s, \qquad U(s=0)=1 
\ee
Then one has
\be \label{Ubds}
U - 1 \in \ep L^{\infty}_s L^{\infty}_x, \qquad \nabla_x U \in \ep L^{\infty}_s L^{2}_x, \qquad I \Delta U \in \ep L^{\infty}_s L^{2}_x
\ee
and $ U^{-1} $ obeys the same bounds. If $ A_i $ is a regular solution then $ U $ is a regular gauge transformation. 
\end{lemma}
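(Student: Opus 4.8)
The plan is to integrate the linear ODE \eqref{CaloricODE} and propagate bounds one derivative at a time, always exploiting the integrating-factor structure together with the three properties of $A_s$ recorded in \eqref{AsDTbds} and \eqref{IDeltaAs}. First I would recast \eqref{CaloricODE} as the integral equation $U(s) = 1 + \int_0^s U(s') A_s(s') \dd s'$. Since $A_s \in \ep L^1_s L^\infty_x$, a Banach fixed point argument in $C([0,1];L^\infty_x)$ gives a unique solution, which is regular in $x$ when $\bar{A}_i$, hence $A_s = \pt^\ell A_\ell$, is regular. Gronwall then yields $\vn{U(s)}_{L^\infty_x} \le \exp \vn{A_s}_{L^1_s L^\infty_x} \le 2$ for $\ep$ small, so that $\vn{U(s)-1}_{L^\infty_x} \le \int_0^s \vn{U(s')}_{L^\infty_x} \vn{A_s(s')}_{L^\infty_x} \dd s' \ls \ep$, which is the first bound in \eqref{Ubds}. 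As $U^{-1}$ solves $\pt_s U^{-1} = - A_s U^{-1}$, $U^{-1}(0) = 1$, the same argument applies verbatim to $U^{-1}$.

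For the gradient bound I would introduce $W \defeq (\nabla_x U) U^{-1}$. Differentiating \eqref{CaloricODE} in $x$ and using $\pt_s U^{-1} = -A_s U^{-1}$, a short computation reveals the cancellation $\pt_s W = U (\nabla_x A_s) U^{-1}$ with $W(0) = 0$. Integrating and using the $L^\infty_x$ bounds on $U^{\pm 1}$ together with $\nabla_x A_s \in \ep L^1_s L^2_x$ — a consequence of $A_s \in \ep L^1_s H^{3/2+}$ since $H^{1/2+} \subset L^2$ — gives $\vn{W(s)}_{L^2_x} \ls \ep$ uniformly in $s$. Then $\nabla_x U = W U$ and $\nabla_x U^{-1} = - U^{-1} W$ both lie in $\ep L^\infty_s L^2_x$.

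For the last bound I would apply $I\Delta$ to the integral equation and expand with the Leibniz rule:
\[
I\Delta U(s) = \int_0^s I\Delta A_s \,\dd s' + \int_0^s I\big[(U-1)\Delta A_s\big] \dd s' + 2\int_0^s I\big[(\nabla_x U)\!\cdot\!(\nabla_x A_s)\big] \dd s' + \int_0^s I\big[(\Delta U) A_s\big] \dd s'.
\]
The first term is in $\ep L^\infty_s L^2_x$ directly by \eqref{IDeltaAs} (with $s_1 = 0$, $s_2 = s$). The middle two are error terms, controlled using that $I$ is bounded on $L^2$, that its kernel is concentrated at scale $N^{-1}$ so that commutators $[I, f\,\cdot\,]$ gain $\sim N^{-1}\vn{\nabla f}$, that $A_s(s), \nabla_x A_s(s), \Delta A_s(s)$ are frequency-localized at $\lesssim s^{-1/2}$ with the parabolic gain $\jb{2^{2k}s}^{-10}$ coming from the $S$-norm \eqref{Snorm}, and that $I$ acts as the identity on frequencies $\lesssim N$ (so only $s \lesssim s_0 = N^{-2}$ feels $I$ nontrivially), together with the Steps 1–2 bounds on $U-1$ and $\nabla_x U$. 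The final term $\int_0^s I[(\Delta U) A_s]\dd s'$ is linear in $\Delta U$: writing $\Delta U = I^{-1}(I\Delta U)$ and using $A_s \in \ep L^1_s L^\infty_x$ as integrating factor, it is absorbed by a Gronwall / continuity argument in $s$. The analogous computation for $U^{-1}$ (with $\pt_s U^{-1} = -A_s U^{-1}$) gives the same, and regularity of $U, U^{-1}$ when $A_i$ is regular follows by differentiating the ODE and propagating $H^m_x$ bounds inductively in $m$.

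The main obstacle is precisely the $I\Delta U$ estimate: naively $\Delta A_s$ and $\Delta U$ are not uniformly in $L^2_x$ (their high-frequency parts are too rough), so the error terms must be handled by carefully splitting $s \lesssim s_0$ versus $s \gtrsim s_0$ and using, in the first regime, commutator estimates to move $I$ past the multiplications by $U$ and by $A_s$ while gaining from $\nabla_x U$ and from the frequency localization at scale $s^{-1/2}$, and in the second regime the fact that $I$ is simply the identity there.
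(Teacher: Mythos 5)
Your Steps 1--2 are fine (the conjugation $W=(\nabla_x U)U^{-1}$ is a pleasant variant of the paper's direct Gronwall argument), and your Leibniz expansion of $I\Delta U$ has the right skeleton: the pure term $\int_0^{s}I\Delta A_s\,\dd s'$ via \eqref{IDeltaAs}, and the $(\Delta U)A_s$ term closed by Gronwall -- though for the latter you need the genuine product estimate $\vn{I(fg)}_{L^2}\ls \vn{If}_{L^2}\vn{g}_{H^{\frac{3}{2}+}}$ (i.e. \eqref{prodIL2}, using $A_s\in\ep L^1_s H^{\frac{3}{2}+}$ from \eqref{AsDTbds}), not merely ``$A_s\in\ep L^1_sL^\infty_x$ as integrating factor'', since $I^{-1}$ is unbounded on $L^2$ and $I$ cannot be moved past a rough $L^\infty$ multiplier.

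The genuine gap is your treatment of the middle terms, above all $\int_0^{s}I\bigl[(U-1)\Delta A_s\bigr]\dd s'$. At that stage you only hold the \emph{uniform} bounds $\vn{U-1}_{L^\infty}\ls\ep$, $\vn{\nabla_x U}_{L^2}\ls\ep$, and pointwise in $s$ the best available bound is $\vn{I\Delta A_s(s)}_{L^2}\ls \ep\, s^{-1}$ (two extra derivatives on $I\jb{D_x}A\in L^\infty_s L^2$ cost $s^{-1}$; without $I$ there is a further loss $(s^{-1/2}/N)^{1-\sg}$). So any estimate of the form $\vn{I[(U-1)\Delta A_s(s)]}_{L^2}\ls \ep\,\vn{I\Delta A_s(s)}_{L^2}$ produces a non-integrable $s^{-1}$, and your split ``$s\ls s_0$ vs.\ $s\gtrsim s_0$ where $I$ is the identity'' does not help: on $[N^{-2},1]$ the same $s^{-1}$ gives a $\log N$ loss, destroying the uniform-in-$N$ conclusion $I\Delta U\in\ep L^\infty_sL^2_x$. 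Commutator gains $N^{-1}\vn{\nabla\chi}$ and frequency localization of $A_s$ at $\ls s^{-1/2}$ do not remove this divergence. The term $\nabla U\cdot\nabla A_s$ has the same issue if treated as a pure error ($\vn{\nabla A_s(s)}_{L^\infty}\sim s^{-5/4}$ is not $L^1_s$); the paper instead bounds it by $\vn{I\nabla U}_{H^1}\vn{\nabla A_s}_{H^{\frac{1}{2}+}}$ and feeds $\vn{I\Delta U}_{L^2}$ back into the Gronwall loop. The missing idea for $(U-1)\Delta A_s$ is the paper's Fubini trick: substitute $U(s)=1+\int_0^{s}UA_s\,\dd s'$ into the whole term $\int_0^{s_1}U\Delta A_s\,\dd s$ and exchange the order of integration, so that the \emph{time-integrated} quantity $\int_{s'}^{s_1}\Delta A_s\,\dd s\in\ep\, I^{-1}L^2$ (controlled by \eqref{IDeltaAs}, which encodes a fundamental-theorem-of-calculus cancellation unavailable after putting norms inside the $s$-integral) appears as a block, paired with $UA_s\in\ep L^1_{s'}H^{\frac{3}{2}+}$ through \eqref{prodIL2}. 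Short of that, you would have to prove a quantitative vanishing rate such as $\vn{U(s)-1}_{H^{\frac{3}{2}+}}\ls\ep\, s^{c}$ as $s\to0$ to offset the $s^{-1}$, which your proposal neither states nor establishes.
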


\begin{proof}
Write the integral form 
\be \label{IntForm}
U(s_1) = 1+ \int_0^{s_1} U A_s \dd s 
\ee 
The first two bounds in \eqref{Ubds} follow from the first two bounds in \eqref{AsDTbds} using Gronwall's inequality. Similarly one shows $ U-1 \in \ep L^{\infty}_s H^{\frac{3}{2}+} $. 
 
The third bound in \eqref{Ubds} is more delicate. We use an argument from \cite[Theorem 7]{klainerman1995finite} and \cite[ Appendix B]{oh2014gauge}. One writes
$$
I \Delta U(s_1) = I \int_0^{s_1} \Delta U A_s  + 2 \nabla U \nabla A_s + U \Delta A_s    \dd s 
$$ 
The first two terms in the integral are handled by Gronwall's inequality and by
\begin{align} \label{prodIL2}
\vn{I(fg)}_{L^2} \ls \vn{If}_{L^2}  \vn{g}_{H^{\frac{3}{2}+}} \\
\vn{I(fg)}_{L^2} \ls \vn{I f}_{H^{1}} \vn{g}_{H^{\frac{1}{2}+}} 
\end{align}
for $ f=\Delta U, \ g=A_s $, respectively $ f= \nabla U, \ g=\nabla A_s $, provided we can also bound the third term.

For the third term one plugs in $ U(s) $ from \eqref{IntForm} and by Fubini's theorem write
$$
\int_0^{s_1} I \Delta A_s    \dd s  + I \int_0^{s_1} U(s') A_s(s')   \lpr \int_{s'}^{s_1} \Delta A_s(s)  \dd s  \rpr \dd s'
$$
Bound this in $ L^2_x $ using \eqref{IDeltaAs} for the first integral, then use \eqref{prodIL2} 
for $ g= U(s') A_s(s')  $ and $ f= \int_{s'}^{s_1} \Delta A_s(s)  \dd s \in \ep I^{-1} L^2  $ by \eqref{IDeltaAs}. Note that $ U A_s \in \ep L^1_{s'} H^{\frac{3}{2}+} $ by using the algebra property for $ H^{\frac{3}{2}+} $. 
\end{proof}

\subsection{Caloric gauge solutions} \

Setting the caloric condition $ A_s = 0 $ in the covariant \eqref{cYMHF} and the dynamic \eqref{dYMHF} Yang-Mills heat flows one obtains the initial value problems 

\be  \label{IVPcHF}
\pt_s A_i = \mathbf{D}^{\ell} F_{\ell i}, \qquad A_i(s=0)= \bar{A}_i 
\ee
\be  \label{IVPdHF}
\pt_s A_{\al} = \mathbf{D}^{\ell} F_{\ell \al}, \qquad A_{\al}(s=0)= \bar{A}_{\al}
\ee

\begin{theorem}[Existence and uniqueness for regular solutions, \cite{oh2015finite}, \cite{rade1992yang}] \label{ThmRegSol} \

(1) Consider the initial value problem \eqref{IVPcHF}, i.e. the \eqref{cYMHF} in the caloric gauge $ A_s = 0$, with regular initial data $  \bar{A}_i \in H^{\infty}_x $ (in particular having finite magnetic energy). Then there exists a unique global regular solution $ A_i \in C^{\infty}_s  H^{\infty}_x ( \mb{R}^3 \times [0,\infty)) $. 

(2) Consider the initial value problem \eqref{IVPdHF}, i.e. the \eqref{dYMHF} in the caloric gauge with regular initial data $  \bar{A}_{\al} \in C^{\infty}_t H^{\infty}_x (I \times \mb{R}^3) $ where $ I $ is a time interval. Then there exists a unique regular solution $ A_{\al} \in C^{\infty}_{t,s}  H^{\infty}_x ( I \times \mb{R}^3 \times [0,1]) $. 
\end{theorem}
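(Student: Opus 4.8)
The plan is to reduce both parts to the genuinely parabolic DeTurck system \eqref{DeTurck} together with the gauge-transport ODE \eqref{CaloricODE}, i.e. to run the DeTurck trick exactly as in \cite{oh2014gauge}, \cite{oh2015finite}. The point is that the caloric-gauge equation $\pt_s A_i = \mathbf{D}^\ell F_{\ell i}$ is only degenerately parabolic, so one cannot set up a direct fixed-point argument on it; one first constructs a regular solution in DeTurck's gauge and then changes gauge.

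\emph{Part (1).} Solve \eqref{DeTurck} with the regular data $\bar A_i$: by the parabolic theory underlying Proposition \ref{DeTurckProp}, together with persistence of regularity for the differentiated equations, one obtains a local regular solution $A^{\mathrm{DT}}_i \in C^\infty_s H^\infty_x(\mb{R}^3 \times [0,s_*))$ with $A^{\mathrm{DT}}_s = \pt^\ell A^{\mathrm{DT}}_\ell$, hence a regular solution of \eqref{cYMHF}. Global-in-$s$ existence of this flow for finite-energy data on $\mb{R}^3$ is exactly the Yang--Mills heat flow regularity theory of Rade \cite{rade1992yang} and Oh \cite{oh2015finite}: the magnetic energy ${\bf E}[F_{ij}](s)$ is non-increasing along the (gauge-invariant) flow, and in three dimensions this quantity is subcritical, which precludes finite-time blow-up of the higher Sobolev norms. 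Next solve the transport ODE \eqref{CaloricODE}, $\pt_s U = U A^{\mathrm{DT}}_s$, $U(s=0)=1$; by the regular version of Lemma \ref{ODEBdds} this $U$ is a regular, globally defined gauge transformation. Setting $A_i \defeq U A^{\mathrm{DT}}_i U^{-1} - \pt_i U U^{-1}$ produces a regular connection which by construction has $A_s = U A^{\mathrm{DT}}_s U^{-1} - (\pt_s U) U^{-1} = 0$, still solves \eqref{cYMHF} by its gauge invariance, and agrees with $\bar A_i$ at $s=0$ since $U(0)=1$; this is the required solution of \eqref{IVPcHF}.

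\emph{Part (2).} Treat $t$ as a parameter and run the dynamic DeTurck flow $A_s = \pt^\ell A_\ell$ on $I\times\mb{R}^3\times[0,1]$: the $A_i$-equation is again \eqref{DeTurck}, while the $A_0$-equation reduces, after cancelling the $[A_s,\cdot]$ contributions against the corresponding terms in $\mathbf{D}^\ell F_{\ell 0}$, to a linear covariant parabolic equation of the schematic form \eqref{BiEq} driven by $A_i$ (this is the computation leading to \eqref{FeqDeTurck}). Standard parabolic theory gives a regular solution in $(x,s)$; to propagate $t$-regularity one differentiates the system in $t$ and observes that each $\pt_t^k A_\al$ solves a \emph{linear} parabolic equation whose coefficients and forcing are controlled by the already-constructed lower-order quantities and by $\bar A_\al \in C^\infty_t H^\infty_x$, giving $A_\al \in C^\infty_{t,s} H^\infty_x(I\times\mb{R}^3\times[0,1])$. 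Finally solve $\pt_s U = U A_s$ with $U(t,x,0)=1$, regular in $(t,x,s)$ by Lemma \ref{ODEBdds} applied with $t$ as a parameter, and set $\tilde A_\al \defeq U A_\al U^{-1} - \pt_\al U U^{-1}$; then $\tilde A_s = 0$, $\tilde A_\al$ solves \eqref{dYMHF}, $\tilde A_i(t,x,0)=\bar A_i$ and $\tilde A_0(t,x,0)=\bar A_0 = 0$, since $U$ is the identity at $s=0$.

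\emph{Uniqueness and main obstacle.} For uniqueness, given two regular caloric solutions with the same data, transform each back to DeTurck's gauge (solve the inverse transport ODE); a standard energy estimate for the difference gives uniqueness for the quasilinear parabolic system \eqref{DeTurck}, and since the caloric$\,\leftrightarrow\,$DeTurck passage is fixed by \eqref{CaloricODE} with $U(0)=1$, the two caloric solutions coincide. The main difficulty is conceptual rather than computational: it is the degeneracy of the caloric-gauge flow, which forces the two-step DeTurck construction; the only genuinely nontrivial external input is the global-in-$s$ regularity of the Yang--Mills heat flow for finite-energy data on $\mb{R}^3$ from \cite{rade1992yang}, \cite{oh2015finite}, while in Part (2) the bookkeeping required to carry $C^\infty_t$ regularity through the parabolic flow is the only other delicate point.
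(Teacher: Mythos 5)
Your construction differs from the paper's in Part (2). The paper takes the caloric spatial components $A_i$ directly from Part (1) and then builds $A_0$ \emph{inside the caloric gauge}: it first solves the linear covariant parabolic system $(\pt_s-\Delta_A)B_\ell = 2[F_\ell^{\ j},B_j]$ with data $\bar F_{\ell 0}$ (smooth coefficients, since $A_i$ is already known), defines $A_0$ by integrating $\pt_s A_0=\mathbf{D}^\ell B_\ell$ from the (general, not necessarily zero) data $\bar A_0$, and then invokes a consistency lemma (\cite[Lemma 6.1]{oh2014gauge}) to see that the resulting curvature $F_{\ell 0}$ equals $B_\ell$, so \eqref{IVPdHF} holds; $t$-regularity is then automatic because everything beyond Part (1) is linear in the unknowns. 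You instead re-run the full dynamic DeTurck flow (parabolic equations for $A_i$ and $A_0$ after the cancellation leading to \eqref{FeqDeTurck}/\eqref{BiEq}) and gauge back via \eqref{CaloricODE}. That is a legitimate alternative: it avoids the consistency lemma, at the cost of re-establishing well-posedness and $C^\infty_t$ bookkeeping for the coupled nonlinear DeTurck system, whereas the paper's route gets $t$-smoothness essentially for free. Two small points: Part (2) does not assume $\bar A_0=0$ (your construction handles general $\bar A_0$ anyway, since $U(s=0)=1$), and for Part (1) the paper simply cites \cite[Corollary 6.7]{oh2015finite}/\cite{rade1992yang}, consistent with your sketch.

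The genuine gap is in your uniqueness argument. You propose to ``transform each caloric solution back to DeTurck's gauge (solve the inverse transport ODE)'', but the transport ODE \eqref{CaloricODE} only goes in the direction DeTurck $\to$ caloric. Going from a caloric solution $A$ (with $A_s=0$) to a connection satisfying $A'_s=\pt^\ell A'_\ell$ requires a gauge transformation $V$ solving a \emph{nonlinear parabolic PDE} of harmonic-map-heat-flow type, schematically $\pt_s V=\Delta V-\pt^\ell V\,V^{-1}\pt_\ell V-\pt^\ell(VA_\ell V^{-1})V$, not an ODE; as written, this step does not exist, and without it your reduction of caloric uniqueness to DeTurck uniqueness is circular. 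The repair is either to solve that PDE (which is the actual DeTurck trick in reverse, and needs its own local theory), or, more simply, to prove uniqueness directly in the caloric gauge: for two regular caloric solutions the difference $\delta A_i$ satisfies $\pt_s\delta A_i=\pt^\ell(\pt_\ell\delta A_i-\pt_i\delta A_\ell)+\mathcal{O}(A\,\pt\delta A)+\mathcal{O}(\pt A\,\delta A)+\dots$, and since the degenerate operator $\pt^\ell(\pt_\ell\cdot-\pt_i\cdot)$ is negative semidefinite when paired with $\delta A_i$, a Gronwall estimate on $\vn{\delta A}_{L^2_x}^2$ using the $L^\infty$ bounds of the regular solutions closes (this is in effect what \cite[Lemma 6.9]{oh2015finite}, cited by the paper, provides). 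With that step fixed, your sketch is a sound alternative to the paper's.
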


For part (1) see \cite[Corollary 6.7]{oh2015finite} for the statement and proof in the form stated here, of the similar result due to \cite{rade1992yang} in a different context. 

Part (2) is also from \cite{oh2015finite}. Like in the first part, no smallness assumption is required. We sketch the details of part (2) from \cite[Thm 6.10, Lemma 6.9]{oh2015finite} and \cite[Step 1 in Thm 4.8, Prop. 5.7, Lemma 6.1]{oh2014gauge} for the sake of completeness:

The spatial components $ A_i $ satisfy \eqref{IVPcHF} and are obtained from part (1), so it remains to determine $ A_0 $. This can be done from the equation $ \pt_s A_{0} = \mathbf{D}^{\ell} F_{\ell 0} $ if one can determine what $ F_{\ell 0} $ should be, knowing that they have to solve the linear system with smooth coefficients 
$$
(\pt_s - \Delta_A) B_{\ell} = 2 [F_{\ell}^{\ j}, B_j ], \qquad B_{\ell}(0) = \bar{F}_{\ell 0} 
$$
Proposition 5.7 in \cite{oh2014gauge}  provides a unique regular solution $ B $, which is used to define regular $ A_0 $ from the initial data $ \bar{A}_0 $ by integrating
$ \pt_s A_{0} =  \mathbf{D}^{\ell} B_{\ell} $. Then one obtains curvatures $ F_{\ell 0}= \pt_{\ell} A_0 - \pt_t A_{\ell} + [A_{\ell}, A_0] $ which turn out to be equal to $ B_{\ell} $, as guaranteed by \cite[Lemma 6.1]{oh2014gauge}. Therefore, the solution satisfies \eqref{IVPdHF}. Uniqueness is proved in \cite[Lemma 6.9]{oh2015finite}.

\

\

Now we transfer $ I^{-1} H^1 $ bounds from DeTurck's gauge to the Caloric gauge under a smallness assumption. 

\begin{proposition} \label{ChangeCaloricDeTurck}
Let $ A_i $ be the unique regular solution of \eqref{IVPcHF} obtained in Theorem \ref{ThmRegSol} from an initial data $  \bar{A}_i  $ assumed to be regular and to satisfy $ \vn{I  \bar{A}_i }_{H^1} \leq \ep $. Then, if $ \ep $ is small enough, one has 
\be \label{IAbdd}
\vn{I A_i}_{  L^{\infty}_s H^1 (\mb{R}^3 \times [0,1])} \ls \ep .
\ee
Moreover, $ A_i $ is equivalent, using a regular gauge transformation, to 
\be \label{UchangeCaloricDeTurck}
A_i' = U^{-1} A_i U -  \partial_i U^{-1} U, \qquad  U(s=0)=1 
\ee
the regular solution to \eqref{cYMHF} in DeTurck's gauge $ A'_s = \pt^{\ell} A'_{\ell} $ with initial data $  \bar{A}_i  $ obtained in Proposition \ref{DeTurckProp}.
\end{proposition}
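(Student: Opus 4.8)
The plan is to run DeTurck's trick in reverse: build the gauge transformation from the DeTurck-gauge solution, transport it onto the caloric gauge, and then identify the result with $A_i$ using the uniqueness in Theorem \ref{ThmRegSol}. First I would apply Proposition \ref{DeTurckProp} to $\bar A_i$ (which satisfies $I\bar A_i \in \ep H^1$, $\ep$ small) to obtain the regular solution $A_i'$ of \eqref{DeTurck} on $\mb{R}^3\times[0,1]$ with $A_i'(0)=\bar A_i$ and $I A_i' \in \ep L^\infty_s H^1$, together with $A_s' = \pt^\ell A_\ell'$ obeying \eqref{AsDTbds} and \eqref{IDeltaAs}; by construction $A_i'$ solves \eqref{cYMHF}. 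Then I would apply Lemma \ref{ODEBdds} to this $A_i'$ to produce the regular gauge transformation $U=U(s)$ solving $\pt_s U = U A_s'$, $U(0)=1$, with $U-1,\ U^{-1}-1 \in \ep L^\infty_s L^\infty_x$, $\nabla_x U,\ \nabla_x U^{-1} \in \ep L^\infty_s L^2_x$, and $I\Delta U,\ I\Delta U^{-1} \in \ep L^\infty_s L^2_x$. Since $I$ is bounded on $L^2$, these imply $\nabla_x U(s) \in \ep\, I^{-1}H^1$ and $\vn{U(s)-1}_{X} \ls \ep$ uniformly in $s\in[0,1]$ (and likewise for $U^{-1}$), with $X$ the norm \eqref{XnormDef}.

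Next, set $\tilde A_i \defeq U A_i' U^{-1} - (\pt_i U) U^{-1}$, the gauge transform of $A_i'$ by $U$. The $s$-component becomes $U A_s' U^{-1} - (\pt_s U) U^{-1} = 0$ by the defining ODE, so $\tilde A$ lies in the caloric gauge; inverting gives $A_i' = U^{-1} \tilde A_i U - \pt_i(U^{-1}) U$, which is \eqref{UchangeCaloricDeTurck} once $\tilde A = A$ is known. Because \eqref{cYMHF} is covariant under regular ($s$- and $x$-dependent) gauge transformations and $A_i'$ solves it, $\tilde A$ solves it as well; combined with $\tilde A_s=0$ this is exactly $\pt_s \tilde A_i = \mathbf{D}^\ell F_{\ell i}[\tilde A]$, and $\tilde A_i(0)=A_i'(0)=\bar A_i$ since $U(0)=1$. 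Thus $\tilde A$ is a regular solution of \eqref{IVPcHF}, and the uniqueness statement in Theorem \ref{ThmRegSol}(1) forces $\tilde A_i = A_i$. This establishes the equivalence asserted in \eqref{UchangeCaloricDeTurck}, realized by the regular gauge transformation $U$ with $U(0)=1$.

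It remains to prove \eqref{IAbdd}. Writing
$$ A_i = A_i' + (U-1) A_i' U^{-1} + A_i'(U^{-1}-1) - (\pt_i U) U^{-1} $$
and applying, slicewise in $s$, the product estimate $I^{-1}H^1 \times X \to I^{-1}H^1$ from \eqref{Algebraproducts} — using $I A_i'(s) \in \ep H^1$, $\pt_i U(s) \in \ep\, I^{-1}H^1$, and $U(s)-1,\ U^{-1}(s)-1 \in \ep X$ — yields $\vn{I A_i(s)}_{H^1} \ls \ep$ uniformly in $s\in[0,1]$, i.e. \eqref{IAbdd}.

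The main obstacle is organizational rather than analytic: one must verify that $\tilde A$ genuinely qualifies as a \emph{regular} caloric solution, so that the uniqueness part of Theorem \ref{ThmRegSol} applies. This rests on the regularity of $U$ — the content of Lemma \ref{ODEBdds}, whose only delicate point is the bound $I\Delta U \in \ep L^\infty_s L^2_x$ — and on the gauge covariance of the heat flow \eqref{cYMHF}. Once these are in place, both the identification $\tilde A = A$ and the $I^{-1}H^1$ bound are routine applications of \eqref{Algebraproducts}.
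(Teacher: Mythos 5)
Your proposal is correct and follows essentially the same route as the paper: apply Proposition \ref{DeTurckProp} to get the DeTurck solution $A_i'$, solve the ODE $\pt_s U = U A_s'$ (Lemma \ref{ODEBdds}) to gauge-transform it into a regular caloric solution with the same data, identify it with $A_i$ via the uniqueness in Theorem \ref{ThmRegSol}, and then deduce \eqref{IAbdd} from the $X$-norm bounds on $U, U^{-1}$ and the product estimates \eqref{Algebraproducts}. The only differences are cosmetic (your explicit decomposition of $U A_i' U^{-1}$ and the remark on bounding $I\nabla^2 U$ by $I\Delta U$), which the paper leaves implicit.
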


The proof follows the strategy of \cite[Theorem 5.1]{oh2014gauge} and consists of a version of \emph{DeTurck's trick} \cite{deturck1983deforming}, \cite{donaldson1985anti}, see \cite[Remark 5.9]{oh2014gauge}.

\begin{proof}
 Let $ A' $ be the regular solution to \eqref{cYMHF} in DeTurck's gauge $ A'_s = \pt^{\ell} A'_{\ell} $ with initial data $  \bar{A}_i  $ obtained in Proposition \ref{DeTurckProp} and which satisfies $ I A_i' \in \ep L^{\infty}_{s} H^1( \mb{R}^3 \times [0,1]) $. Making the gauge transformation
\be \label{gtransf}
\tilde{A}_i  =  U A_i' U^{-1} - \partial_i U U^{-1}, \qquad \tilde{A}_s  =  U A_s' U^{-1} - \partial_s U U^{-1}
\ee 
on $ \mb{R}^3 \times [0,1] $ one obtains a caloric solution $ \tilde{A}_i, \ \tilde{A}_s=0 $ with $ \tilde{A}_i (s=0) = \bar{A}_i  $ provided $ U(s) $ solves the following ODE pointwise in $ x $ 
$$
\pt_s U = U A_s', \qquad U(s=0)=1 
$$
Since $ A_s' $ is regular, also $ U $, and therefore $  \tilde{A}_i $ are. From the uniqueness part in Theorem \ref{ThmRegSol} we deduce that $ \tilde{A}_i = A_i $.

By Lemma \ref{ODEBdds}, $ U $ and $ U^{-1} $ satisfy the bounds \eqref{Ubds}, and therefore $ U-1 , U^{-1} - 1 \in \ep  L^{\infty}_{s} X $ where $ X $ is defined in \eqref{XnormDef}. 
Using \eqref{gtransf}, $ I A_i' \in \ep L^{\infty}_{s} H^1 $ and the product estimate \eqref{Algebraproducts} one obtains \eqref{IAbdd}. 
%and 
%$$
%\vn{I (\pt_i U \cdot  V) }_{H^1} \ls \vn{U}_{X} \vn{V}_X 
%$$
\end{proof}

\

\begin{remark} \label{RkChangeCaloricDeTurck}
Suppose $ A_{\al}(t,x,s) $ is the the unique regular solution of \eqref{IVPdHF} obtained in Theorem \ref{ThmRegSol} Part(2) on an interval $ [t_0,t_1] \times \mb{R}^3 \times [0,1] $ from regular and temporal initial data $  \bar{A}_{\al} \in C^{\infty}_t H^{\infty}_x ([t_0,t_1] \times \mb{R}^3) $, $ \bar{A}_{0} = 0 $  satisfying $ \vn{I  \bar{A}_i }_{L^{\infty}_t H^1} \leq \ep $. 
Then we may apply Proposition \ref{ChangeCaloricDeTurck} for each $ t \in [t_0, t_1] $ obtaining a regular $ U(t,x,s) $ with $ U(t,x,s=0) =1 $ and \eqref{IAbdd} holds uniformly in $ t $. In addition to \eqref{UchangeCaloricDeTurck} one has 
\be \label{UchangeCaloricDeTurck2}
A_{\al}' = U^{-1} A_{\al} U -  \partial_{\al} U^{-1} U, \quad F_{\al \beta}' = U^{-1} F_{\al \beta} U,   \qquad  U(t,s=0)=1 
\ee
on $ [t_0,t_1] \times \mb{R}^3 \times [0,1] $ where $ A_{\al}' $ is the solution to \eqref{dYMHF} in DeTurck's gauge, with (temporal) initial data  $ \bar{A}_{\al} $. 
\end{remark}

\

Finally, one has improved well-posedness in Sobolev spaces conditional on the smallness of the (almost conserved) modified energy. 

\begin{proposition} \label{ImprovedWPwAC}
Suppose $ A_i $ is the (caloric) regular solution of the problem \eqref{IVPcHF} from Theorem \ref{ThmRegSol} (1) for regular initial data $  \bar{A}_i $ with $ \vn{\bar{A}_i }_{H^{\sg}} \leq M_0 $ and 
\be \label{bddFxN}
\sup_{0 \leq m \leq 3} \sup_{s \in [0,s_0]}  (N s^{\frac{1}{2}})^{1-\sg} \vn{ (s^{\frac{1}{2}} \mathbf{D}_x)^m F(s) }_{L^2(\mb{R}^3)} \ll 1 
\ee
Then one has $  \vn{A_i (s) }_{H^{\sg}} \leq C(M_0) $ for all $ s \in [0,s_0] $, $s_0=N^{-2}$.
\end{proposition}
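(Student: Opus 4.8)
The plan is to bound $A_i(s)$ through the \emph{forward} (Duhamel) form of the caloric heat flow rather than the backward representation \eqref{CaloricAiRepr}, treating the two factors of $H^\sg = L^2 \cap \dot H^\sg$ separately and closing the $\dot H^\sg$ part by a continuity argument in the heat-time $s$. Expanding $\pt_s A_i = \mathbf{D}^\ell F_{\ell i}$ gives
$$ \pt_s A_i = \Delta A_i - \pt_i\, \mathrm{div}\, A + \mathcal{Q}(A,\pt A) + \mathcal{C}(A,A,A), $$
with $\mathcal{Q}$ quadratic carrying one derivative and $\mathcal{C}$ cubic. The linear part generates the semigroup $e^{s\mathcal{L}} \defeq {\bf P}\, e^{s\Delta} + {\bf P}^\perp$ (heat flow on the divergence-free part, identity on the curl-free part), which is a contraction on every $H^r$, so
$$ A_i(s) = e^{s\mathcal{L}}\bar A_i + \int_0^s e^{(s-s')\mathcal{L}}\big(\mathcal{Q} + \mathcal{C}\big)(s')\,\dd s', \qquad \vn{e^{s\mathcal{L}}\bar A_i}_{H^\sg}\le M_0. $$
For the $L^2$ component I would sidestep the (derivative-losing) linear part by using instead the backward representation $A_i(s) = A_i(s_0) + \int_s^{s_0}\mathbf{D}^\ell F_{i\ell}(s')\,\dd s'$ together with the gauge-invariant bound $\vn{\mathbf{D}^\ell F_{i\ell}(s')}_{L^2}\ls (s')^{-1/2}(N(s')^{1/2})^{\sg-1}$ from \eqref{bddFxN} (case $m=1$), which is integrable over $[0,s_0]$ for $\sg>0$ and has integral $\ls N^{-1}$; hence $\vn{A_i(s)}_{L^2}\le M_0 + O(N^{-1})$ for all $s\in[0,s_0]$.

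The remaining and main task is the $\dot H^\sg$ bound. First I would convert the covariant hypothesis \eqref{bddFxN} into ordinary Sobolev information: using the diamagnetic inequality, the covariant Gagliardo--Nirenberg inequalities \eqref{SobGN}, and Littlewood--Paley (peeling off the commutators $[A,F]$, which are lower order once $\vn{A(s)}_{\dot H^1}$ is under control), one obtains $\vn{F_{\al\beta}(s)}_{\dot H^m}\ls s^{-m/2}(Ns^{1/2})^{\sg-1}$ for $0\le m\le 3-$. Because the linear part of $F_{ij}$ is $\mathrm{curl}\,A$, this yields directly $\vn{{\bf P}A(s)}_{\dot H^{1+}}\ls s^{-0-}(Ns^{1/2})^{\sg-1} + \vn{A(s)}_{H^\sg}^2$, and in particular a $\delta$-small bound $\vn{{\bf P}A(s)}_{L^\infty}\ls s^{-1/4-}(Ns^{1/2})^{\sg-1}$; the curl-free part solves $\pt_s A^{cf}_i = {\bf P}^\perp[A^\ell, F_{\ell i}]$ (since $\pt^\ell F_{\ell i}$ is divergence-free), which upon integration and a coupled bootstrap of the auxiliary quantity $\sup_{s}(Ns^{1/2})^{1-\sg}\vn{A(s)}_{\dot H^1}$ provides a comparable bound on $\vn{A^{cf}(s)}_{\dot H^1}$ (plus a harmless $C(M_0)$).

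For the nonlinear Duhamel integral in $\dot H^\sg$, I would exploit the parabolic smoothing $\vn{e^{(s-s')\Delta}g}_{\dot H^\sg}\ls(s-s')^{-\alpha/2}\vn{g}_{\dot H^{\sg-\alpha}}$ with $\alpha\in(0,2)$. For the $\mathcal{Q}$-terms $[A^\ell,\pt_\ell A_i]$ I place the undifferentiated factor in $H^\sg$ (bound $K\defeq\vn{A}_{L^\infty_s H^\sg}$) and the differentiated factor in $L^{2}$ via the $\dot H^1$ information just obtained, using the product law \eqref{SobMult} with a suitable $\alpha$; on the curl-free output one has $e^{(s-s')\mathcal{L}}{\bf P}^\perp\mathcal{N} = {\bf P}^\perp[A^\ell,F_{\ell i}]$, estimated directly through $\vn{A(s')}_{\dot H^1}\vn{F(s')}_{\dot H^{\sg+1/2+}}$ by \eqref{SobMultHomog}. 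In every case the $s'$-integral converges at $s'=0$ in the relevant range $\sg>\tfrac34$ and produces a positive power of $s$; since $s\le s_0 = N^{-2}$, this power beats the factor $N^{\sg-1}$ carried by the curvature amplitude, leaving a total $\ls N^{-c}(K + C(M_0))$ with $c>0$; the cubic terms $\mathcal{C}$ are handled the same way using the $\delta$-small $L^\infty$ bound on ${\bf P}A(s)$ and the product structure of $H^\sg$. Since $N\gg1$, this closes the continuity argument with $K\le 2M_0$, hence $\vn{A_i(s)}_{H^\sg}\le C(M_0)$ on $[0,s_0]$.

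The main obstacle is the derivative loss in the quadratic term $[A^\ell,\pt_\ell A_i]$: against $\dot H^\sg$ with $\sg<1$, the product $A\,\pt A$ cannot be controlled by $H^\sg$-data alone. The resolution is precisely that the curvature hypothesis controls the differentiated (divergence-free) part of $A(s)$ in $L^2$ for $s>0$ — exactly what a heat smoothing of order $<2$ can absorb — at the cost of a time integral near $s=0$ that is only borderline convergent, which is where the subcriticality threshold enters. The bookkeeping for the curl-free component and the cubic terms requires the simultaneous auxiliary bootstrap of $\vn{A(s)}_{\dot H^1}$ (and of $\vn{F(s)}_{\dot H^m}$ obtained from \eqref{bddFxN}), but these are strictly lower order and do not affect the main numerology.
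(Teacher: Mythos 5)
Your scheme breaks at the auxiliary $\dot H^1$ bootstrap, and this is not a technicality but the central difficulty of the statement. In the caloric gauge the flow is only degenerately parabolic: as you correctly note, the linear semigroup is ${\bf P}e^{s\Delta}+{\bf P}^{\perp}$, i.e.\ the curl-free component does not diffuse at all. Consequently $A^{cf}(s)=A^{cf}(0)+\int_0^s {\bf P}^{\perp}[A^{\ell},F_{\ell\, \cdot}](s')\,\dd s'$, and the quantity $\vn{A^{cf}(s)}_{\dot H^1}$ is bounded below by $\vn{A^{cf}(0)}_{\dot H^1}$ minus the Duhamel correction. Since the hypothesis only gives $\vn{\bar A_i}_{H^{\sg}}\leq M_0$ with $\sg<1$, there is no bound of the form $\vn{A^{cf}(0)}_{\dot H^1}\leq C(M_0)$ (the solution is regular, so the norm is finite, but it depends on higher norms of the data, which is exactly what the proposition must avoid). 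Hence the coupled bootstrap of $\sup_s (Ns^{1/2})^{1-\sg}\vn{A(s)}_{\dot H^1}$ cannot close with constants depending only on $M_0$: the claimed ``harmless $C(M_0)$'' for $A^{cf}$ does not exist. This gap propagates into your main quadratic estimate, where the differentiated factor $\pt_\ell A_i$ contains $\pt A^{cf}$ and is placed in $L^2$ ``via the $\dot H^1$ information just obtained,'' and also into your conversion of \eqref{bddFxN} into ordinary Sobolev bounds on $F$, which again invokes $\dot H^1$ control of the full connection. (A smaller slip: your $L^2$ bound integrates backward from $s_0$, which presupposes $\vn{A_i(s_0)}_{L^2}\leq M_0$; integrate forward from $s=0$ instead, which works with the same $O(N^{-1})$ error.)

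The paper circumvents precisely this degeneracy by leaving the caloric gauge near $s=0$: it runs a short-time $H^{\sg}$ well-posedness theory for the genuinely parabolic DeTurck equation \eqref{DeTurck} on $[0,s^*]$ with $s^*\simeq M_0^{-p_{\sg}}$ (Proposition \ref{DeTurckProp} and Remark \ref{rmkWP}, using smallness of the interval rather than of the data), transfers the bound back to the caloric solution through the ODE \eqref{CaloricODE} and the product estimates \eqref{Algebraproducts1}, and only then, for $s\in[s^*,s_0]$, integrates $\pt_s A_i=\mathbf{D}^{\ell}F_{\ell i}$ directly using \eqref{bddFxN} and covariant Gagliardo--Nirenberg \eqref{SobGN}; the singular factor $s^{-1}(Ns^{1/2})^{\sg-1}$ is integrable away from $s=0$, which is why the covariant data suffices there but not on $[0,s^*]$. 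If you want to remain in the caloric gauge, you would have to rework the nonlinear estimates so that $A^{cf}$ is only ever measured in $H^{\sg}$ (e.g.\ $H^{\sg}\times H^{\sg-1}$ products absorbed by the heat smoothing on the divergence-free output, and curvature-based bounds for the curl-free output), and convert \eqref{bddFxN} to ordinary derivatives using only $L^p$ information from $H^{\sg}$; as written, your argument assumes regularity of $A^{cf}$ that the data does not provide.
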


\begin{proof}

First note that we have local wellposedness for the equation in DeTurck's gauge \eqref{DeTurck}. This means a (regular) solution $ \bar{A}_i (s) $ to \eqref{DeTurck} exists on $ [0,s^*] $ with $  \bar{A}_i (0) = \bar{A}_i $, $ \vn{\bar{A}_i (s)}_{H^{\sg}} \ls M_0 $ for all $ s \in [0,s^*] $ with $ s^* \simeq M_0^{-p_{\sg}} $ where $ p_{\sg} = \frac{2}{ \sg-1/2} $. 
This is proved by the argument in Proposition \ref{DeTurckProp} and Remark \ref{rmkWP}, one just replaces \eqref{Rmkprdest} by their homogeneous version and one uses the smallness of the interval instead of smallness of the initial data. 

We next consider the ODE \eqref{CaloricODE} for $ \bar{A} $, i.e. $ \pt_s U= U \bar{A}_s $, $ U(0) =1 $ where $ \bar{A}_s = \pt^{\ell} \bar{A}_{\ell} $, which defines the gauge transformation from  DeTurck's gauge to the caloric gauge. The $ A_i $ and $ \bar{A}_i $ are related by
 $$
A_i  =  U \bar{A}_i U^{-1} - \partial_i U U^{-1}, \qquad A_s  =  U \bar{A}_s U^{-1} - \partial_s U U^{-1}=0
$$ 
Now \eqref{Ubds} is replaced by: $ U \in C(M_0) L^{\infty}_s L^{\infty}_x $ and $ \nabla_x U \in  C(M_0) L^{\infty}_s H^{\sg} $ on $ [0,s^*] $, and the same for $ U^{-1} $. We denote by $ C(M_0) $ a constant that can change from line to line. 
By \eqref{Algebraproducts1} one obtains $ \vn{A_i (s)}_{H^{\sg}} \ls C(M_0)$ for  $ s \in [0,s^*] $. 

It remains to extend this bound to $ [0,s_0] $ using \eqref{bddFxN}. Let $ p \in [4,6] $ be the Sobolev exponent such that $ H^{\sg} \subset L^p $. From \eqref{IVPcHF} and the covariant Gagliardo-Nirenberg inequality \eqref{SobGN} one has $ \vn{\pt_s A_i (s)}_{L^p} \ls s^{-1} (N s^{\frac{1}{2}})^{\sg-1} $. Integrating one has 
$$
 \vn{ A_i (s)}_{L^p} \ls  \vn{ A_i (s^*)}_{H^{\sg}} + (N s^{*\frac{1}{2}})^{\sg-1} \ls C(M_0), \qquad s \in [s^*,s_0]. 
$$ 
Now we let $ 1/q = 1/2-1/p $, $ q \in [3,4] $  and bound
\begin{align*}
 \vn{ \pt_s A_i (s)}_{H^1} & \ls \vn{\mathbf{D}_x F(s)}_{L^2} + \vn{\mathbf{D}_x^2 F(s)}_{L^2}+  \vn{ A_i (s)}_{L^p}  \vn{\mathbf{D}_x F(s)}_{L^q}   \\
 & \ls s^{-1} (N s^{\frac{1}{2}})^{\sg-1} + C(M_0) s^{-1} (N s^{\frac{1}{2}})^{\sg-1}
 \end{align*}
Integrating again one has 
$$
\vn{ A_i (s)}_{H^{\sg}} \leq \vn{ A_i (s^*)}_{H^{\sg}} + \int_{s^*}^s \vn{ \pt_s A_i (s')}_{H^1}  \dd s' \ls C(M_0) (N s^{*\frac{1}{2}})^{\sg-1} \ls C(M_0) 
$$
for all $ s \in [s^*,s_0] $. 
 \end{proof}

%%%%%%%%%%%%%%%%%%%%%%%%%%%%%%%%%%%%%%%
%%%%%%%%%%%%%%%%%%%%%%%%%%%%%%%%%%%%%%%
%%%%%%%%%%%%%%%%%%%%%%%%%%%%%%%%%%%%%%%
%%%%%%%%%%%%%%%%%%%%%%%%%%%%%%%%%%%%%%%
%%%%%%%%%%%%%%%%%%%%%%%%%%%%%%%%%%%%%%%
%%%%%%%%%%%%%%%%%%%%%%%%%%%%%%%%%%%%%%%
%%%%%%%%%%%%%%%%%%%%%%%%%%%%%%%%%%%%%%%
%%%%%%%%%%%%%%%%%%%%%%%%%%%%%%%%%%%%%%%
%%%%%%%%%%%%%%%%%%%%%%%%%%%%%%%%%%%%%%%
%%%%%%%%%%%%%%%%%%%%%%%%%%%%%%%%%%%%%%%
%%%%%%%%%%%%%%%%%%%%%%%%%%%%%%%%%%%%%%%
%%%%%%%%%%%%%%%%%%%%%%%%%%%%%%%%%%%%%%%
%%%%%%%%%%%%%%%%%%%%%%%%%%%%%%%%%%%%%%%
%%%%%%%%%%%%%%%%%%%%%%%%%%%%%%%%%%%%%%%

\section{Energy estimates for the curvature and the tension field} \label{EnEstSec}

The goal of this section is to obtain weighted bounds for higher covariant derivatives for the curvature $ F_{\al \beta}(s) $ and the tension field $ w_x(s) $, first globally in space and then locally - with square summability of the local bounds, proving Proposition \ref{PropCoeff}. All estimates in this section are covariant, independent of the choice of gauge. 

\

Let $ A_{t,x,s} $ be a regular solution to \eqref{dYMHF} on an interval $ I \times \mb{R}^3 \times [0,s_0] $. 

Then the curvature satisfies 
\be \label{0CovPara}
\left(\mathbf{D}_{s}-\mathbf{D}^{\ell} \mathbf{D}_{\ell}\right)  F_{\al \beta} = 2 [F_{\al}^{\ \ell}, F_{\ell \beta}]
\ee
This is obtained from Bianchi's identity \eqref{Bianchi}, commuting the covariant derivatives by \eqref{CDComm} and applying Bianchi's identity again. 

For higher covariant derivatives one computes (see \cite[Section 5]{oh2015finite} for details):
 \be \label{mCovPara}
 \left(\mathbf{D}_{s}-\mathbf{D}^{\ell} \mathbf{D}_{\ell}\right) \mathbf{D}_{x}^{(m)} F=  \sum_{i=0}^{m} \mathcal{O} \lpr \mathbf{D}_{x}^{(i)} F, \mathbf{D}_{x}^{(m-i)} F \rpr
 \ee

Now let $ w_k(s) $ be the Yang-Mills tension field defined in \eqref{wDef}. It satisfies 

\be \label{0CovParaw}
\lpr \mathbf{D}_{s} -\mathbf{D}^{\ell} \mathbf{D}_{\ell} \rpr w_{k}=2\left[F_{k}^{\ell}, w_{\ell}\right]+2\left[F^{0 \ell}, \mathbf{D}_{k} F_{0 \ell}+2 \mathbf{D}_{\ell} F_{k 0}\right]
\ee

\

See \cite[Eq. (8.8)]{oh2017yang}, \cite[Appendix A]{oh2014gauge}.
For higher covariant derivatives one has
\be \label{mCovParaw}
\lpr \mathbf{D}_{s} -\mathbf{D}^{\ell} \mathbf{D}_{\ell}  \rpr  \mathbf{D}_{x}^{(m)}w_{k}=  
\sum_{i=0}^{m} \mathcal{O} \lpr \mathbf{D}_{x}^{(i)} F, \mathbf{D}_{x}^{(m-i)} w_x \rpr
+ \mathcal{O} \lpr \mathbf{D}_{x}^{(i)} F, \mathbf{D}_{x}^{(m+1-i)} F \rpr
\ee

\subsection{Covariant energy estimates and the comparison principle} \label{CovEnEst} \ 

Suppose $ G $ solves the covariant heat equation 
$$
\left(\mathbf{D}_{s}-\mathbf{D}^{\ell} \mathbf{D}_{\ell}\right) G = \mathcal{N} 
$$
on an interval $ \mathbb{R}^3 \times [s_1,s_2] $ and let $ p>0 $. Then one has 
\begin{align*}
\sup_{s \in [s_1,s_2]} s^{2p} \int_{\mb{R}^3} (G,G)(s) \dd x + \int_{s_1}^{s_2} s^{2p}  \int_{\mb{R}^3}  (\mathbf{D}^{\ell} G, \mathbf{D}_{\ell} G)(s) \dd x  \dd s \ls \\
s_1^{2p} \int_{\mb{R}^3} (G,G)(s_1) \dd x + p \int_{s_1}^{s_2} s^{2p-1 } \int_{\mb{R}^3} (G,G)(s) \dd x \dd s + \lpr \int_{s_1}^{s_2} s^p \vn{ \mathcal{N}(s)}_{L^2_x}    \dd s \rpr ^2  
\end{align*}
or, in short, denoting $ J=[s_1,s_2] $,  
\be  \label{EnergyEst}
\vn{s^p G}_{L^{\infty}_{s} L^2_x (J) }  +  \vn{s^{p} s^{\frac{1}{2}} \mathbf{D}_x G}_{L^2_{ \frac{\dd s}{s}} L^2_x (J) } \ls s_1^p \vn{G(s_1)}_{L^2_x} + p \vn{s^p G}_{L^2_{ \frac{\dd s}{s}} L^2_x (J) }    + \vn{s^p \mathcal{N} }_{L^1_{s} L^2_x(J) }  
\ee 
This is obtained by applying the Fundamental theorem of calculus on an interval to the function $ \varphi(s) = \frac{1}{2} s^{2p} \int_{\mb{R}^3} (G,G)(s) \dd x $, using the equation, integrating by parts, applying Cauchy-Schwarz and absorbing a term into the LHS. See \cite[Lemma 4.5]{oh2015finite} for more details. 

\

As a consequence of the diamagnetic-type inequality (\cite[Lemma 4.6]{oh2015finite}, \cite[Lemma 3.9]{tao2008global})
$$
( \pt_s - \Delta) \vm{G} \leq \vm{ \mathcal{N} }
$$
and the Duhamel principle, \cite[Corollary 4.7]{oh2015finite} formulates the comparison principle:

\begin{lemma} \label{Comparison}
The following inequality holds point-wise 
$$
\vm{G(s)} \leq  e^{s \Delta} \vm{G(0)} + \int_0^{s} e^{(s-s') \Delta} \vm{ (\mathbf{D}_{s} -\mathbf{D}^{\ell} \mathbf{D}_{\ell}) G(s') }  \dd s'. 
$$
\end{lemma}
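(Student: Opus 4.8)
The plan is to combine the covariant (diamagnetic) Kato inequality with the scalar parabolic maximum principle. Write $\mathcal{N} \defeq (\mathbf{D}_{s}-\mathbf{D}^{\ell}\mathbf{D}_{\ell})G$, so that $G$ solves the covariant heat equation with source $\mathcal{N}$. The first ingredient is the pointwise diamagnetic-type bound
\[
(\pt_s - \Delta)\vm{G} \leq \vm{\mathcal{N}},
\]
valid in the sense of distributions; this is the parabolic analogue of the Kato inequality $\vm{\pt_x\vm{\varphi}} \leq \vm{\mathbf{D}_x\varphi}$ used in \eqref{SobGN}, and for regular $G$ it follows by a standard regularization: approximate $\vm{G}$ by $\sqrt{\vm{G}^2+\ep^2}$, differentiate using $\pt_s(G,G) = 2(\mathbf{D}_s G,G)$ and $\Delta(G,G) \geq 2(\mathbf{D}^{\ell}\mathbf{D}_{\ell}G,G)$ up to a nonnegative gradient term, and let $\ep \to 0$. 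I would simply cite \cite[Lemma 4.6]{oh2015finite} (or \cite[Lemma 3.9]{tao2008global}) for this step.

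Next, let $h(s) \defeq e^{s\Delta}\vm{G(0)} + \int_0^s e^{(s-s')\Delta}\vm{\mathcal{N}(s')}\dd s'$ denote the right-hand side of the claimed inequality. By Duhamel's formula, $h$ is the solution of the scalar, flat, inhomogeneous heat equation $(\pt_s - \Delta)h = \vm{\mathcal{N}}$ with initial data $h(0) = \vm{G(0)}$; since $G$ and $\mathcal{N}$ are regular, $h$ is continuous, bounded, and decays at spatial infinity on each slice $s = \mathrm{const}$. Then the difference $v \defeq \vm{G} - h$ satisfies, in the distributional sense, $(\pt_s - \Delta)v \leq 0$ on $\mb{R}^3 \times (0,s]$ together with $v(0) \leq 0$.

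Finally I would invoke the maximum principle for the heat equation: a distributional subsolution $v$ of $(\pt_s-\Delta)v \leq 0$ on $\mb{R}^3 \times [0,s]$ which is nonpositive at $s=0$ and satisfies a suitable growth bound — here $v \in L^\infty$, in fact decaying, so no Phragm\'en--Lindel\"of subtlety arises — is nonpositive throughout. Hence $\vm{G(s)} \leq h(s)$ pointwise, which is the assertion. I would present this as the content of \cite[Corollary 4.7]{oh2015finite}.

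The only mildly delicate point is the regularity bookkeeping needed to run the comparison principle against a merely distributional subsolution $\vm{G}$ — i.e. making sense of $(\pt_s-\Delta)\vm{G} \leq \vm{\mathcal{N}}$ and then comparing it to the smooth function $h$. For regular solutions this is routine: work instead with the smooth $\sqrt{\vm{G}^2+\ep^2}$, which satisfies the differential inequality up to an $O(\ep)$ error, apply the classical parabolic comparison principle, and pass to the limit $\ep \to 0$. So I do not expect a real obstacle here; the lemma is essentially the Kato inequality plus Duhamel's principle plus the scalar maximum principle, assembled in that order.
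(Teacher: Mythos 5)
Your proposal is correct and follows the same route as the paper: the parabolic diamagnetic (Kato-type) inequality $(\pt_s-\Delta)\vm{G}\leq\vm{\mathcal{N}}$ from \cite[Lemma 4.6]{oh2015finite} (or \cite[Lemma 3.9]{tao2008global}) combined with Duhamel's principle and the scalar comparison/maximum principle, which is exactly how \cite[Corollary 4.7]{oh2015finite} is invoked in the paper. The extra regularization and decay bookkeeping you supply is consistent with, and merely more explicit than, the cited argument.
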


\

We will also use the following estimate for Duhamel terms 

\begin{lemma}[ \cite{oh2015finite}] \label{lemmashur}
For $ p< \frac{3}{4} $ and $ s_0 \leq 1 $ the following estimate holds on $ [0,s_0] \times \mb{R}^3 $:
$$
\vn{s_1^p \int_0^{s_1} e^{(s_1-s) \Delta} \mathcal{N}(s) \dd s }_{L^2_{ \frac{\dd s_1}{s_1}} L^2_x}  
\ls  \vn{s^{p+\frac{1}{4}} \mathcal{N}(s) }_{L^2_{ \frac{\dd s}{s}} L^1_x}
$$
\end{lemma}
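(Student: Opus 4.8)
The plan is to reduce the estimate to a one–dimensional Schur/convolution bound in the heat–time variable, the only analytic input being the mapping property of the free heat semigroup on $\mb{R}^3$. First I would record the elementary bound $\vn{e^{t\Delta}f}_{L^2_x}\ls t^{-3/4}\vn{f}_{L^1_x}$ for all $t>0$: this is Young's inequality, since the Gaussian kernel $G_t$ satisfies $\vn{G_t}_{L^2_x}\simeq t^{-3/4}$ in three space dimensions. Inserting this into the Duhamel integral gives, pointwise in $s_1\in[0,s_0]$,
\be \label{eq:shurpointwise}
s_1^p\,\vn{\int_0^{s_1}e^{(s_1-s)\Delta}\mathcal{N}(s)\,\dd s}_{L^2_x}\ls \int_0^{s_1} s_1^{p}\,(s_1-s)^{-3/4}\,\vn{\mathcal{N}(s)}_{L^1_x}\,\dd s,
\ee
so it remains to bound the $L^2_{\dd s_1/s_1}$-norm of the right–hand side by $\vn{s^{p+1/4}\vn{\mathcal{N}(s)}_{L^1_x}}_{L^2_{\dd s/s}}$.

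Next I would set $g(s)=s^{p+1/4}\vn{\mathcal{N}(s)}_{L^1_x}$, extended by zero for $s>s_0$ (harmless, since $s_0\le 1$ and restricting the integrals only decreases the norms), so that the right–hand side of \eqref{eq:shurpointwise} equals $(Tg)(s_1)=\int_0^{s_1}K(s_1,s)\,g(s)\,\tfrac{\dd s}{s}$ with $K(s_1,s)=s_1^{p}(s_1-s)^{-3/4}s^{3/4-p}$. The key observation is the exact scaling identity $K(s_1,s_1 u)=(1-u)^{-3/4}u^{3/4-p}$ for $0<u<1$, independent of $s_1$; writing $s=s_1u$ this exhibits $T$ as a multiplicative convolution on $((0,\infty),\tfrac{\dd s}{s})$ with kernel $\phi(u)=(1-u)^{-3/4}u^{3/4-p}\mathbf{1}_{(0,1)}(u)$, i.e. $(Tg)(s_1)=\int_0^1 \phi(u)\,g(s_1 u)\,\tfrac{\dd u}{u}$. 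By Minkowski's integral inequality together with the invariance of $\tfrac{\dd s}{s}$ under $s\mapsto s u$,
\be \label{eq:shurbeta}
\vn{Tg}_{L^2_{\dd s_1/s_1}}\le \Big(\int_0^1 \phi(u)\,\tfrac{\dd u}{u}\Big)\,\vn{g}_{L^2_{\dd s/s}},\qquad \int_0^1 \phi(u)\,\tfrac{\dd u}{u}=\int_0^1 (1-u)^{-3/4}u^{-1/4-p}\,\dd u = B\!\left(\tfrac34-p,\tfrac14\right),
\ee
which is finite precisely when $p<\tfrac34$. Combining \eqref{eq:shurpointwise} with \eqref{eq:shurbeta} yields the claim.

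I do not expect a genuine obstacle here beyond the bookkeeping; the one point to watch is that the constant in \eqref{eq:shurbeta} is the Beta value $B(\tfrac34-p,\tfrac14)$, which blows up as $p\uparrow\tfrac34$, so the stated range $p<\tfrac34$ is exactly what this argument permits. One could equally well bypass the convolution picture and verify the two–weight Schur test for the kernel $K$ directly with a power weight $w(s)=s^{\varepsilon}$, arriving at the same threshold.
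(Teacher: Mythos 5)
Your argument is correct, and it is essentially the standard proof of this estimate (which the paper does not reprove but imports from \cite[Lemma 4.8]{oh2015finite}): the $L^1_x\to L^2_x$ heat-semigroup decay $\vn{e^{t\Delta}f}_{L^2}\ls t^{-3/4}\vn{f}_{L^1}$ in three dimensions, followed by a Schur-type bound for the resulting kernel, which your rescaling $s=s_1u$ cleanly identifies as a multiplicative convolution on $((0,\infty),\tfrac{\dd s}{s})$ with mass $B(\tfrac34-p,\tfrac14)$, finite exactly for $p<\tfrac34$. The extension of $g$ by zero beyond $s_0$ and the use of dilation invariance of the Haar measure are handled correctly, so there is nothing to fix.
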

See \cite[Lemma 4.8]{oh2015finite}.

\subsection{Higher covariant derivatives bounds for $ F $ and $ w $} \ 

 Denote
 $$
 \al^m(s') \defeq  \vn{  (N s^{\frac{1}{2}})^{1-\sigma} (s^{\frac{1}{2}} \mathbf{D}_x)^m F_{\al \beta}(s) }_{L^{\infty}_s L^2_x ( [0,s'] \times \mb{R}^3) } 
 $$
 $$
 \beta^m(s') \defeq  \vn{  (N s^{\frac{1}{2}})^{1-\sigma} (s^{\frac{1}{2}} \mathbf{D}_x)^m F_{\al \beta}(s) }_{L^{2}_{\frac{\dd s}{s}} L^2_x ([0,s'] \times \mb{R}^3)}  
 $$

\begin{proposition} \label{CovBdsF1}
Let $ A_{t,x,s} $ be a regular solution to \eqref{dYMHF} on an interval $ J \times \mb{R}^3 \times [0,s_0] $. Fix a time $ t \in J $ and denote $ F(s)=F(t,s) $. Let $ \eta \ll 1 $ and suppose 
 $$
  \al^0(s_0) + \beta^0(s_0) \ls \eta  
 $$ 
 Then, for $ N^2 s_0 = 1$ and for all $ 0 \leq m \leq 10 $ one has
 $$
  \al^m(s_0) + \beta^{m+1}(s_0) \ls_m \eta   
 $$
\end{proposition}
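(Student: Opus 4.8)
The plan is to run an induction on $ m $, at each step applying the covariant energy estimate \eqref{EnergyEst} to $ G = \mathbf{D}_x^{(m)} F_{\al \beta} $, whose evolution is governed by \eqref{mCovPara}. Concretely, fix $ t $, write $ F(s) = F_{\al\beta}(t,s) $, and apply \eqref{EnergyEst} on $ [s_1, s_0] $ with the weight $ s^{p} $, where $ p = \frac{1-\sg}{2} + \frac{m}{2} > 0 $, multiplied by the harmless constant $ N^{1-\sg} $; then let $ s_1 \to 0^+ $. With this choice the left-hand side of \eqref{EnergyEst} equals exactly $ N^{-(1-\sg)} \big( \al^m(s_0) + \beta^{m+1}(s_0) \big) $, since $ s^{p}\mathbf{D}_x^{(m)}F = N^{-(1-\sg)} (Ns^{\frac12})^{1-\sg}(s^{\frac12}\mathbf{D}_x)^m F $ and $ s^{p}s^{\frac12}\mathbf{D}_x\mathbf{D}_x^{(m)}F = N^{-(1-\sg)}(Ns^{\frac12})^{1-\sg}(s^{\frac12}\mathbf{D}_x)^{m+1}F $. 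On the right-hand side, the ``initial data'' term $ s_1^{p}\vn{\mathbf{D}_x^{(m)}F(s_1)}_{L^2_x} $ tends to $ 0 $ as $ s_1 \to 0 $ when $ m \geq 1 $ (for a regular solution $ \vn{\mathbf{D}_x^{(m)}F(s)}_{L^2_x} $ is bounded near $ s = 0 $ and $ p > 0 $), while for $ m = 0 $ it is $ \ls N^{-(1-\sg)}\al^0(s_0) \ls N^{-(1-\sg)}\eta $. The ``middle'' term $ p \vn{s^{p}\mathbf{D}_x^{(m)}F}_{L^2_{\frac{\dd s}{s}} L^2_x} = p\, N^{-(1-\sg)} \beta^m(s_0) $ is controlled by the hypothesis (for $ m=0 $) or by the inductive hypothesis (for $ 1 \leq m \leq 10 $, since $ \beta^m $ is part of the conclusion at the $(m-1)$-st step).

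Thus everything reduces to bounding the nonlinear term $ \vn{s^p \mathcal{N}^{(m)}}_{L^1_s L^2_x} $, where by \eqref{mCovPara} $ \mathcal{N}^{(m)} = \sum_{i=0}^{m} \mathcal{O}\big( \mathbf{D}_x^{(i)} F, \mathbf{D}_x^{(m-i)} F \big) $. I would estimate each bilinear term by Hölder in $ x $, placing the factor carrying more covariant derivatives in $ L^2_x $ and the other one in $ L^4_x $ (or in $ L^\infty_x $ when it carries none), and using the covariant Gagliardo--Nirenberg inequalities \eqref{SobGN} to turn the $ L^4_x $ / $ L^\infty_x $ norms into (at most two) $ L^2_x $ norms of covariant derivatives. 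A short count shows that for $ 1 \leq i \leq m-1 $ only covariant derivatives of order $ \leq m $ occur, and that the top-order one, $ \mathbf{D}_x^{(m)}F $ in $ L^2_x $, always enters with exponent $ <1 $; the diagonal terms $ i \in \{0,m\} $ equal $ \mathcal{O}(F, \mathbf{D}_x^{(m)}F) $, in which $ F $ sits in $ L^\infty_x $ with norm $ \ls \eta\, s^{-\frac14}/(Ns^{\frac12})^{1-\sg} $ by \eqref{SobGN} and the control of $ \al^1,\al^2 $. Pairing these pointwise-in-$s$ bounds with Cauchy--Schwarz in $ \frac{\dd s}{s} $, carrying the weights and using $ s_0 = N^{-2} $, one finds that every resulting $ s $-integral converges at $ s = 0 $ precisely because $ \sg > \frac12 $ and moreover produces a surplus factor $ s_0^{\epsilon} = N^{-2\epsilon} $ for some $ \epsilon = \epsilon(\sg) > 0 $. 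Schematically this yields
$$
\vn{s^p \mathcal{N}^{(m)}}_{L^1_s L^2_x} \ls_m N^{-(1-\sg)} N^{-\epsilon} \Big( \eta^2 + \eta\,\big( \al^m(s_0) + \beta^{m+1}(s_0) \big)^{\frac34} + \dots \Big),
$$
i.e. the nonlinear contribution is $ N^{-(1-\sg)} $ times something which is either quadratically small in $ \eta $ or sub-linear in the quantities being estimated, with an extra $ N^{-\epsilon} $.

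Feeding this into \eqref{EnergyEst} gives, schematically, $ \al^m(s_0) + \beta^{m+1}(s_0) \ls_m \eta + (\text{small constant})\cdot\big(\al^m(s_0)+\beta^{m+1}(s_0)\big)^{\frac34} $, and since for a regular solution all of these quantities are a priori finite and tend to $ 0 $ as $ s_0 \to 0 $, a continuity argument in $ s_0 \in (0, N^{-2}] $ together with $ \eta \ll 1 $ closes the bootstrap and gives $ \al^m(s_0) + \beta^{m+1}(s_0) \ls_m \eta $. (The same argument in fact works for every $ m $, the restriction $ m \leq 10 $ being imposed only because that is what is needed downstream.) One bookkeeping point: the diagonal term at level $ m $ requires $ \vn{F}_{L^\infty_x} $, hence $ \al^1 $ and $ \al^2 $; the latter are produced at the $ m = 2 $ step, which is self-contained since its nonlinearity only involves $ \mathbf{D}_x^{\leq 2}F $, so one should run the induction in the order $ m = 0, 2, 1, 3, 4, \dots, 10 $, or equivalently bootstrap all of $ \{ \al^m, \beta^{m+1} \}_{0 \leq m \leq 10} $ simultaneously.

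\textbf{Main obstacle.} The crux is the weight bookkeeping: after distributing the covariant derivatives across the two factors of each term of $ \mathcal{N}^{(m)} $ and invoking \eqref{SobGN}, one must check that the accumulated powers of $ s $ still integrate against $ \frac{\dd s}{s} $ near $ s = 0 $ — this is exactly where $ \sg > \frac12 $ enters — and leave a positive surplus power of $ s_0 $ to absorb universal constants. The secondary difficulty is that the estimate of $ \mathcal{N}^{(m)} $ genuinely involves $ \al^m $ (and $ \beta^{m+1} $) on the right, so there is no clean term-by-term gain of a derivative and the induction must be closed by the continuity/bootstrap argument exploiting $ \eta \ll 1 $, rather than by a purely triangular recursion.
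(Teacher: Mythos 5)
Your proposal follows essentially the same route as the paper: apply the covariant energy estimate \eqref{EnergyEst} to \eqref{mCovPara} with weight $p=\tfrac{m}{2}+\tfrac{1-\sg}{2}$ (times $N^{1-\sg}$), absorb the vanishing initial-data term and control the middle term by the hypothesis/induction, estimate the bilinear nonlinearity by H\"older plus the covariant Gagliardo--Nirenberg inequalities \eqref{SobGN} with the surplus power of $s$ coming from $\sg>\tfrac12$, and close by induction on $m$ with a continuity argument in $s$ using $\eta\ll 1$ and the vanishing of the norms as $s\to 0$. The only difference is cosmetic bookkeeping: the paper closes each step with a superlinear self-bound $\eta^{1/2}\,s^{1/4}(Ns^{1/2})^{\sg-1}(\beta^{m+1})^{3/2}$ rather than your sublinear/small-coefficient version, and your reordering of the induction for $m=1,2$ is unnecessary since those steps close by the same continuity argument.
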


\begin{proof}

Denote by $ \mathcal{N}^m $ the RHS of \eqref{mCovPara} and let 
$$
\gamma^m (s') \defeq  \vn{  (N s^{\frac{1}{2}})^{1-\sigma} s^{\frac{m}{2}} \vn{ \mathcal{N}^m(s)}_{L^2_x} }_{L^1_s [0,s']}  
$$
Applying \eqref{EnergyEst} to \eqref{mCovPara} with $ p=\frac{m}{2} + \frac{1-\sg}{2} $ on $ [0,s] $ and multiplying by $ N^{1-\sg} $ one obtains
$$
 \al^m(s) + \beta^{m+1}(s) \ls \beta^{m}(s) + \gamma^m (s) 
 $$
First let $ m=0 $. Using Holder's inequality and \eqref{SobGN} one has
$$
\vn{[F,F](s)}_{L^2_x} \ls \vn{F(s)}_{L^2_x}^{\frac{1}{2}} \vn{\mathbf{D}_x F(s)}_{L^2_x}^{\frac{3}{2}}
$$
which implies that one can bound 
$$
\beta^1 (s) \ls \eta + \gamma^1 (s) \ls \eta + \eta^{\frac{1}{2}} \frac{s^{1/4}}{(N s^{\frac{1}{2}})^{1-\sigma}} \lpr \beta^1 (s) \rpr^{\frac{3}{2}}
$$
Since $ \lim_{s \to 0} \beta^1 (s)=0 $ one can use a continuity argument to obtain $ \beta^1 (s_0) \ls \eta  $. 
Then one proceeds by induction on $ m $, similarly bounding 
$$
\al^m(s) + \beta^{m+1}(s) \ls \beta^{m}(s) + \gamma^m (s) \ls  \eta + \eta^{\frac{1}{2}} \frac{s^{1/4}}{(N s^{\frac{1}{2}})^{1-\sigma}} \lpr \beta^{m+1} (s) \rpr^{\frac{3}{2}}
$$
and concluding like before. 
\end{proof}

Now we turn to the Yang-Mills tension field $ w_x $.

\begin{proposition} \label{CovBdsw1}
Given \eqref{YM} solutions  $ A_{t,x,s} $ and $ F_{\al \beta} $ satisfying the assumptions from Proposition \ref{CovBdsF1}, for $ w_k(s) \defeq \mathbf{D}^{\al} F_{\al k}(s) $ and $ 0 \leq m \leq 8 $ one has 
\be \label{derbdsw}
 \vn{  (N^2 s)^{1-\sigma} s^{\frac{1}{4}} (s^{\frac{1}{2}} \mathbf{D}_x)^m w_x(s) }_{L^{\infty}_s L_x^2   \cap L^2_{\frac{\dd s}{s}} L^2_x ( [0,s_0] \times \mb{R}^3) }  \ls \eta^2
\ee
Moreover,
\be \label{L1bdw}
(N^2 s)^{1-\sigma} \vn{w_x(s)}_{L^1_x} \ls s^{\frac{1}{2}} \eta \qquad \forall s \in [0,s_0].
\ee
\end{proposition}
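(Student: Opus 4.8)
The plan is to exploit the structural fact that $ w_\al(\cdot,0)=\mathbf{D}^\beta F_{\al\beta}(\cdot,0)=0 $: at $ s=0 $ the connection solves \eqref{YM}, which is exactly $ \mathbf{D}^\beta F_{\al\beta}=0 $. Hence $ w_x $, and each $ \mathbf{D}_x^{(m)} w_x $, solves its covariant parabolic equation \eqref{0CovParaw}, \eqref{mCovParaw} with \emph{vanishing} data at $ s=0 $, so the comparison principle (Lemma \ref{Comparison}) gives the pointwise bound $ \vm{\mathbf{D}_x^{(m)} w_x(s)}\leq\int_0^s e^{(s-s')\Delta}\vm{\mathcal{N}^m(s')}\dd s' $, where $ \mathcal{N}^m $ is the right side of \eqref{mCovParaw} --- a sum of terms $ \mathcal{O}(\mathbf{D}_x^{(i)}F,\mathbf{D}_x^{(m-i)}w_x) $ and $ \mathcal{O}(\mathbf{D}_x^{(i)}F,\mathbf{D}_x^{(m+1-i)}F) $ (for $ m=0 $ this is $ [F,w_x]+[F,\mathbf{D}_x F] $ from \eqref{0CovParaw}).

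The next step is to bound the forcing using only the $ F $-bounds $ \al^j(s_0)+\beta^{j+1}(s_0)\ls\eta $ from Proposition \ref{CovBdsF1}, available for $ j\leq 10 $, which suffices since $ m\leq 8 $. For the $ F $-only part of $ \mathcal{N}^m $, Hölder in $ x $ gives $ \vn{\mathbf{D}_x^{(i)}F\,\mathbf{D}_x^{(m+1-i)}F}_{L^1_x}\leq\vn{\mathbf{D}_x^{(i)}F}_{L^2}\vn{\mathbf{D}_x^{(m+1-i)}F}_{L^2}\ls\eta^2 N^{2(\sigma-1)}s^{\sigma-(m+3)/2} $; placing instead one factor in a $ \beta $-norm yields the sharper $ \ls\eta N^{2(\sigma-1)}h(s)\,s^{\sigma-(m+3)/2} $ with $ h\in L^2_{\frac{\dd s}{s}} $, $ \vn{h}_{L^2_{\frac{\dd s}{s}}}\ls\eta $. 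Hölder together with the covariant Gagliardo--Nirenberg inequalities \eqref{SobGN} gives $ \vn{\mathcal{N}^m(s)}_{L^2_x}\ls\eta^2 N^{2(\sigma-1)}s^{\sigma-(m+1)/2-7/4} $.

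Granting these, the two pieces of \eqref{derbdsw} are obtained as follows. For $ m=0 $: the heat semigroup maps $ L^1_x\to L^2_x $ with norm $ \ls(s-s')^{-3/4} $, and since $ -\tfrac34>-1 $ and $ \sigma>\tfrac12 $ the elementary Beta-integral $ \int_0^s(s-s')^{-3/4}(s')^{\sigma-3/2}\dd s'\ls s^{\sigma-5/4} $ converges, so $ \vn{w_x(s)}_{L^2_x}\ls\eta^2 N^{2(\sigma-1)}s^{\sigma-5/4} $, i.e. the desired $ L^\infty_s L^2_x $ bound with weight $ (N^2 s)^{1-\sigma}s^{1/4} $; for the $ L^2_{\frac{\dd s}{s}}L^2_x $ piece I apply the Schur-type Lemma \ref{lemmashur} with $ p=\tfrac54-\sigma $ (legitimate since $ p<\tfrac34 \iff \sigma>\tfrac12 $), which combined with the comparison principle reduces matters to $ \vn{s^{p+1/4}\mathcal{N}^0}_{L^2_{\frac{\dd s}{s}}L^1_x}\ls\eta N^{2(\sigma-1)}\vn{h}_{L^2_{\frac{\dd s}{s}}}\ls\eta^2 N^{2(\sigma-1)} $. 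For $ m\geq1 $ I proceed by induction: the $ L^2_{\frac{\dd s}{s}} $ bound on $ \mathbf{D}_x^{(m)}w_x $ comes from the covariant energy estimate \eqref{EnergyEst} applied to $ \mathbf{D}_x^{(m-1)}w_x $ (whose left side already contains $ \mathbf{D}_x^{(m)}w_x $ in $ L^2_{\frac{\dd s}{s}}L^2_x $ with the correct weight, and whose right side involves only the inductively controlled $ \mathbf{D}_x^{(m-1)}w_x $ and $ \mathcal{N}^{m-1} $), and the $ L^\infty_s $ bound from \eqref{EnergyEst} applied to $ \mathbf{D}_x^{(m)}w_x $ itself. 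In each case the $ s=0 $ data term vanishes, but the forcing integral $ \int_0^s(s')^{p}\vn{\mathcal{N}(s')}_{L^2_x}\dd s' $ is only \emph{logarithmically} divergent near $ 0 $; this is handled by running \eqref{EnergyEst} on $ [s_1,s] $ with $ s_1\in[s/2,s] $ chosen --- by averaging the just-established $ L^2_{\frac{\dd s}{s}} $ bound against $ \tfrac{\dd s}{s} $ --- so that $ s_1^{p}\vn{\mathbf{D}_x^{(m')}w_x(s_1)}_{L^2_x} $ is small. The genuinely self-interacting terms $ \mathcal{O}(F,\mathbf{D}_x^{(m)}w_x) $ ($ i=0 $) are absorbed by a continuity argument: using $ \vn{F(s)}_{L^\infty_x}\ls\eta\,s^{-3/4}(Ns^{1/2})^{\sigma-1} $ from \eqref{SobGN} and inserting the a-priori bound for $ \mathbf{D}_x^{(m)}w_x $, their contribution to \eqref{derbdsw} carries an extra factor $ \eta N^{-1/2}\ll1 $ (the $ N^{-1/2} $ from $ s\leq s_0=N^{-2} $), while the $ i\geq1 $ terms only involve $ \mathbf{D}_x^{(m-i)}w_x $, $ m-i\leq m-1 $, and are controlled by induction. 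Finally, \eqref{L1bdw} follows from the comparison principle and $ \vn{e^{(s-s')\Delta}}_{L^1\to L^1}\leq1 $: $ \vn{w_x(s)}_{L^1_x}\leq\int_0^s\vn{\mathcal{N}^0(s')}_{L^1_x}\dd s'\ls\eta N^{2(\sigma-1)}\int_0^s h(s')(s')^{\sigma-1/2}\tfrac{\dd s'}{s'}\ls\eta^2 N^{2(\sigma-1)}s^{\sigma-1/2} $ by Cauchy--Schwarz against $ \vn{h}_{L^2_{\frac{\dd s}{s}}} $ (convergent since $ \sigma>\tfrac12 $), i.e. $ (N^2 s)^{1-\sigma}\vn{w_x(s)}_{L^1_x}\ls\eta s^{1/2} $.

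I expect the main obstacle to be the singular behaviour of the forcing as $ s\to0 $: since $ F_{\al\beta}(\cdot,0) $ lies only in $ H^{\sigma-1} $, the covariant-derivative bounds on $ F $ blow up like negative powers of $ s $ at $ s=0 $, so $ \mathcal{N}^m $ is \emph{not} integrable against $ \dd s $ near $ 0 $ from the pointwise bounds alone. Overcoming this is precisely what forces the combined use of (i) the $ L^1_x\to L^2_x $ heat smoothing with its borderline-integrable kernel $ (s-s')^{-3/4} $, (ii) the $ \tfrac14 $-weight shift of Lemma \ref{lemmashur} together with the square-integrability (in $ \tfrac{\dd s}{s} $) of the covariant derivatives of $ F $, and (iii) restarting the parabolic energy estimate away from $ s=0 $; the factor $ s^{1/4} $ in \eqref{derbdsw} is exactly the one produced by these mechanisms.
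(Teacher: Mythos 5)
Your $m=0$ treatment and the $L^1_x$ bound \eqref{L1bdw} essentially reproduce the paper's own argument (vanishing data $\mathbf{D}_x^{(m)}w(0)=0$, comparison principle plus Lemma \ref{lemmashur} with the same exponent $p=\tfrac54-\sigma$, Hölder with one $F$-factor measured in $L^2_{\frac{\dd s}{s}}$, and absorption of the $[F,w]$ self-term through the smallness $\eta s_0^{1/4}=\eta N^{-1/2}$), and your induction skeleton for $m\geq1$ via \eqref{EnergyEst} is also the paper's. The genuine gap is in how you treat the forcing for $m\geq 1$. You bound $\vn{\mathcal{N}^m(s)}_{L^2_x}$ only pointwise in $s$ (via \eqref{SobGN} and the $L^\infty_s$ bounds from Proposition \ref{CovBdsF1}), concede that $\int_0^{s_0} s^{p}\vn{\mathcal{N}^m(s)}_{L^2_x}\dd s$ is then logarithmically divergent, and propose to fix this by restarting \eqref{EnergyEst} on $[s_1,s]$ with $s_1\in[s/2,s]$ chosen by averaging. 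That restart rescues the $L^\infty_s L^2_x$ component at each fixed $s$, but it cannot produce the $L^2_{\frac{\dd s}{s}}L^2_x([0,s_0])$ component of \eqref{derbdsw}: an estimate run on $[s_1,s]$ only controls the block $[s/2,s]$, and if you patch dyadic blocks $[2^{-j-1}s_0,2^{-j}s_0]$, each block's forcing contributes a fixed amount $\sim\eta^2$ (this is exactly what "logarithmically critical" means), so the square sum over the unboundedly many scales as $s\to 0$ diverges. The $L^2_{\frac{\dd s}{s}}$ piece is not optional: it is part of the statement, it is the quantity $\rho_m$ you need as input at the next induction step, and it is what Proposition \ref{PropCoeff} localizes and square-sums later. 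Note also that you cannot fall back on Lemma \ref{lemmashur} at higher orders, since the required weight is $p=\tfrac54-\sigma+\tfrac m2>\tfrac34$ for $m\geq1$.

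The repair is the refinement you already used for the $L^1_x$ forcing, applied at the $L^2_x$ level (this is what the paper does). Writing $g_j(s):=(Ns^{1/2})^{1-\sigma}s^{j/2}\vn{\mathbf{D}_x^{(j)}F(s)}_{L^2_x}$, which lies in $L^\infty_s\cap L^2_{\frac{\dd s}{s}}$ with norm $\ls\eta$ for $j\leq 11$ by Proposition \ref{CovBdsF1}, and using $\vn{\mathbf{D}_x^{(i)}F}_{L^\infty}\ls\vn{\mathbf{D}_x^{(i+1)}F}_{L^2}^{1/2}\vn{\mathbf{D}_x^{(i+2)}F}_{L^2}^{1/2}$, the weighted integrand of $\vn{(N^2s)^{1-\sigma}s^{1/4}s^{m/2}\,\mathcal{O}(\mathbf{D}^{(i)}F,\mathbf{D}^{(m+1-i)}F)}_{L^1_s L^2_x}$ with respect to $\frac{\dd s}{s}$ equals exactly $g_{i+1}^{1/2}g_{i+2}^{1/2}g_{m+1-i}$ (all powers of $s$ and $N$ cancel), which is integrable by Hölder with exponents $4,4,2$ and gives $\ls\eta^2$; the terms $\mathcal{O}(\mathbf{D}^{(i)}F,\mathbf{D}^{(m-i)}w)$ are handled by Cauchy--Schwarz in $\frac{\dd s}{s}$ against $\rho_{m-i}$ with the extra factor $\vn{s^{1/4}(Ns^{1/2})^{\sigma-1}}_{L^2_{\frac{\dd s}{s}}}\ls s_0^{1/4}$. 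With this, $\vn{s^p\mathcal{N}^m}_{L^1_sL^2_x[0,s_0]}\ls\eta^2$ outright, so \eqref{EnergyEst} can be run from $s=0$ with vanishing data, no restart is needed, and both components of \eqref{derbdsw} are obtained simultaneously.
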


\begin{proof}
Denote by $ \delta_m $ and $ \rho_m $ the $ L^{\infty}_s L_x^2 $, respectively the $ L^2_{\frac{\dd s}{s}} L^2_x $ norms from \eqref{derbdsw}. Note that $  \mathbf{D}_x^m w(s=0) = 0 $ because $ F $ is assumed to solve the Yang-Mills equation at $ s=0 $. 

\

We begin by showing $ \rho_0 \ls \eta^2 $. 

We use equation \eqref{0CovParaw}, the fact that $ w_x(s=0)=0 $, the Comparison principle in Lemma \ref{Comparison}, Lemma \ref{lemmashur} with $ p=\frac{1}{4}+1-\sg $ and the following estimates 
$$
\vn{ (N^2 s)^{1-\sigma} s^{\frac{1}{2}}   [F, w_x ] }_{L^2_{ \frac{\dd s}{s}} L^1_x} \ls \vn{ (N s^{\frac{1}{2}})^{1-\sigma} F}_{L^{\infty}_s L^2_x} \vn{s^{\frac{1}{4}} (N s^{\frac{1}{2}})^{\sigma-1}}  _{L^{\infty}_s } \rho_0  \ls  \eta s_0^{\frac{1}{4}}  \rho_0 
$$
$$
\vn{ (N^2 s)^{1-\sigma} s^{\frac{1}{2}}   [F,\mathbf{D}_x F] }_{L^2_{ \frac{\dd s}{s}} L^1_x} \ls \vn{ (N s^{\frac{1}{2}})^{1-\sigma} F}_{L^{\infty}_s L^2_x}  \vn{(N s^{\frac{1}{2}})^{1-\sigma} s^{\frac{1}{2}} \mathbf{D}_x F}_{L^2_{ \frac{\dd s}{s}} L^2_x} \ls \eta^2
$$
to obtain 
$$
 \rho_0 \ls \eta s_0^{\frac{1}{4}}  \rho_0   + \eta^2. 
$$
and then we absorb the small $ \rho_0 $ term to the left side since $ \eta \ll 1 $. 

The same estimate for $  [F,\mathbf{D}_x F] $, but with a different factor of $ s $ and bounding in $ L^1_s $, together with the Comparison principle in Lemma \ref{Comparison} will prove \eqref{L1bdw}. Denoting $ M_1 \defeq \vn{w_x}_{L^{\infty}_s L_x^1 [0,s_1] }$, now one absorbs the following term to the LHS:
\be \label{abstrm}
\vn{ [F, w_x ]}_{L^1_s L^1_x} \ls M_1 \int_0^{s_1} \vn{F(s)}_{L^{\infty}_x} \dd s \ls M_1 s_1^{\frac{1}{4}} / (N^2 s)^{1-\sigma} \eta \ll M_1. 
\ee

\

Now we proceed by induction. We assume we have showed $ \rho_i \ls \eta^2 $ for all $ 0 \leq i \leq m $ and we prove 
$$ \delta_m + \rho_{m+1} \ls 
\rho_m + \vn{ (N^2 s)^{1-\sigma} s^{\frac{1}{4}} s^{\frac{m}{2}} \mathcal{N}_m(s) }_{L^1_s L^2_x} 
\ls \eta^2 
$$ 
where $  \mathcal{N}_m(s) $ is the RHS of  \eqref{mCovParaw}. The first inequality is the energy estimate \eqref{EnergyEst}. We will use 
$$
\vn{ \mathbf{D}_{x}^{i} F }_{L^{\infty}_x} \ls \left\|\mathbf{D}^{i+1}_{x} F \right\|_{L_{x}^{2}}^{1 / 2}\left\|\mathbf{D}^{i+2}_{x} F\right\|_{L_{x}^{2}}^{1 / 2}
$$
which follows from \eqref{SobGN}. 
We bound the first component of $  \mathcal{N}_m(s) $ 
$$
\vn{ (N^2 s)^{1-\sigma} s^{\frac{1}{4}} s^{\frac{m}{2}} \vn{ \mathbf{D}_{x}^{i} F }_{L^{\infty}} \vn{\mathbf{D}_{x}^{m-i} w_x }_{L^2}   }_{L^1_s} \ls \eta \vn{s^{\frac{1}{4}} (N s^{\frac{1}{2}})^{\sigma-1}} _{L^{\infty}_s }  \rho_{m-i} \ls \eta^3 s_0^{\frac{1}{4}} 
$$ 
and for the second component we may assume that $ i \leq m+1-i $ and estimate
$$
\vn{ (N^2 s)^{1-\sigma} s^{\frac{1}{4}} s^{\frac{m}{2}} \vn{ \mathbf{D}_{x}^{i} F }_{L^{\infty}} \vn{\mathbf{D}_{x}^{m+1-i} F }_{L^2}   }_{L^1_s} \ls \vn{(N s^{\frac{1}{2}})^{1-\sigma} ( s^{\frac{1}{2}} \mathbf{D}_x)^{i+1} F}^{1/2}_{L^2_{ \frac{\dd s}{s}} L^2_x} \times $$
$$
\vn{(N s^{\frac{1}{2}})^{1-\sigma} ( s^{\frac{1}{2}} \mathbf{D}_x)^{i+2} F}^{1/2}_{L^2_{ \frac{\dd s}{s}} L^2_x} \vn{(N s^{\frac{1}{2}})^{1-\sigma} ( s^{\frac{1}{2}} \mathbf{D}_x)^{m+1-i} F}_{L^2_{ \frac{\dd s}{s}} L^2_x}
\ls \eta^2 
$$
\end{proof}

\subsection{Defining the balls coefficients - Proof of Proposition \ref{PropCoeff}} \

Fix a time $ t \in J $ and denote $ F(s)=F(t,s) $. It is easily seen that the proof is uniform in $ t $ .
If one would only have $ L^{2} $-type norms in \eqref{BallHighCovDer1}, \eqref{BallHighCovDer4}  such as $ L^{2}_{\frac{\dd s}{s}} L^2_x  $ or $ L^2_x $ instead of $ L^{\infty}_s L^2_x $ the square summability would be obvious, because by Proposition \ref{CovBdsF1} we control all higher derivatives of $ F $ on $ [0,s_0]  \times \mb{R}^3 $: 
\be  \label{Highdereqq}
\vn{  (N s^{\frac{1}{2}})^{1-\sigma} (s^{\frac{1}{2}} \mathbf{D}_x)^m F(s) }_{L^{\infty}_s L^2 [0,s_0]}  +
\vn{  (N s^{\frac{1}{2}})^{1-\sigma} (s^{\frac{1}{2}} \mathbf{D}_x)^{m+1} F(s) }_{L^{2}_{\frac{\dd s}{s}} L^2 [0,s_0]} \ls \eta
\ee
and similarly for $ w $, due to Proposition \ref{CovBdsw1} we have  
\be \label{derbdsw2}
 \vn{  (N^2 s)^{1-\sigma} s^{\frac{1}{4}} (s^{\frac{1}{2}} \mathbf{D}_x)^m w_x(s) }_{L^{\infty}_s L_x^2   \cap L^2_{\frac{\dd s}{s}} L^2_x ( [0,s_0] \times \mb{R}^3) }  \ls \eta^2
\ee

To deal with the $ L^{\infty}_s L^2_x $ we use the Energy estimates from section \ref{CovEnEst}. 

Consider a collection of spatial bump functions $ (\chi_j)_{j \in \mathcal{J}} $ with finitely overlapping supports which satisfy $ 1 \ls \sum_{j \in \mathcal{J}} \chi_j^2(x) $. We construct square summable coefficients $ (c_j)_{j \in \mathcal{J}} $ satisfying 
\begin{align}
\label{BumpHighCovDer1}
\al_j^m \defeq  \vn{  \chi_j  (N s^{\frac{1}{2}})^{1-\sigma}(s^{\frac{1}{2}}   \mathbf{D}_x)^m F(s) }_{L^{\infty}_s L^2_x( [0,s_0]\times\mb{R}^3) } \leq c_j  \\
 \label{BumpHighCovDer2}
\beta_j^m \defeq \vn{  \chi_j  (N s^{\frac{1}{2}})^{1-\sigma} (s^{\frac{1}{2}}   \mathbf{D}_x)^m F(s) }_{L^{2}_{\frac{\dd s}{s}} L^2_x ( [0,s_0]\times \mb{R}^3)} \leq c_j   
\end{align}
and
\begin{align}
 \label{BumpHighCovDer3}
\delta_j^m \defeq  \vn{  \chi_j  (N^2 s)^{1-\sigma} s^{\frac{1}{4}} (s^{\frac{1}{2}} \mathbf{D}_x)^m w_x(s) }_{L^{\infty}_s L^2_x( [0,s_0]\times\mb{R}^3) } \leq c_j  \\
 \label{BumpHighCovDer4}
\rho_j^m \defeq \vn{  \chi_j  (N^2 s)^{1-\sigma} s^{\frac{1}{4}} (s^{\frac{1}{2}} \mathbf{D}_x)^m w_x(s) }_{L^{2}_{\frac{\dd s}{s}} L^2_x ( [0,s_0]\times \mb{R}^3)} \leq c_j 
\end{align}
for all $ 0 \leq m \leq 8 $ and then we take each $ \chi_j \equiv 1 $ on $ B_j $ to obtain the conclusion. 

It suffices to show 
\be  \label{sqsumm}
\sum_{j \in \mathcal{J}} (  \al_j^m )^2 \ls \eta^2, \quad \sum_{j \in \mathcal{J}} (  \beta_j^m )^2 \ls \eta^2, \quad
\sum_{j \in \mathcal{J}} (  \delta_j^m )^2 \ls \eta^2  , \quad \sum_{j \in \mathcal{J}} (  \rho^m )^2 \ls \eta^2 . \ee
Note that this is automatically true for the $ \beta_j^m $'s and $ \rho_j^m $'s so it remains to focus on the $ \al_j^m $'s and $ \delta_j^m $'s. 

\

From \eqref{0CovPara} we obtain the parabolic equation of $ \chi_j F $ 
$$
\left(\mathbf{D}_{s}-\mathbf{D}^{\ell} \mathbf{D}_{\ell}\right) \chi_j F = \mathcal{N}_j \defeq 2 \chi_j [F, F] - \Delta \chi_j F - 2 \partial_{\ell} \chi_j  \mathbf{D}^{\ell} F
$$
while from \eqref{0CovParaw} we obtain the equation of  $ \chi_j w_x $ 
$$ 
\lpr \mathbf{D}_{s} -\mathbf{D}^{\ell} \mathbf{D}_{\ell}  \rpr  \chi_j w_x =  \mathcal{N}_j' \defeq 2  \chi_j \left[F, w_{x}\right]+2 \chi_j \left[F, \mathbf{D}_{x} F \right] - \Delta \chi_j w_x - 2 \partial_{\ell} \chi_j  \mathbf{D}^{\ell} w_x
$$

\

By \eqref{EnergyEst} on $ [0,s_0] $, \eqref{SobGN}, \eqref{Highdereqq} and Holder in $ \frac{\dd s}{s} $ we obtain 
\begin{align*}
\al_j^0 & \ls \beta_j^0 + \vn{(N s^{\frac{1}{2}})^{1-\sigma} \mathcal{N}_j(s) }_{L^1_s L^2_x[0,s_0]} \\
& \ls \beta_j^0 + \vn{(N s^{\frac{1}{2}})^{1-\sigma} (\vn{\mathbf{D} F}_{L^2_x}^{\frac{1}{2}} \vn{\mathbf{D}^2 F}_{L^2_x}^{\frac{1}{2}} \vn{\chi_j F}_{L^2_x}  + \vn{\Delta \chi_j F}_{L^2_x}  + \vn{\nabla \chi_j  \mathbf{D} F}_{L^2_x} ) }_{L^1_s [0,s_0]} \\
&\ls \beta_j^0 + \eta   \beta_j^0 \vn{s^{\frac{1}{4}} (N s^{\frac{1}{2}})^{\sigma-1}}_{L^{2}_{\frac{\dd s}{s}}[0,s_0]}    + \tilde{\beta}_j^0 \vn{s}_{L^{2}_{\frac{\dd s}{s}}[0,s_0]}  + \bar{\beta}_j^1 \vn{s^{\frac{1}{2}}}_{L^{2}_{\frac{\dd s}{s}}[0,s_0]}  \\ 
&\ls \beta_j^0 + \tilde{\beta}_j^0 + \bar{\beta}_j^1
\end{align*}
where $ \tilde{\beta}_j^m $ and $ \bar{\beta}_j^m $ (respectively $ \tilde{\rho}_j^m $ and $ \bar{\rho}_j^m $ ) are defined to be the terms in  \eqref{BumpHighCovDer2}  (respectively \eqref{BumpHighCovDer4}) with $ \chi_j $ replaced by $ \Delta \chi_j $ and $ \nabla \chi_j $, which are also square summable, thus proving \eqref{sqsumm} for the $ \al_j^0 $'s. Similarly 
\begin{align*}
\delta_j^0 & \ls \rho_j^0 + \vn{  (N^2 s)^{1-\sigma} s^{\frac{1}{4}} \mathcal{N}_j'(s)   }_{L^1_s L^2_x[0,s_0]} \\
& \ls \rho_j^0 + \vn{  (N^2 s)^{1-\sigma} s^{\frac{1}{4}} ( \vn{\mathbf{D} F}_{L^2_x}^{\frac{1}{2}} \vn{\mathbf{D}^2 F}_{L^2_x}^{\frac{1}{2}} \vn{\chi_j w}_{L^2_x}  +   \vn{\mathbf{D} F}_{L^2_x}^{\frac{1}{2}} \vn{\mathbf{D}^2 F}_{L^2_x}^{\frac{1}{2}} \vn{\chi_j \mathbf{D} F}_{L^2_x}    \\
 & \qquad \ + \vn{\Delta \chi_j w}_{L^2_x}  + \vn{\nabla \chi_j  \mathbf{D} w}_{L^2_x} ) }_{L^1_s [0,s_0]} \\
&\ls \rho_j^0 + \eta   \rho_j^0 \vn{s^{\frac{1}{4}} (N s^{\frac{1}{2}})^{\sigma-1}}_{L^{2}_{\frac{\dd s}{s}}[0,s_0]}  + \eta \beta^1_j + \tilde{\rho}_j^0 \vn{s}_{L^{2}_{\frac{\dd s}{s}}[0,s_0]}  + \bar{\rho}_j^1 \vn{s^{\frac{1}{2}}}_{L^{2}_{\frac{\dd s}{s}}[0,s_0]}  \\ 
&\ls \rho_j^0 + \beta^1_j +  \tilde{\rho}_j^0 + \bar{\rho}_j^1
\end{align*}
This proves \eqref{sqsumm} for $ m=0 $.

\

The same argument holds for $ m \geq 1 $. By \eqref{mCovPara} the equations for higher derivatives of $ F $ are $ \left(\mathbf{D}_{s}-\mathbf{D}^{\ell} \mathbf{D}_{\ell}\right) ( \chi_j \mathbf{D}_{x}^{(m)} F)= \mathcal{N}_j^m $ where 
$$
 \mathcal{N}_j^m \defeq  \chi_j  \sum_{k=0}^{m} \mathcal{O} \lpr \mathbf{D}_{x}^{(k)} F, \mathbf{D}_{x}^{(m-k)} F \rpr  - \Delta \chi_j  \mathbf{D}_{x}^{(m)} F - \nabla  \chi_j \mathbf{D}_{x}^{(m+1)} F
$$
Applying \eqref{EnergyEst} with $ p=\frac{m}{2} + \frac{1-\sg}{2} $, multiplying by $ N^{1-\sg} $ and doing the same manipulations one obtains
$$
\al_j^m \ls \beta_j^m + \eta \sum_{k=0}^m \beta_j^k +  \tilde{\beta}_j^m + \bar{\beta}_j^{m+1}
$$
which is square-summable. Similarly one writes the equation of higher covariant derivatives of $ \chi_j w_x $ following from \eqref{mCovParaw} and estimate 
$$
\delta_j^m \ls \rho_j^m +  \eta \sum_{k=0}^m \rho_j^k  +  \eta \sum_{k=1}^{m+1} \beta_j^k   +   \tilde{\rho}_j^m + \bar{\rho}_j^{m+1}
$$
which concludes the proof.

%%%%%%%%%%%%%%%%%%%%%%%%%%%%%%%%%%%%%%%
%%%%%%%%%%%%%%%%%%%%%%%%%%%%%%%%%%%%%%%
%%%%%%%%%%%%%%%%%%%%%%%%%%%%%%%%%%%%%%%
%%%%%%%%%%%%%%%%%%%%%%%%%%%%%%%%%%%%%%%
%%%%%%%%%%%%%%%%%%%%%%%%%%%%%%%%%%%%%%%
%%%%%%%%%%%%%%%%%%%%%%%%%%%%%%%%%%%%%%%
%%%%%%%%%%%%%%%%%%%%%%%%%%%%%%%%%%%%%%%
%%%%%%%%%%%%%%%%%%%%%%%%%%%%%%%%%%%%%%%
%%%%%%%%%%%%%%%%%%%%%%%%%%%%%%%%%%%%%%%
%%%%%%%%%%%%%%%%%%%%%%%%%%%%%%%%%%%%%%%
%%%%%%%%%%%%%%%%%%%%%%%%%%%%%%%%%%%%%%%
%%%%%%%%%%%%%%%%%%%%%%%%%%%%%%%%%%%%%%%
%%%%%%%%%%%%%%%%%%%%%%%%%%%%%%%%%%%%%%%
%%%%%%%%%%%%%%%%%%%%%%%%%%%%%%%%%%%%%%%

\section{The local change of gauge: Proof of Proposition \ref{LocalGauge1} } \label{SecGauge}

This section is devoted to the proof of Proposition \ref{LocalGauge1}, which is an Uhlenbeck-type lemma for fractional regularities below $ L^2 $, as mentioned in Remark \ref{UhlenbeckType}. Unlike the previous section, which consisted of only gauge-invariant bounds, the goal here is the opposite: to be as specific as possible with the choice of gauge in order to obtain Sobolev bounds for $ A_i $, $ \pt_t A_i $ from the curvature bounds. 

\

We begin by recalling the classical result of Uhlenbeck \cite{uhlenbeck1982connections}, which played a crucial role in the proof of finite energy global well-posedness \cite{klainerman1995finite} stated in Theorem \ref{GWPKM}. 

\begin{theorem}[Uhlenbeck's lemma \cite{uhlenbeck1982connections}] \label{UhlenbecksL}
Let $ A_x :B\to \mathfrak{g} $ have curvature $ F_{ij} $ bounded in $ L^2(B) $. There exists a $ \delta > 0 $ such that if 
$$ \vn{F}_{L^{\frac{3}{2}}(B)} < \delta $$
then there exists $ U: B\to G $ such that the transformation 
\be \label{Gaugetransf}
\tilde{A}_i = U A_i U^{-1} - \partial_i U U^{-1}
\ee 
satisfies $ \partial^{\ell} \tilde{A}_{\ell} =0 $ in $ B$, $ x^{\ell} \tilde{A}_{\ell} =0 $ on $ \partial B$ and 
$$ \vn{ \tilde{A}}_{H^{1}(B)} \ls \vn{F}_{L^2(B)}. $$
\end{theorem}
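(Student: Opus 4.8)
The plan is to follow Uhlenbeck's original argument via the \emph{method of continuity}, which reduces the existence of a good Coulomb gauge to an a priori estimate. Connect the given connection $A$ to the trivial one along the path $t \mapsto tA$, $t \in [0,1]$, with curvatures $F^{(t)} = t\,\mathrm{d}A + t^2[A,A]$, so that $\|F^{(t)}\|_{L^{3/2}(B)} \lesssim \delta$ for all $t$ after shrinking $\delta$ if necessary. Let $S \subset [0,1]$ be the set of parameters $t$ for which there is a gauge transformation $U_t : B \to G$, of Sobolev class $W^{2,3/2}$, such that $\tilde{A}^{(t)} = U_t (tA) U_t^{-1} - \partial U_t\, U_t^{-1}$ satisfies $\partial^\ell \tilde{A}^{(t)}_\ell = 0$ in $B$ and $x^\ell \tilde{A}^{(t)}_\ell = 0$ on $\partial B$, together with the quantitative bound $\|\tilde{A}^{(t)}\|_{W^{1,3/2}(B)} \le C_0 \|F^{(t)}\|_{L^{3/2}(B)}$ for a fixed constant $C_0$. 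It suffices to prove $S = [0,1]$, since then $1 \in S$ and a bootstrap upgrades the conclusion to $H^1$.

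\textbf{Nonemptiness and openness.} One has $0 \in S$ with $U_0 = \mathrm{id}$, $\tilde{A}^{(0)} = 0$. For openness, fix $t_0 \in S$ and seek the nearby gauge in the form $e^{W}$ with $W$ small in $W^{2,3/2}(B;\mathfrak{g})$; the Coulomb condition becomes a quasilinear elliptic equation for $W$ whose linearization at $W = 0$ is the Hodge Laplacian on $\mathfrak{g}$-valued functions with the oblique (Neumann-type) boundary condition inherited from $x^\ell \tilde{A}^{(t)}_\ell = 0$. This operator is an isomorphism on the relevant spaces, so the implicit function theorem solves the gauge condition for $t$ near $t_0$ and produces a Coulomb representative depending continuously on $t$; the bound $\|\tilde{A}^{(t)}\|_{W^{1,3/2}} \le 2C_0 \|F^{(t)}\|_{L^{3/2}}$ holds near $t_0$, and the a priori estimate below sharpens it back to $C_0 \|F^{(t)}\|_{L^{3/2}}$, so $S$ is open.

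\textbf{Closedness via the a priori estimate.} In the Coulomb gauge one has the div--curl system $\partial^\ell \tilde{A}_\ell = 0$, $\partial_i \tilde{A}_j - \partial_j \tilde{A}_i = F_{ij} - [\tilde{A}_i,\tilde{A}_j]$, with $x^\ell \tilde{A}_\ell = 0$ on $\partial B$; since the ball carries no nontrivial harmonic fields with these boundary conditions, elliptic estimates give $\|\tilde{A}\|_{W^{1,3/2}(B)} \lesssim \|F\|_{L^{3/2}(B)} + \|[\tilde{A},\tilde{A}]\|_{L^{3/2}(B)} \lesssim \|F\|_{L^{3/2}(B)} + \|\tilde{A}\|_{L^3(B)}^2$, and by $W^{1,3/2}(B) \hookrightarrow L^3(B)$ this closes to $\|\tilde{A}\|_{W^{1,3/2}} \lesssim \|F\|_{L^{3/2}} + \|\tilde{A}\|_{W^{1,3/2}}^2$. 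On $S$ the left side is $\le C_0\delta$, so choosing $\delta$ small relative to $C_0$ absorbs the quadratic term and yields $\|\tilde{A}^{(t)}\|_{W^{1,3/2}} \le \tfrac12 C_0 \|F^{(t)}\|_{L^{3/2}}$; combined with weak-$W^{2,3/2}$ compactness of the $U_t$ (bounded via $\partial_i U_t = U_t(tA)_i - \tilde{A}_i U_t$ once $\tilde{A}$ is controlled) this shows $S$ is closed and that the improved bound propagates, whence $S = [0,1]$.

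\textbf{Upgrade to $H^1$.} With $1 \in S$ we have $\|\tilde{A}\|_{W^{1,3/2}(B)} \lesssim \|F\|_{L^{3/2}(B)}$, so $\|\tilde{A}\|_{L^3(B)}$ is small. Re-running the div--curl estimate at the $L^2$ level, $\|\tilde{A}\|_{H^1(B)} \lesssim \|F\|_{L^2(B)} + \|[\tilde{A},\tilde{A}]\|_{L^2(B)} \lesssim \|F\|_{L^2(B)} + \|\tilde{A}\|_{L^3(B)}\|\tilde{A}\|_{L^6(B)} \lesssim \|F\|_{L^2(B)} + \|\tilde{A}\|_{L^3(B)}\|\tilde{A}\|_{H^1(B)}$, and absorbing the last term gives $\|\tilde{A}\|_{H^1(B)} \lesssim \|F\|_{L^2(B)}$ as claimed. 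The main obstacle I anticipate is the closedness step: one must run the whole continuity scheme in low-regularity Sobolev spaces where the gauge group acts only boundedly, make the div--curl elliptic estimate on the ball with the boundary gauge condition $x^\ell\tilde{A}_\ell=0$ precise, and fix the threshold $\delta$ so that the quadratic nonlinearity is genuinely absorbed uniformly along the path. The other delicate point is the openness step, namely identifying the linearized gauge-fixing operator with an invertible Hodge Laplacian with oblique boundary conditions.
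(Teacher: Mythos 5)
Your overall strategy (continuity method plus a div--curl a priori estimate, then an $L^2$-level upgrade) is the right family of ideas -- this theorem is not proved in the paper at all, it is quoted from Uhlenbeck's original work, where exactly such a continuity scheme is used -- but there is a genuine gap at the very first step of your scheme. You connect $A$ to $0$ by the straight-line path $t\mapsto tA$ and assert that $\vn{F^{(t)}}_{L^{3/2}(B)}\ls\delta$ for all $t$ ``after shrinking $\delta$''. This is false: $F^{(t)}=t\,\mathrm{d}A+t^2[A,A]=tF+(t^2-t)[A,A]$, and the commutator term is controlled by $\vn{A}_{L^3}^2$, which the hypothesis does not control at all. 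Concretely, take $A=U^{-1}\mathrm{d}U$ pure gauge with a wildly oscillating $U$: then $F\equiv 0$, so the hypothesis holds for every $\delta$, yet $F^{(1/2)}=-\tfrac14[A,A]$ can have arbitrarily large $L^{3/2}$ norm. Since both your openness bootstrap (returning from $2C_0$ to $C_0$) and your closedness absorption hinge on the uniform smallness of $\vn{F^{(t)}}_{L^{3/2}}$ along the path, the scheme as written does not close. This is precisely the pitfall that the classical proofs avoid: Uhlenbeck (and the Donaldson--Kronheimer exposition) do not use $tA$ but a path along which the critical curvature norm is genuinely monotone -- e.g.\ the dilation path $A_t=f_t^*A$ with $f_t(x)=tx$, for which scale invariance of $\vn{F}_{L^{3/2}}$ in three dimensions gives $\vn{F_{A_t}}_{L^{3/2}(B)}=\vn{F_A}_{L^{3/2}(B_t)}\le\vn{F_A}_{L^{3/2}(B)}<\delta$ for all $t\le 1$, with $A_t\to 0$ as $t\to 0$ (after a preliminary reduction to smooth $A$ by density, which the continuity argument needs anyway for continuity in $t$).

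Two secondary points to flag if you repair the path. First, your closedness step needs more than boundedness of $U_t$: you must extract a limit gauge in a topology strong enough to pass to the limit in the nonlinear gauge relation, and in the borderline space you chose this is delicate because $W^{2,3/2}(B)\not\hookrightarrow L^\infty(B)$ in three dimensions; here it helps that the theorem also assumes $F\in L^2$, so once $\vn{\tilde A_t}_{L^3}$ is small you can run the elliptic estimates at the $W^{2,2}/H^1$ level, where the gauge group does embed in $C^0$. Second, in the openness step the linearized operator is the Neumann-type Laplacian acting on $\mathfrak g$-valued functions modulo constants (constants are a genuine kernel, acting by conjugation), so the implicit function theorem should be set up on that quotient. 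Your final upgrade from $W^{1,3/2}$ to $H^1$ via $\vn{[\tilde A,\tilde A]}_{L^2}\ls\vn{\tilde A}_{L^3}\vn{\tilde A}_{H^1}$ and absorption is correct once the Coulomb representative with small $\vn{\tilde A}_{L^3}$ has actually been produced.
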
 

\

After smoothing the curvature $ F_{\al \beta} $ using the Yang-Mills heat flow, we will apply Uhlenbeck's lemma at $ s=s_0 $ to first obtain an $ H^1 $ bound. 

\

By elliptic regularity, one obtains additional bound on $ \tilde{A} $:

\begin{lemma} \label{AiElliptic}
Suppose $ \tilde{A}_i $ is like in Theorem \ref{UhlenbecksL} and assume the (gauge-invariant) bounds 
$$  \vn{ (s_0^{\frac{1}{2}} \tilde{\mathbf{D}}_x)^m \tilde{F} }_{L^2(B)} \leq \delta $$ 
for $ 0 \leq m \leq 2 $ and some $ s_0 \ll 1 $.  Then 
$$ s_0^{\frac{1}{4}} \vn{ \tilde{A}_i}_{L^{\infty}(B)} + s_0^{\frac{3}{4}} \vn{\pt_x \tilde{A}_i}_{L^{\infty}(B)} \ls \delta.     $$ 
\end{lemma}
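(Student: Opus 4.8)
The plan is to run an elliptic bootstrap for $\tilde{A}_i$ in the Uhlenbeck gauge and then interpolate up to $L^{\infty}$, carrying the scaling $s_0^{-1/2}$ throughout as if it were a frequency. Since $\partial^{\ell}\tilde{A}_{\ell}=0$ on $B$, taking the divergence in $i$ of $\tilde{F}_{ij}=\partial_i\tilde{A}_j-\partial_j\tilde{A}_i+[\tilde{A}_i,\tilde{A}_j]$ yields the elliptic system $\Delta\tilde{A}_j=\partial^i\big(\tilde{F}_{ij}-[\tilde{A}_i,\tilde{A}_j]\big)$, supplemented by the boundary data $x^{\ell}\tilde{A}_{\ell}=0$ on $\partial B$ from Theorem \ref{UhlenbecksL}. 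This is the Hodge boundary value problem for $1$-forms, and standard elliptic regularity for it on the ball (as in \cite{uhlenbeck1982connections}) gives, for each integer $k\ge0$,
\[
\vn{\tilde{A}}_{H^{k+1}(B)}\ls\vn{\tilde{F}}_{H^{k}(B)}+\vn{[\tilde{A},\tilde{A}]}_{H^{k}(B)}+\vn{\tilde{A}}_{L^2(B)},
\]
where keeping the divergence structure of the source is what lets one gain a full derivative. The curvature norms on the right are obtained from the hypothesis $\vn{\tilde{\mathbf{D}}_x^m\tilde{F}}_{L^2(B)}\le\delta s_0^{-m/2}$, $0\le m\le2$, by converting covariant into ordinary derivatives via $\partial_x\tilde{F}=\tilde{\mathbf{D}}_x\tilde{F}-[\tilde{A}_x,\tilde{F}]$; the commutator corrections are bounded with H\"older, Sobolev embedding on $B$, and Kato's inequality $\vm{\partial_x\vm{\tilde{F}}}\le\vm{\tilde{\mathbf{D}}_x\tilde{F}}$, using the control on $\tilde{A}$ from the previous stage, and they always carry a surplus power of $\delta$ (hence of $s_0$), since $\delta\ll1\le s_0^{-1/4}$.

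Running the bootstrap in the order $\tilde{A}\in H^1\Rightarrow\tilde{F}\in H^1\Rightarrow\tilde{A}\in H^2\Rightarrow\tilde{F}\in H^2\Rightarrow\tilde{A}\in H^3$ — starting from $\vn{\tilde{A}}_{H^1(B)}\ls\vn{\tilde{F}}_{L^2(B)}\ls\delta$ of Theorem \ref{UhlenbecksL}, bounding the quadratic term by the sharp product law \eqref{SobMult} together with interpolation between $H^k$ and $H^{k+1}$ (needed because $H^1(B)$ is not an algebra in three dimensions), and absorbing the resulting $\ls\delta\vn{\tilde{A}}_{H^{k+1}(B)}$ using the a priori finiteness of these norms for the smooth $\tilde{A}$ — one arrives at
\[
\vn{\tilde{A}}_{H^1(B)}\ls\delta,\qquad\vn{\tilde{A}}_{H^2(B)}\ls\delta s_0^{-1/2},\qquad\vn{\tilde{A}}_{H^3(B)}\ls\delta s_0^{-1},
\]
equivalently $\vn{\tilde{\mathbf{D}}_x^m\tilde{A}}_{L^2(B)}\ls\delta s_0^{-\max(m-1,0)/2}$ for $0\le m\le3$. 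The asymmetry of these exponents ($s_0^{0}$ at levels $0,1$ but $s_0^{-1/2}$ at level $2$, and $s_0^{-1}$ at level $3$) is precisely what produces the two different weights $s_0^{1/4}$ and $s_0^{3/4}$ in the statement.

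Finally, the covariant Gagliardo--Nirenberg inequality on the ball \eqref{CovGNonBinf} applied to $\tilde{A}$ gives
\[
\vn{\tilde{A}}_{L^{\infty}(B)}\ls\big(\vn{\tilde{\mathbf{D}}_x\tilde{A}}_{L^2(B)}+\vn{\tilde{A}}_{L^2(B)}\big)^{1/2}\big(\vn{\tilde{\mathbf{D}}_x^2\tilde{A}}_{L^2(B)}+\vn{\tilde{\mathbf{D}}_x\tilde{A}}_{L^2(B)}\big)^{1/2}+\vn{\tilde{A}}_{L^2(B)}\ls\delta^{1/2}(\delta s_0^{-1/2})^{1/2}\ls\delta s_0^{-1/4},
\]
and applying the same inequality to $\tilde{\mathbf{D}}_x\tilde{A}$, then returning to $\partial_x\tilde{A}=\tilde{\mathbf{D}}_x\tilde{A}-[\tilde{A},\tilde{A}]$ (the commutator being of lower order, $\vn{[\tilde{A},\tilde{A}]}_{L^{\infty}(B)}\ls\vn{\tilde{A}}_{L^{\infty}(B)}^2\ls\delta^2 s_0^{-1/2}$), yields $\vn{\partial_x\tilde{A}}_{L^{\infty}(B)}\ls(\delta s_0^{-1/2})^{1/2}(\delta s_0^{-1})^{1/2}\ls\delta s_0^{-3/4}$. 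Multiplying by $s_0^{1/4}$ and $s_0^{3/4}$ respectively gives the claim.

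I expect the main obstacle to be closing the elliptic bootstrap on the ball with exactly the weights $\delta s_0^{-m/2}$ at each level: one must rely on the Uhlenbeck boundary conditions so that the regularity estimate holds up to $\partial B$ rather than only in the interior, preserve the divergence structure of the elliptic source to avoid a derivative loss, and verify at every stage — both when converting $\tilde{\mathbf{D}}_x$ into $\partial_x$ and when estimating the quadratic elliptic source — that each $[\tilde{A},\tilde{A}]$ term carries an extra factor $\delta$ (equivalently, a better power of $s_0$) so that it is harmless. A minor additional wrinkle is the failure of $H^1(B)$ to be an algebra in three dimensions, which forces the quadratic terms to be handled via \eqref{SobMult} plus interpolation rather than a crude product bound.
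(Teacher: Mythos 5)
Your proposal is correct and follows essentially the same route as the paper: derive the elliptic equation $\Delta\tilde{A}_j=\partial^i\tilde{F}_{ij}-\partial^i[\tilde{A}_i,\tilde{A}_j]$ from the Coulomb condition, bootstrap regularity of $\tilde{A}$ using the covariant curvature bounds (converting $\tilde{\mathbf{D}}_x$ into $\partial_x$ with harmless commutator errors), and finish with Gagliardo--Nirenberg interpolation to obtain the weights $s_0^{1/4}$ and $s_0^{3/4}$. The only difference is cosmetic: you run the bootstrap on the $L^2$-based scale $H^1\to H^2\to H^3$ with product laws and absorption, while the paper climbs through an $L^{3/2}\to W^{2,3/2}\to L^{p}\to L^{2-}\to L^{2}$ ladder before applying the same interpolation.
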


\begin{proof}
First note that $ s_0^{\frac{1}{2}} \pt_x \tilde{F} \in \delta L^2(B) $. Indeed, by \eqref{CovGNonB} one has $ s_0^{\frac{1}{2}} \tilde{F} \in \delta L^6(B) $ and then $  s_0^{\frac{1}{2}} [ \tilde{A}, \tilde{F} ] \in \delta L^2(B) $.

From $ \partial^{\ell} \tilde{A}_{\ell} =0 $ one obtains the elliptic equation
\be \label{ElleqACol}
\Delta \tilde{A_j}  = \pt^i \tilde{F}_{ij} -  \pt^i [\tilde{A}_i, \tilde{A}_j ].
\ee 
We want to show $ s_0^{\frac{1}{2}} [\tilde{A}, \nabla \tilde{A}] \in  \delta L^2(B) $. We begin with some weaker bounds, like $  [\tilde{A}, \nabla \tilde{A}] \in  \delta L^{\frac{3}{2} } (B) $ which follows from H\"older and Sobolev embedding. Then $ s_0^{\frac{1}{2}} \tilde{A}_j \in \delta W^{2, \frac{3}{2} } (B) \subset \delta L^p(B) $ for all $ p < \infty $. This implies $  s_0^{\frac{1}{2}}  [\tilde{A}, \nabla \tilde{A}] \in  \delta  L^{2-}(B) $ and $ s_0^{\frac{1}{2}} \Delta \tilde{A} \in  \delta  L^{2-}(B) $ from which, together with $ \tilde{A} \in \delta W^{1,2}(B) \subset \delta L^6(B) $ and Gagliardo-Nirenberg inequalities one obtains $ s_0^{\frac{1}{4}+} \tilde{A}  \in \delta L^{\infty}(B) $. Now we obtain $ s_0^{\frac{1}{2}} [\tilde{A}, \nabla \tilde{A}] \in  \delta L^2(B) $ from which $ s_0^{\frac{1}{2}} \Delta \tilde{A} \in  \delta  L^{2}(B) $ and similarly conclude $ s_0^{\frac{1}{4}} \tilde{A}  \in \delta L^{\infty}(B) $.

For the second inequality note that $  s_0 \pt_x^2 \tilde{F} \in \delta L^2(B) $. Differentiating \eqref{ElleqACol} one obtains an elliptic equation for $ \pt_x \tilde{A}_j $. Then we conclude by a similar argument as before, we omit the details. 
\end{proof}

\subsection{Proof of Proposition \ref{LocalGauge1} Part (1) } \ 

{\bf Step 1.} (Change of gauge)
By integrating the equation $ \pt_s A_i = \mathbf{D}^{\ell} F_{\ell i}  $ one obtains 
$$
A_i (s) =  \int_s^{s_0} \mathbf{D}^{\ell} F_{i \ell }(s') \dd s' + A_i(s_0)  , \qquad s \in [0,s_0]. 
$$
Since $ N s^{\frac{1}{2}}_0 = 1 $ we have $  \vn{ F(s_0) }_{L^2(B)} \leq \delta $ and we can apply Uhlenbeck's lemma (Theorem \ref{UhlenbecksL}) at $ s=s_0 $ to obtain $  U: B\to G $ such that after the transformation \eqref{Gaugetransf} one has $  \vn{ \tilde{A}_i (s_0)}_{H^{1}(B)} \ls \delta $. Write
\begin{align*}
\tilde{A}_i (s) & =  \int_s^{s_0} U  \mathbf{D}^{\ell} F_{i \ell } U^{-1} \dd s'      + U  A_i(s_0) U^{-1} - \partial_i U U^{-1} \\ 
& = \int_s^{s_0} \tilde{\mathbf{D}}^{\ell} \tilde{F}_{i \ell }(s') \dd s' +  \tilde{A}_i (s_0)
\end{align*}
For $ \tilde{A}_i (s_0) $ we already have an $ H^{1}(B) $ bound 
so it remains to estimate the integral in $ H^{\sg}(B)  $, and specifically in $ \dot{H}^{\sg}(B)  $, since integrating in $ L^2(B) $ is straightforward. 

\

{\bf Step 2.} (Replace covariant derivatives by ordinary derivatives)

From Lemma \ref{AiElliptic} applied to $  \tilde{A}_i (s_0) $ we obtain $ s_0^{\frac{1}{4}} \tilde{A}_i(s_0), s_0^{\frac{3}{4}} \pt_x \tilde{A}_i (s_0) \in \delta L^{\infty}(B) $.

From this and from $ (N s'^{\frac{1}{2}})^{1-\sg} s'^{\frac{5}{4}}  \tilde{\mathbf{D}}^{\ell} \tilde{F}_{i \ell }(s') \in \delta L^{\infty}(B) $ - which follows from \eqref{CovGNonBinf} one obtains  
\be \label{tldAiinf}
(N s^{\frac{1}{2}})^{1-\sg}  s^{\frac{1}{4}} (s^{\frac{1}{2}} \pt_x)^m \tilde{A}_i(s) \in \delta L^{\infty}(B).
\ee  
for $ m=0 $ and $ s\in [0,s_0] $ . This allows us to obtain 
\be \label{regDerF}
(N s^{\frac{1}{2}})^{1-\sg} \vn{ (s^{\frac{1}{2}} \pt_x)^m \tilde{F}_{ij}(s) }_{L^2(B)} \ls \delta, \qquad s \in  [0,s_0]
\ee
for $ m=1 $ and also \eqref{tldAiinf} for $ m=1 $
%$  (N s^{\frac{1}{2}})^{1-\sg}  s^{\frac{3}{4}}  \pt_x \tilde{A}(s) \in \delta L^{\infty}(B) $ for all $ s\in [0,s_0] $ 
which in turn implies \eqref{regDerF} also for $ m=2 $.

\

{\bf Step 3.} (Estimate the integral term) 
We first consider 
$$
\tilde{A}_i^{main}  \defeq \int_0^{s_0} \pt^{\ell} \tilde{F}_{i \ell }(s') \dd s'
$$

Consider an extension $ \bar{F} $ of $ \tilde{F} $ from $ B\times [0,s_0] $ to $ \mb{R}^3 \times [0,s_0] $ such that 
\be \label{regDerFext}
(N s^{\frac{1}{2}})^{1-\sg} \vn{ (s^{\frac{1}{2}} \pt_x)^m \bar{F}(s) }_{L^2( \mb{R}^3)} \ls \delta , \qquad   0 \leq m \leq 2. 
\ee
for $ s \in  [0,s_0] $. This implies 
$$
(N s^{\frac{1}{2}})^{1-\sg} s^{\frac{1}{2}} \vn{P_k \pt_x \bar{F}(s) }_{\dot{H}^{\sg} } \ls \frac{2^{k \sigma}}{ \jb{ s^{\frac{1}{2}} 2^k } }  \al_k 
$$
for some $ \vn{ (\al_k) }_{\ell^2 } \ls \delta $. Indeed, use \eqref{regDerFext} with $ m=1 $ for low frequencies with  $ s 2^{2k} < 1 $ and use \eqref{regDerFext} with $ m=2 $ for high frequencies such that $ s 2^{2k} > 1 $. Now 
\begin{align} \notag
N^{2(1-\sigma)} \vn{\tilde{A}_i^{main}  }_{\dot{H}^{\sg}(B)}^2  & \ls N^{2(1-\sigma)} \sum_k  \int_0^{s_0} \int_0^{s_0} \lng \partial \bar{F}_k(s_1), \partial \bar{F}_k(s_2) \rng_{\dot{H}^{\sigma}} \dd s_1  \dd s_2  \\
\label{intfreqarg}
& \ls \sum_k  \int_0^{s_0} \int_0^{s_0} 
\frac{ (s_1^{\frac{1}{2}} 2^{k})^{\sigma} }{ \lng s_1^{\frac{1}{2}} 2^k \rng } \frac{  (s_2^{\frac{1}{2}} 2^{k})^{\sigma} }{ \lng s_2^{\frac{1}{2}} 2^k \rng }  \al_k^2
\frac{\dd s_1}{s_1}  \frac{\dd s_2 }{s_2} \\
\notag & \ls \sum_k  \al_k^2 \ls \delta^2. 
\end{align}
The remaining term 
$$
\tilde{A}_i^{rem}  \defeq \int_0^{s_0} [ \tilde{A}^{\ell}, \tilde{F}_{i \ell }](s') \dd s'
$$
can in fact be estimated easily in $ N^{-\frac{1}{2}} H^{1}(B) $ placing $ \tilde{A}(s'), \pt_x \tilde{A}(s') $ in $ L^{\infty}(B) $ and using \eqref{regDerF}. 

\subsection{Proof of Proposition \ref{LocalGauge1} Part (2) } \ 

{\bf Step 1.} (Second change of gauge) 

Since $ U=U(x) $ is independent of $ t $ and $ s $, $ \tilde{A} $ remains in the temporal-caloric gauge $ \tilde{A}_0(t,x,0) = 0, \tilde{A}_s(t,x,s)=0 $. In particular 
$$
\pt_t \tilde{A}_i (t_0,0)   = \int_0^{s_0} \pt_t \tilde{\mathbf{D}}^{\ell} \tilde{F}_{i \ell }(t_0,s') \dd s' + \pt_t  \tilde{A}_i (t_0,s_0).
$$
We want to use the bounds
% $ L^2 $ bounds on $ \tilde{F}_{\al \beta} $ 
\be \label{bddFxtild}
 (N s^{\frac{1}{2}})^{1-\sg} \vn{ (s^{\frac{1}{2}} \tilde{\mathbf{D}}_x)^m \tilde{F}_{\al \beta}(t_0,s) }_{L^2(B)} \ls \delta 
\ee
to place $ \pt_t  \tilde{A}_i (t_0,s_0) $ in $ L^2(B) $, but $ \tilde{A}_{\al} (\cdot ,s_0) $ is not in the temporal gauge. So we change the gauge to make it so, letting 
$$
\dbtilde{A}_{\al} = V \tilde{A}_{\al} V^{-1} - \partial_{\al} V V^{-1}, \qquad \dbtilde{F}_{\al \beta} = V \tilde{F}_{\al \beta} V^{-1}
$$ 
for $ V=V(t,x) $ which makes $ \dbtilde{A}_{\al}(\cdot,s_0) $ temporal i.e. by  solving the ODE
$$
\pt_t V = V \tilde{A}_{0}(\cdot,s_0), \qquad V(t_0) = Id.
$$
Now we have 
\be \label{dbtildecond}
\dbtilde{A}_{0}(t,s_0)= 0, \qquad   \dbtilde{A}_{s}=0
\ee
and, since $ V(t_0) = Id $,  
\be \label{dbtildecond2}
\dbtilde{A}_{i}(t_0,s) = \tilde{A}_{i}(t_0,s), \qquad  \dbtilde{F}_{\al \beta} (t_0,s) = \tilde{F}_{\al \beta} (t_0,s)
\ee

\

{\bf Step 2.} (Bounds on $  \dbtilde{A}_{0} $) From $  \dbtilde{F}_{s 0} = \dbtilde{\mathbf{D}}^{\ell} \dbtilde{F}_{\ell 0} $ and \eqref{dbtildecond}, \eqref{dbtildecond2} one has
$$
\dbtilde{A}_{0}(t_0,s) = \int_s^{s_0} \tilde{\mathbf{D}}^{\ell} \tilde{F}_{0 \ell} (t_0,s')  \dd s' 
$$
From this and \eqref{CovGNonBinf}, \eqref{bddFxtild} one obtains 
\be \label{A0tildeLinf}
 (N s^{\frac{1}{2}})^{1-\sg} s^{\frac{1}{4}} \vn{ \dbtilde{A}_{0}(t_0,s)}_{L^{\infty}(B)} \ls \delta.
\ee
From Step 2 in Part (1) we have \eqref{tldAiinf} at $ t=t_0 $ for $ m=0,1 $ which allows to pass from \eqref{bddFxtild} to 
\be \label{regDertilddF}
(N s^{\frac{1}{2}})^{1-\sg} \vn{ (s^{\frac{1}{2}} \pt_x)^m \tilde{F}_{\al \beta}(t_0,s) }_{L^2(B)} \ls \delta, \qquad  \ m\leq 2.
\ee
We claim
\be \label{claimdiA0}
\begin{aligned}
\pt_i \dbtilde{A}_{0}(t_0,0) & =  \int_0^{s_0} \pt_i \pt^{\ell} \tilde{F}_{0 \ell} (t_0,s') + \pt_i [ \tilde{A}^{\ell} , \tilde{F}_{0 \ell} ] (t_0,s')  \dd s' \\ 
& \in N^{\sg-1} H^{\sg-1}(B) +  L^2(B). 
\end{aligned}
\ee
Indeed, the second integral is in $ N^{-\frac{1}{2}}  L^2(B) $ using \eqref{tldAiinf} and \eqref{regDertilddF}. 

To bound the first integral we proceed like in Step 3 from Part (1). Extend $ \tilde{F} $ by  $ \bar{F} $ from $ B\times [0,s_0] $ to $ \mb{R}^3 \times [0,s_0] $ such that \eqref{regDertilddF} holds on $ \mb{R}^3 $ for  $ \bar{F} $. Then 
$$
(N s^{\frac{1}{2}})^{1-\sg} s \vn{P_k \pt_x^2 \bar{F}(s) }_{\dot{H}^{\sg-1} } \ls \frac{s^{\frac{1}{2}} 2^{k \sigma} }{ \jb{ s^{\frac{1}{2}} 2^k } }  \al_k
$$
for some $ \vn{ (\al_k) }_{\ell^2 } \ls \delta $. Then  
\begin{align} \label{Fintfreqargg}
N^{2(1-\sigma)} \vn{ \int_0^{s_0} \pt^2 \tilde{F}(t_0,s')  \dd s'   }_{H^{\sg-1}(B)}^2 & \ls \\
\notag
 N^{2(1-\sigma)} \sum_k  \int_0^{s_0} & \int_0^{s_0} \lng \partial^2 \bar{F}_k(s_1), \partial^2 \bar{F}_k(s_2) \rng_{\dot{H}^{\sigma-1}} \dd s_1  \dd s_2 
\end{align}
This is bounded by \eqref{intfreqarg} and thus $ \ls \delta^2 $.

\

{\bf Step 3.} (Estimate $ \pt_t  \dbtilde{A}_{i} $) Using \eqref{dbtildecond} and $  \pt_t  \dbtilde{A}_i (t_0,s_0) =  \dbtilde{F}_{0i} (t_0,s_0)  $ write
$$
\pt_t \dbtilde{A}_i (t_0,0)   = \int_0^{s_0} \dbtilde{\mathbf{D}}_t  \dbtilde{\mathbf{D}}^{\ell} \dbtilde{F}_{i \ell }(t_0,s') - [ \dbtilde{A}_0 , \dbtilde{\mathbf{D}}^{\ell} \dbtilde{F}_{i \ell } ](t_0,s') \dd s' +  \dbtilde{F}_{0i} (t_0,s_0).
$$ 
The last term $ \dbtilde{F}_{0i} (t_0,s_0) $ is in $ \delta L^2(B) $. We rewrite the integral, by commuting and using Bianchi \eqref{Bianchi} and \eqref{dbtildecond2}, as 
\be  \label{inttilde}
\int_0^{s_0} \tilde{\mathbf{D}}_x^2  \tilde{F}_{0 x } + [\tilde{F}_{0 \ell }, \tilde{F}_{\ell i} ] -   [ \dbtilde{A}_0 , \tilde{\mathbf{D}}^{\ell} \tilde{F}_{i \ell } ] \dd s'  , \qquad \quad t=t_0
\ee 

We  bound the integral of $ [\tilde{F}_{0 \ell }, \tilde{F}_{\ell i} ] $ and $ [ \dbtilde{A}_0 , \tilde{\mathbf{D}}^{\ell} \tilde{F}_{i \ell } ] $ in $ N^{-\frac{1}{2}}  L^2(B) $ using \eqref{bddFxtild} together with \eqref{CovGNonBinf}, and \eqref{A0tildeLinf} together with \eqref{bddFxtild}. The same holds for the integral of $ \tilde{\mathbf{D}}_x^2  \tilde{F}_{0 x } - \pt_x^2  \tilde{F}_{0 x } $. The integral of $   \pt_x^2  \tilde{F}_{0 x } $ is estimated exactly by \eqref{Fintfreqargg}. 

This proves $ \pt_t \dbtilde{A}_i (t_0,0) \in L^2(B) + N^{\sg-1} H^{\sg-1}(B) $. 

\

{\bf Step 4.} (Return to $ \tilde{A}_{\al} $) From the temporal condition $  \tilde{A}_0(t,0) = 0 $ and \eqref{dbtildecond2} one has
$$
\pt_t \tilde{A}_i(t_0,0) =  \tilde{F}_{0i} (t_0,0) = \dbtilde{F}_{0i} (t_0,0)= \pt_t \dbtilde{A}_i(t_0,0) - \pt_i \dbtilde{A}_0(t_0,0) + [ \tilde{A}_i, \dbtilde{A}_0](t_0,0)
$$ 
We claim this is in $ L^2(B) + N^{\sg-1} H^{\sg-1}(B) $: the term $ \pt_t \dbtilde{A}_i (t_0,0) $ was treated in Step 3, the term $ \pt_i \dbtilde{A}_0(t_0,0) $ in \eqref{claimdiA0} in Step 2, while for $  [ \tilde{A}_i, \dbtilde{A}_0] $ we place both inputs in $ L^4(B) $ by Sobolev embedding.

%%%%%%%%%%%%%%%%%%%%%%%%%%%%%%%%%%%%%%%
%%%%%%%%%%%%%%%%%%%%%%%%%%%%%%%%%%%%%%%
%%%%%%%%%%%%%%%%%%%%%%%%%%%%%%%%%%%%%%%
%%%%%%%%%%%%%%%%%%%%%%%%%%%%%%%%%%%%%%%
%%%%%%%%%%%%%%%%%%%%%%%%%%%%%%%%%%%%%%%
%%%%%%%%%%%%%%%%%%%%%%%%%%%%%%%%%%%%%%%
%%%%%%%%%%%%%%%%%%%%%%%%%%%%%%%%%%%%%%%
%%%%%%%%%%%%%%%%%%%%%%%%%%%%%%%%%%%%%%%
%%%%%%%%%%%%%%%%%%%%%%%%%%%%%%%%%%%%%%%
%%%%%%%%%%%%%%%%%%%%%%%%%%%%%%%%%%%%%%%
%%%%%%%%%%%%%%%%%%%%%%%%%%%%%%%%%%%%%%%
%%%%%%%%%%%%%%%%%%%%%%%%%%%%%%%%%%%%%%%
%%%%%%%%%%%%%%%%%%%%%%%%%%%%%%%%%%%%%%%
%%%%%%%%%%%%%%%%%%%%%%%%%%%%%%%%%%%%%%%

\section{Local differences of heat flows which coincide on a ball} \label{SecErrorSM}

The goal of this section is to compare two caloric heat flow solutions which coincide somewhere initially. Due to infinite speed of propagation they will not coincide in the future, but one can obtain quantitative local $ L^2 $ estimates for the difference. 

\

Consider two balls $ B^{(1)} \subset B $ of radius $ \simeq 1 $. 

\begin{proposition} \label{PropErrorDif}
Let $ A_{t,x,s}, A'_{t,x,s}  : [t_0,t_1] \times B \times [0,s_0] \to  \mathfrak{g} $ be two smooth caloric solutions to \eqref{dYMHF} which coincide on a smaller initial cylinder $ [t_0,t_1] \times B^{(1)} \times \{0 \} $. Let $ \chi $ be a spatial bump function supported in the ball $ B^{(1)} $. Let $ N^2 s_0=1 $. We assume, for all $ t \in [t_0,t_1] $ and $ 0 \leq m \leq 3 $, $ \ep \ll 1 $,  the following bounds: 
\begin{align}
\label{Fderbdds1}
 \vn{  (N s^{\frac{1}{2}})^{1-\sigma} (s^{\frac{1}{2}} \mathbf{D}_x)^m F_{\al \beta}(t,s) }_{L^{\infty}_s L^2 \cap L^{2}_{\frac{\dd s}{s}} L^2 ( [0,s_0]\times B) } \ls \ep \\
\label{Fderbdds2}
 \vn{  (N s^{\frac{1}{2}})^{1-\sigma} (s^{\frac{1}{2}} \mathbf{D}_x')^m F_{\al \beta}'(t,s) }_{L^{\infty}_s L^2  \cap L^{2}_{\frac{\dd s}{s}} L^2 ( [0,s_0]\times B) }
 \ls \ep \\
\label{bddAprime}
A'_x \in \ep L^{\infty}_t L^{\infty}_s \lpp H^1( B)+ N^{\sg-1} H^{\sg} ( B) \rpp
\end{align} 
and 
 \begin{align}
 \label{wderbdds1}
  \vn{  (N^2 s)^{1-\sigma} s^{\frac{1}{4}} (s^{\frac{1}{2}} \mathbf{D}_x)^m w_x(t,s) }_{L^{\infty}_s L^2  \cap L^{2}_{\frac{\dd s}{s}} L^2  ( [0,s_0]\times B) }
\ls \ep  \\
 \label{wderbdds2} 
  \vn{  (N^2 s)^{1-\sigma} s^{\frac{1}{4}} (s^{\frac{1}{2}} \mathbf{D}_x')^m w_x'(t,s) }_{L^{\infty}_s L^2  \cap L^{2}_{\frac{\dd s}{s}} L^2  ( [0,s_0]\times B) }
\ls \ep 
\end{align}
Then, for any $ s \in [0,s_0] $, $ t\in [t_0,t_1] $ one has
\begin{align}
\label{FDiff}
(N s^{\frac{1}{2}})^{9(1-\sigma) } \vn{ \chi [ F_{\al \beta}(t,s) - F'_{\al \beta}(t,s)] }_{L^{\infty}_t L^2_x} & \ls s^{2} \ep \\
\label{wDiff}
(N s^{\frac{1}{2}})^{8(1-\sigma) } \vn{ \chi [ w_{\ell}(t,s) -  w'_{\ell}(t,s)  ] }_{L^{\infty}_t L^2_x} & \ls s^{\frac{5}{4}} \ep
\end{align}
\end{proposition}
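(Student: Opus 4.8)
The plan is to exploit the fact that both $F$ and $F'$ satisfy the covariant parabolic equation \eqref{0CovPara} (and $w, w'$ satisfy \eqref{0CovParaw}), together with the key structural observation that the difference $\chi[F(t,s)-F'(t,s)]$ vanishes at $s=0$ on the support of $\chi$. The idea is to set up a bootstrap in which one estimates the localized difference $\chi(F-F')$ in a weighted $L^\infty_s L^2_x$ norm against itself (with a small gain coming from $s_0 \ll 1$) plus a large supply of $s$-power from the initial vanishing. Concretely, I would introduce a multiplier $\psi_\ell(s)$ of the form $\psi_\ell(s) = (Ns^{1/2})^{\ell(1-\sigma)}s^{-p_\ell}$ (for suitable descending exponents $p_\ell$ as $\ell$ increases from $0$ to $9$), and prove a cascade of estimates: control of $\chi(F-F')$ at "level $\ell+1$" in terms of $\chi(F-F')$ at "level $\ell$" with an extra factor of $s$, each step losing one power of $(Ns^{1/2})^{1-\sigma}$ and gaining one power of $s$, so that after $9$ iterations one reaches the claimed $s^2$ gain. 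The base case ($\ell = 0$) uses the infinite-speed-of-propagation heat kernel bound plus the initial vanishing on $B^{(1)}$ to get the first power of $s$.

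\textbf{Key steps.} First, I would write the equation for the difference. Since $A$ and $A'$ are different connections, one cannot simply subtract \eqref{0CovPara} for $F$ and $F'$; instead one transports $F'$ into the $A$-frame or, more robustly, writes a PDE of the schematic form
\[
(\mathbf{D}_s - \mathbf{D}^\ell \mathbf{D}_\ell)(F - F') = \mathcal{O}(F, F-F') + \mathcal{O}(a-a', \mathbf{D}^2 F') + \mathcal{O}(a-a', \nabla a', F')
\]
where $a-a'$ denotes the difference of connection coefficients (which also vanishes on $B^{(1)}$ at $s=0$). Localizing by $\chi$ produces the usual commutator terms $\Delta\chi\,(F-F')$, $\nabla\chi\,\mathbf{D}(F-F')$, supported on $\supp\nabla\chi$. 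The second step is the base estimate: apply the comparison principle (Lemma \ref{Comparison}) or the covariant energy estimate \eqref{EnergyEst} on $[0,s]$; because $\chi(F-F')(0)=0$, the boundary term drops, and the Duhamel term involving the difference can be estimated using \eqref{lemmashur}, the bounds \eqref{Fderbdds1}–\eqref{bddAprime}, \eqref{wderbdds1}–\eqref{wderbdds2}, the covariant Gagliardo–Nirenberg inequalities \eqref{CovGNonB}, \eqref{CovGNonBinf}, and the smallness $\ep \ll 1$ plus $s_0 \ll 1$ to absorb the self-referential term. Third, one iterates: at each level, higher covariant derivatives of $F-F'$ (controlled up to order $3$ by assumption for $F$ and $F'$ individually, hence their difference) feed the right-hand side, and one pays one power of $s$ for each extra derivative weight. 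The argument for \eqref{wDiff} is analogous using \eqref{0CovParaw} and the extra $s^{1/4}$ weight, noting $w(0)=w'(0)=0$ since both solve \eqref{YM} at $s=0$.

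\textbf{Main obstacle.} The hard part will be bookkeeping the frame discrepancy: $F$ lives in the $A$-frame and $F'$ in the $A'$-frame, so "$F-F'$" is not genuinely covariant, and the difference equation genuinely involves $a-a'$ (not just curvature differences). One must show that $a-a'$ also enjoys a local vanishing/smallness on $B^{(1)}$ propagating into $s>0$ — this requires integrating the caloric equation $\partial_s a_i = \mathbf{D}^\ell F_{\ell i}$ (resp.\ for $a'$) from $s=0$, using \eqref{bddAprime} and the curvature bounds, and controlling $a-a'$ in the weighted norm by the same bootstrap. A secondary subtlety is that $\nabla\chi$-supported error terms are \emph{not} small a priori (they live on $B\setminus B^{(1)}$ where the difference need not be small), so one must arrange the geometry with a nested sequence of cutoffs $\chi = \chi_0, \chi_1, \dots$ between $B^{(1)}$ and $B$, or more cleanly, exploit that the heat kernel moves the "defect region" inward only at speed $\sim s^{1/2} \ll 1$, so that for $s \le s_0 = N^{-2}$ the difference on $\supp\chi$ is controlled by finite-speed-type localization. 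I would handle this by choosing $\supp\chi$ strictly inside $B^{(1)}$ and using the exponential off-diagonal decay of $e^{s\Delta}$ to dump the $\nabla\chi$ contributions into the $s^M$-small error (a standard but slightly delicate parabolic localization estimate, cf.\ the proof of the comparison principle).
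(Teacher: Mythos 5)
Your proposal follows essentially the same route as the paper: the paper writes the localized difference equations for $\chi\,\delta F$, $\chi\,\delta \mathbf{D}_x F$, $\chi\,\delta w$ (which genuinely involve $\delta A$ and $\delta\,\pt^{\ell}A_{\ell}$, controlled exactly as you suggest by integrating the caloric equations $\pt_s A_{\ell}=\mathbf{D}^{j}F_{j\ell}$ from $s=0$), uses the vanishing at $s=0$ in the covariant energy estimate \eqref{EnergyEst}, and runs your nested-cutoff induction (Lemma \ref{PowersOfS}) with cutoffs $\chi_0,\dots,\chi_8=\chi$ inside $B^{(1)}$, trading one factor of $(Ns^{\frac12})^{1-\sg}$ for an $s^{\frac14}$ gain at each step. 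The only differences are bookkeeping: the achievable per-step gain is $s^{\frac14}$ rather than a full power of $s$, and the $\nabla\chi$ commutator terms are absorbed by the previous-level bounds on the larger cutoff (where the difference already vanishes at $s=0$), with no need for heat-kernel off-diagonal decay.
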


It will be clear from the proof that one can in fact obtain arbitrary powers of $ s $ in the conclusion. 

Heuristically, these bounds are clear from the decay of the Gaussian kernels on the spatial scales involved. 

\

{\bf Main equations.}
By caloric solutions we mean that they satisfy $ A_s = A'_s = 0 $, which implies
\be \label{ptsA}
\pt_s A_{\ell} = F_{s \ell} =  \mathbf{D}^{j} F_{j \ell} 
\ee
for each $ t \in [t_0,t_1] $. Taking divergence and using the identity $  \mathbf{D}^{\ell}   \mathbf{D}^{j} F_{j \ell} =0$ one has 

\be  \label{ptsDivA}
\pt_s  \pt^{\ell} A_{\ell} = - [A^{\ell}, F_{s \ell} ]
\ee

From \eqref{0CovPara} and $ A_s=0 $ we get the schematic equation 
\be  \label{cutoffF}
\left(\pt_{s}-\mathbf{D}^{\ell} \mathbf{D}_{\ell} \right) \left( \chi F \right) = 2 \chi [F,F] - 
\Delta \chi F - 2 \nabla \chi \cdot  \mathbf{D}_x F 
\ee
and from \eqref{mCovPara}
\be  \label{cutoffDF}
\left(\pt_{s}-\mathbf{D}^{\ell} \mathbf{D}_{\ell} \right) \left( \chi \mathbf{D}_{x} F \right) =  \chi \mathcal{O}(F, \mathbf{D}_{x} F) - 
\Delta \chi \mathbf{D}_{x}F - 2 \nabla \chi \cdot  \mathbf{D}^2_x F 
\ee
while from \eqref{0CovParaw} we obtain
\be  \label{cutoffw}
\left(\pt_{s}-\mathbf{D}^{\ell} \mathbf{D}_{\ell} \right) \left( \chi w \right) = 2 \chi [F,w] +   \chi \mathcal{O}(F, \mathbf{D}_{x} F) -
\Delta \chi w - 2 \nabla \chi \cdot  \mathbf{D}_x w 
\ee

\

{\bf Preliminary bounds.}
From the covariant Gagliardo-Nirenberg inequalities \eqref{CovGNonB}, \eqref{CovGNonBinf} and \eqref{Fderbdds1}, \eqref{Fderbdds2}, for $ p \in [2,\infty] $ and $ m=0,1 $ one has
\be \label{SobF}
(N s^{\frac{1}{2}})^{1-\sg} \big( \vn{ \mathbf{D}_x^m F (s)}_{L^p( B)}  +  \vn{ \mathbf{D}_x^{'m} F' (s)}_{L^p( B )}   \big) \ls \ep s^{-\frac{3}{2} (\frac{1}{2}-\frac{1}{p})} s^{-\frac{m}{2}} 
\ee
Similarly from \eqref{wderbdds1}, \eqref{wderbdds2} 
\be \label{SobW}
(N^2 s)^{1-\sigma}  \big( \vn{ \mathbf{D}_x^m w_x (s)}_{L^p( B )}  +  \vn{ \mathbf{D}_x^{'m} w'_x (s)}_{L^p( B )}   \big) \ls  \ep s^{-\frac{1}{4}}   s^{-\frac{3}{2} (\frac{1}{2}-\frac{1}{p})} s^{-\frac{m}{2}} 
\ee

From \eqref{bddAprime} one has
\be \label{bddAprimeLq}
A'_x \in \ep L^{\infty}_t L^{\infty}_s \lpp L^6( B )+ N^{\sg-1} L^q ( B ) \rpp, \qquad \frac{1}{2}-\frac{1}{q}=\frac{\sg}{3}.
\ee 

\

{\bf Notation.} Below we use the following notations and identities:  
\begin{align}
\notag
&\delta A_x = A_x - A'_x \qquad    \delta \pt^{\ell} A_{\ell}  = \pt^{\ell} A_{\ell} - \pt^{\ell} A'_{\ell} \\
\notag
&  \delta F = F - F'  \qquad   \delta \mathbf{D}_x^m F  = \mathbf{D}_x^m F - (\mathbf{D}_x^{'})^mF' \qquad  \mathbf{D}_x  \delta F =  \mathbf{D}_x ( F-F')\\
\notag
 &  \delta w = w_x - w'_x \qquad  \delta \mathbf{D}_x w=  \mathbf{D}_x w -  \mathbf{D}'_x w' \\
 & \delta \mathbf{D}^{\ell} \mathbf{D}_{\ell} = \mathbf{D}^{\ell} \mathbf{D}_{\ell} - \mathbf{D}^{',\ell} \mathbf{D}_{\ell}' = 2 [\delta A^{\ell} , \pt_\ell \cdot] + [\delta \pt^\ell  A_{\ell} , \cdot] + [A^{\ell}, [A_{\ell}, \cdot ]]- [A^{' \ell}, [A_{\ell}', \cdot ]]
  \label{DiffDx}
\end{align}

\

The following lemma shows that by slightly reducing the support of the cutoff functions we can  improve the exponents of powers of $ s $ for differences of solutions that coincide at the initial time on a domain. 

\begin{lemma} \label{PowersOfS}
Let $ A_{t,x,s}, A'_{t,x,s}  $ be two solutions like in Proposition \ref{PropErrorDif}. Let $ \chi_k, \chi_{k+1} $ be bump functions supported in $ B^{(1)} $ such that $ \chi_k \equiv 1 $ on $ \supp \chi_{k+1} $. Fix $ t \in [t_0,t_1] $. Let $ k \geq 0 $ and suppose that for $ m=0, 1 $, $ p \in [2,4] $ one has
\begin{align}
\label{DiffHyp1}
  (N s^{\frac{1}{2}})^{(k+1)(1-\sigma)} s^{\frac{m}{2}} \vn{ \chi_k \delta  \mathbf{D}_x^m F(s)}_{ L^2 } & \ls \ep s^{\frac{k}{4}} \\
  \label{DiffHyp2}
  \vn{   (N s^{\frac{1}{2}})^{(k+1)(1-\sigma)} s^{\frac{m+1}{2}} \chi_k \delta  \mathbf{D}_x^{m+1} F }_{ L^{2}_{\frac{\dd s}{s}} L^2 [0,s_1] } & \ls \ep s_1^{\frac{k}{4}}\\
  \label{DiffHyp3}
   (N s^{\frac{1}{2}})^{(k+1)(1-\sigma)} \vn{  \chi_k  (A_x - A_x')(s)}_{L^p} & \ls \ep s^{\frac{k+2}{4}} s^{-\frac{3}{2}(\frac{1}{2}-\frac{1}{p})} \\
   \label{DiffHyp4}
    (N s^{\frac{1}{2}})^{(k+2)(1-\sigma)}  \vn{  \chi_k  (\pt^{\ell} A_{\ell} - \pt^{\ell} A'_{\ell})(s)}_{L^2} & \ls \ep s^{\frac{k+1}{4}} 
\end{align}
for any $ s,s_1 \in [0,s_0] $. Then one has 
\begin{align}
\label{DiffCon1}
  (N s^{\frac{1}{2}})^{(k+2)(1-\sigma)} s^{\frac{m}{2}} \vn{ \chi_{k+1}\delta  \mathbf{D}_x^m F(s)}_{ L^2 } & \ls \ep s^{\frac{k+1}{4} } \\
  \label{DiffCon2}
  \vn{   (N s^{\frac{1}{2}})^{(k+2)(1-\sigma)} s^{\frac{m+1}{2}} \chi_{k+1}\delta  \mathbf{D}_x^{m+1} F }_{ L^{2}_{\frac{\dd s}{s}} L^2 [0,s_1] } & \ls \ep s_1^{ \frac{k+1}{4} }\\
  \label{DiffCon3}
   (N s^{\frac{1}{2}})^{(k+2)(1-\sigma)} \vn{  \chi_{k+1} (A_x - A_x')(s)}_{L^p} & \ls \ep s^{ \frac{k+3}{4} } s^{-\frac{3}{2}(\frac{1}{2}-\frac{1}{p})}  \\
 \label{DiffCon4}
    (N s^{\frac{1}{2}})^{(k+3)(1-\sigma)}  \vn{  \chi_{k+1} (\pt^{\ell} A_{\ell} - \pt^{\ell} A'_{\ell})(s)}_{L^2} & \ls \ep s^{\frac{k+2}{4}} 
\end{align}
Moreover, if one assumes $ k \geq 2 $ and 
\be  \label{DiffHyp5}
(N^2 s_1)^{1-\sg} s_1^{\frac{1}{4}} \vn{ \chi_k \delta w(s_1)}_{L^2} + \vn{   (N^2 s)^{1-\sigma} s^{\frac{3}{4}} \chi_k \delta \mathbf{D}_x  w
}_{ L^{2}_{\frac{\dd s}{s}} L^2 [0,s_1] } \ls \frac{\ep s_1^{\frac{k-2}{4}} }{ (N s_1^{\frac{1}{2}})^{(k-2)(1-\sigma)} } 
\ee
 for all $ s_1 \in [0,s_0] $ then one also has
\be \label{DiffCon5}
(N^2 s_1)^{1-\sg} s_1^{\frac{1}{4}} \vn{ \chi_{k+1} \delta w(s_1)}_{L^2} + \vn{   (N^2 s)^{1-\sigma} s^{\frac{3}{4}} \chi_{k+1} \delta \mathbf{D}_x  w
}_{ L^{2}_{\frac{\dd s}{s}} L^2 [0,s_1] } \ls \frac{\ep s_1^{\frac{k-1}{4} } }{ (N s_1^{\frac{1}{2}})^{(k-1)(1-\sigma)} } 
\ee
\end{lemma}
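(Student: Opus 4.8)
The strategy is a covariant parabolic energy estimate applied to the cutoff differences, using the induction hypotheses \eqref{DiffHyp1}--\eqref{DiffHyp4} (and \eqref{DiffHyp5}) as inputs to bound the nonlinear right-hand sides, and exploiting the fact that multiplying by a slightly-narrower cutoff $\chi_{k+1}$ lets us insert a factor $\chi_k$ for free inside every term (so the larger cutoff's bounds apply), at the cost of commutator terms $\Delta\chi_{k+1}$, $\nabla\chi_{k+1}$ which are again supported where $\chi_k\equiv 1$. First I would write the equation for $\chi_{k+1}\delta F$. Subtracting \eqref{cutoffF} for $F$ and $F'$, and using the difference identity \eqref{DiffDx} for $\delta\mathbf{D}^\ell\mathbf{D}_\ell$, one gets
\[
(\pt_s - \mathbf{D}^\ell\mathbf{D}_\ell)(\chi_{k+1}\delta F) = \chi_{k+1}\big( \mathcal{O}(F,\delta F) + \mathcal{O}(\delta F, F') + (\delta\mathbf{D}^\ell\mathbf{D}_\ell)F' \big) - \Delta\chi_{k+1}\,\delta F - 2\nabla\chi_{k+1}\cdot\mathbf{D}_x\delta F,
\]
where $(\delta\mathbf{D}^\ell\mathbf{D}_\ell)F'$ is schematically $\delta A_x\,\pt F' + \delta\pt^\ell A_\ell\,F' + (A_x+A_x')\,\delta A_x\,F'$. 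Then I would apply the energy estimate \eqref{EnergyEst} with $p = \tfrac{k+2}{2}\cdot\tfrac{1-\sg}{1}$—more precisely $p$ chosen so that $(N^2 s)^p$ matches the weight $(Ns^{1/2})^{(k+2)(1-\sg)}$ times the target power $s^{(k+1)/4}$—on $[0,s_1]$, noting $\chi_{k+1}\delta F$ vanishes at $s=0$ since the solutions coincide on $B^{(1)}$. The $L^1_s L^2_x$ norm of the forcing is estimated term by term: for $\chi_{k+1}\mathcal{O}(F,\delta F)$ insert $\chi_k$ on the $\delta F$ factor and use \eqref{DiffHyp1}, \eqref{DiffHyp2} together with the Gagliardo--Nirenberg bounds \eqref{SobF} for the undifferentiated $F$ factor; for the $\delta A_x$, $\delta\pt^\ell A_\ell$ terms use \eqref{DiffHyp3}, \eqref{DiffHyp4} paired with \eqref{SobF} (and \eqref{bddAprimeLq} for the cubic piece); the commutator terms $\Delta\chi_{k+1}\delta F$, $\nabla\chi_{k+1}\mathbf{D}_x\delta F$ are handled by \eqref{DiffHyp1}, \eqref{DiffHyp2} since $\chi_k\equiv 1$ on their supports. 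The bookkeeping of $s$-powers: each application of a difference bound contributes an extra $s^{1/4}$ relative to the non-difference bound and an extra $(Ns^{1/2})^{-(1-\sg)}$ weight degradation, which is exactly the gain from level $k$ to $k+1$. This yields \eqref{DiffCon1} and \eqref{DiffCon2}.

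Next, \eqref{DiffCon3} and \eqref{DiffCon4} follow from \eqref{DiffCon1}--\eqref{DiffCon2} by integrating in $s$. For \eqref{DiffCon3}, integrate $\pt_s A_\ell = \mathbf{D}^j F_{j\ell}$ and $\pt_s A'_\ell = \mathbf{D}'^j F'_{j\ell}$, subtract, multiply by $\chi_{k+1}$ (inserting $\chi_k$ inside, again free), and write $\chi_{k+1}\delta(\mathbf{D}^j F_{j\ell}) = \chi_{k+1}\mathbf{D}_x\delta F + \chi_{k+1}[\delta A_x, F']$; bound the first by \eqref{DiffHyp2}-type control at level $k+1$ (just obtained) via Cauchy--Schwarz in $\tfrac{\dd s}{s}$, the second by \eqref{DiffHyp3} and \eqref{SobF}, then convert $L^2_x\to L^p_x$ with \eqref{CovGNonB}, picking up $s^{-\frac{3}{2}(\frac12-\frac1p)}$. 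For \eqref{DiffCon4}, integrate \eqref{ptsDivA} for the two solutions, subtract to get $\pt_s\delta\pt^\ell A_\ell = -[\delta A^\ell, F_{s\ell}] - [A'^\ell, \delta F_{s\ell}]$, multiply by $\chi_{k+1}$, insert $\chi_k$, and estimate using \eqref{DiffCon3} (just proved at level $k+1$), \eqref{SobF}, \eqref{bddAprimeLq}, and \eqref{DiffCon2}. The $w$-statement \eqref{DiffCon5} under \eqref{DiffHyp5} is entirely parallel: apply \eqref{EnergyEst} to $\chi_{k+1}\delta w$ using the subtracted form of \eqref{cutoffw} and the identity \eqref{DiffDx}, again vanishing at $s=0$; the new feature is the $\mathcal{O}(F,\mathbf{D}_xF)$ source, whose difference $\chi_{k+1}\delta\mathcal{O}(F,\mathbf{D}_xF)$ is controlled by \eqref{DiffHyp1}, \eqref{DiffHyp2} and \eqref{SobF}, and the $\chi_{k+1}[F,w]$ piece, handled with \eqref{DiffHyp5} and \eqref{SobF}; the $w$-weight is $(N^2 s)^{1-\sg}s^{1/4}$ rather than $(Ns^{1/2})^{1-\sg}$, and the extra $s^{1/4}$ per difference step is tracked as before.

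The main obstacle I expect is purely the bookkeeping: one must keep exact track of three intertwined quantities through every term—the cutoff level (controlling which hypothesis applies after inserting $\chi_k$), the power of $s$ (each difference factor must contribute the claimed $s^{1/4}$), and the frequency weight $(Ns^{1/2})^{(1-\sg)\cdot(\text{level})}$—while also verifying that $s$-integrals like $\int_0^{s_1} s^{\frac{1-\sg}{2}-1}s^{\text{stuff}}\,\dd s$ converge with the right power of $s_1$ emerging. The danger points are the cubic terms in $\delta\mathbf{D}^\ell\mathbf{D}_\ell$ applied to $F'$, where one must decide how to split $\delta A_x$ and $(A_x+A_x')$ between $L^\infty$-type and $L^6,L^q$-type bounds so that all factors of $N^{\sg-1}$ land where \eqref{bddAprimeLq} permits, and the $A^{cf}$-type term $\chi_{k+1}[\delta A_x, F']$ in the $\pt_s A$ integrations, which is why \eqref{DiffHyp3} is stated at the borderline range $p\in[2,4]$ rather than just $p=2$. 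Everything reduces to the covariant Gagliardo--Nirenberg inequalities \eqref{CovGNonB}--\eqref{CovGNonBinf} and the energy estimate \eqref{EnergyEst}, so no genuinely new analytic input is needed beyond careful index-chasing.
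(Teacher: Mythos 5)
Your plan follows the same architecture as the paper's proof (parabolic difference equations for $\chi_{k+1}\delta F$, $\chi_{k+1}\delta\mathbf{D}_xF$, $\chi_{k+1}\delta w$ estimated via \eqref{EnergyEst}, then integration of \eqref{ptsA}, \eqref{ptsDivA} combined with Gagliardo--Nirenberg for \eqref{DiffCon3}--\eqref{DiffCon4}), but there is one genuine gap in the $w$-step: the self-interaction term $\chi_{k+1}[F,\delta w]$ cannot be ``handled with \eqref{DiffHyp5} and \eqref{SobF}'' as you propose. The hypothesis \eqref{DiffHyp5} only yields $\|\chi_k\delta w(s)\|_{L^2}\lesssim \ep\, s^{\frac{k-3}{4}}(Ns^{\frac12})^{-k(1-\sg)}$, and pairing this with $\|F(s)\|_{L^\infty}\lesssim \ep\, s^{-\frac34}(Ns^{\frac12})^{-(1-\sg)}$ from \eqref{SobF} gives an integrand of size $\ep^2 s^{\frac{k-6}{4}}(Ns^{\frac12})^{-(k+1)(1-\sg)}$, whose $L^1_s$ integral over $[0,s_1]$ diverges at the base case $k=2$ (where \eqref{DiffHyp5} is just the triangle inequality applied to \eqref{wderbdds1}--\eqref{wderbdds2}) and is borderline for $k=3$ when $\sg$ is close to $\tfrac56$; note that $\delta w$ is only controlled ``two levels behind'' $\delta F$. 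The paper instead treats this term as part of the unknown: it bounds only $\|F\|_{L^\infty}$ and keeps $\|\chi\delta w\|_{L^\infty_sL^2}$ inside the quantity $\mathcal{T}$, so the contribution is $\ep\, s_1^{\frac14}(Ns_1^{\frac12})^{\sg-1}\mathcal{T}\ll \mathcal{T}$ and is absorbed into the left-hand side of \eqref{EnergyDiffW}. Without this absorption the induction for $w$ cannot start, so you need to add it.

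A secondary, fixable point concerns your choice of the weight $p$ in \eqref{EnergyEst}: taking $p>0$ so that $s^p$ reproduces the full target weight produces the term $p\|s^pG\|_{L^2_{\frac{\dd s}{s}}L^2}$ on the right, which is not controlled by the left-hand side, and estimating it from the level-$k$ hypotheses loses the needed factor $s^{\frac14}(Ns^{\frac12})^{-(1-\sg)}$. The paper sidesteps this by running \eqref{EnergyEst} with $p=0$ for $\chi\delta F$ and $\chi\delta w$ (so that term vanishes) and with $p=\tfrac12$ for $\chi\delta\mathbf{D}_xF$, where the required $L^2_{\frac{\dd s}{s}}$ input was already produced in the $m=0$ step (\eqref{EnergyDiff2}); the $s$- and $N$-weights are inserted by hand and evaluated at $s_1$ using monotonicity. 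Similarly, for \eqref{DiffCon3} the passage to $L^p$ must be done on the integrand $\chi\delta\mathbf{D}_xF(s)$, interpolating against $\mathbf{D}_x(\chi\delta\mathbf{D}_xF)$ from \eqref{EnergyDiffDx1} before integrating in $s$ (as in \eqref{GNSdiv}), not after the $s$-integration. With these corrections your pairings of the hypotheses with \eqref{SobF}, \eqref{SobW}, \eqref{bddAprimeLq} coincide with the paper's argument.
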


Assuming this Lemma we can can now prove the main estimates of this section.

\subsection{Proof of Proposition \ref{PropErrorDif}}

Fix a time $ t \in [t_0,t_1] $. 
By integrating equation \eqref{ptsA} point-wise and the fact that the solutions coincide at $ s=0 $ on $ B^{(1)} $, we have 
$$
 A_{\ell}(s_1)  - A_{\ell}'(s_1)   =  \int_0^{s_1}  \mathbf{D}^{j} F_{j \ell}(s) -  \mathbf{D}^{'j} F'_{j \ell}(s)   \dd s,   \qquad x \in  B^{(1)}
$$
 which implies, by estimating $  \mathbf{D}^{j} F_{j \ell} $ and $ \mathbf{D}^{'j} F'_{j \ell} $ separately using \eqref{Fderbdds1}, \eqref{Fderbdds2}, \eqref{SobF}, that 
\be \label{diffA1}
 (N s^{\frac{1}{2}})^{1-\sigma} \vn{  A_x(s) - A_x'(s)}_{L^p(  B^{(1)})} \ls \ep s^{\frac{1}{2}} s^{-\frac{3}{2}(\frac{1}{2}-\frac{1}{p})}, \quad p \in [2,4]
\ee
By integrating equation \eqref{ptsDivA}, on $ B^{(1)} $ we have 
$$ 
\pt^{\ell} A_{\ell}(s_1) - \pt^{\ell} A'_{\ell}(s_1)   = -  \int_0^{s_1}  [A_x - A_x', \mathbf{D}_x F   ] +  [ A',   \mathbf{D}_x F - \mathbf{D}'_x F' ]   \dd s  
$$
We bound the integrand in $ L^2_x $. We use \eqref{diffA1} with $ p=2 $ and \eqref{SobF} with $ p=\infty $ for the first term. The second term we bound by $  (L^6+ N^{\sg-1} L^q) \times ( L^3 \cap L^{3/\sg}) \to L^2 $ using \eqref{bddAprimeLq} and \eqref{SobF} with $ p=3 $ and $ p=3/\sg $, for both $  \mathbf{D}_x F  $ and $ \mathbf{D}'_x F' $ separately. 
We obtain 
\be \label{diffA2}
 (N s^{\frac{1}{2}})^{2(1-\sigma)}
\vn{ \pt^{\ell} A_{\ell}(s) - \pt^{\ell} A'_{\ell}(s)}_{L^2(  B^{(1)})} \ls \ep s^{\frac{1}{4}} 
\ee

\

We will apply Lemma \ref{PowersOfS}  several times with different cutoff functions to obtain \eqref{FDiff} and \eqref{wDiff}, each time getting an extra power of $ s^{1/4} $ and slightly reducing the support.  

Given $ \chi $ supported in $ B^{(1)} $, we consider a sequence of bump functions $ \chi_0, \chi_1, \dots , \chi_8 = \chi $ all supported in $ B^{(1)} $ 
such that $ \chi_k \equiv 1 $ on $ \supp \chi_{k+1} $, for each $ k \in \{0, \dots 7 \} $. 

Based on \eqref{Fderbdds1}, \eqref{Fderbdds2}, \eqref{diffA1}, \eqref{diffA2} we first apply Lemma \ref{PowersOfS} with $ k=0 $ and $ \chi_0, \chi_1 $. We obtain \eqref{DiffHyp1}-\eqref{DiffHyp4} with $ k=1 $ for $ \chi_1 $. We repeatedly apply Lemma \ref{PowersOfS} with $ \chi_{k}, \chi_{k+1} $ until we obtain \eqref{FDiff}. Starting from $ k \geq 2 $ we also obtain bounds on $  \delta w $: in the $ k=2 $ step the bound \eqref{DiffHyp5} follows trivially from \eqref{wderbdds1}, \eqref{wderbdds2} by the triangle inequality. When $ k=7 $ we conclude \eqref{wDiff}.

\subsection{Proof of Lemma \ref{PowersOfS} }

For brevity, in the course of this proof we denote $ \tilde{\chi} \defeq \chi_k $ and $ \chi \defeq \chi_{k+1} $. Fix a time $ t \in [t_0,t_1] $. 

\subsubsection{Difference equation for F}
We first prove  \eqref{DiffCon1},  \eqref{DiffCon2} for $ m=0 $.

Subtracting \eqref{cutoffF}  for $ F $ and $ F' $ and using the form of $  \delta \mathbf{D}^{\ell} \mathbf{D}_{\ell} $ from \eqref{DiffDx} we obtain the schematic equation (where we drop some constants and irrelevant signs)  
\begin{align}  
\label{Ndiff1}
\left(\pt_{s}-\mathbf{D}^{\ell} \mathbf{D}_{\ell} \right) \left( \chi \delta F \right) = & \   \chi [\delta F, F] +  \chi [\delta F, F']  \\ 
\label{Ndiff2}
& +  \nabla \chi \cdot \delta \mathbf{D}_x F +  \Delta \chi \delta F \\
\label{Ndiff3}
& + [\delta A^{\ell} , \pt_\ell (\chi F')] + \chi [\delta \pt^\ell  A_{\ell} ,  F'] \\
\label{Ndiff4}
& + \chi [A^{\ell}, [A_{\ell},  F' ]]- \chi [A^{' \ell}, [A_{\ell}',  F' ]]
\end{align}
Note that $ \chi \delta F(0)=0  $. Using the energy estimate \eqref{EnergyEst} with $ p=0 $ we obtain 
\be \label{EnergyDiff}
\vn{ \chi \delta F }_{L^{\infty}_{s} L^2_x [0,s_1] }  +  \vn{ s^{\frac{1}{2}} \mathbf{D}_x  \left( \chi \delta F(s) \right)}_{L^2_{ \frac{\dd s}{s}} L^2_x [0,s_1] } \ls \ep s_1^{\frac{k+1}{4}}   (N s_1^{\frac{1}{2}})^{-(k+2)(1-\sigma)}
\ee
provided we estimate \eqref{Ndiff1} - \eqref{Ndiff4} in $ L^1_s L^2_x [0,s_1] $. Then, note that by bounding $ s^{\frac{1}{2}} \nabla \chi \cdot \delta F(s) $ and $  s^{\frac{1}{2}} \chi [\delta A, F] (s) $ using  \eqref{DiffHyp1}, \eqref{DiffHyp3}  and \eqref{SobF} we would also have 
\be \label{EnergyDiff2}
 \vn{ s^{\frac{1}{2}} \chi \mathbf{D}_x  \left(  \delta F \right)}_{L^2_{ \frac{\dd s}{s}} L^2_x [0,s_1] }  +
  \vn{ s^{\frac{1}{2}} \chi \delta \mathbf{D}_x F }_{L^2_{ \frac{\dd s}{s}} L^2_x [0,s_1] } 
 \ls \ep s_1^{\frac{k+1}{4}}   (N s_1^{\frac{1}{2}})^{-(k+2)(1-\sigma)}
\ee
which will give \eqref{DiffCon1},  \eqref{DiffCon2} for $ m=0 $. 

We estimate the nonlinear terms in $ L^1_s L^2_x [0,s_1] $ as follows: 

For \eqref{Ndiff1} we use 
\eqref{DiffHyp1} with $ m=0 $ and \eqref{SobF} with $ p=\infty, \ m=0 $; 

For \eqref{Ndiff2} we use \eqref{DiffHyp1}, \eqref{DiffHyp2}  with $ m=0 $, recalling that $ \tilde{\chi} \equiv 1 $ on $ \supp \chi $;

For \eqref{Ndiff3} we use \eqref{DiffHyp4}, \eqref{SobF} with $ p=\infty $; to estimate the second term. Then use 
\eqref{DiffHyp3}, \eqref{SobF} with $  m=1,0, \ p=\infty $ to estimate $ \chi [\delta A^{\ell} , \mathbf{D}_{\ell}'  F']  $ and $  [\delta A^{\ell} , \pt_{\ell} \chi  F']  $. 

For \eqref{Ndiff4} and the remainder from \eqref{Ndiff3}, write everything in terms of 
\be \label{trilineardelta}
\mathcal{O}( A', \chi \delta A, F') \quad \text{and} \  \qquad  \mathcal{O}( \chi \delta A,  \tilde{\chi} \delta A, F')
\ee
The first term is estimated as $  (L^6+ N^{\sg-1} L^q) \times ( L^3 \cap L^p) \times L^{\infty} \to L^2 $
using \eqref{bddAprimeLq}, \eqref{DiffHyp3} with $ p=3/\sg $ and \eqref{SobF}. 
The second term is bounded as $ L^4 \times L^4 \times L^{\infty} \to L^2 $ from \eqref{DiffHyp3}, \eqref{SobF}.

\subsubsection{Difference equation for $ \mathbf{D}_x F $}
Similarly one writes the parabolic equation for differences from \eqref{cutoffDF}
\begin{align}  
\label{DNdiff1}
\left(\pt_{s}-\mathbf{D}^{\ell} \mathbf{D}_{\ell} \right) \left( \chi \delta \mathbf{D}_x F \right) = & \   \chi  \mathcal{O}(\delta F, \mathbf{D}_x F) +  \chi \mathcal{O}( F', \delta \mathbf{D}_x F) \\ 
\label{DNdiff2}
& +  \nabla \chi \cdot \delta \mathbf{D}_x^2 F +  \Delta \chi \delta \mathbf{D}_x F \\
\label{DNdiff3}
& + [\delta A^{\ell} , \pt_\ell (\chi  \mathbf{D}'_x F')] + \chi [\delta \pt^\ell  A_{\ell} , \mathbf{D}'_x F'] \\
\label{DNdiff4}
& + \chi [A^{\ell}, [A_{\ell},   \mathbf{D}'_x F' ]]- \chi [A^{' \ell}, [A_{\ell}',   \mathbf{D}'_x F' ]]
\end{align}
Denote the nonlinearity by $ \mathcal{M} $. Using the energy estimate \eqref{EnergyEst} with with $ p=\frac{1}{2} $ one obtains 
\begin{align}  
 \label{EnergyDiffDx1}
\vn{ \chi s^{\frac{1}{2}} \delta \mathbf{D}_x F }_{L^{\infty}_{s} L^2_x [0,s_1] }  +  \vn{ s \mathbf{D}_x  \left( \chi \delta \mathbf{D}_x F(s) \right)}_{L^2_{ \frac{\dd s}{s}} L^2_x [0,s_1] } \ls \\
 \label{EnergyDiffDx2}
\vn{ \chi s^{\frac{1}{2}} \delta \mathbf{D}_x F }_{L^2_{ \frac{\dd s}{s}} L^2_x [0,s_1] } + \int_0^{s_1} s^{\frac{1}{2}} \vn{\mathcal{M}(s)}_{L^2_x} \dd s 
\end{align}
We prove 
\be
\eqref{EnergyDiffDx2} \ls   \ep s_1^{\frac{k+1}{4}}   (N s_1^{\frac{1}{2}})^{-(k+2)(1-\sigma)}
\ee
The first term in \eqref{EnergyDiffDx2} was estimated in \eqref{EnergyDiff2}. 
To estimate $ \mathcal{M} $ in $ L^1_s L^2_x [0,s_1] $ we proceed exactly as in the previous step, with the following difference:  for the terms which have $ \mathbf{D}_x F $ instead of $ F $, when applying \eqref{DiffHyp1}, \eqref{DiffHyp2}, \eqref{SobF}  we use $ m=1 $ instead of $ m=0 $. 

As before, the second term in \eqref{EnergyDiffDx1} can be replaced by $ s \chi \delta \mathbf{D}_x^2 F(s) $, which completes the proof of \eqref{DiffCon1},  \eqref{DiffCon2} also for $ m=1 $.

\subsubsection{Bound on $ A $ and $ div A $}
By integrating equations \eqref{ptsA}, \eqref{ptsDivA} and the fact that the solutions coincide at $ s=0 $ on the support of $ \chi $ we have 
\be
 \chi \left( A_{\ell}(s_1)  - A_{\ell}'(s_1) \right)  = \chi \int_0^{s_1} \delta \mathbf{D}_x F (s)   \dd s   
\ee
\be \label{intDiv}
 \chi  \left( \pt^{\ell} A_{\ell}(s_1) - \pt^{\ell} A'_{\ell}(s_1)  \right)  = - \chi \int_0^{s_1}  [\delta A, \mathbf{D}_x F   ] +  [ A',  \delta \mathbf{D}_x F ]   \dd s  
\ee
By the covariant Gagliardo-Nirenberg inequalities \eqref{SobGN} we have 
\be \label{GNSdiv}
\vn{ \chi \delta \mathbf{D}_x F (s)}_{L^p_x} \ls \vn{ \chi \delta \mathbf{D}_x F (s)}_{L^2_x}^{1-\theta} \vn{ \mathbf{D}_x  \left( \chi \delta \mathbf{D}_x F(s) \right)}_{L^2_x}^{\theta},  \quad \theta = 3( 2^{-1} - p^{-1}) 
\ee
Integrating this and using Holder in $  \frac{\dd s}{s} $ we obtain \eqref{DiffCon3} from \eqref{EnergyDiffDx1}. 

For \eqref{DiffCon4} we estimate the integral \eqref{intDiv} as follows: for the first term use \eqref{DiffCon3} with $ p=2 $ and \eqref{SobF} with  $ p=\infty, \ m=1 $; the second term is estimated as  $  (L^6+ N^{\sg-1} L^q) \times ( L^3 \cap L^p) \to L^2 $
using \eqref{bddAprimeLq} and \eqref{GNSdiv} for $ L^3 $ and $ L^p $ with $ p=3/\sg $, together with \eqref{EnergyDiffDx1}. 

\subsubsection{Bound on $ w $} 

Subtracting \eqref{cutoffw}  for $ w $ and $ w' $ and using the form of $  \delta \mathbf{D}^{\ell} \mathbf{D}_{\ell} $ from \eqref{DiffDx} we obtain the schematic equation
\begin{align}  
\label{Nwdiff1}
\left(\pt_{s}-\mathbf{D}^{\ell} \mathbf{D}_{\ell} \right) \left( \chi \delta w \right) = & \   \chi [\delta F, w] +  \chi [ F, \delta w ]  \\ 
\label{Nwdiff2}
 &+  \chi  \mathcal{O}(\delta F, \mathbf{D}_x F) +  \chi \mathcal{O}( F', \delta \mathbf{D}_x F)  \\ 
\label{Nwdiff3}
& +  \nabla \chi \cdot \delta \mathbf{D}_x w +  \Delta \chi \delta w \\
\label{Nwdiff4}
& + [\delta A^{\ell} , \pt_\ell (\chi w')] + \chi [\delta \pt^\ell  A_{\ell} ,  w'] \\
\label{Nwdiff5}
& + \chi [A^{\ell}, [A_{\ell},  w' ]]- \chi [A^{' \ell}, [A_{\ell}',  w' ]]
\end{align}

 Using the energy estimate \eqref{EnergyEst} with $ p=0 $ we obtain 
\be \label{EnergyDiffW}
\mathcal{T} \defeq \vn{ \chi \delta w }_{L^{\infty}_{s} L^2_x [0,s_1] }  +  \vn{ s^{\frac{1}{2}} \mathbf{D}_x  \left( \chi \delta w \right)}_{L^2_{ \frac{\dd s}{s}} L^2_x [0,s_1] } \ls \ep \mathcal{T} + \frac{\ep s_1^{\frac{k-1}{4}}}{(N s_1^{\frac{1}{2}})^{(k+2)(1-\sigma)}}
\ee
provided we bound \eqref{Nwdiff1} - \eqref{Nwdiff5} in $ L^1_s L^2_x [0,s_1] $ by the RHS of \eqref{EnergyDiffW}. Then we absorb the $ \ep \mathcal{T} $ to the LHS and, replace the $ \mathbf{D}_x  \left( \chi \delta w \right) $ by $  \chi \delta \mathbf{D}_x w $ by bounding $ s^{\frac{1}{2}} \nabla \chi \cdot \delta w $ and $  s^{\frac{1}{2}} \chi [\delta A, w] $ using  \eqref{DiffHyp5}, \eqref{DiffHyp3}  and \eqref{SobW} obtaining 
$$
\vn{ \chi \delta w (s_1)}_{L^2_x} + \vn{ s^{\frac{1}{2}} \chi \delta \mathbf{D}_x w}_{L^2_{ \frac{\dd s}{s}} L^2_x [0,s_1] } \ls  \frac{\ep s_1^{\frac{k-1}{4}}}{(N s_1^{\frac{1}{2}})^{(k+2)(1-\sigma)}}
$$
This is more than enough for \eqref{DiffCon5}. We estimate the nonlinear terms in $ L^1_s L^2_x [0,s_1] $ very similarly to the case of $ \chi \delta F $:
 
 For \eqref{Nwdiff1} we first use 
\eqref{DiffHyp1} with $ m=0 $ and \eqref{SobW} with $ p=\infty, \ m=0 $; then for $ \chi [ F, \delta w ] $ we use  \eqref{SobF} with $ p=\infty, \ m=0 $, which contributes the term $  \ep s_1^{\frac{1}{4}} \mathcal{T} \ll \ep  \mathcal{T} $. 

The terms in \eqref{Nwdiff2} were already estimated in \eqref{DNdiff1}. 

For \eqref{Nwdiff3} we use \eqref{DiffHyp5}, recalling that $ \tilde{\chi} \equiv 1 $ on $ \supp \chi $;

For \eqref{Nwdiff4} we use \eqref{DiffHyp4}, \eqref{SobW} with $ p=\infty $; to estimate the second term. Then use 
\eqref{DiffHyp3}, \eqref{SobW} with $  m=1,0, \ p=\infty $ to estimate $ \chi [\delta A^{\ell} , \mathbf{D}_{\ell}'  w']  $ and $  [\delta A^{\ell} , \pt_{\ell} \chi  w']  $. 

For \eqref{Nwdiff5} and the remainder from \eqref{Nwdiff4}, write everything in terms of 
$$
\mathcal{O}( A', \chi \delta A, w') \quad \text{and} \  \qquad  \mathcal{O}( \chi \delta A,  \tilde{\chi} \delta A, w')
$$ 
and then proceed exactly like in \eqref{trilineardelta}, using \eqref{SobW} instead of \eqref{SobF}.

%%%%%%%%%%%%%%%%%%%%%%%%%%%%%%%%%%%%%%%
%%%%%%%%%%%%%%%%%%%%%%%%%%%%%%%%%%%%%%%
%%%%%%%%%%%%%%%%%%%%%%%%%%%%%%%%%%%%%%%
%%%%%%%%%%%%%%%%%%%%%%%%%%%%%%%%%%%%%%%
%%%%%%%%%%%%%%%%%%%%%%%%%%%%%%%%%%%%%%%
%%%%%%%%%%%%%%%%%%%%%%%%%%%%%%%%%%%%%%%
%%%%%%%%%%%%%%%%%%%%%%%%%%%%%%%%%%%%%%%
%%%%%%%%%%%%%%%%%%%%%%%%%%%%%%%%%%%%%%%
%%%%%%%%%%%%%%%%%%%%%%%%%%%%%%%%%%%%%%%
%%%%%%%%%%%%%%%%%%%%%%%%%%%%%%%%%%%%%%%
%%%%%%%%%%%%%%%%%%%%%%%%%%%%%%%%%%%%%%%
%%%%%%%%%%%%%%%%%%%%%%%%%%%%%%%%%%%%%%%
%%%%%%%%%%%%%%%%%%%%%%%%%%%%%%%%%%%%%%%
%%%%%%%%%%%%%%%%%%%%%%%%%%%%%%%%%%%%%%%

\section{Decompositions and estimates in DeTurck's gauge} \label{SecDecAndEst}

In this section we consider solutions  $ A_{t,x,s} $ to \eqref{YM} and \eqref{dYMHF} on $  [t_0,t_1] \times \mb{R}^3 \times [0,s_0]$, $ t_1=t_0+1$ in:
\begin{enumerate}
\item The temporal gauge at $ s=0 $, i.e. $ A_0(t,x,0) = 0 \ $.
\item The DeTurck gauge $ A_s = \pt^{\ell} A_{\ell} $ on $  [t_0,t_1] \times \ \mb{R}^3 \times [0,s_0]$. 
\item The Coulomb gauge at $ s=0,\ t=t_0 $, i.e. $  \pt^{\ell} A_{\ell} (t_0,x,0)=0$. 
\end{enumerate}
Let $ \sg > \frac{5}{6} $. We assume $ A_i(t,x,0)$ obeys the bounds in Proposition \ref{ModLWP} part (2) and therefore also \eqref{PkDtAcfL2Linfx} - \eqref{PkDtAL2Linfx}. Then Proposition \ref{DeTurckProp} holds uniformly in $ t \in [t_0,t_1]  $, and as a consequence \eqref{ALinfty}, \eqref{AbilL2Linf}.

 We assume $ \mathcal{IE}(t) \ls \ep^2 $ control of the modified energy \eqref{ModifiedEnergy} for all $ t \in [t_0,t_1] $. In particular, by Proposition \ref{CovBdsF1} and \ref{CovBdsw1}  the curvature $ F_{\al \beta} $ and the tension field $ w_i $ defined in \eqref{wDef} obey the bounds 
\begin{align}
\label{HighCovDerF}
 \vn{  (N s^{\frac{1}{2}})^{1-\sigma} (s^{\frac{1}{2}} \mathbf{D}_x)^m F_{\al \beta}(t,s) }_{L^{\infty}_s L^2_x \cap L^{2}_{\frac{\dd s}{s}} L^2_x ( [0,s_0]\times \mb{R}^3) } \ls \ep \\
 \label{HighCovDerFw}
 \vn{ (N^2 s)^{1-\sigma} s^{\frac{1}{4}} (s^{\frac{1}{2}} \mathbf{D}_x)^m w_x(t,s) }_{L^{\infty}_s L^2_x \cap L^{2}_{\frac{\dd s}{s}} L^2_x ( [0,s_0]\times \mb{R}^3)} \ls \ep  
 \end{align}
for all $ 0 \leq m \leq 8 $ and $ t\in [t_0,t_1] $. 

Under these assumptions we control the norms \eqref{AcfL2Pk} - \eqref{Fsinfty}.

\subsection{Decompositions in DeTurck's gauge} \

\subsubsection{Decomposition of $ F $} \ 

Expanding \eqref{0CovPara} and canceling the $ [A_s, F ]$ term with the $ [  \pt^{\ell} A_{\ell}, F ] $ term one obtains 
\be \label{FeqDeTurck}
(\partial_{s} -\Delta) F_{\al \beta}  = 2 [F_{\al}^{\ \ell}, F_{\ell \beta}] + 2[ A^{\ell}, \pt_{\ell} F_{\al \beta} ] + [A^{\ell}, [A_{\ell},  F_{\al \beta} ]]
\ee
and one decomposes 
\be \label{Fdec} 
F_{\al \beta} (s) =  e^{s \Delta} F_{\al \beta} (0) + F^{bil}_{\al \beta} (s) 
\ee
where the bilinear term 
\be \label{Fbil}
F^{bil}_{\al \beta}(s_1) \defeq  \int_0^{s_1} e^{(s_1-s)\Delta} \lpr 2 [F_{\al}^{\ \ell}, F_{\ell \beta}] + 2[ A^{\ell}, \pt_{\ell} F_{\al \beta} ] + [A^{\ell}, [A_{\ell},  F_{\al \beta} ]] \rpr  (s) \dd s 
\ee
obeys better estimates, such as  

\begin{lemma} Under the assumptions above, one has:
\be \label{FbilL2}
\vn{F^{bil}(s)}_{L^{\infty}_t L^2_x} +  s^{\frac{1}{2}} \vn{\pt_x F^{bil}(s)}_{L^{\infty}_t L^2_x} \ls s^{\frac{1}{4}} / (N^2 s)^{1-\sigma} \ep. 
\ee
\end{lemma}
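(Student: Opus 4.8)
The plan is to estimate the bilinear term $F^{bil}_{\al \beta}(s_1)$ defined in \eqref{Fbil} by treating each of the three schematic pieces $[F,F]$, $[A,\pt_x F]$ and $[A,[A,F]]$ separately, using the smoothing of the heat kernel $e^{(s_1-s)\Delta}$ to absorb the missing integrability. The basic mechanism is that $(s_1-s)^{\frac{1}{2}-\frac{3}{2}(\frac{1}{2}-\frac{1}{p})} e^{(s_1-s)\Delta} : L^p_x \to L^2_x$ (and similarly with one extra derivative), so that placing the nonlinearity in a suitable $L^p_x$ at the inner parabolic time $s$ and integrating $\frac{\dd s}{(s_1-s)^{\gamma}}$ against the already-known pointwise-in-$s$ bounds leaves a convergent $s$-integral that produces the gain $s^{\frac14}$ together with the correct power $(N^2 s)^{-(1-\sg)}$.

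First I would record the ingredients I am allowed to use: the covariant high-derivative bounds \eqref{HighCovDerF} for $F_{\al\beta}$, the Gagliardo--Nirenberg consequences giving $(Ns^{\frac12})^{1-\sg}\vn{\mathbf{D}_x^m F(s)}_{L^p_x} \ls \ep\, s^{-\frac32(\frac12-\frac1p)-\frac m2}$ exactly as in \eqref{SobF}, the $L^\infty_x$ bound \eqref{ALinfty} $\vn{A_i(s)}_{L^\infty_x} \ls \ep\, s^{-\frac14}/(Ns^{\frac12})^{1-\sg}$, and the $H^1$-type bound $I A_i \in \ep L^\infty_s H^1$ from Proposition \ref{DeTurckProp}. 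For the $[F,F]$ term I would use Hölder, e.g. $\vn{[F,F](s)}_{L^{3/2}_x} \ls \vn{F(s)}_{L^2_x}\vn{F(s)}_{L^6_x}$ (or $\vn{F(s)}_{L^3_x}^2$), so that $\vn{[F,F](s)}_{L^{3/2}_x} \ls \ep^2 s^{-\frac12}/(Ns^{\frac12})^{2(1-\sg)}$, and then $(s_1-s)^{\frac12-\frac32(\frac13)} = (s_1-s)^0$ — actually one wants the version mapping $L^{3/2}\to L^2$ which costs $(s_1-s)^{-1/2}$; integrating $(s_1-s)^{-1/2} s^{-1/2}\,\dd s$ over $[0,s_1]$ gives a constant, and carefully tracking the weights $(N^2 s)^{-(1-\sg)}$ against $(N^2 s_1)^{-(1-\sg)}$ (using that $s \le s_1$ and $1-\sg < \frac16$) yields the claimed $s_1^{1/4}/(N^2 s_1)^{1-\sg}\,\ep$ — here the extra $s_1^{1/4}$ is genuinely available because the scaling is subcritical; I would instead organize the $s$-integral to produce exactly one net factor $s_1^{1/4}$. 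For the $[A,\pt_x F]$ term place $A$ in $L^\infty_x$ via \eqref{ALinfty} and $\pt_x F(s)$ in $L^2_x$ via \eqref{HighCovDerF} with $m=1$, giving $\vn{[A,\pt_x F](s)}_{L^2_x} \ls \ep^2 s^{-\frac34-\frac12}/(Ns^{\frac12})^{2(1-\sg)}$; then use $e^{(s_1-s)\Delta}:L^2\to L^2$ bounded, and for the derivative estimate use $(s_1-s)^{-1/2}e^{(s_1-s)\Delta}:L^2\to \dot H^1$, integrating the resulting singular-but-integrable kernel. The cubic term $[A,[A,F]]$ is the mildest: two factors of $A$ in $L^\infty_x$ and $F$ in $L^2_x$.

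The step I expect to be the main obstacle is bookkeeping the frequency/heat-time weights so that the final bound carries the weight $(N^2 s_1)^{-(1-\sg)}$ rather than merely $(N^2 s)^{-(1-\sg)}$ with $s$ the inner variable: since $s \le s_1$ we have $(N^2 s)^{-(1-\sg)} \ge (N^2 s_1)^{-(1-\sg)}$, so the naive estimate goes the wrong way and one must extract a compensating positive power of $s/s_1$ from the GN bounds. The resolution is that the nonlinearities genuinely carry \emph{two} copies of the weight (being bilinear) while the output needs only one, so a factor $(N s^{\frac12})^{1-\sg}$ is to spare and can be traded — via $(N s^{\frac12})^{1-\sg} \le (N s_1^{\frac12})^{1-\sg}$ when convenient, or via the heat smoothing — to reconstitute $(N^2 s_1)^{-(1-\sg)}$; one must also check that the residual $s$-integrals converge, which forces one to distribute the available powers of $s$ (from $e^{s\Delta}$-smoothing of the linear part implicitly, and from subcriticality) carefully, and this is exactly where the condition $\sg > \frac56$ (equivalently $1-\sg$ small) enters to keep all exponents in the convergent range. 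I would present this as: fix $s_1$, split $F^{bil}$ into the three pieces, in each piece insert the GN bounds at time $s$, bound the heat propagator, and perform the $\dd s$-integral; the derivative bound $s_1^{1/2}\vn{\pt_x F^{bil}(s_1)}$ is identical with one extra $(s_1-s)^{-1/2}$ and $s^{-m/2}$ reshuffled, after which one checks — using $1-\sg<\frac16$ — that every Beta-function integral converges and the weights assemble to $s_1^{1/4}/(N^2 s_1)^{1-\sg}\,\ep$ (one should note $\ep^2 \le \ep$ since $\ep \ll 1$).
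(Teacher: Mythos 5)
Your overall strategy is the paper's: write $F^{bil}$ via Duhamel, treat the three bracket terms $[F,F]$, $[A,\pt_x F]$, $[A,[A,F]]$ separately, insert the pointwise-in-$s$ bounds \eqref{SobF}, \eqref{ALinfty}, \eqref{HighCovDerF}, use heat-kernel smoothing (and $(s_1-s)^{\frac12}\pt_x e^{(s_1-s)\Delta}:L^2_x\to L^2_x$ for the derivative part), and integrate in $s$. The paper does this in the simplest possible bookkeeping: every bracket term is bounded in $L^2_x$ by $\ep\, s^{-\frac34}/(N^2s)^{1-\sg}$ (first factor in $L^\infty_x$, second in $L^2_x$), and then $\int_0^{s_1}s^{-\frac34-(1-\sg)}\dd s\simeq s_1^{\frac14-(1-\sg)}$ assembles the weight automatically.

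However, as written two of your exponents are wrong, and they sit exactly where the lemma lives. (i) For $[A,\pt_x F]$ you claim $\vn{[A,\pt_x F](s)}_{L^2_x}\ls \ep^2 s^{-\frac34-\frac12}/(Ns^{\frac12})^{2(1-\sg)}$; with $s^{-\frac54}$ the $\dd s$-integral diverges at $s=0$. The correct count from \eqref{ALinfty} ($\vn{A}_{L^\infty_x}\ls \ep s^{-\frac14}(Ns^{\frac12})^{\sg-1}$) and \eqref{HighCovDerF} with $m=1$ is $s^{-\frac14-\frac12}=s^{-\frac34}$, which integrates fine. (ii) For $[F,F]$ via $L^2\times L^6\to L^{3/2}$, the heat-kernel cost $e^{(s_1-s)\Delta}:L^{3/2}_x\to L^2_x$ in $d=3$ is $(s_1-s)^{-\frac14}$, not $(s_1-s)^{-\frac12}$; with your $-\frac12$ the Beta integral gives only $\ep^2/(N^2s_1)^{1-\sg}$ and the factor $s_1^{\frac14}$ is lost — "organizing the $s$-integral" cannot recover it. With the correct $-\frac14$ your $L^{3/2}$ route does close; alternatively just place one $F$ in $L^\infty_x$ (via \eqref{Fsinfty}/\eqref{SobGN}) as the paper does, so all three terms land in $L^2_x$ with the uniform bound $\ep\, s^{-\frac34}/(N^2s)^{1-\sg}$. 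Finally, the "main obstacle" you describe — trading inner-time weights $(N^2s)^{-(1-\sg)}$ for the outer $(N^2s_1)^{-(1-\sg)}$ — is not an issue at all: since $\tfrac34+(1-\sg)<1$ (here is where $\sg>\tfrac56$, or even just $\sg>\tfrac34$, enters), the singular integral is dominated by $s\simeq s_1$ and directly produces $s_1^{\frac14}/(N^2s_1)^{1-\sg}$; no spare factor needs to be traded.
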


\begin{proof}
Fix a time $ t \in [t_0,t_1] $. It suffices to bound the three terms in the bracket in \eqref{Fbil} in $ L^2_x $ by $ s^{-\frac{3}{4}} /  (N^2 s)^{1-\sigma} \ep $. For the second estimate in \eqref{FbilL2} one begins with the fact that $ (s_1-s)^{\frac{1}{2}} \pt_x e^{(s_1-s)\Delta} : L^2_x \to L^2_x $. 

For $ [F, F] $ and $ [ A^{\ell}, \mathbf{D}_{\ell} F_{\al \beta} ] $ we use Holder, placing the first term in $ L^{\infty}_x $ and the second in $ L^2_x $, recalling \eqref{ALinfty}. For $ [A,[A,F]] $ we do $ L^{\infty}_x \times L^{\infty}_x \times L^2_x \to L^2_x $.  
\end{proof}

\subsubsection{Decomposition of $ w $} \

Expanding \eqref{0CovParaw}, plugging in \eqref{Fdec} and canceling the $ [A_s, w ]$ term with the $ [  \pt^{\ell} A_{\ell},w ] $ term one obtains 
$$
(\partial_{s} -\Delta) w_k = \mathcal{N}^{(2)} + \mathcal{N}^{(3)} + \mathcal{N}^{self} 
$$
where
\be
\begin{aligned}
\label{NonlinearityW}
\mathcal{N}^{(2)} (s) & \defeq 2 [ e^{s \Delta} \pt_t A^{\ell}(0), e^{s \Delta} (\pt_k \pt_t A_{\ell} - 2 \pt_{\ell} \pt_t A_k )(0)] \\
\mathcal{N}^{(3)} (s) &\defeq [F^{bil}_{0 \ell}, \mathbf{D}_x F_{0x}] + 2 [ e^{s \Delta} \pt_t A^{\ell}(0), \pt_x  F^{bil}_{0 x} +[A_x,F_{0x}] ]  \\
\mathcal{N}^{self} (s) & \defeq 2[F_k^{\ \ell}, w_{\ell}] + 2 [A^{\ell}, \pt_{\ell} w_k] +  [A^{\ell}, [A_{\ell},  w_k ]] 
\end{aligned}
\ee
where $ \mathcal{N}^{(2)} $ contains the dominant part from $ 2\left[F^{0 \ell}, \mathbf{D}_{k} F_{0 \ell}+2 \mathbf{D}_{\ell} F_{k 0}\right] $ (recall the temporal gauge condition implies $ F_{0 \ell}= \pt_t A_{\ell} $ at $ s=0$ ). The term $ \mathcal{N}^{(3)} $ consists of the remainder and $ \mathcal{N}^{self} $ contains the terms involving $ w $.

Since $ w(0)=0 $, this gives rise to the following decomposition which isolates the leading quadratic part of $ w $:  
\be \label{wDec}
w(s) = w^{(2)}(s) + w^{(3)}(s)
\ee
where 
$$
 w^{(2)}(s_1) \defeq \int_0^{s_1} e^{(s_1-s) \Delta}\mathcal{N}^{(2)} (s) \dd s, \quad w^{(3)}(s_1) \defeq \int_0^{s_1} e^{(s_1-s) \Delta} ( \mathcal{N}^{(3)} (s) +\mathcal{N}^{self} (s) )  \dd s
$$
The term $ w^{(2)}(s) $ can be written, following \cite{oh2017yang}, as 
\be \label{w2form}
w^{(2)}_i(s) = \mathbf{W} (  \pt_t A^{\ell}(0), \pt_i \pt_t A_{\ell}(0) - 2 \pt_{\ell} \pt_t A_i (0) )
\ee
where  $\mathbf{W}$ is a symmetric bilinear form with symbol
\be
\begin{aligned}  \label{Wdef}
\mathbf{W}(\xi, \eta, s) &=\int_{0}^{s} e^{-\left(s-s' \right)|\xi+\eta|^{2}} e^{-s' \left(|\xi|^{2}+|\eta|^{2}\right)} \dd s' \\ 
 & =  -\frac{1}{2 \xi \cdot \eta} e^{-s |\xi+\eta|^{2}} (1-e^{2 s(\xi \cdot \eta)} )
\end{aligned}
\ee
which enjoys the following favorable frequency concentration: 

\begin{lemma}[\cite{oh2020hyperbolic} - Lemma 8.2]  \label{LemmaWfreq}
 For any $k, k_{1}, k_{2} \in \mathbb{Z}$ and $s>0$, denoting $ k_{\max } = \max(k,k_1,k_2) $,  the bilinear operator
$$
\left\langle s 2^{2 k}\right\rangle^{10}\left\langle s^{-1} 2^{-2 k_{\max }}\right\rangle 2^{2 k_{\max }} P_{k} \mathbf{W}\left(P_{k_{1}}(\cdot), P_{k_{2}}(\cdot), s\right)
$$
is disposable, i.e. its kernel has bounded mass, see \eqref{BilMultOp}. 
\end{lemma}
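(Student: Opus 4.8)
\textbf{Proof plan for Lemma \ref{LemmaWfreq}.} The statement is that the rescaled bilinear operator built from the symbol $\mathbf{W}(\xi,\eta,s)$ is disposable, i.e. the kernel of
$$
\left\langle s 2^{2 k}\right\rangle^{10}\left\langle s^{-1} 2^{-2 k_{\max }}\right\rangle 2^{2 k_{\max }} P_{k} \mathbf{W}\left(P_{k_{1}}(\cdot), P_{k_{2}}(\cdot), s\right)
$$
has bounded mass uniformly in $k,k_1,k_2,s$. The plan is to work entirely with the closed-form expression for the symbol from \eqref{Wdef}, namely
$$
\mathbf{W}(\xi,\eta,s) = \int_0^s e^{-(s-s')|\xi+\eta|^2} e^{-s'(|\xi|^2+|\eta|^2)} \dd s',
$$
and show that after multiplication by the stated weights and insertion of the Littlewood-Paley cutoffs $P_{k_1}(\xi), P_{k_2}(\eta), P_k(\xi+\eta)$, the resulting function of $(\xi,\eta)$ is, up to the usual $2^{3(k+k_1+k_2)}$ rescaling, a symbol all of whose derivatives satisfy the natural Mikhlin-type bounds with constants independent of the parameters. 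By the standard criterion (integration by parts in the representation \eqref{BilMultOp}, as recorded in the paper's convention for disposable multipliers), this gives a kernel with bounded mass. Thus the lemma reduces to symbol estimates.

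The key steps, in order: First I would record the pointwise size bound. On the support of the cutoffs one has $|\xi|\simeq 2^{k_1}$, $|\eta|\simeq 2^{k_2}$, $|\xi+\eta|\lesssim 2^{k_{\max}}$, so from the integral form $|\mathbf{W}| \le \int_0^s e^{-s'(|\xi|^2+|\eta|^2)}\dd s' \lesssim \min(s, 2^{-2k_{\max}})$; combining with the alternative estimate using $|\xi+\eta|$ one extracts the gain $\langle s 2^{2k}\rangle^{-10}$ (the heat factor $e^{-(s-s')|\xi+\eta|^2}$ together with $e^{-s'(|\xi|^2+|\eta|^2)}$ forces concentration where $s|\xi+\eta|^2 \lesssim 1$ OR the $s'$-integral is small, and one interpolates). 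So $2^{2k_{\max}}\langle s^{-1}2^{-2k_{\max}}\rangle \langle s2^{2k}\rangle^{10}|\mathbf{W}| \lesssim 1$ on the relevant frequency region --- this is the heart of the matter and the step I expect to require the most care, since one must juggle the three frequencies $k,k_1,k_2$ and the parabolic time $s$ and check that the weight $\langle s^{-1}2^{-2k_{\max}}\rangle$ (which is large when $s\ll 2^{-2k_{\max}}$) is exactly compensated by the factor $\min(s,2^{-2k_{\max}})$ coming from the $s'$-integral when $|\xi+\eta|$ is not the largest frequency. Second, I would differentiate: each $\partial_\xi$ or $\partial_\eta$ hitting either exponential in the integrand produces a polynomial factor in $s'|\xi|, s'|\eta|, (s-s')|\xi+\eta|$ of size $\lesssim (s')^{1/2}2^{k_i}$ or $(s-s')^{1/2}2^{k_{\max}}$, each of which is $\lesssim 1 + (\text{the quantity already controlled})$, so every derivative costs at most $2^{-k_1}$ (resp. $2^{-k_2}$, resp. $2^{-k}$ after the cutoff) times a harmless constant; here one uses that on the cutoff supports derivatives of $P_{k_i}$ and of the smooth factor $1/(\xi\cdot\eta)$ in the second form of \eqref{Wdef} are controlled --- one has to be slightly careful near the set $\xi\cdot\eta = 0$, where the integral form (rather than the divided form) is the safe one to differentiate.

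Third, with the pointwise bound and all derivative bounds in hand, I invoke the disposability criterion: a bilinear symbol $m(\xi,\eta)$ supported in $|\xi|\simeq 2^{k_1}, |\eta|\simeq 2^{k_2}$ (with output localized at $2^k$) such that $|\partial_\xi^a\partial_\eta^b m| \lesssim 2^{-a k_1 - b k_2}$ for $|a|,|b|$ up to some fixed order, with constants uniform in the parameters, produces a bilinear kernel $K(x-y_1,x-y_2)$ with $\|K\|_{L^1} \lesssim 1$ --- this is the multilinear analogue of the Coifman-Meyer / Mikhlin bound and is the fact the paper attributes to its ``disposable'' terminology; I would cite it rather than reprove it. The main obstacle, as flagged, is the combinatorial bookkeeping in Step 1: one must split into the cases $k_{\max}=k_1$ (or $k_2$) versus $k_{\max}=k$, and within the former into $s \gtrsim 2^{-2k_{\max}}$ versus $s \ll 2^{-2k_{\max}}$, verifying in each that the product of all three weights against $|\mathbf{W}|$ stays $\lesssim 1$; once that table is filled in, the derivative estimates follow the same case analysis essentially verbatim, since differentiation only inserts extra polynomially-bounded Gaussian-damped factors. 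I would present only the worst case ($k_{\max}=k_2$, the high-high-to-low and low-high regimes) in detail and remark that the others are analogous or easier.
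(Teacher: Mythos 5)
A preliminary remark: the paper itself contains no proof of this lemma --- it is quoted from \cite{oh2020hyperbolic} (Lemma 8.2) --- so your proposal can only be assessed on its own terms rather than against an in-paper argument.

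Your Step 1 is sound: since $|\xi|^{2}+|\eta|^{2}\ge \tfrac12|\xi+\eta|^{2}$ on the whole integration range, one can split the second Gaussian in \eqref{Wdef} to extract a factor $e^{-cs|\xi+\eta|^{2}}\lesssim \langle s2^{2k}\rangle^{-10}$ on the output support while still integrating $e^{-\frac{s'}{2}(|\xi|^{2}+|\eta|^{2})}$ in $s'$ to get $\min(s,2^{-2k_{\max}})$, and $2^{2k_{\max}}\langle s^{-1}2^{-2k_{\max}}\rangle\min(s,2^{-2k_{\max}})\simeq 1$ in both regimes. The genuine gap is in Steps 2--3, in the high-high-to-low case $k\ll k_{1}\simeq k_{2}$. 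There the composite symbol $p_{k}(\xi+\eta)p_{k_1}(\xi)p_{k_2}(\eta)\mathbf{W}(\xi,\eta,s)$ does \emph{not} satisfy the hypothesis $|\partial_{\xi}^{a}\partial_{\eta}^{b}m|\lesssim 2^{-|a|k_{1}-|b|k_{2}}$ of the Coifman--Meyer-type criterion you invoke: a $\xi$-derivative falling on $p_{k}(\xi+\eta)$, or producing the factor $(s-s')(\xi+\eta)$ from the first exponential, only gains $2^{-k}$, which is much larger than the required $2^{-k_{1}}$. Your parenthetical ``resp.\ $2^{-k}$ after the cutoff'' concedes exactly this, but then the criterion no longer applies; and downgrading all gains to $2^{-k}$ is insufficient, since the support volume $\simeq 2^{3k_{1}+3k_{2}}$ against decay at scale $2^{-k}$ in both variables yields a kernel mass of order $2^{6(k_{1}-k)}$, which is unbounded. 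Two standard repairs: either redo the symbol estimates in the variables $(\zeta,\eta)=(\xi+\eta,\eta)$ in that case, measuring $\zeta$-derivatives against $2^{-k}$ and $\eta$-derivatives against $2^{-k_{2}}$ (bounds the symbol does satisfy), with the low-high case kept in the original variables; or, cleaner and uniform over all frequency configurations, avoid symbol calculus entirely by using the factorization visible in \eqref{Wdef}, namely $\mathbf{W}(f,g,s)=\int_{0}^{s}e^{(s-s')\Delta}\big[(e^{s'\Delta}f)(e^{s'\Delta}g)\big]\,\dd s'$. Then $P_{k}\mathbf{W}(P_{k_1}\cdot,P_{k_2}\cdot,s)$ is an $s'$-average of compositions of a pointwise product with the disposable multipliers $P_{k}e^{(s-s')\Delta}$ and $P_{k_i}e^{s'\Delta}$, whose kernel masses are $\lesssim e^{-c(s-s')2^{2k}}$ and $\lesssim e^{-cs'2^{2k_i}}$; since the operator vanishes unless the two largest of $k,k_{1},k_{2}$ agree up to $O(1)$, one has $2^{2k_1}+2^{2k_2}\gtrsim 2^{2k_{\max}}$, and the $s'$-integral of the product of masses reproduces exactly the weight $\langle s2^{2k}\rangle^{-10}\langle s^{-1}2^{-2k_{\max}}\rangle^{-1}2^{-2k_{\max}}$. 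With either repair your outline closes; as written, the high-high-to-low case is not covered.
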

 
Using this structure of $\mathbf{W}$, one can obtain more precise estimates for $ w^{(2)} $, such as
\begin{lemma} Under the assumptions above, one has:
\be \label{Pkw2L2}
\vn{P_k w^{(2)}_x(s)}_{L^2_t L^2_x} \ls  \ep 2^{(0+)k} \frac{ (2^k s^{\frac{1}{2}})^{\frac{1}{2}-}}{\jb{2^{2k}s}^8} \frac{1}{ (N^2 s)^{1-\sigma}  } \lpr 1 + \frac{1}{2^{\frac{k}{2}-}} \frac{1}{ (N s^{\frac{1}{2}})^{1-\sigma}} \rpr
\ee
Therefore, after summing in $ k $ one obtains:
\be \label{w2L2}
\vn{w^{(2)}_x(s)}_{L^2_t L^2_x} \ls   \ep  s^{0-} / (N^2 s)^{1-\sigma}
\ee
\end{lemma}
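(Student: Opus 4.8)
The plan is to estimate $w^{(2)}$ directly from the explicit bilinear form \eqref{w2form}--\eqref{Wdef}. By \eqref{w2form}, $w^{(2)}_i(s)$ is schematically $\mathbf{W}\big(\pt_t A(0),\,\nabla \pt_t A(0)\big)$. Decomposing each factor into Littlewood--Paley pieces and applying Lemma \ref{LemmaWfreq}, the operator $P_k \mathbf{W}(P_{k_1}\cdot, P_{k_2}\cdot, s)$ equals $\jb{s 2^{2k}}^{-10}\,\jb{s^{-1}2^{-2k_{\max}}}^{-1} 2^{-2k_{\max}}$ times a disposable operator, with $k_{\max}=\max(k,k_1,k_2)$. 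Since $\jb{s^{-1}2^{-2k_{\max}}}^{-1}2^{-2k_{\max}}\simeq\min(s,2^{-2k_{\max}})$, disposability and H\"older in $x$ give
\be \label{Wdispplan}
\vn{ P_k \mathbf{W}(P_{k_1} f, P_{k_2} g, s) }_{L^2_t L^2_x} \ls \jb{s 2^{2k}}^{-10}\,\min(s,2^{-2k_{\max}})\, \vn{P_{k_1} f}_{Y_1}\vn{P_{k_2} g}_{Y_2},
\ee
where $(Y_1,Y_2)$ runs over H\"older-admissible pairs such as $(L^2_t L^{\infty}_x, L^{\infty}_t L^2_x)$, $(L^{\infty}_t L^{\infty}_x, L^2_t L^2_x)$, $(L^4_t L^4_x,L^4_t L^4_x)$, chosen per regime. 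With $f=\pt_t A(0)$ and $g=\nabla\pt_t A(0)$ one has $\vn{P_{k_2} g}_{Y_2}\simeq 2^{k_2}\vn{P_{k_2}\pt_t A(0)}_{Y_2}$, and the derivative $2^{k_2}$ is overwhelmed by the gain $2^{-2k_{\max}}$ of $\mathbf{W}$ -- the heat-wave commutator shifts a derivative from high to low frequency, cf.\ Remark \ref{ComparisonAC}.

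\textbf{Step 2: dyadic estimates, high$\times$low and low$\times$high.}
Fix $k$ and split $\sum_{k_1,k_2}$ into the regimes $k_1\sim k\gtrsim k_2$, $k_2\sim k\gtrsim k_1$, and $k_1\sim k_2\gtrsim k$. In the first two one places the higher-frequency factor in $L^{\infty}_t L^2_x$ -- bounding $\vn{P_j \pt_t A^{df}}_{L^{\infty}_t L^2_x}\ls\ep\jb{2^j/N}^{1-\sg}$ by \eqref{divfree} and $\vn{P_j \pt_t A^{cf}}_{L^{\infty}_t L^2_x}\ls\ep 2^{-j/2}\jb{2^j/N}^{2(1-\sg)}$ by \eqref{curlfree} -- and the lower-frequency factor in $L^2_t L^{\infty}_x$ via \eqref{PkDtAL2Linfx}, or via \eqref{PkDtAcfL2Linfx}--\eqref{PkDtAcfL2L2} when it is $A^{cf}$ (where the improved $\tfrac14$-regularity of $A^{cf}$ enters). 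Summing the geometric series in the low frequency, the factor $\jb{s2^{2k}}^{-10}$ forces concentration at $2^k\lesssim s^{-1/2}$ and produces the weight $(2^k s^{1/2})^{1/2-}\jb{2^{2k}s}^{-8}$; the two $m$-weights multiply to $(N^2 s)^{-(1-\sg)}$ when both inputs are divergence-free, while a curl-free input contributes one further factor $m^{-1}\simeq(Ns^{1/2})^{-(1-\sg)}$ at the frequency $2^k\sim s^{-1/2}$ plus the extra $2^{-(k/2-)}$ summability coming from \eqref{PkDtAcfL2Linfx}--\eqref{PkDtAcfL2L2}, \eqref{curlfree} -- precisely the second term of \eqref{Pkw2L2}.

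\textbf{Step 3: the high$\times$high regime.}
This is the delicate case, since Lemma \ref{LemmaWfreq} decays only in the \emph{output} frequency, so for $2^{k_1}\sim 2^{k_2}\gg 2^k$ the $m$-weights alone do not make the sum over $k_1$ converge. For the divergence-free part one uses the bilinear Strichartz estimate \eqref{BilStrichartz}: from \eqref{divfree}, $\pt_t A^{df}_{k_i}\in \ep\, 2^{k_i/4}\jb{2^{k_i}/N}^{1-\sg}\,X^{1/4,\frac12+}$, so \eqref{BilStrichartz} bounds $\vn{P_k(\pt_t A^{df}_{k_1}\pt_t A^{df}_{k_2})}_{L^2_t L^2_x}$ with a $D^{1/2-}$ gain on the output; combined with $2^{k_2}\min(s,2^{-2k_{\max}})$ and $\jb{s2^{2k}}^{-10}$ from Step 1, summing over $k_1\sim k_2\ge k$ converges (using $\sg>\tfrac14$) and yields a contribution dominated by the first term of \eqref{Pkw2L2}, with $\ep^2$ in place of $\ep$. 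The high$\times$high interactions involving a curl-free factor are treated analogously using \eqref{PkDtAcfL2L2} and absorbed into the second term.

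\textbf{Step 4: summation in $k$, and the main obstacle.}
Summing \eqref{Pkw2L2}, the series $\sum_k 2^{(0+)k}(2^k s^{1/2})^{1/2-}\jb{2^{2k}s}^{-8}\big(1+2^{-(k/2-)}(Ns^{1/2})^{-(1-\sg)}\big)$ is geometric-type, peaks at $2^k\sim s^{-1/2}$, and is $\ls s^{0-}$ (the $s^{-\ep}$ loss coming from $2^{(0+)k}$ at the peak), which gives \eqref{w2L2}. The main difficulty is the high$\times$high regime of Step 3: because $\mathbf{W}$ localizes only the output frequency, summability over the much higher input frequencies must be recovered from bilinear Strichartz, and this step -- together with the contrast between \eqref{divfree} and \eqref{curlfree}--\eqref{PkDtAcfL2L2} -- is what produces both the $(2^k s^{1/2})^{1/2-}$ smoothing and the curl-free--driven second term. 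Everything else is routine Littlewood--Paley bookkeeping of the $m$-weights.
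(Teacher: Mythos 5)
Your outline follows essentially the same route as the paper: Littlewood--Paley decomposition of \eqref{w2form}, disposability of $\mathbf{W}$ via Lemma \ref{LemmaWfreq}, H\"older pairings $L^2_tL^\infty_x\times L^\infty_tL^2_x$ with \eqref{PkDtAL2Linfx}, \eqref{PkDtAcfL2Linfx}, \eqref{DtPkAdf} in the unbalanced-frequency regimes, and the bilinear Strichartz estimate \eqref{BilStrichartz} for the high-high $df\times df$ interaction; the bookkeeping of the $m$-weights and the summation in $k$ are as in the paper.

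The one place you deviate is the high-high interaction involving a curl-free factor, and as written this step does not close on the whole range $\sg>\frac{5}{6}$. You propose to use \eqref{PkDtAcfL2L2}, which is an $L^2_tL^2_x$ bound, so with the H\"older pairs you list the partner (say $\pt_t A^{df}_{k_2}$, $k_2\simeq k_1\gg k$) must go in $L^\infty_tL^\infty_x$, i.e.\ $\ls \ep\, 2^{\frac32 k_1}\jb{2^{k_1}/N}^{1-\sg}$ by Bernstein from \eqref{DtPkAdf}. Combining with \eqref{PkDtAcfL2L2} and the factor $2^{-k_1}$ from Lemma \ref{LemmaWfreq} (including the derivative on one input), the dyadic piece is $\ep^2\, 2^{-\frac14 k_1}\jb{2^{k_1}/N}^{3(1-\sg)}$, whose sum over $k_1$ diverges whenever $3(1-\sg)>\frac14$, i.e.\ for $\frac56<\sg<\frac{11}{12}$. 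The paper avoids this by pairing the curl-free factor in $L^2_tL^\infty_x$ via \eqref{PkDtAcfL2Linfx} against the other factor in $L^\infty_tL^2_x$, which gives $2^{-(\frac12-)k_1}\jb{2^{k_1}/N}^{3(1-\sg)}$ and converges precisely because $3(1-\sg)<\frac12$ for $\sg>\frac56$; alternatively you can keep \eqref{PkDtAcfL2L2} but must first apply Bernstein to the low output frequency $P_k$ ($L^1_x\to L^2_x$, gaining only $2^{\frac32 k}$ with $k\ll k_1$) and put the partner in $L^\infty_tL^2_x$. Either fix is routine, but some such extra ingredient is needed; the pairing implicit in your Step 3 is genuinely insufficient near $\sg=\frac56$.
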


\begin{remark} \label{RmkPkw2L2}
One can decompose the inputs of $ w^{(2)}_x(s) $ into $ \pt_t A^{cf} + \pt_t A^{df} $.
It is clear from the proof that the bound \eqref{Pkw2L2} is true also for the separate components of $ w^{(2)}_x(s) $, in particular for $ P_k w^{(2)}(\pt_t A^{df}, \pt_t A^{df}, s) $. 
\end{remark}

\begin{proof}
Doing a Littlewood-Paley decomposition and invoking Lemma \ref{LemmaWfreq} we need to bound
\be
P_k \mathcal{O}(\pt_t A_{k_1}(0), \pt_t A_{k_2}(0)) \cdot 2^{- k_{\max }} \left\langle s 2^{2 k}\right\rangle^{-10} \left\langle s^{-1} 2^{-2 k_{\max }}\right\rangle^{-1} 
\ee
in the two cases, without loss of generality:
\begin{enumerate}
\item
low-high $ 2^{k_1} \ls 2^k \simeq 2^{k_2} \simeq 2^{k_{\max }} $. Bound $ \pt_t A_{k_1}(0) $ in $ L^2 L^{\infty} $ using \eqref{PkDtAL2Linfx} and $ \pt_t A_{k_2}(0) $ in $ L^{\infty}  L^2 $.
Then one sums in $ k_1 $, obtaining the bound 
$$
\ep 2^{(0+)k}  \frac{ 2^{2k}s}{\jb{2^{2k}s}^{10}} \frac{1}{ (N^2 s)^{1-\sigma}  }. 
$$
\item
 high-high to low $ 2^k \ls 2^{k_1} \simeq 2^{k_2} \simeq 2^{k_{\max }} $. Split both inputs into $ \pt_t A^{cf} + \pt_t A^{df} $.  When at least one input is $ \pt_t A^{cf} $, place that input in $ L^2 L^{\infty} $ using \eqref{PkDtAcfL2Linfx} and the other one in $ L^{\infty}  L^2 $. After summing the high frequencies one obtains the bounds 
 $$
\ep  \frac{1}{2^{\frac{k}{2}-}} \left( \frac{2^k}{N} \right)^{3 (1-\sg)} \frac{1}{(2^{2k}s)^{10}} \qquad \text{or} \qquad  \ep \frac{s^{\frac{1}{4}- }}{ (N s^{\frac{1}{2}})^{3(1-\sigma)}}
 $$
 depending on whether $ s^{-\frac{1}{2}} = k(s) \leq k $ or $ k \leq k(s) $. 
 
 The remaining case $ P_k \mathcal{O}(\pt_t A^{df}_{k_1}(0), \pt_t A^{df}_{k_2}(0)) $ is bounded using the Bilinear Strichartz estimate \eqref{BilStrichartz}. After summation one can obtain either 
$$
\ep 2^{(0+)k}  \frac{ 2^{2k}s}{\jb{2^{2k}s}^{10}} \frac{1}{ (N^2 s)^{1-\sigma}  } \qquad \text{or} \qquad \ep 2^{(0+)k} \frac{ (2^k s^{\frac{1}{2}})^{\frac{1}{2}-}}{\jb{2^{2k}s}^{10}} \frac{1}{ (N^2 s)^{1-\sigma}  } 
$$ 
 depending on whether $ s^{-\frac{1}{2}} = k(s) \leq k $ or $ k \leq k(s) $.   
\end{enumerate}
\end{proof}

\subsection{Estimates for the curvature and the tension field}

\subsubsection{Preparations} For easy reference we record here the bounds on $ [t_0,t_1] \times \mb{R}^3 $ which we will use below. 
These follow from \eqref{curlfree}, \eqref{divfree}, \eqref{Strichartzz}, \eqref{PkDtAL2Linfx},  \eqref{ALinfty}, \eqref{HighCovDerF}, \eqref{SobGN}. 

% The following bounds hold on $ [t_0,t_1] \times \mb{R}^3  $

\begin{align}
\label{AcfL2Pk}
\vn{P_k A^{cf}(0)}_{L^{\infty}_t L^2_x} & \ls \ep 2^{-\frac{5}{4}k} \jb{ \frac{2^k}{N}}^{1-\sg} \\ 
\label{AcfL12}
s^{\frac{1}{8}} \vn{e^{s \Delta} A^{cf}(0)}_{ L^{\infty}_{t,x} }  & \ls \vn{e^{s \Delta} A^{cf}(0)}_{ L^{\infty}_t L^{12}_x} \ls \frac{ \ep}{ (N s^{\frac{1}{2}})^{1-\sigma}} \\
\label{DtPkAcf}
\vn{\pt_t P_k A^{cf}(0)}_{L^{\infty}_t L^2_x} & \ls \ep  2^{-\frac{k}{2}} \jb{ \frac{2^k}{N} }^{2 (1-\sg)} \\
\label{DtPkAdf}
\vn{\pt_t P_k A^{df}(0)}_{L^{\infty}_t L^2_x} & \ls \ep  \jb{ \frac{2^k}{N} }^{1-\sg} \\
\label{PkDtStr}
\vn{P_k \nabla_{t,x} A^{df}(0)}_{L^{2}_t L^{\infty}_x} & \ls 2^{(1+)k}  \jb{ \frac{2^k}{N}}^{1-\sg}  
\end{align}
\begin{align}
\label{esDAdfStr}
%\lpr \vn{P_k A^{df}(0)}_{L^{2}_t L^{\infty}_x} \ls 2^{(0+)k}  \jb{ \frac{2^k}{N}}^{1-\sg}, \rpr \quad  
\vn{e^{s \Delta} A^{df}(0)}_{L^{2}_t L^{\infty}_x} & \ls s^{0-} / (N s^{\frac{1}{2}})^{1-\sigma} \\
\label{esDL2Linf}
\vn{e^{s \Delta} \pt_t A_i (0)}_{L^{2}_t L^{\infty}_x} & \ls  s^{-\frac{1}{2}+} / (N s^{\frac{1}{2}})^{1-\sigma} \\
\label{ALinftytx}
\vn{A_i (s)}_{L^{\infty}_t  L^{\infty}_x} & \ls s^{-\frac{1}{4}} / (N s^{\frac{1}{2}})^{1-\sigma} \\
\label{Fs2}
\vn{F(s)}_{L^{\infty}_t L^{2}_x} & \ls \ep / (N s^{\frac{1}{2}})^{1-\sigma} \\
 \label{Fsinfty}
\vn{F(s)}_{L^{\infty}_t L^{\infty}_x} & \ls \ep s^{-\frac{3}{4}} / (N s^{\frac{1}{2}})^{1-\sigma} 
\end{align}

%\be \label{AcfL2Pk}
%\vn{P_k A^{cf}(0)}_{L^{\infty}_t L^2_x} \ls \ep 2^{-\frac{5}{4}k} \jb{ \frac{2^k}{N}}^{1-\sg}
%\ee

\subsubsection{Bounds on the curvature}

\begin{proposition} \label{FbilbdsProp} Under the assumptions above, one has:
\begin{align}
\label{Fbilbd1}
\vn{F^{bil}(s)}_{L^2_t L^{\infty}_x} \ls s^{-\frac{1}{4}+} /  (N s^{\frac{1}{2}})^{3(1-\sigma)} \ep \\
 \label{Fbilbd2}
\vn{F^{bil}(s)}_{L^2_t L^{2}_x} + s^{\frac{1}{2}} \vn{\pt_x F^{bil}(s)}_{L^2_t L^{2}_x}   \ls s^{\frac{1}{2}-\frac{1}{8}} /  (N s^{\frac{1}{2}})^{2(1-\sigma)} \ep
\end{align}
\end{proposition}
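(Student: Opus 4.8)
The plan is to bound the three pieces of the integrand in \eqref{Fbil} (namely $[F,F]$, $[A^{\ell},\pt_{\ell}F]$, $[A^{\ell},[A_{\ell},F]]$) in a norm adapted to the Duhamel operator $\int_0^{s_1}e^{(s_1-s)\Delta}(\cdot)\dd s$, and then to invoke the heat-kernel smoothing bounds. For \eqref{Fbilbd1} the plan is to use $(s_1-s)^{\frac{3}{4}}e^{(s_1-s)\Delta}:L^2_x\to L^\infty_x$, so that it suffices to estimate each term in $L^2_t L^2_x$ with a power $s^{-\frac{3}{4}+}/(N^2 s)^{3(1-\sg)/?}$-type weight, and then integrate in $s$ (noting $\int_0^{s_1}(s_1-s)^{-\frac34}s^{-1+}\dd s\simeq s_1^{-\frac14+}$). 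For \eqref{Fbilbd2}, the first estimate uses $e^{(s_1-s)\Delta}$ bounded on $L^2_x$, and the derivative estimate uses $(s_1-s)^{-\frac12}s^{-1/2+}\dd s$-type convolution (with a half-derivative landing on the kernel costing $(s_1-s)^{-1/2}$).

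First I would handle $[F,F]$: by Hölder place one factor in $L^\infty_x$ using \eqref{Fsinfty} (which gives $\ep s^{-3/4}/(Ns^{1/2})^{1-\sg}$) and the other in $L^2_x$ via \eqref{Fs2} (giving $\ep/(Ns^{1/2})^{1-\sg}$) — with the time integrability coming from the space-time Strichartz-type bound \eqref{Fbilbd1} being bootstrapped, i.e. actually I should split $F=e^{s\Delta}F(0)+F^{bil}$ and estimate the $e^{s\Delta}F(0)$ contribution using the linear bounds, and the $F^{bil}$ contribution using the already-established $L^\infty_x$ bound \eqref{FbilL2} (for the $L^\infty_t$ version) together with $L^2_t$ gains; to get genuine $L^2_t$ decay for $[F,F]$ one of the two $F$'s should be estimated in $L^2_tL^\infty_x$, which for $F^{bil}$ is \eqref{Fbilbd1} itself (so this term enters the bootstrap) and for the linear part follows from $\pt_tA$ Strichartz \eqref{PkDtStr}, \eqref{esDAdfStr}, \eqref{esDL2Linf} after writing $F_{0i}=\pt_tA_i$ at $s=0$ and expanding $F_{ij}$ in $\nabla A$. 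Next, $[A^{\ell},\pt_{\ell}F]$: place $A^{\ell}$ in $L^\infty_{t,x}$ via \eqref{ALinftytx} (cost $s^{-1/4}/(Ns^{1/2})^{1-\sg}$) and $\pt_x F$ in $L^2_tL^2_x$, where $\pt_x F\approx \pt_x e^{s\Delta}F(0)+\pt_x F^{bil}$; the linear part gives $s^{-1/2}\cdot(\text{energy})$ and the bilinear part is controlled by the $\pt_x F^{bil}$ bound in \eqref{FbilL2} or by bootstrapping \eqref{Fbilbd2}. The cubic term $[A^{\ell},[A_{\ell},F]]$ is the mildest: put both $A$'s in $L^\infty_{t,x}$ via \eqref{ALinftytx} and $F$ in $L^2_tL^2_x$ (or $L^\infty_tL^2_x$ for the pointwise-in-$t$ version), picking up $s^{-1/2}/(Ns^{1/2})^{2(1-\sg)}$ — this decays fast enough to integrate against any heat-kernel singularity.

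After assembling, the $s$-integral $\int_0^{s_1}(s_1-s)^{-\frac34}\cdot[\text{bound}](s)\dd s$ with $[\text{bound}](s)\sim \ep s^{-1+}/(N^2 s)^{\text{(power)}(1-\sg)}$ produces $\ep s_1^{-1/4+}/(N^2 s_1)^{\text{power}\cdot(1-\sg)}$ after pulling the (slowly varying) weight $(N^2 s)^{-(1-\sg)}$ essentially out of the integral at $s\simeq s_1$; one must check that the worst power of $(N^2 s)^{-(1-\sg)}$ among the three terms is the claimed $3(1-\sg)$ in \eqref{Fbilbd1} (coming from $[F,F]$ with one $F$ being $F^{bil}$, hence a factor $(N^2 s)^{-(1-\sg)}$ from \eqref{FbilL2}, times $(Ns^{1/2})^{-2(1-\sg)}$ from the two linear $F$'s — but for a cleaner argument I'd take the bilinear $F$ in $L^\infty$ and bound the resulting single power, then track carefully) and $2(1-\sg)$ in \eqref{Fbilbd2}. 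The main obstacle I anticipate is \textbf{the bootstrap/self-consistency}: $F^{bil}$ appears inside its own defining integral through the $[F,F]$ and $[A,\pt_x F]$ terms, so \eqref{Fbilbd1}, \eqref{Fbilbd2} are not proved by a single direct estimate but by a continuity argument in $s_1$ (or by iterating the already-established crude bound \eqref{FbilL2} to upgrade the time-integrability), and one must be careful that the small parameter $\ep$ and the smallness of $s_0=N^{-2}$ (so that e.g. $s_0^{1/8}\ll 1$) suffice to absorb the bootstrap terms; getting the precise powers of $s$ (in particular the $-\frac18$ loss in \eqref{Fbilbd2}, which presumably traces to using $L^{12}_x$ Sobolev embedding \eqref{AcfL12} or the $L^2_tL^2_x$ bilinear Strichartz \eqref{BilStrichartz} at a suboptimal exponent to ensure $k$-summability near $\sg=\frac56$) right is the delicate bookkeeping part.
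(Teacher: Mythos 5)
There is a genuine gap, and it sits exactly where the paper has to work hardest: the term $[A^{\ell},\pt_{\ell}F]$. Your plan is to put $A$ in $L^\infty_{t,x}$ via \eqref{ALinftytx} (cost $\ep s^{-1/4}/(Ns^{1/2})^{1-\sg}$) and $\pt_x F$ in $L^2_tL^2_x$ via the energy bound (cost $\ep s^{-1/2}/(Ns^{1/2})^{1-\sg}$). After the Duhamel integration with the $(s_1-s)^{-3/4}$ kernel this produces at best $\ep^2 s^{-1/2}/(Ns^{1/2})^{2(1-\sg)}$, and this does \emph{not} imply the claimed bound $\ep s^{-1/4+}/(Ns^{1/2})^{3(1-\sg)}$: the ratio of the two is $\ep N^{1-\sg}s^{\frac{1-\sg}{2}-\frac14-}$, which blows up as $s\to 0$ because $\frac{1-\sg}{2}<\frac14$ for $\sg>\frac12$. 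So a pure H\"older argument loses an essential quarter power of $s$ (equivalently, it produces no genuine $L^2_t$ gain). The paper's proof avoids this by splitting $A^{\ell}=e^{s\Delta}A^{df}(0)+A^{bil}+e^{s\Delta}A^{cf}(0)$ and $\pt_\ell F$ accordingly: the $A^{df}$/$A^{bil}$ pieces are put in $L^2_tL^\infty_x$ via \eqref{esDAdfStr}, \eqref{AbilL2Linf} (Strichartz in $t$, not $L^\infty_t$), the $A^{cf}$-against-$F^{bil}$/$A^{cf}$-pieces use \eqref{AcfL12} (which only costs $s^{-1/8}$, not $s^{-1/4}$), and, crucially, the remaining interaction $[e^{s\Delta}A^{cf}(0),\,e^{s\Delta}\pt_x\pt_{t,x}A^{df}(0)]$ is not estimated term-by-term at fixed $s'$ at all: after performing the $s'$-integration it is recognized as the bilinear heat multiplier $\mathbf{W}(A^{cf}(0),\pt_x\pt_{t,x}A^{df}(0))$ (see \eqref{Wexpterm}, \eqref{Wdef}), whose frequency concentration (Lemma \ref{LemmaWfreq}) is then exploited through a Littlewood--Paley trichotomy with \eqref{AcfL2Pk}, \eqref{PkDtStr}, Bernstein and Strichartz. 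This structural step is absent from your proposal, and it is also the source of both the $3(1-\sg)$ weight in \eqref{Fbilbd1} and the $s^{-1/8}$ loss in \eqref{Fbilbd2} (the low-high case with $A^{cf}$ at low frequency placed in $L^\infty_tL^{12+}_x$), which you could only guess at.

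Two smaller points. First, the bootstrap/continuity argument you anticipate for the $[F,F]$ term is unnecessary: the paper never feeds \eqref{Fbilbd1}--\eqref{Fbilbd2} back into their own proof; it bounds $[F^{bil},F]$ by the already-established crude bounds \eqref{FbilL2} (for $F^{bil}$ in $L^\infty_tL^2_x$) and \eqref{Fsinfty} (for $F$ in $L^\infty_{t,x}$), and the linear pieces $[e^{s\Delta}\pt_tA(0),F]$, $[e^{s\Delta}\pt_xA^{df}(0),F]$ by \eqref{esDL2Linf}, \eqref{esDAdfStr} against \eqref{Fs2}; so the $L^2_t$ integrability comes from the linear Strichartz factors, not from self-reference. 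Second, your phrase ``the linear part gives $s^{-1/2}\cdot(\text{energy})$'' hides the fact that $F(t,\cdot,0)$ is only at $H^{\sg-1}$ regularity, so any such bound must carry the $(Ns^{1/2})^{-(1-\sg)}$ weight; once that is tracked, the shortfall described above becomes unavoidable without the $\mathbf{W}$-structure.
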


\begin{proof}
{\bf Part 1.} We begin with \eqref{Fbilbd1}. 
If $ \mathcal{T} $ is one the three terms in the bracket in \eqref{Fbil}, then we bound either  
$$
\vn{ \int_0^s  e^{(s-s')\Delta} \mathcal{T}(s')  \dd s'  }_{L^2 L^{\infty}}  \quad \text{or} \quad \int_0^s \frac{1}{(s-s')^{\frac{3}{4}}} \vn{ \mathcal{T} (s')}_{L^2 L^{2}} \dd s'
$$
since one has $ (s-s')^{\frac{3}{4}}  e^{(s-s')\Delta} : L^2_x \to L^{\infty}_x $. Let 
\begin{enumerate}
\item $ \mathcal{T} =  [F,F] $, which we place in $ L^2 L^{2} $. For: 

$ [F^{bil},F](s') $ we use \eqref{FbilL2} and \eqref{Fsinfty}, while for 

$ [e^{s' \Delta} \pt_t A_{\ell} (0), F] $, $ [e^{s' \Delta} \pt_x A^{df} (0), F] $ we use \eqref{esDL2Linf} or Strichartz/\eqref{esDAdfStr} and \eqref{Fs2}.

%Similarly we argue for $ [e^{s' \Delta} \pt_x A^{cf} (0), F^{bil}+ e^{s' \Delta} \pt_t A(0) + e^{s' \Delta} \pt_x A^{df} (0) $.

% For $ [e^{s' \Delta} \pt_x A^{cf} (0),e^{s' \Delta} \pt_x A^{cf} (0)] $ use $ L^{\infty} L^4 \times  L^{\infty} L^4 $, Bernstein and \eqref{AcfL2Pk}. 

\item $ \mathcal{T} = [ A^{\ell}, \pt_{\ell} F_{\al \beta} ] $ which we separate as follows: 

$ [e^{s' \Delta} A^{df}(0) + A^{bil}(s'), \mathbf{D}_{\ell} F ] $  as $ L^2 L^{\infty} \times  L^{\infty} L^2 \to L^2 L^2 $ by  \eqref{esDAdfStr}, \eqref{AbilL2Linf}. 

$ [e^{s' \Delta} A^{cf}(0), \pt_{x} F^{bil} + e^{s' \Delta} \pt_{x} \pt_{t,x} A^{cf}(0) + e^{s' \Delta} \pt_{x} [A,A](0) ] $ as $   L^{\infty} L^{\infty} \times L^{\infty} L^2 \to L^2 L^2 $ using \eqref{AcfL12}, \eqref{DtPkAcf}, \eqref{FbilL2}.  

$ [e^{s' \Delta} A^{cf}(0),  e^{s' \Delta} \pt_{x} \pt_{t,x} A^{df}(0)] $ which after the $ s' $ integration takes the form 
\be  \label{Wexpterm}
\mathbf{W}(A^{cf}(0), \pt_{x} \pt_{t,x} A^{df}(0)) 
\ee 
where $ \mathbf{W} $ is defined in \eqref{Wdef}. We use Lemma \ref{LemmaWfreq} and the Littlewood Paley trichotomy to bound this term in $ L^2 L^{\infty} $. In the low-high and high-low input frequencies cases we place $ A^{cf} $ in $ L^{\infty} L^{\infty} $ and $ \pt_{t,x}A^{df} $ in $ L^{2} L^{\infty} $ using Strichartz. In the high-high to low case use Bernstein on the output low frequency and $  L^{\infty} L^2  \times L^{2} L^{\infty} \to L^2 L^2 $ with \eqref{AcfL2Pk} and \eqref{PkDtStr}.

\item $ \mathcal{T}=  [A^{\ell}, [A_{\ell},  F ]] $ which is placed in $ L^2 L^{2} $ as follows: 

For $ [e^{s' \Delta} A^{df}(0) + A^{bil}(s'), [A,F](s')] $ as $ L^2 L^{\infty} \times L^{\infty} L^{\infty}  \times L^{\infty} L^2 $ using  \eqref{esDAdfStr}, \eqref{AbilL2Linf}, \eqref{ALinftytx}, \eqref{Fs2}. 

Similarly one deals with $ [ e^{s' \Delta} A^{cf}(0), [ e^{s' \Delta} A^{df}(0) + A^{bil}(s') , F(s') ] ]. $ 

The remaining term $ [e^{s' \Delta} A^{cf}(0), [e^{s' \Delta} A^{cf}(0),F(s')]] $ is bounded as $ L^{\infty} L^{12} \times L^{\infty} L^{12} \times L^{\infty} L^{3} $ using \eqref{AcfL12} and interpolating between \eqref{Fs2} and \eqref{Fsinfty}. 
\end{enumerate}

\

{\bf Part 2.} We continue with \eqref{Fbilbd2}. We bound the bracket from  \eqref{Fbil} in $ L^2 L^2 $. This suffices also for the $ \pt_x F^{bil}_{0 i} $ bound because  $ \pt_x e^{(s_1-s)\Delta} : L^2_x \to (s_1-s)^{-\frac{1}{2}}  L^2_x $.  The majority of terms in \eqref{Fbil} were already estimated in $ L^2 L^2 $ in Part 1. The term that remains to be estimated in $ L^2 L^2 $ is \eqref{Wexpterm}.

Again we use Lemma \ref{LemmaWfreq} and the Littlewood Paley trichotomy. In the high-high to low and high-low cases use $  L^{\infty} L^2  \times L^{2} L^{\infty} \to L^2 L^2 $ with \eqref{AcfL2Pk} and \eqref{PkDtStr}. In the low-high case ($ A^{cf} $ - low frequency) place $ A^{cf}(0) $ in  $ L^{\infty} L^{12+} $ and $  \pt_{t,x} A^{df}(0) $ in $  L^{\infty} L^{\frac{12}{5}-} $ using Bernstein. This last case is responsible for the loss of $ s^{-\frac{1}{8}} $ in \eqref{Fbilbd2}, which can probably be removed. 
\end{proof}

\begin{corollary} \label{CorFbddd} One has:
\be \label{Fbddd}
\vn{F_{\al \beta}(s)}_{L^2_t L^{\infty}_x} \ls \ep s^{-\frac{1}{2}+} /  (N s^{\frac{1}{2}})^{(1-\sigma)} 
\ee 
\end{corollary}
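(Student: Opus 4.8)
The statement \eqref{Fbddd} should follow directly by combining the decomposition \eqref{Fdec} with the bilinear bound \eqref{Fbilbd1} just established. The plan is to estimate the two pieces of $F_{\al\beta}(s) = e^{s\Delta}F_{\al\beta}(0) + F^{bil}_{\al\beta}(s)$ separately in $L^2_t L^\infty_x$.

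For the bilinear term, \eqref{Fbilbd1} gives $\vn{F^{bil}(s)}_{L^2_t L^\infty_x} \ls \ep \, s^{-\frac14+}/(N s^{\frac12})^{3(1-\sg)}$, and since $s \leq s_0 = N^{-2}$ we have $(Ns^{\frac12})^{3(1-\sg)} = (Ns^{\frac12})^{(1-\sg)} \cdot (Ns^{\frac12})^{2(1-\sg)} \geq (Ns^{\frac12})^{(1-\sg)}$ (as $Ns^{\frac12}\le 1$... wait, that inequality goes the wrong way) --- more carefully, since $Ns^{\frac12}\le 1$, having a larger power of $(Ns^{\frac12})$ in the denominator is \emph{worse}, but the gain is in the power of $s$: $s^{-\frac14+}$ versus $s^{-\frac12+}$. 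Writing $(Ns^{\frac12})^{3(1-\sg)} = (Ns^{\frac12})^{(1-\sg)}(N^2 s)^{(1-\sg)}$ and using $N^2 s \le 1$ so $(N^2s)^{-(1-\sg)}\ge 1$, one sees the bilinear bound is of the form $\ep s^{-\frac14+}(N^2s)^{-(1-\sg)}/(Ns^{\frac12})^{(1-\sg)}$; since $s^{-\frac14+}(N^2 s)^{-(1-\sg)} = s^{-\frac14+} N^{-2(1-\sg)} s^{-(1-\sg)}$ and $1-\sg < \frac16$, the exponent of $s$ here is $-\frac14 - (1-\sg) + > -\frac12+$, and the extra factor $N^{-2(1-\sg)}\le 1$ only helps, so the bilinear contribution is absorbed into the claimed RHS $\ep s^{-\frac12+}/(Ns^{\frac12})^{(1-\sg)}$. (I would double-check the bookkeeping of the $N$ and $s$ powers carefully here; this is the one spot where signs of exponents must be tracked precisely.)

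For the linear term $e^{s\Delta}F_{\al\beta}(0)$, I would do a Littlewood--Paley decomposition and use $\vn{e^{s\Delta}P_k f}_{L^\infty_x} \ls 2^{\frac32 k} e^{-c s 2^{2k}}\vn{P_k f}_{L^2_x}$ (Bernstein plus heat kernel decay), together with the energy-type bound on $F_{\al\beta}(0)$: from \eqref{HighCovDerF} at $s\to 0$ (or more directly from the assumed $\mathcal{IE}$ control and Lemma \ref{IEquivSum}-type reasoning) one has $\vn{P_k F_{\al\beta}(0)}_{L^2_x} \ls \ep\, \al_k\, \jb{2^k/N}^{1-\sg}$ with $\vn{(\al_k)}_{\ell^2}\ls 1$, uniformly in $t$ (hence also in $L^2_t$ on the unit interval $[t_0,t_1]$). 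Then $\vn{e^{s\Delta}F(0)}_{L^2_t L^\infty_x} \ls \ep \sum_k 2^{\frac32 k} e^{-cs 2^{2k}} \al_k \jb{2^k/N}^{1-\sg}$; splitting at $2^k \simeq N$ and at $2^k \simeq s^{-\frac12}$, Cauchy--Schwarz in $k$, and the elementary estimate $\sum_k 2^{\frac32 k} e^{-cs2^{2k}} \ls s^{-\frac34}$ (with the $\jb{2^k/N}^{1-\sg}$ factor converting a power of $N$ into the $(Ns^{\frac12})^{-(1-\sg)}$ weight for the high frequencies) yields a bound $\ls \ep s^{-\frac34+\frac14}/(Ns^{\frac12})^{(1-\sg)} \cdot(\text{possibly }s^{0-})$; since $2 \times \frac{3}{2}\cdot\frac12 = \frac{3}{2}$ but we only take one power of $L^2_t$... actually the $s^{-\frac34}$ must be reconciled with the target $s^{-\frac12+}$, so one uses that $F(0)$ additionally lies in $\dot H^{\sg}$ type spaces giving an extra $s^{\frac14-}$ smoothing: precisely $\vn{e^{s\Delta}P_k F(0)}_{L^\infty_x}\ls 2^{(\frac32-)k} s^{0-} e^{-cs2^{2k}} 2^{(0+)k}\vn{P_kF(0)}_{L^2}$, trading $2^{k/2}$ of Bernstein against $s^{-1/4}$ of the heat semigroup to land on $s^{-\frac12+}$.

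\textbf{Main obstacle.} The genuinely nontrivial input is \eqref{Fbilbd1}, which is already proven in Proposition \ref{FbilbdsProp}; given that, \eqref{Fbddd} is essentially bookkeeping. The one place requiring care is verifying that the combination $s^{-\frac14+}/(Ns^{\frac12})^{3(1-\sg)}$ from the bilinear term and the linear $e^{s\Delta}F(0)$ contribution both fit under $\ep s^{-\frac12+}/(Ns^{\frac12})^{(1-\sg)}$ simultaneously, which hinges on $s\le s_0 = N^{-2}$ and $1-\sg$ small (i.e. $\sg>\frac56$); I expect this to be the step where one must be scrupulous about the exponents of $N$ versus $s$, but no new analytic idea is needed.
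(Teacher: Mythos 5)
Your reduction to the decomposition \eqref{Fdec} and the treatment of the bilinear piece via \eqref{Fbilbd1} match the paper, and your exponent bookkeeping for $F^{bil}$ is correct. The gap is in the linear term $e^{s\Delta}F_{\al\beta}(0)$. Using only fixed-time information of the type $\vn{P_k F(0)}_{L^2_x}\ls \ep\,\al_k\jb{2^k/N}^{1-\sg}$ together with Bernstein and heat-kernel decay, the best possible bound is
\[
\vn{e^{s\Delta}F(0)}_{L^2_t L^\infty_x}\ \ls\ \vn{e^{s\Delta}F(0)}_{L^\infty_t L^\infty_x}\ \ls\ \ep\, s^{-\frac34}/(Ns^{\frac12})^{1-\sg},
\]
coming from the frequencies $2^k\simeq s^{-1/2}$, where the heat factor $e^{-cs2^{2k}}$ is $O(1)$ and Bernstein $L^2_x\to L^\infty_x$ costs exactly $2^{3k/2}\simeq s^{-3/4}$. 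Your closing step, "trading $2^{k/2}$ of Bernstein against $s^{-1/4}$ of the heat semigroup," is not available at that critical frequency, and membership of $F(0)$ in $\dot H^{\sg}$-type spaces only adjusts the $(Ns^{1/2})$ weight, not the $s^{-3/4}$. The gain from $s^{-3/4}$ to $s^{-1/2+}$ is precisely the gain of an $L^2_tL^\infty_x$ wave-Strichartz estimate over a fixed-time Sobolev/Bernstein estimate; it genuinely requires the space-time ($X^{s,b}$) structure of the solution at $s=0$ from Proposition \ref{ModLWP}, and it is needed in the subsequent applications (e.g.\ Lemma \ref{Lw3L2L1} and the bound \eqref{quadt1}), so $s^{-3/4}$ does not suffice. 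This is exactly what the paper does: in the temporal gauge $F_{0i}(s)=e^{s\Delta}\pt_t A_i(0)+F^{bil}_{0i}(s)$ and one invokes \eqref{esDL2Linf}, while $F_{k\ell}(s)=e^{s\Delta}(\pt_k A^{df}_\ell-\pt_\ell A^{df}_k)(0)+e^{s\Delta}[A_\ell,A_k](0)+F^{bil}_{k\ell}(s)$, with the curl part handled by the Strichartz bound \eqref{PkDtStr} and the quadratic part by Strichartz for $A^{df}$ or the extra regularity of $A^{cf}$ with H\"older and Bernstein.

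A secondary point: your fixed-time envelope $\vn{P_kF(0)}_{L^2}\ls\ep\,\al_k\jb{2^k/N}^{1-\sg}$ does not follow from \eqref{HighCovDerF} "at $s\to 0$" (that bound degenerates as $s\to 0$, since the weight $(Ns^{1/2})^{1-\sg}$ vanishes) nor from $\mathcal{IE}$ control alone, which is gauge-invariant and carries no frequency localization at $s=0$; it requires the gauge-dependent bounds $I A_x[t]\in\ep L^\infty_t(H^1\times L^2)$ from Proposition \ref{ModLWP}, plus a product estimate for the quadratic part $[A_i,A_j](0)$ of $F_{ij}(0)$, which your write-up does not address. But even granting that envelope, the main obstruction above remains: without the $X^{s,b}$/Strichartz input the linear term cannot be brought below $s^{-3/4}$.
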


\begin{proof}
From \eqref{Fdec} and the temporal condition $ A_0(s=0)=0 $ one writes
$$
F_{0i} (s) = e^{s \Delta} \pt_t A_i(0) + F^{bil}_{0i} (s) 
$$
and the bound follows from \eqref{esDL2Linf} and \eqref{Fbilbd1}. Similarly 
\begin{align*}
F_{k \ell} (s) & = e^{s \Delta} (\pt_k A_\ell - \pt_{\ell} A_k + [A_{\ell},A_k])(0) + F^{bil}_{k \ell} (s)  \\
& = e^{s \Delta} (\pt_k A_\ell^{df} - \pt_{\ell} A_k^{df} )(0) +  e^{s \Delta} [A_{\ell},A_k](0) + F^{bil}_{k \ell} (s) 
\end{align*}
Now use \eqref{PkDtStr} and  \eqref{Fbilbd1} again. For $ e^{s \Delta} [A_{\ell},A_k](0) $ use either Strichartz when we have $ A^{df} $ or use the better regularity of $ A^{cf} $, together with H\"older and Bernstein. 
\end{proof}

\subsubsection{Bounds on the tension field $w$} \ 

\begin{lemma} \label{wLinfL1}
Under the assumptions above, one has:
\be \label{wLinfL1eq}
\vn{w_x(s)}_{L^{\infty}_t L^1_x} + s^{\frac{1}{2}} \vn{\pt_x w_x(s)}_{L^{\infty}_t L^1_x} \ls s^{\frac{1}{2}} / (N^2 s)^{1-\sigma} \ep
\ee
\end{lemma}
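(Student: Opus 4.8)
The plan is to bound the three pieces of the tension field $ w $ coming from the decomposition \eqref{wDec}, $ w = w^{(2)} + w^{(3)} $, together with the self-interaction terms, in the $ L^\infty_t L^1_x $ norm (and the $ s^{1/2} \pt_x $ variant), always at a fixed time $ t \in [t_0,t_1] $. The Duhamel representation $ w^{(j)}(s_1) = \int_0^{s_1} e^{(s_1-s)\Delta}(\cdots)(s)\dd s $ together with the heat kernel bounds $ e^{(s_1-s)\Delta} : L^1_x \to L^1_x $ and $ \pt_x e^{(s_1-s)\Delta} : L^1_x \to (s_1-s)^{-1/2} L^1_x $ reduces everything to estimating the nonlinearities $ \mathcal{N}^{(2)}, \mathcal{N}^{(3)}, \mathcal{N}^{self} $ in $ L^1_s L^1_x $ with the appropriate weight $ (N^2 s)^{1-\sg} s^{-1/2} $, and the bracket $ \int_0^{s_1}(s_1-s)^{-1/2}\vn{\cdots(s)}_{L^1_x}\dd s $ for the derivative bound. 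The key tool for producing $ L^1_x $ bounds is that all the quadratic terms are products of two factors each controllable in $ L^2_x $, so Cauchy–Schwarz gives $ L^1_x $.

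First I would treat $ \mathcal{N}^{(2)} $, i.e. the leading term $ w^{(2)} $. By \eqref{w2form}--\eqref{Wdef} it is $ \mathbf{W}(\pt_t A^\ell(0), \pt_i\pt_t A_\ell(0) - 2\pt_\ell \pt_t A_i(0)) $, and I would run the Littlewood–Paley trichotomy exactly as in the proof of \eqref{Pkw2L2}, invoking Lemma \ref{LemmaWfreq} to dispose of the symbol and absorb the $ 2^{-2k_{\max}} $ gain. Placing one input in $ L^\infty_t L^2_x $ via \eqref{DtPkAdf}, \eqref{DtPkAcf} and the other in $ L^\infty_t L^2_x $ as well, Cauchy–Schwarz in $ x $ plus summation over frequencies yields the $ L^\infty_s L^1_x $ bound with weight $ s^{1/2}/(N^2 s)^{1-\sg} $; in fact one can simply integrate the pointwise-in-$s$ bound analogous to \eqref{w2L2} but measured in $ L^1_x $, where the extra $ 2^{-2k_{\max}} $ from $ \mathbf{W} $ and the $ \jb{2^{2k}s}^{-8} $ decay make the $ k $-sum converge and give a clean $ s^{1/2} $ power. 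The $ \pt_x w^{(2)} $ bound follows by keeping one extra derivative on the output, which the symbol weight $ \jb{s2^{2k}}^{10} $ in Lemma \ref{LemmaWfreq} easily absorbs, costing only $ s^{-1/2} $.

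Next I would handle $ w^{(3)} $ and $ \mathcal{N}^{self} $. For $ \mathcal{N}^{(3)} $ from \eqref{NonlinearityW}: the term $ [F^{bil}_{0\ell}, \mathbf{D}_x F_{0x}] $ is bounded $ L^\infty_t L^2_x \times L^\infty_t L^2_x \to L^\infty_t L^1_x $ using \eqref{FbilL2} and \eqref{HighCovDerF} with $ m=1 $; the term $ [e^{s\Delta}\pt_t A^\ell(0), \pt_x F^{bil}_{0x} + [A_x, F_{0x}]] $ similarly pairs \eqref{esDL2Linf}-type or energy control of $ \pt_t A(0) $ in $ L^2_x $ with $ L^2_x $ control of $ \pt_x F^{bil} $ (from \eqref{FbilL2}) and of $ [A_x, F_{0x}] $ (from \eqref{ALinftytx}, \eqref{Fs2} via Hölder). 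For $ \mathcal{N}^{self} = 2[F_k^{\ \ell}, w_\ell] + 2[A^\ell, \pt_\ell w_k] + [A^\ell,[A_\ell,w_k]] $, I would estimate these in $ L^1_s L^1_x $ using the already-available bounds \eqref{HighCovDerFw} on $ w $ and its covariant derivative in $ L^2_x $, the curvature bounds \eqref{Fs2}, and $ L^\infty_x $ control of $ A $ from \eqref{ALinftytx}; the resulting contribution comes with a positive power of $ s_1 $ (an $ \ep $-small factor), so one closes by absorbing into the LHS — or, since $ w^{self} $ only reintroduces $ w $ itself, one runs the whole estimate as a fixed-point/bootstrap in the $ L^\infty_s L^1_x \cap (\text{weighted } \pt_x) $ norm.

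The main obstacle I anticipate is bookkeeping the numerology in the $ \mathcal{N}^{(2)} $ estimate: one must check that after summing over all three frequency regimes the weight genuinely comes out as $ s^{1/2}/(N^2 s)^{1-\sg} $ and not something worse, which requires using the $ df $/$ cf $ splitting of the inputs and the sharper curl-free bounds \eqref{DtPkAcf} in the high-high-to-low regime (where $ (N^2 s)^{1-\sg} $ rather than $ (N s^{1/2})^{2(1-\sg)} $ is the right target), much as in the proof of \eqref{Pkw2L2}. The $ L^1_x $ setting is actually more forgiving than the $ L^2 L^2 $ estimates already carried out, since we never need bilinear Strichartz — plain Cauchy–Schwarz in $ x $ suffices — so the only real work is confirming that none of the frequency sums lose the decay, and that the $ \pt_x $ version costs exactly $ s^{-1/2} $ and no more.
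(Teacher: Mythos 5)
Your route through the decomposition $w = w^{(2)}+w^{(3)}$ and the bilinear symbol $\mathbf{W}$ is much heavier than what is needed, and as written it has a genuine gap in the treatment of the leading term. The lemma is a \emph{fixed-time} statement, so the only bounds available for the inputs of $\mathbf{W}$ are the $L^{\infty}_t L^2_x$ bounds \eqref{DtPkAdf}, \eqref{DtPkAcf}; you cannot use the $L^2$-in-time tools (Strichartz \eqref{PkDtAL2Linfx}, \eqref{PkDtAcfL2Linfx}, bilinear Strichartz \eqref{BilStrichartz}) that make the frequency sums converge in the proof of \eqref{Pkw2L2}, which is the proof you propose to imitate. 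Since \eqref{DtPkAdf} has no decay in $k_1$ for $2^{k_1}\leq N$, the dyadic sum over the low input frequency in the low--high $df$--$df$ interaction (and likewise the sum over the output frequency $k$ in the high--high-to-low regime, where $\jb{s2^{2k}}^{-10}$ gives no gain for $2^k \leq s^{-1/2}$) diverges logarithmically, so "plain Cauchy--Schwarz plus summation over frequencies" produces the target times a factor like $\log N$ or $\log(1/s)$ rather than the clean power $s^{\frac12}/(N^2 s)^{1-\sg}$. Your proposed fix (sharper $cf$ bounds in the high--high regime) does not touch the problematic $df$ low-frequency input; avoiding the loss would require keeping the low-frequency factor as a single block and a variant of Lemma \ref{LemmaWfreq} for non-dyadic inputs, i.e.\ extra work the proposal does not supply. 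A minor related slip: \eqref{esDL2Linf} is an $L^2_t L^{\infty}_x$ bound and cannot be invoked at fixed $t$ for $\mathcal{N}^{(3)}$ (the fixed-time $L^2_x$ bound on $e^{s\Delta}\pt_t A(0)$ does work there).

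The paper's proof shows the decomposition is unnecessary precisely because the $L^1_x$ norm only asks for Cauchy--Schwarz of two $L^2_x$ factors that are already heat-flow smoothed: the first inequality is \eqref{L1bdw}, proved from the covariant equation \eqref{0CovParaw} with source $[F,\mathbf{D}_x F]$, the comparison principle (Lemma \ref{Comparison}), the gauge-invariant bounds \eqref{HighCovDerF} with $m=0,1$ giving $\vn{[F,\mathbf{D}_xF](s)}_{L^1_x}\ls \ep^2 s^{-\frac12}/(N^2s)^{1-\sg}$ (which integrates in $s$ to exactly the stated weight, with no Littlewood--Paley sums and hence no logarithms), and an absorption of $[F,w]$ as in \eqref{abstrm}; the derivative bound then follows from the DeTurck-gauge Duhamel formula using $\pt_x e^{(s_1-s)\Delta}:L^1_x\to (s_1-s)^{-\frac12}L^1_x$, bounding the bracket $[F,\mathbf{D}_xF]+[F,w]+[A,[A,w]]+[A,\pt_x w]$ in $L^1_x$ via \eqref{ALinftytx}, \eqref{Fsinfty} and the first part, and absorbing the $[A,\pt_x w]$ term. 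Your self-interaction/absorption step and your use of heat-kernel smoothing for the $\pt_x$ bound do match the paper; the problem is specifically the frequency-decomposed treatment of the quadratic part, which both loses the clean constant and discards the smoothing of $F(s)$ that makes the direct argument one line.
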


\begin{proof} Fix a time $ t \in [t_0,t_1] $. 
The first inequality was already proved in \eqref{L1bdw}.  

For the second inequality we write schematically 
$$
\pt_x w(s_1) = \int_0^{s_1} \pt_x e^{(s_1-s) \Delta} \lpr [F, \mathbf{D}_{x} F] +  [F, w] + [A, [A,  w ]]+  [A, \pt_{x} w]   \rpr  \dd s
$$
We use the fact that  $ \pt_x e^{(s_1-s)\Delta} : L^1_x \to (s_1-s)^{-\frac{1}{2}}  L^1_x $ so bound the round bracket in $ L^1_x $. The main term is 
$$
 \int_0^{s_1} (s_1-s)^{-\frac{1}{2}} \vn{F}_{L^2_x}   \vn{\mathbf{D}_{x} F}_{L^2_x} \dd s \ls \ep / (N^2 s_1)^{1-\sigma} 
$$
Similarly for $  [F, w]  $ and $ [A, [A,  w ]] $: place $ F(s) $, respectively the $ A(s) $'s, in $ L^{\infty}_x $ and place $ w_x(s) $ in $ L^1_x $ using the first part from \eqref{wLinfL1eq}. 
Finally, for $ [A, \pt_{x} w] $, we place $ A(s) $ in $ L^{\infty}_x $ by \eqref{ALinftytx} and $ \pt_{x} w $ in $ L^1_x $. The resulting term is absorbed to the LHS similarly to \eqref{abstrm}. 
\end{proof}

\begin{lemma} \label{Lw3L2L1} 
Under the assumptions above, one has:
\be \label{w3L2L1}
\vn{w^{(3)}_x(s)}_{L^2_t L^1_x} \ls s^{1-\frac{1}{8}} / (N s^{\frac{1}{2}})^{3(1-\sigma)} \ep 
\ee
\end{lemma}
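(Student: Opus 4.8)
The plan is to write $w^{(3)}(s_1)=\int_0^{s_1}e^{(s_1-s)\Delta}\big(\mathcal N^{(3)}(s)+\mathcal N^{self}(s)\big)\dd s$ and, since $e^{(s_1-s)\Delta}$ is disposable on $L^1_x$, use Minkowski's inequality in $t$ and in $s$ to reduce the claim to
\be
\int_0^{s_1}\vn{\mathcal N^{(3)}(s)+\mathcal N^{self}(s)}_{L^2_tL^1_x}\,\dd s\ \ls\ \frac{s_1^{1-\frac18}}{(Ns_1^{\frac12})^{3(1-\sigma)}}\,\ep\ ,\qquad s_1\in[0,s_0].
\ee
Each term on the left is then bounded in $L^2_tL^1_x$ by H\"older in $x$ (via $L^2_x\times L^2_x\to L^1_x$ or $L^\infty_x\times L^1_x\to L^1_x$) together with, where needed, H\"older/Sobolev embedding in $t$ on the bounded interval $[t_0,t_1]$, after which one integrates in $s$. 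Two structural facts organize the bookkeeping. First, from \eqref{Fdec} and the temporal condition $A_0(\cdot,0)=0$ one has $F_{0i}(s)=e^{s\Delta}\pt_tA_i(0)+F^{bil}_{0i}(s)$, so \eqref{Fs2} and \eqref{FbilL2} give $\vn{e^{s\Delta}\pt_tA_i(0)}_{L^\infty_tL^2_x}\ls\ep/(Ns^{\frac12})^{1-\sigma}$ (the $F^{bil}$ contribution being subdominant since $s\le s_0=N^{-2}$). Second, the splitting $A_x(s)=e^{s\Delta}A^{df}(0)+e^{s\Delta}A^{cf}(0)+A^{bil}(s)$, with the three pieces controlled in $L^2_tL^\infty_x$ by \eqref{esDAdfStr}, in $L^\infty_{t,x}$ by \eqref{AcfL12} and Bernstein (with an $s^{-\frac18}$ loss), and in $L^\infty_{t,x}$ by \eqref{AbilL2Linf}.

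Concretely: the term $[F^{bil}_{0\ell},\mathbf D_xF_{0x}]$ goes as $L^2_tL^2_x\times L^\infty_tL^2_x\to L^2_tL^1_x$ using \eqref{Fbilbd2} and Proposition \ref{CovBdsF1} with $m=1$; the terms $[e^{s\Delta}\pt_tA^\ell(0),\pt_xF^{bil}_{0x}]$ and $[e^{s\Delta}\pt_tA^\ell(0),[A_x,F_{0x}]]$ are treated by keeping $e^{s\Delta}\pt_tA(0)$ in $L^\infty_tL^2_x$ with the bound above and putting the second factor in $L^2_tL^2_x$ — for the latter one first expands $[A_x,F_{0x}]$ through the splitting of $A_x$, using \eqref{Fs2} and \eqref{esDAdfStr}. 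For $\mathcal N^{self}$ we insert the already established bounds $\vn{w_x(s)}_{L^\infty_tL^1_x}+s^{\frac12}\vn{\pt_xw_x(s)}_{L^\infty_tL^1_x}\ls s^{\frac12}\ep/(N^2s)^{1-\sigma}$ from Lemma \ref{wLinfL1} and \eqref{L1bdw} (so no bootstrap in $w$ is needed), estimating $[F_k^{\ \ell},w_\ell]$ by $L^2_tL^\infty_x\times L^\infty_tL^1_x$ (via \eqref{Fbddd}), $[A^\ell,\pt_\ell w_k]$ by splitting $A$ as above against $\pt_xw\in L^2_tL^1_x$, and $[A^\ell,[A_\ell,w_k]]$ by $L^\infty_{t,x}\times L^\infty_{t,x}\times L^2_tL^1_x$. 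In every case the bound on the integrand has the form $\ep^2\,s^{a}(Ns^{\frac12})^{-b(1-\sigma)}$ with $a>-1$ (integrability at $s=0$ being comfortable as $1-\sigma<\frac16$) and $b\ge3$; the $s$-integral yields $\ep^2\,s_1^{a+1}(Ns_1^{\frac12})^{-b(1-\sigma)}$, which is $\ls\ep\,s_1^{7/8}(Ns_1^{\frac12})^{-3(1-\sigma)}$ because $\ep<1$, because $a+1\ge\frac78$ for the $b=3$ terms, and because for the $b=4$ terms (those carrying two $A$'s or an $A^{bil}$) one has $a+1=1$ and the deficit is absorbed via $s_1^{1/8}\ls(Ns_1^{\frac12})^{1-\sigma}$, valid since $s_1\le s_0=N^{-2}$.

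The main obstacle is the last-paragraph bookkeeping: one must make sure every term comes out with the full weight $(Ns^{\frac12})^{-3(1-\sigma)}$ (not $-2(1-\sigma)$ or $-4(1-\sigma)$) and $s$-power at least $\frac78$. The borderline terms are $[F^{bil},\mathbf D_xF]$ and the pieces involving the curl-free piece $e^{s\Delta}A^{cf}(0)$, both of which only afford $s^{7/8}$ — this is precisely the origin of the $1-\frac18$ in the statement, the $-\frac18$ being inherited from the $s^{-\frac18}$ in \eqref{Fbilbd2} and from the Bernstein loss on $A^{cf}$. One must also be careful to use the $L^\infty_tL^2_x$ control of $e^{s\Delta}\pt_tA(0)$ rather than the more singular $L^2_tL^\infty_x$ bound \eqref{esDL2Linf}, and to invoke $s_1\le s_0$ to dispose of the quartic ($b=4$) remainders.
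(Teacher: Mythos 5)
Your argument is correct and follows the paper's proof in all essentials: the same reduction to bounding $\mathcal N^{(3)}+\mathcal N^{self}$ term by term in $L^2_tL^1_x$ and integrating in $s$, with only a cosmetic variation in the $[e^{s\Delta}A^{cf}(0),\pt_x w]$ term (you use the $L^\infty_{t,x}$ bound from \eqref{AcfL12} against $\pt_x w\in L^\infty_tL^1_x$, while the paper interpolates $\pt_x w$ into $L^{12/11}_x$ against $L^{12}_x$ — the same $s^{-1/8}$ loss either way). One small fix: for the piece $[e^{s\Delta}A^{df}(0),\pt_x w]$ you should pair $e^{s\Delta}A^{df}\in L^2_tL^\infty_x$ with $\pt_x w\in L^\infty_tL^1_x$ (exactly what Lemma \ref{wLinfL1} gives), not with ``$\pt_x w\in L^2_tL^1_x$'', since $L^2_t\times L^2_t$ only yields $L^1_t$ in time.
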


\begin{proof}
Recall 
$$
w^{(3)}(s_1) \defeq \int_0^{s_1} e^{(s_1-s) \Delta} ( \mathcal{N}^{(3)} (s) +\mathcal{N}^{self} (s) )  \dd s  $$
where $  \mathcal{N}^{(3)} $ and $ \mathcal{N}^{self} $ 
are defined in \eqref{NonlinearityW}. 

We begin with $  \mathcal{N}^{(3)} $:

$ [F^{bil}_{0 \ell}, \mathbf{D}_x F_{0x}] $ is estimated as $ L^2 L^2 \times L^{\infty} L^2 $ using Proposition \ref{FbilbdsProp}. 

$ [ e^{s \Delta} \pt_t A(0), \pt_x  F^{bil}_{0 x} +[A_x,F_{0x}] ]   $ is estimated as $ L^{\infty}  L^2 \times L^2 L^2 $  using Prop. \ref{FbilbdsProp}. 

We continue with $ \mathcal{N}^{self} $: 

$ [F_k^{\ \ell}, w_{\ell}] $ is estimated as $ L^2 L^{\infty} \times L^{\infty} L^1 $ using Corollary \ref{CorFbddd} and Lemma \ref{wLinfL1}. 

$ [e^{s \Delta} A^{df} + A^{bil}, \pt_{x} w] $ seen as $ L^2 L^{\infty} \times L^{\infty} L^1 $ using \eqref{esDAdfStr}, \eqref{AbilL2Linf} and Lemma \ref{wLinfL1}. 

$ [e^{s \Delta} A^{cf}, \pt_{x} w] $ is estimated as $ L^{\infty} L^{12} \times L^{\infty} L^{\frac{12}{11}} $ using \eqref{AcfL12} and 
$$ \vn{\pt_{x} w(s)}_{ L^{\infty} L^{\frac{12}{11}} } \ls \ep s^{-\frac{1}{8}} /  (N s^{\frac{1}{2}})^{2(1-\sigma)}  $$
obtained by interpolating between \eqref{wLinfL1eq} and $ (N^2 s)^{1-\sigma} s^{\frac{3}{4}} \pt_x w(s) \in \ep L^{\infty} L^2  $. 
%This term can in fact be estimated more precisely, which improves the $ s^{-\frac{1}{8}} $ loss in \eqref{w3L2L1}, but this is not needed for our purposes. 

$ [A^{\ell}, [A_{\ell},  w_k ]]  $ is bounded in several ways, e.g. as $ L^{\infty} L^{\infty} \times L^{\infty} L^{\infty}  \times L^{\infty} L^1 $.
\end{proof}

%%%%%%%%%%%%%%%%%%%%%%%%%%%%%%%%%%%%%%%
%%%%%%%%%%%%%%%%%%%%%%%%%%%%%%%%%%%%%%%
%%%%%%%%%%%%%%%%%%%%%%%%%%%%%%%%%%%%%%%
%%%%%%%%%%%%%%%%%%%%%%%%%%%%%%%%%%%%%%%
%%%%%%%%%%%%%%%%%%%%%%%%%%%%%%%%%%%%%%%
%%%%%%%%%%%%%%%%%%%%%%%%%%%%%%%%%%%%%%%
%%%%%%%%%%%%%%%%%%%%%%%%%%%%%%%%%%%%%%%
%%%%%%%%%%%%%%%%%%%%%%%%%%%%%%%%%%%%%%%
%%%%%%%%%%%%%%%%%%%%%%%%%%%%%%%%%%%%%%%
%%%%%%%%%%%%%%%%%%%%%%%%%%%%%%%%%%%%%%%
%%%%%%%%%%%%%%%%%%%%%%%%%%%%%%%%%%%%%%%
%%%%%%%%%%%%%%%%%%%%%%%%%%%%%%%%%%%%%%%
%%%%%%%%%%%%%%%%%%%%%%%%%%%%%%%%%%%%%%%
%%%%%%%%%%%%%%%%%%%%%%%%%%%%%%%%%%%%%%%

\section{Trilinear estimates} \label{SecTrilinear}

This section is devoted to the proof of \eqref{deltaEnergyReduced}.

\

We consider a solution which is in:
\begin{enumerate}
\item The temporal gauge at $ s=0 $, i.e. $ A_0(t,x,0) = 0 \ $.
\item The DeTurck gauge $ A_s = \pt^{\ell} A_{\ell} $ on $  [t_0,t_1] \times \ \mb{R}^3 \times [0,s_0]$. 
\item The Coulomb gauge at $ s=0,\ t=t_0 $, i.e. $  \pt^{\ell} A_{\ell} (t_0,x,0)=0$. 
\end{enumerate}

We assume $ A_i(t,x,0)$ obeys the conditions in Prop. \ref{ModLWP} part (2) and Prop. \ref{DeTurckProp} uniformly in $ t $. We assume $ \mathcal{IE}(t) \ls \ep^2 $ and therefore \eqref{HighCovDerF}, \eqref{HighCovDerFw}.

\begin{proposition} \label{TrilinearProp}
Let $ \sg> \frac{5}{6} $ and $ t_1 \leq t_0+1 $. Let $ A_{t,x,s} : [t_0,t_1] \times \mb{R}^3 \times [0,s_0] \to  \mathfrak{g} $ be a regular solution to 
\eqref{YM} at $ s=0 $ and to  \eqref{dYMHF} obeying the gauge conditions $ (1),(2),(3)$ above. Assume the bounds stated above and consequently we assume all the estimates from Section \ref{SecDecAndEst}. 

Then, if $ \ep $ is small enough, one has 
\be \label{TrilwF}
(N^2 s)^{1-\sg}  \vm{  \int_{t_0}^{t_1} \int_{\mathbb{R}^{3}} \chi(x) (w^{\ell}(s), F_{0\ell}(s) )  \dd x \dd t } \ls 
\frac{s^{\frac{1}{4}-} }{  (N s^{\frac{1}{2}})^{(1-\sg)}}  \ep^2
\ee
\end{proposition}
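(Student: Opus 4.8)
The plan is to split the trilinear integral $\int_{t_0}^{t_1}\int_{\mathbb{R}^3}\chi(w^\ell,F_{0\ell})\dd x\dd t$ according to the decompositions $w=w^{(2)}+w^{(3)}$ from \eqref{wDec} and $F_{0\ell}=e^{s\Delta}\pt_t A_\ell(0)+F^{bil}_{0\ell}$ from \eqref{Fdec}, and then to further decompose $\pt_t A=\pt_t A^{df}+\pt_t A^{cf}$ and $w^{(2)}$ into its bilinear sub-interactions using \eqref{w2form}, \eqref{Wdef}. Of the resulting pieces, the ones involving $w^{(3)}$ or $F^{bil}$ are lower-order: for these I would estimate $w^{(3)}$ in $L^2_t L^1_x$ via Lemma \ref{Lw3L2L1} paired with $F_{0\ell}$ in $L^2_t L^\infty_x$ via Corollary \ref{CorFbddd}, and the $(w^{(2)},F^{bil})$ term using \eqref{w2L2} in $L^2_t L^2_x$ against \eqref{Fbilbd2}; in each case, crucially, the powers of $s$ gained (from Proposition \ref{FbilbdsProp}, Lemma \ref{Lw3L2L1}, or the $s^{0-}$ in \eqref{w2L2}) together with the weight $(N^2s)^{1-\sg}$ and the heat-kernel factors $(Ns^{1/2})^{-(1-\sg)}$ per factor more than suffice to beat the target $s^{1/4-}(Ns^{1/2})^{-(1-\sg)}\ep^2$, using $\sg>\frac{5}{6}$ only to control the summation losses (e.g. the $s^{-1/8}$ in \eqref{Fbilbd2}, \eqref{w3L2L1}).

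The genuinely dominant term is the ``heat-wave commutator'' interaction
$$
(N^2 s)^{1-\sg}\int_{t_0}^{t_1}\!\!\int_{\mathbb{R}^3}\chi\,\big(\,\mathbf{W}(\pt_t A^\ell(0),(\pt^i\pt_t A_\ell-2\pt_\ell\pt_t A^i)(0)),\ e^{s\Delta}\pt_t A_i(0)\,\big)\dd x\dd t,
$$
and within it the worst piece is the all-divergence-free one $\pt_t A^{df}$, cf. \eqref{YMTrilcomp}. Here the idea, following \cite{oh2017yang}, is to first pass to a genuinely trilinear expression on $\mathbb{R}\times\mathbb{R}^3$ by disposing of the cutoff $\chi$ (using Lemma \ref{cutoffXsb} / the fact that $\chi$-multiplication is bounded on the relevant spaces), then expand $\pt_t A^{df}$ via its wave equation \eqref{YMTemporal3}, which reveals the null form $\mathbf{N}$. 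The key algebraic point is that the symbol $\mathbf{W}(\xi,\eta,s)$ from \eqref{Wdef}, by Lemma \ref{LemmaWfreq}, concentrates the output at $|\xi+\eta|\simeq 2^{k}$ with $s2^{2k}\simeq 1$ and carries a weight $2^{-2k_{\max}}\langle s^{-1}2^{-2k_{\max}}\rangle^{-1}$ — this is precisely the ``extra derivative shifted from high to low frequency'' noted in Remark \ref{ComparisonAC}, and it is what converts \eqref{YMTrilcomp2} (with $2^{-2k_{\max}}$) into something summable where the MKG analogue \eqref{MKGTril2} (with only $2^{-k_{\min}-k_{\max}}$) would not be.

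Concretely, after a Littlewood–Paley decomposition $\sum_{k,k_1,k_2}$ I would split into the three frequency regimes (low-high, high-low, high-high$\to$low) for the $\mathbf{W}$ bilinear form, disposing of $P_k\mathbf{W}(P_{k_1}\cdot,P_{k_2}\cdot,s)$ up to the weight from Lemma \ref{LemmaWfreq}, and then estimate the remaining trilinear wave interaction $\int(\pt_t A^{df}_{k_1},\mathbf{N}(\pt_t A^{df}_{k_2},e^{s\Delta}\pt_t A^{df}_k))$ using the bilinear Strichartz estimate \eqref{BilStrichartz} on the null-form pair together with the bounds \eqref{PkDtAdf}, \eqref{PkDtStr}, \eqref{esDL2Linf}, \eqref{esDAdfStr}, and, for the pieces where one input is $A^{cf}$ or $\pt_t A^{cf}$, the improved estimates \eqref{PkDtAcfL2Linfx}–\eqref{PkDtAcfL2L2} and \eqref{AcfL2Pk}, \eqref{DtPkAcf} (the curl-free inputs being cheaper by $\tfrac14$ derivative after the Coulomb change of gauge). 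In every regime the heat-kernel decay $\langle s2^{2k}\rangle^{-10}$ in $\mathbf{W}$ (and in $e^{s\Delta}$) localizes $2^k$ to $s^{-1/2}$, turning the frequency weight into the claimed power $s^{1/4-}(Ns^{1/2})^{-(1-\sg)}$ after summing the off-diagonal frequencies. The main obstacle I anticipate is the bookkeeping of the $N$-weights: one must track how the factors $\jb{2^{k_i}/N}^{1-\sg}$ on each input combine with the overall $(N^2s)^{1-\sg}$ and with $s_0=N^{-2}$ so that the low-frequency ($2^{k_i}<N$) and high-frequency ($2^{k_i}>N$) parts of each input both sum to give exactly the borderline power of $N$, and the constraint $\sg>\frac{5}{6}$ is precisely what makes the worst off-diagonal sum (high-high inputs with $2^{k}\ll 2^{k_1}\simeq 2^{k_2}$) converge; getting that exponent arithmetic right — rather than any single estimate — is the crux.
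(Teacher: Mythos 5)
Your overall architecture coincides with the paper's: the splitting into $(w^{(3)},F)$, $(w^{(2)},F^{bil})$ and the main term $(w^{(2)},e^{s\Delta}\pt_t A)$, the treatment of the two quadrilinear pieces by exactly the norms you name, the use of Lemma \ref{LemmaWfreq} to extract the weight $2^{-2k_{\max}}\langle s2^{2k}\rangle^{-10}\langle s^{-1}2^{-2k_{\max}}\rangle^{-1}$, and the disposal of the curl-free interactions by Strichartz/Lebesgue bounds are all as in the paper. The genuine gap is in how you close the dominant all-divergence-free term. Estimating it "using the bilinear Strichartz estimate \eqref{BilStrichartz} on the null-form pair together with" the Lebesgue bounds does not give the claimed power of $s$: in the high-high-to-low regime $2^k\ll 2^{k_1}\simeq 2^{k_2}$, the crude size bound on the null symbol combined with \eqref{BilStrichartz} (whose gain is $2^{(\frac12-)k}$ at the low output frequency) and the $\jb{2^{k_i}/N}^{1-\sg}$ weights yields, after summing over $k_1\simeq s^{-1/2}$ and $k\ls s^{-1/2}$, only about $s^{0+}(N s^{\frac12})^{-(1-\sg)}\ep^2$ — short of the target $s^{\frac14-}(N s^{\frac12})^{-(1-\sg)}\ep^2$ by precisely $s^{\frac14}$; at $s\simeq s_0=N^{-2}$ this makes the energy increment $O(1)$ instead of $O(N^{-\frac12+})$ and the almost conservation law collapses. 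The missing mechanism is the exchange of the angular factor $\vm{\xi_1\wedge\xi_2}$ for modulation weights (and for $\jb{\tau_1+\tau_2+\tau_3}$, which is also what absorbs the sharp cutoff $1_{[t_0,t_1]}$), i.e. the bound \eqref{wedgebd} together with the case analysis on the dominant modulation and \eqref{aalphabd}; this is exactly what Proposition \ref{Coretrilinear} packages, producing $2^{(\frac32+)k_{\max}}\prod\vn{\cdot}_{X^{0,\frac12+}}$ and hence, against the $2^{-2k_{\max}}$ weight, the decisive decay $2^{-(\frac12-)k_{\max}}\simeq s^{\frac14-}$. The $X^{0,\frac34+}$ control of $I\pt_t A^{df}$ must therefore be used through a genuine $X^{s,b}$ null-form estimate, not only through its Strichartz consequences.

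Two secondary corrections. First, the null form is not "revealed" by expanding $\pt_t A^{df}$ via its wave equation \eqref{YMTemporal3}; it is algebraic: it comes from the structure \eqref{w2form} of $w^{(2)}$ once one splits $w^{(2)}={\bf P}w^{(2)}+{\bf P}^{\perp}w^{(2)}$ and applies the Leray identities \eqref{Nullformidentities} to the divergence-free inputs, as in \eqref{SpecW}--\eqref{SpecNull}; substituting the equation would only add further nonlinear terms. Second, "disposing of the cutoff $\chi$" is not free: the frequency projections and the Leray projection do not commute with $\chi$, so one must estimate the commutators $[\chi,P_k]$ and $[{\bf P}^{\perp},\chi]$ acting on $e^{s\Delta}\pt_t A^{df}_k$ (the ${\bf P}^{\perp}$ part of $w^{(2)}$ does not pair to zero against $\chi e^{s\Delta}\pt_t A^{df}$, only up to a commutator). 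These are easy but necessary steps, carried out in the paper via \eqref{CommutatorTril}, \eqref{3dfCommutator} and Lemmas \ref{LchifI}, \ref{CommMD}; your use of Lemma \ref{cutoffXsb} covers only the main (non-commutator) piece.
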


\

After peeling off some terms, the main contribution to \eqref{TrilwF} will come from a null form term. The key estimates needed for this main term are encapsulated by:

\begin{proposition} \label{Coretrilinear}
Denoting $ k_{\max} = \max(k_1,k_2,k_3) $ where $ k_1,k_2,k_3 \geq 0 $ are dyadic frequencies, one has the following inequality for frequency localized functions: 
\be \label{CoreTrilEst}
\vm{ \int_{t_0}^{t_1}  \int_{\mathbb{R}^{3}}   {\bf N} (f_{k_1},g_{k_2}) h_{k_3}  \dd x \dd t } \ls 2^{ (\frac{3}{2}+)  k_{\max} }  \vn{f_{k_1} }_{X^{0,\frac{1}{2}+ }} \vn{g_{k_2}}_{X^{0,\frac{1}{2}+}}  \vn{  h_{k_3}}_{X^{0,\frac{1}{2}+}}
\ee
By translation invariance, the same estimate holds for $  \mathcal{O} {\bf N} $ if $  \mathcal{O} $ is a disposable operator. 
\end{proposition}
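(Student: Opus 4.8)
The plan is to prove \eqref{CoreTrilEst} by a frequency-localized duality/Plancherel argument combined with bilinear $L^2_{t,x}$ estimates for products of free-wave-like functions. First I would recall that the null form $\mathbf{N}(f,g)$ is built from $Q_{ij}(f,g)=\partial_i f\,\partial_j g-\partial_j f\,\partial_i g$ together with the operators $\Delta^{-1}\nabla^i$ (see \eqref{DefNullForm}, \eqref{DefNullFormLie}), so after Littlewood-Paley localization each piece of $\mathbf{N}(f_{k_1},g_{k_2})$ has its output frequency comparable to $2^{k_{12}}:=2^{\max(k_1,k_2)}$ in the high-high$\to$low-or-comparable cases, and the symbol of $Q_{ij}$ is bounded by $|\xi_1||\xi_2|$ but more importantly exhibits the standard null-form gain $|Q_{ij}(\xi_1,\xi_2)|\lesssim |\xi_1||\xi_2|\,\angle(\xi_1,\xi_2)$. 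The classical bound I would invoke is that for functions localized at frequencies $2^{k_1},2^{k_2}$ with the null structure present, one has $\|\,|D|^{-1/2}\mathbf{N}(f_{k_1},g_{k_2})\|_{L^2_{t,x}}\lesssim 2^{?}\|f_{k_1}\|_{X^{0,1/2+}}\|g_{k_2}\|_{X^{0,1/2+}}$ — concretely this is the bilinear estimate underlying \eqref{BilStrichartz}, namely $|D|^{-(1/2-)}\big(X^{1/4,1/2+}\times X^{1/4,1/2+}\big)\subseteq L^2_tL^2_x$, which after accounting for the two derivatives in $Q_{ij}$ and the $\Delta^{-1}\nabla$ gives a clean $L^2_{t,x}$ bound on the localized null form with an explicit power of $2^{k_{\max}}$.

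The key steps, in order: (1) decompose $\mathbf{N}(f_{k_1},g_{k_2})$ into its constituent terms $\Delta^{-1}\nabla^i Q_{ij}(f_{k_1},g_{k_2})$ and $Q_{ij}(\Delta^{-1}\nabla^i f_{k_1},g_{k_2})$, noting each is a bilinear multiplier whose symbol, after the $\Delta^{-1}\nabla$, is bounded by $2^{k_{12}}$ times the null symbol; (2) pair against $h_{k_3}$, use Cauchy–Schwarz in $t,x$, and reduce to estimating $\|\mathbf{N}(f_{k_1},g_{k_2})\|_{L^2_{t,x}}\,\|h_{k_3}\|_{L^2_{t,x}}$; (3) bound $\|h_{k_3}\|_{L^2_{t,x}}\lesssim \|h_{k_3}\|_{X^{0,0}}\le \|h_{k_3}\|_{X^{0,1/2+}}$ trivially; (4) for the bilinear factor, apply the null-form bilinear estimate: using the improved bilinear Strichartz \eqref{BilStrichartz} together with the null-form angular gain, obtain $\|\mathbf{N}(f_{k_1},g_{k_2})\|_{L^2_{t,x}}\lesssim 2^{(3/2+)k_{\max}}\|f_{k_1}\|_{X^{0,1/2+}}\|g_{k_2}\|_{X^{0,1/2+}}$, where the power of $2^{k_{\max}}$ comes from: two derivatives in $Q_{ij}$ contributing $2^{k_1+k_2}$, the $\Delta^{-1}\nabla$ removing $2^{-k_{12}}$, and the bilinear $L^2$ estimate costing $2^{(1/2+)k_{\min}}$ (Bernstein/interpolation loss), which one bounds crudely by $2^{(1/2+)k_{\max}}$; (5) assemble and sum the contributions to get \eqref{CoreTrilEst}. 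The final sentence about $\mathcal{O}\mathbf{N}$ with $\mathcal{O}$ disposable follows since disposable operators are bounded on every translation-invariant norm, in particular on $X^{0,1/2+}$, so they can be absorbed into the inputs or output.

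The main obstacle I expect is getting the \emph{sharp} power $2^{(3/2+)k_{\max}}$ uniformly across the three frequency regimes (low-high, high-low, high-high$\to$low), since in the high-high$\to$low case the output of the null form is at a low frequency $2^{k_3}\ll 2^{k_1}\simeq 2^{k_2}$ and the naive count of derivatives would give $2^{k_1+k_2}=2^{2k_{\max}}$, which is too lossy. Here one genuinely needs the null-form cancellation: in the high-high interaction the angle between $\xi_1$ and $\xi_2$ is controlled by the modulations via the identity $|\xi_1||\xi_2|\angle(\xi_1,\xi_2)^2\lesssim \big||\tau_1|-|\xi_1|\big|+\big||\tau_2|-|\xi_2|\big|+\big||\tau_1+\tau_2|-|\xi_1+\xi_2|\big|$, which trades the extra derivative for modulation weights absorbed by the $X^{0,1/2+}$ norms (with an $\epsilon$-loss producing the $+$ in the exponent). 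The bookkeeping of these modulation trade-offs, while standard (cf.\ \cite{selberg1999multilinear}, \cite{klainerman2002bilinear}), is the delicate part; once it is in place, the other two regimes are easier because Bernstein on the genuinely low frequency factor already suffices, and the stated exponent $\tfrac32+$ is comfortably non-sharp there.
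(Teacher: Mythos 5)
Your reduction in steps (2)--(4) contains a genuine gap: the intermediate bilinear estimate you rely on, namely $\|\tilde P_{k_3}\mathbf{N}(f_{k_1},g_{k_2})\|_{L^2_{t,x}}\lesssim 2^{(\frac32+)k_{\max}}\|f_{k_1}\|_{X^{0,\frac12+}}\|g_{k_2}\|_{X^{0,\frac12+}}$, is false. Test it with two time-truncated free waves at comparable frequency $2^k$ whose Fourier supports are transverse caps (say near $2^k e_1$ and $2^k e_2$): there the null symbol gives no gain ($|\xi_1\wedge\xi_2|\simeq 2^{2k}$, angle $\simeq 1$, single sign on the caps), the $\Delta^{-1}\nabla$ only removes one derivative, and the product of the two wave packets saturates $\|uv\|_{L^2_{t,x}}\simeq 2^{k}$, so $\|\mathbf{N}(u,v)\|_{L^2_{t,x}}\simeq 2^{2k}$ — a factor $2^{k/2}$ worse than what you need. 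The trilinear estimate \eqref{CoreTrilEst} is nevertheless true for these data, but only because in this configuration $\mathbf{N}(u,v)$ lives at modulation $\simeq 2^k$ (temporal frequency $|\xi_1|+|\xi_2|\simeq 2^{k+1}$ versus spatial frequency $\simeq\sqrt2\,2^k$), so the pairing with $h_{k_3}$ is penalized either through $h$'s own modulation weight ($\lambda_3^{1/2+}$ inside its $X^{0,\frac12+}$ norm) or through the $\langle\tau\rangle^{-1}$ decay of the Fourier transform of the sharp cutoff $1_{[t_0,t_1]}$. Your Cauchy--Schwarz in $t,x$, followed by $\|h_{k_3}\|_{L^2}\le\|h_{k_3}\|_{X^{0,\frac12+}}$, throws both mechanisms away; the same defect shows up in your appeal to the identity $|\xi_1||\xi_2|\angle^2\lesssim\lambda_1+\lambda_2+\lambda_{12}$, since after Cauchy--Schwarz there is nothing left to absorb the output modulation $\lambda_{12}$.

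This is exactly why the paper's proof never separates the product from $h_{k_3}$: it passes to the full space-time Fourier side, keeps the cutoff as the weight $\langle\tau_1+\tau_2+\tau_3\rangle^{-1}$, and uses the Klainerman--Machedon wedge bound \eqref{wedgebd}, $|\xi_1\wedge\xi_2|\lesssim 2^{\frac{k_1}{2}}2^{\frac{k_2}{2}}2^{\frac{k_3}{2}}(\langle\tau_1+\tau_2+\tau_3\rangle+\lambda_1+\lambda_2+\lambda_3)^{\frac12}$, which distributes the angular gain as half-powers over all three frequencies and trades the remaining half power against whichever of the four "modulations" (including the cutoff frequency) dominates; the three resulting cases are then closed with Strichartz, $X^{0,0+}\subset L^{2+}_tL^2_x$, the bilinear estimate \eqref{BilStrichartz}, and the kernels $a_\alpha\in L^p_t$ from \eqref{aalphabd}. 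To repair your argument you would have to keep the trilinear (plus cutoff) structure intact and perform this modulation bookkeeping with all four weights available — at which point you are essentially reproducing the paper's proof rather than shortcutting it. Your final remark about disposable operators $\mathcal{O}$ is fine.
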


Recall the definitions of a null form from \eqref{DefNullForm}, \eqref{Qij}. Moreover, if $ \mathcal{O} $ is a bilinear operator, we denote by $  \mathcal{O}  {\bf N}(f,g) $ a linear combination of 
$$
\Delta^{-1} \nabla^i  \lpr  \mathcal{O}( \pt_i f, \pt_j g) - \mathcal{O}(\pt_j f, \pt_i g ) \rpr  \quad \text{and} \quad   \mathcal{O}(\Delta^{-1} \nabla^i \pt_i f, \pt_j g) - \mathcal{O}(\Delta^{-1} \nabla^i \pt_j f, \pt_i g ) 
$$

\begin{remark}
Proposition \ref{Coretrilinear} states a classical type of multilinear estimate, except for the presence of the sharp time cutoff $ 1_{[t_0,t_1]} $. The proof is essentially contained in \cite{keel2011global}. 
%We require a smaller number of cases as \cite[Eq. (124)]{keel2011global} in fact deals with integrals of commutators  $ \lng I  {\bf N}( v,w)-   {\bf N}( I v, I w) , u\rng $. The commutators provide additional weights, ... 
\end{remark}

\subsection{Proof of Proposition \ref{TrilinearProp}} \ 

Splitting $ w(s) $ and $  F_{0\ell}(s) $ by \eqref{Fdec} and \eqref{wDec} we decompose
$$
\int_{t_0}^{t_1}  \int_{\mathbb{R}^{3}} \chi(x) \big(w^{\ell}(s), F_{0\ell}(s) \big)  \dd x \dd t = \mathcal{T}_1 + \mathcal{T}_2 + \mathcal{T}
$$
into two quadrilinear terms and a trilinear term
\begin{align*}
 \mathcal{T}_1 \defeq  \int_{t_0}^{t_1} \int_{\mathbb{R}^{3}} \chi(x) \big(w^{(3)\ell}(s), F_{0\ell}(s) \big)  \dd x \dd t \\
 \mathcal{T}_2 \defeq  \int_{t_0}^{t_1} \int_{\mathbb{R}^{3}} \chi(x) \big(w^{(2)\ell}(s), F_{0\ell}^{bil}(s) \big)  \dd x \dd t \\
 \mathcal{T} \defeq  \int_{t_0}^{t_1} \int_{\mathbb{R}^{3}} \chi(x)  \big(w^{(2) \ell}(s), e^{s \Delta} \pt_t A_{\ell} \big)  \dd x \dd t 
\end{align*}

\

We first estimate the quadrilinear terms by spacetime norms:
\be \label{quadt1}
\vm{ \mathcal{T}_1 } \ls  \int_{t_0}^{t_1} \int_{\mathbb{R}^{3}} \vm{ \chi \big(w^{(3)}(s), F(s) \big)  } \dd x \dd t  \ls \vn{w^{(3)}(s) }_{L^2_t L^1_x} \vn{F(s) }_{L^2_t L^{\infty}_x} \ee
which is bounded by $ s^{\frac{3}{8}-} / (N^2 s)^{2(1-\sigma)} \ep^2 $
using Lemma \ref{Lw3L2L1} and Corollary \ref{CorFbddd},
\be \label{quadt2}
\vm{ \mathcal{T}_2 } \ls  \int_{t_0}^{t_1} \int_{\mathbb{R}^{3}} \vm{ \chi \big(w^{(2)}(s), F^{bil}(s) \big)  } \dd x \dd t  \ls \vn{w^{(2)}(s)}_{L^2_t L^2_x}  \vn{F^{bil}(s) }_{L^2_t L^2_x}
\ee
which is also bounded by $ s^{\frac{3}{8}-} / (N^2 s)^{2(1-\sigma)} \ep^2 $   using \eqref{w2L2} and \eqref{Fbilbd2}. Since $ \sg> \frac{5}{6} $, this is more than enough. 

\

It remains to consider the main term $  \mathcal{T} $. Performing a Littlewood-Paley decomposition we need to bound
$$
 \int_{t_0}^{t_1} \int_{\mathbb{R}^{3}}   \big( P_k w^{(2) \ell}(s), \tilde{P}_k ( \chi e^{s \Delta} \pt_t A_{\ell} ) \big)  \dd x \dd t
$$
To pass the frequency localization onto $  \pt_t A_{\ell} $ write 
$$
\tilde{P}_k ( \chi e^{s \Delta} \pt_t A_{\ell} ) = \tilde{P}_k ( \chi e^{s \Delta} P_k \pt_t A_{\ell} ) +  \tilde{P}_k ( [\chi,P_k] e^{s \Delta} \pt_t A_{\ell} ) 
$$
The commutator term is bounded in \eqref{CommutatorTril}. Then main term is 
$$
 \int_{t_0}^{t_1} \int_{\mathbb{R}^{3}}   \big( P_k w^{(2) \ell}(s),  \chi e^{s \Delta} \pt_t P_k A_{\ell}  \big)  \dd x \dd t.
$$
We split all inputs as $ \pt_t A =  \pt_t A^{df} +  \pt_t A^{cf} $. For the dominant term, with $ 3 $ divergence-free inputs, we further split $ w^{(2)} =  {\bf P}  w^{(2)} + {\bf P}^{\perp} w^{(2)}$ obtaining:

\be  \label{3dfNullTerm}
\int_{t_0}^{t_1} \int_{\mathbb{R}^{3}} \big(P_k {\bf P}  w^{(2)}(\pt_t A^{df}, \pt_t A^{df}, s),  \chi e^{s \Delta}  \pt_t A^{df}_k  \big)  \dd x \dd t 
\ee
which contains a null form, and the following commutator-type error term
\be \label{3dfCommutator} 
\int_{t_0}^{t_1} \int_{\mathbb{R}^{3}} \big(P_k {\bf P}^{\perp}   w^{(2)}(\pt_t A^{df}, \pt_t A^{df}, s), \tilde{P}_k ( \chi e^{s \Delta}  \pt_t A^{df}_k ) \big)  \dd x \dd t 
\ee
Then we have terms with $ 1 $ curl-free input and $ 2 $ divergence-free inputs:
\begin{align} \label{dfdfcf}
 \int_{t_0}^{t_1} \int_{\mathbb{R}^{3}}   \big( P_k w^{(2)}(\pt_t A^{df}, \pt_t A^{df}, s),  \chi e^{s \Delta} \pt_t A_{k}^{cf} \big)  \dd x \dd t \\ 
 \label{cfdfdf}
  \int_{t_0}^{t_1} \int_{\mathbb{R}^{3}}   \big( P_k w^{(2)}(\pt_t A^{cf}, \pt_t A^{df}, s),  \chi e^{s \Delta} \pt_t A_{k}^{df} \big)  \dd x \dd t \\
  \label{dfcfdf}
   \int_{t_0}^{t_1} \int_{\mathbb{R}^{3}}   \big( P_k w^{(2)}(\pt_t A^{df}, \pt_t A^{cf}, s),  \chi e^{s \Delta} \pt_t A_{k}^{df} \big)  \dd x \dd t 
\end{align}
terms with $ 2 $ curl-free inputs and $ 1 $ divergence-free input:
\begin{align} \label{cfcfdf}
 \int_{t_0}^{t_1} \int_{\mathbb{R}^{3}}   \big( P_k w^{(2)}(\pt_t A^{cf}, \pt_t A^{cf}, s),  \chi e^{s \Delta} \pt_t A_{k}^{df} \big)  \dd x \dd t \\ 
  \int_{t_0}^{t_1} \int_{\mathbb{R}^{3}}   \big( P_k w^{(2)}(\pt_t A^{cf}, \pt_t A^{df}, s),  \chi e^{s \Delta} \pt_t A_{k}^{cf} \big)  \dd x \dd t \\
   \int_{t_0}^{t_1} \int_{\mathbb{R}^{3}}   \big( P_k w^{(2)}(\pt_t A^{df}, \pt_t A^{cf}, s),  \chi e^{s \Delta} \pt_t A_{k}^{cf} \big)  \dd x \dd t 
\end{align}
and finally a term with $ 3 $ curl-free inputs
\be \label{cfcfcf}
 \int_{t_0}^{t_1} \int_{\mathbb{R}^{3}}   \big( P_k w^{(2)}(\pt_t A^{cf}, \pt_t A^{cf}, s),  \chi e^{s \Delta} \pt_t A_{k}^{cf} \big)  \dd x \dd t.  
\ee

\

\

\subsubsection{The main term \eqref{3dfNullTerm}} \
%We consider the main component of \eqref{AllDf}, specifically 
%$$
%\int_{t_0}^{t_1} \int_{\mathbb{R}^{3}} (P_k {\bf P}  w^{(2)}(\pt_t A^{df}, \pt_t A^{df}, s),  \chi e^{s \Delta}  \pt_t A^{df}_k  )  \dd x \dd t 
%$$

We proceed to specifying the null form, recalling \eqref{w2form}. 
From the identities \eqref{Nullformidentities} and Lemma \ref{LemmaWfreq} one can write 
\be \label{SpecW}
P_k {\bf P}  \mathbf{W} (   {\bf P}  \pt_t A^{\ell}_{k_1},  \pt_x \pt_t A^{df}_{k_2,\ell} - 2 \pt_{\ell} \pt_t A_{k_2,x}^{df} )
\ee 
as
\be \label{SpecNull}
\left\langle s 2^{2 k}\right\rangle^{-10}\left\langle s^{-1} 2^{-2 k_{\max }}\right\rangle^{-1} 2^{-2 k_{\max }}  P_k \mathcal{O} {\bf N} (  \pt_t A^{df}_{k_1}, \pt_t A^{df}_{k_2} )  
\ee
where $ \mathcal{O} $ is disposable. 

Since we assume the conditions in Proposition \ref{ModLWP} part (2) we have $ I \pt_t A^{df} \in \ep X^{0,\frac{3}{4}+} $. Additionally, from Lemma \ref{cutoffXsb} 
$$ \chi e^{s \Delta} P_k \pt_t A^{df}  \in \ep  \jb{ 2^k / N }^{1-\sg} X^{0,\frac{3}{4}+} $$  

We apply Prop. \ref{Coretrilinear} to bound the contribution of \eqref{3dfNullTerm} to the LHS of \eqref{TrilwF} by 
$$
(N^2 s)^{1-\sg} 2^{ (-\frac{1}{2}-)  k_{\max} }
\left\langle s 2^{2 k}\right\rangle^{-10}\left\langle s^{-1} 2^{-2 k_{\max }}\right\rangle^{-1} 
 \jb{ \frac{2^{k}} { N} }^{1-\sg}  \jb{ \frac{2^{k_1}} { N} }^{1-\sg}  \jb{ \frac{2^{k_2}} { N} }^{1-\sg}
\ep^3 
 $$
After summation in $ k_1,k_2, k $ the result is $ \ls $  RHS of  \eqref{TrilwF}. \qed

\

We next estimate the terms \eqref{dfdfcf} - \eqref{cfcfcf}, beginning by decomposing them into
\be \label{WLPdec}
(N^2 s)^{1-\sg} \int_{t_0}^{t_1} \int_{\mathbb{R}^{3}}   ( P_k  \mathbf{W} (\pt_t A^{(1)}_{k_1}, \nabla_x \pt_t A^{(2)}_{k_2}, s),  \chi e^{s \Delta} \pt_t A_{k}^{(3)} )  \dd x \dd t 
\ee

\

\subsubsection{The terms \eqref{cfcfdf}-\eqref{cfcfcf} with low-high inputs in $ w $} \

Here we consider the contributions of \eqref{WLPdec}
when $ 2^k \simeq 2^{k_{\max}} $ and at most one of $ A^{(1)}, A^{(2)}, A^{(3)} $ is $ A^{df} $. 
By Lemma \ref{LemmaWfreq} and translation-invariance, without loss of generality it suffices to bound 
 \be \label{LowHiinW}
2^{-k}  \int_{t_0}^{t_1} \int_{\mathbb{R}^{3}}    \pt_t A^{(1)}_{k_1} \cdot \pt_t A^{(2)}_{k_2}  \cdot P_k (\chi e^{s \Delta} \pt_t A_{k}^{(3)} )   \dd x  \dd t  \times \left\langle s 2^{2 k}\right\rangle^{-10}\left\langle s^{-1} 2^{-2 k}\right\rangle^{-1}       (N^2 s)^{1-\sg}
 \ee
when $ 2^{k_1} \ls 2^{k_2} \simeq 2^{k} $.  We bound this term as $ L^2 L^{\infty} \times L^{\infty} L^2 \times L^{\infty} L^2 $,  the low frequency in $ L^2 L^{\infty} $ using \eqref{PkDtAcfL2Linfx}, \eqref{PkDtStr}, \eqref{DtPkAcf}, \eqref{DtPkAdf}. After summing in $ k_1 $, and then in $ k $ using the $ \left\langle s 2^{2 k}\right\rangle^{-10}\left\langle s^{-1} 2^{-2 k}\right\rangle^{-1} $ factors   one obtains a bound of 
$
s^{\frac{1}{2}-} (N s^{\frac{1}{2}})^{-3(1-\sg)}.
$

\subsubsection{The terms \eqref{cfcfdf}-\eqref{cfcfcf} with high-high inputs in $ w $} \

Here we consider the contributions of \eqref{WLPdec}
when $ 2^k \ll  2^{k_1} \simeq 2^{k_2} $ and at most one of $ A^{(1)}, A^{(2)}, A^{(3)} $ is $ A^{df} $. 
By Lemma \ref{LemmaWfreq}, without loss of generality it suffices to bound 
 \be \label{HiHiinW}
2^{-{k_2}}  \int_{t_0}^{t_1} \int_{\mathbb{R}^{3}}   P_k ( \pt_t A^{(1)}_{k_1} \cdot \pt_t A^{(2)}_{k_2})  \chi e^{s \Delta} \pt_t A_{k}^{(3)}   \dd x  \dd t  \times \left\langle s 2^{2 k}\right\rangle^{-10}\left\langle s^{-1} 2^{-2 {k_2}}\right\rangle^{-1}       (N^2 s)^{1-\sg}
 \ee
We use the norms of $ L^{\infty} L^2, L^2 L^2 $ and $ L^2 L^{\infty} $ as follows: a high frequency $ \pt_t A^{cf} $ is always placed in $ L^2 L^2 $ using \eqref{PkDtAcfL2L2}, the term $  \pt_t A^{df} $ (if present) is placed in $ L^2 L^{\infty} $ using \eqref{PkDtStr} (otherwise place the low frequency  $ \pt_t A^{cf} $ in $ L^2 L^{\infty} $ using \eqref{PkDtAcfL2Linfx}) and the remaining $ \pt_t A^{cf} $ term is placed in $ L^{\infty} L^2 $ by \eqref{DtPkAcf}. After summation we obtain either $ s^{\frac{5}{8}-} (N s^{\frac{1}{2}})^{-4(1-\sg)} $ or $ s^{\frac{3}{8}-} (N s^{\frac{1}{2}})^{-(1-\sg)} $. 

\subsubsection{The terms \eqref{dfdfcf}-\eqref{dfcfdf} with low-high inputs in $ w $} \

Now consider the contributions of \eqref{WLPdec}
when $ 2^k \simeq 2^{k_{\max}} $ and two of $ A^{(1)}, A^{(2)}, A^{(3)} $ are $ A^{df} $ and one is $ A^{cf} $. Again it suffices to bound \eqref{LowHiinW} when $ 2^{k_1} \ls 2^{k_2} \simeq 2^{k} $. We place the $ \pt_t A^{cf} $ term in $ L^2 L^2 $ using \eqref{PkDtAcfL2L2}. In the cases of \eqref{dfdfcf} and \eqref{dfcfdf}  when $ \pt_t A^{df} $ is low frequency, place this term in $ L^2 L^{\infty} $ using \eqref{PkDtStr} and the other term in $ L^{\infty} L^2 $. In the case of \eqref{cfdfdf} place the product of the two high frequency df terms in $ L^2 L^2 $ using \eqref{BilStrichartz}. After summing in $ k_1 $, and then in $ k $ using the $ \left\langle s 2^{2 k}\right\rangle^{-10}\left\langle s^{-1} 2^{-2 k}\right\rangle^{-1} $ factors  one obtains a bound of 
$
s^{\frac{3}{8}-} (N s^{\frac{1}{2}})^{-2(1-\sg)}
$
or $s^{\frac{1}{4}-} $.

\subsubsection{The terms \eqref{dfdfcf}-\eqref{dfcfdf} with high-high inputs in $ w $} \

Here we consider the contributions of \eqref{WLPdec}
when $ 2^k \ll  2^{k_1} \simeq 2^{k_2} $ and  two of $ A^{(1)}, A^{(2)}, A^{(3)} $ are $ A^{df} $ and one is $ A^{cf} $. Again it suffices to bound \eqref{HiHiinW} and we place the $ \pt_t A^{cf} $ term in $ L^2 L^2 $ using \eqref{PkDtAcfL2L2}. In the case of  \eqref{dfdfcf} we place the product of the two high frequency df terms in $ L^2 L^2 $ using \eqref{BilStrichartz}. In the cases of \eqref{cfdfdf}, \eqref{dfcfdf}, place the low frequency $ \pt_t A^{df} $ in $ L^2 L^{\infty} $ using \eqref{PkDtStr} and the high frequency $ \pt_t A^{df} $ in $  L^{\infty} L^2$. In all cases, after summations, we obtain a bound slightly better than $ s^{\frac{1}{4}-} (N s^{\frac{1}{2}})^{-(1-\sg)} $.  \qed

\

We continue with the commutator-type terms \eqref{CommutatorTril} and \eqref{3dfCommutator}, but first let us make some preparations. Recall the commutator identity \cite[Lemma 2]{tao2001global}:
\be \label{PkCommutator}
[P_k, \chi] f = 2^{-k} L(\nabla \chi, f)
\ee
where $ L $ is a disposable operator. It's possible to pass $ I $ bounds from $ f $ to $ L( \nabla \chi, f)$:

\begin{lemma} \label{LchifI}
Let $ \chi' $ be a bump function and $ L $ a disposable operator. One has
$$
\vn{P_k L( \chi', f)}_{L^2_x} \ls \jb{\frac{2^k}{N}}^{1-\sg} \vn{I f}_{L^2_x} 
$$
\end{lemma}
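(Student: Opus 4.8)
The statement is a quantitative version of the heuristic that a fixed Schwartz bump in the first slot of a disposable operator almost preserves the frequency scale, combined with the fact that on each dyadic block the multiplier $m$ of \eqref{Ioperator} is comparable to $\jb{2^j/N}^{-(1-\sg)}$. So the plan is: (i) decompose $f=\sum_{j\ge0}P_jf$ and establish rapid off‑diagonal decay for $P_kL(\chi',P_jf)$ in $j-k$; (ii) compare $\vn{P_jf}_{L^2_x}$ with $\vn{IP_jf}_{L^2_x}$ on each block; (iii) sum.

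For step (i), I would use the kernel/symbol description \eqref{BilMultOp}: since $L$ is disposable its symbol $m_L(\eta,\zeta)=\hat K(\eta,\zeta)$ is bounded (by the kernel mass, uniformly in whatever parameter $L$ may depend on, exactly as for the $L$ produced by \eqref{PkCommutator}), so
\[ \widehat{L(\chi',g)}(\xi)=\int m_L(\eta,\xi-\eta)\,\widehat{\chi'}(\eta)\,\hat g(\xi-\eta)\dd\eta . \]
If $\hat g$ is supported in $\{|\zeta|\sim2^j\}$ and we apply $P_k$, the integrand forces $|\eta|\gtrsim 2^{|j-k|}$ (with the usual convention that the $P_0$ block collects $|\xi|\ls1$, where the bound is trivial), and the Schwartz decay of $\widehat{\chi'}$ then gives, for every $M$,
\[ \vn{P_kL(\chi',P_jf)}_{L^2_x}\ls_M 2^{-M|j-k|}\vn{P_jf}_{L^2_x}. \]
This is the step I expect to require the most care in writing out rigorously — one must check that disposability of $L$ together with the Schwartz tail of $\widehat{\chi'}$ really produce this decay uniformly in $L$ (and in $k$) — but it is morally immediate from the Fourier picture above and is exactly the mechanism behind \eqref{PkCommutator}.

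For steps (ii)–(iii): $m$ is radial, positive, non‑increasing, equals $1$ for $|\xi|\le N$ and $(N/|\xi|)^{1-\sg}$ for $|\xi|\ge2N$, hence is comparable to the constant $m(2^j)\simeq\jb{2^j/N}^{-(1-\sg)}$ on the support of $\widehat{P_jf}$; therefore $\vn{P_jf}_{L^2_x}\ls\jb{2^j/N}^{1-\sg}\vn{IP_jf}_{L^2_x}$ (this is just Lemma \ref{IEquivSum} done blockwise). Using $\jb{2^j/N}\le2^{|j-k|}\jb{2^k/N}$ and the decay from step (i),
\[ \vn{P_kL(\chi',f)}_{L^2_x}\le\sum_{j\ge0}\vn{P_kL(\chi',P_jf)}_{L^2_x}\ls_M\jb{2^k/N}^{1-\sg}\sum_{j\ge0}2^{-(M-1)|j-k|}\vn{IP_jf}_{L^2_x}. \]
Choosing $M$ large and applying Cauchy–Schwarz in $j$, together with $\sum_j\vn{IP_jf}_{L^2_x}^2\simeq\vn{If}_{L^2_x}^2$ (valid since $I$ is a Fourier multiplier, so it commutes with each $P_j$), yields $\vn{P_kL(\chi',f)}_{L^2_x}\ls\jb{2^k/N}^{1-\sg}\vn{If}_{L^2_x}$. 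In the $\mathfrak g$‑valued case everything is done componentwise, using the gauge‑invariant $L^2_x$ norm from \eqref{InvMetric}.
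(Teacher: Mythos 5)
Your proof is correct and follows essentially the same route as the paper's: disposability of $L$ (bounded-mass kernel, hence bounded symbol), Littlewood--Paley frequency-support analysis using the Schwartz decay of $\widehat{\chi'}$, and the blockwise comparison $\vn{P_j f}_{L^2_x} \ls \jb{2^j/N}^{1-\sg}\vn{I P_j f}_{L^2_x}$. The only cosmetic difference is that the paper first reduces to the plain product $P_k(\chi' f)$ via the kernel of $L$ and then splits $f$ just into $f_{<k+c}$ and $f_{>k+c}$, whereas you keep $L$'s symbol throughout and run a full dyadic decomposition with $2^{-M|j-k|}$ off-diagonal decay summed by Cauchy--Schwarz.
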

\begin{proof}
By translation-invariance and the integrable kernel, it suffices to prove this for $ P_k( \chi' f) $ which we separate as $ P_k( \chi' f_{<k+c}) $ and $  P_k( \chi'_{>k+c-3} f_{>k+c}) $. Normalize $ \vn{I f}_{L^2_x} = 1 $. Then
\begin{align*}
\vn{ P_k( \chi' f) }_{L^2_x} & \ls \vn{f_{<k+c}}_{L^2_x} + \sum_{k_2=k_1+O(1)\geq k+c} \vn{ \chi'_{k_1}}_{L^{\infty}_x } \vn{f_{k_2}}_{L^2_x} \\
& \ls \jb{2^k/N}^{1-\sg} + \sum_{k_2 \geq k+c} 2^{-5 k_2} \jb{2^{k_2}/N}^{1-\sg} \ls \jb{2^k/N}^{1-\sg}.
\end{align*}
\end{proof}

\begin{lemma} \label{CommMD}
Let $ f_k $ be a function localized at frequencies $ \jb{\xi} \simeq 2^k $, $ \chi $ be a bump function and let $ m(\xi) $ be a homogeneous multiplier of degree zero. Then
$$
\vn{P_k [m(D), \chi] f_k}_{L^2_x} \ls 2^{-k} \vn{f_k}_{L^2_x}.  
$$
\end{lemma}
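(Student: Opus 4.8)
The plan is to estimate the commutator $[m(D),\chi]f_k$ by viewing it as a translation-invariant bilinear operator acting on $\nabla\chi$ and $f_k$ and then exploiting that the relevant piece of the multiplier is effectively at frequency $\lesssim 2^k$. First I would write out the kernel representation: with $\widehat{m(D)f}(\xi)=m(\xi)\hat f(\xi)$ and $K=\check m$, one has
$$
[m(D),\chi]f_k(x)=\int K(x-y)\big(\chi(y)-\chi(x)\big)f_k(y)\dd y,
$$
and the mean value theorem gives $\chi(y)-\chi(x)=\int_0^1\nabla\chi(x+t(y-x))\cdot(y-x)\dd t$. Since $f_k$ is frequency-localized at $\jb{\xi}\simeq 2^k$, I can replace $K$ by $\tilde P_k K$ (up to a rapidly decaying tail), and the factor $(x-y)$ turns $\tilde P_k K$ into $2^{-k}$ times a kernel of the same type (an $L^1$ kernel, uniformly in $k$): schematically $x K_k(x)=2^{-k}\widetilde K_k(x)$ with $\|\widetilde K_k\|_{L^1}\lesssim 1$. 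This is exactly the mechanism behind the commutator identity \eqref{PkCommutator}, and in fact the cleanest route is just to cite \eqref{PkCommutator}: since $m(D)P_k$ differs from $P_k$ only by a disposable multiplier (a smooth homogeneous symbol of degree zero times a Littlewood-Paley cutoff has integrable kernel), writing $m(D)f_k=m(D)\tilde P_k f_k$ and using $[P_k,\chi]f=2^{-k}L(\nabla\chi,f)$ from \eqref{PkCommutator} reduces everything to the disposability of $L$.

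So the key steps in order are: (1) insert a fattened Littlewood-Paley projection, $f_k=\tilde P_k f_k$, and write $m(D)\tilde P_k=2^{-k}\cdot\big(2^k m(D)\tilde P_k\big)$, noting that $2^k$ is harmless to absorb later — more precisely, observe $m(D)\tilde P_k$ is disposable; (2) commute $\chi$ past $m(D)\tilde P_k$ using the Coifman–Meyer / \cite{tao2001global} commutator identity, which yields $[m(D)\tilde P_k,\chi]f_k=2^{-k}\tilde L(\nabla\chi,f_k)$ for a disposable bilinear operator $\tilde L$ (this is the analogue of \eqref{PkCommutator} with $P_k$ replaced by the disposable operator $m(D)\tilde P_k$; the proof is identical — expand the symbol of $m(D)\tilde P_k$ around the output frequency and the $\nabla_\xi$ derivative produces the $2^{-k}$ gain); (3) apply the projection $P_k$ on the outside and use Minkowski's inequality (disposable operators are bounded on $L^2_x$ by the remark after \eqref{BilMultOp}) together with $\|\nabla\chi\|_{L^\infty}\lesssim 1$ to conclude
$$
\|P_k[m(D),\chi]f_k\|_{L^2_x}\ls 2^{-k}\|\nabla\chi\|_{L^\infty_x}\|f_k\|_{L^2_x}\ls 2^{-k}\|f_k\|_{L^2_x}.
$$

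The only mild subtlety — and the step I would be most careful about — is justifying that $m(D)\tilde P_k$ plays the role of $P_k$ in the commutator lemma \eqref{PkCommutator}, i.e. that one genuinely gets the $2^{-k}$ gain rather than just $O(1)$. This is where the homogeneity-degree-zero hypothesis on $m$ is used: the symbol $m(\xi)\tilde\psi(2^{-k}\xi)$ (with $\tilde\psi$ the bump defining $\tilde P_k$) is smooth, supported in $|\xi|\simeq 2^k$, and satisfies $|\partial_\xi^\alpha(m(\xi)\tilde\psi(2^{-k}\xi))|\lesssim 2^{-k|\alpha|}$ uniformly in $k$ — precisely the bounds needed so that one derivative in $\xi$ costs $2^{-k}$, which is what produces the gain when differentiating the symbol of the product to extract the commutator. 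Everything else is routine: the tail from $f_k=\tilde P_k f_k$ is handled by the rapid decay of Littlewood-Paley kernels, exactly as in the proof of Lemma \ref{LchifI}. I would simply remark that the proof is the same as that of \cite[Lemma 2]{tao2001global} / the identity \eqref{PkCommutator}, with the disposable operator $m(D)\tilde P_k$ in place of $P_k$, and omit the repetition of the symbol calculus.
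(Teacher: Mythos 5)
Your argument is correct, but it proceeds along a route different from (essentially dual to) the paper's. You work in physical space: after inserting a fattened projection you note that $m(\xi)\tilde\psi(2^{-k}\xi)$ obeys symbol bounds $\vm{\partial_\xi^\alpha \big(m\,\tilde\psi(2^{-k}\cdot)\big)(\xi)}\ls 2^{-k\vm{\alpha}}$, so the kernel $K_k$ of $m(D)\tilde P_k$ satisfies $\vn{K_k}_{L^1}\ls 1$ and $\vn{\vm{x}K_k}_{L^1}\ls 2^{-k}$, and then you Taylor-expand $\chi$ to convert the commutator into $2^{-k}$ times a disposable bilinear expression in $(\nabla\chi,f_k)$, exactly in the spirit of \eqref{PkCommutator} and \cite[Lemma 2]{tao2001global}. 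The paper instead argues on the Fourier side: it splits $\chi=\chi_{<k-3}+P_{[k-3,k+3]}\chi$ (only these $\chi$-frequencies survive against $P_k$ and $f_k$), disposes of the second piece via the rapid decay $\vn{P_{[k-3,k+3]}\chi}_{L^\infty}\ls 2^{-10k}$, and for the first piece applies the mean value theorem to the symbol, $m(\xi)-m(\xi-\eta)=\int_0^1\nabla m(\xi-t\eta)\cdot\eta\,\dd t$, using $\vm{\nabla m}\ls 2^{-k}$ at frequencies $\simeq 2^k$ together with the integrability of $\vm{\eta}\hat\chi(\eta)$. So you differentiate $\chi$ and use a kernel moment of the localized multiplier, while the paper differentiates $m$ and uses a Fourier moment of $\chi$; both spend the degree-zero homogeneity plus one derivative of $\chi$ to extract the $2^{-k}$ gain. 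Your version buys a clean reduction to the disposability formalism already used in the paper, with almost no computation; the paper's version avoids having to justify uniform disposability of $m(D)\tilde P_k$. Two minor remarks: with the outer $P_k$ and $\tilde\psi\equiv 1$ on the support of the symbol of $P_k$, the replacement of $m(D)$ by $m(D)\tilde P_k$ inside the commutator is exact, so no tail estimate is actually needed; and for $k=O(1)$ the uniform symbol bounds degenerate near $\xi=0$ (where a homogeneous $m$ is not smooth), but there the claimed bound is trivial from $L^2$-boundedness of $m(D)$ and of multiplication by $\chi$ — a caveat the paper's own proof shares.
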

\begin{proof}
Note that
$$
P_k [m(D), \chi] f_k = P_k [m(D), \chi_{<k-3}] f_k + P_k [m(D), P_{[k-3,k+3]} \chi] f_k
$$
For the latter term, bounds the two terms of this commutator separately as
$$
\vn{ P_k [m(D), P_{[k-3,k+3]} \chi] f_k  }_{L^2_x}   \ls \vn{P_{[k-3,k+3]} \chi }_{ L^{\infty}_x} \vn{f_k}_{L^2_x} \leq 2^{-10k} \vn{f_k}_{L^2_x}. 
$$
For the former term, denoting $ T f_k = [m(D), \chi_{<k-3}] f_k $, 
\begin{align*}
\widehat{T f_k}(\xi) & = \int  [m(\xi) - m(\xi-\eta)] \hat{\chi}_{<k-3} (\eta) \hat{f_k}(\xi-\eta)         \dd \eta  \\
& = \int_0^1 \int_{\eta \ll 2^k} \nabla m(\xi-t \eta) \eta \hat{\chi}_{<k-3} (\eta) \hat{f_k}(\xi-\eta) \dd \eta \dd t 
\end{align*}
Apply $ P_k $ and take the $ L^2_{\xi} $ norm, use Minkowski's inequality, take absolute values, bound  
\newline   
$ \vm{ \nabla m(\xi-t \eta)} \ls 2^{-k} $ and use the integrability of $ \vm{ \eta} \hat{\chi}_{<k-3} (\eta) $.
\end{proof}

\subsubsection{Proof of the first commutator bound:}
\be \label{CommutatorTril}
 (N^2 s)^{1-\sg} \sum_k \vm{ \int_{t_0}^{t_1} \int_{\mathbb{R}^{3}}   ( P_k w^{(2) \ell}(s),  \tilde{P}_k ( [\chi,P_k] e^{s \Delta} \pt_t A_{\ell} )  )  \dd x \dd t } 
 \ls \frac{s^{\frac{1}{4}-} }{  (N s^{\frac{1}{2}})^{(1-\sg)}}  \ep^2
\ee

%$$
% (N^2 s)^{1-\sg} \sum_k \int_{t_0}^{t_1} \int_{\mathbb{R}^{3}}   ( P_k w^{(2) \ell}(s),  \tilde{P}_k ( [\chi,P_k] e^{s \Delta} \pt_t A_{\ell} )  )  \dd x \dd t
%$$
We bound $  P_k w^{(2)}(s)$    in $ L^2_t L^2_x $ using \eqref{Pkw2L2}. The term $  \tilde{P}_k ( [\chi,P_k] e^{s \Delta} \pt_t A_{\ell} ) $ is viewed as  
$$ 2^{-k} \tilde{P}_k L(\nabla \chi, e^{s \Delta} \pt_t A) $$ 
using \eqref{PkCommutator} which we bound using Lemma \ref{LchifI} as 
$$
2^{-k} \vn{ \tilde{P}_k L(\nabla \chi, e^{s \Delta} \pt_t A)}_{L^{\infty}_t L^2_x} \ls \ep 2^{-k} \jb{2^k/N}^{1-\sg}.
$$
After summing in $ k $ we bound the product by the RHS of \eqref{CommutatorTril}. 

\

\subsubsection{The second commutator bound \eqref{3dfCommutator} } We prove that 
$$
 (N^2 s)^{1-\sg} \sum_k 
 \vm{\int_{t_0}^{t_1} \int_{\mathbb{R}^{3}} (P_k {\bf P}^{\perp}   w^{(2)}(\pt_t A^{df}, \pt_t A^{df}, s), \tilde{P}_k ( \chi e^{s \Delta}  \pt_t A^{df}_k ) )  \dd x \dd t }
$$
is $ \ls $ the RHS of \eqref{CommutatorTril}. Since $ {\bf P}^{\perp} \pt_t A^{df} = 0 $ one can write 
$$ {\bf P}^{\perp} \chi e^{s \Delta}  \pt_t A^{df}_k =  [{\bf P}^{\perp}, \chi] e^{s \Delta}  \pt_t A^{df}_k
$$
By Lemma \ref{CommMD} 
$$
\vn{ \tilde{P}_k [{\bf P}^{\perp}, \chi] e^{s \Delta} \pt_t A^{df}_k }_{L^{\infty}_t L^2_x} \ls 
\ep 2^{-k} \jb{2^k/N}^{1-\sg}
$$
By Lemma \ref{Pkw2L2} and Remark \ref{RmkPkw2L2} we obtain an $ L^2_t L^2_x $ bound for $ P_k w^{(2)}(\pt_t A^{df}, \pt_t A^{df}, s) $. The numerology is the same as in the proof of \eqref{CommutatorTril}.  \qed

\

\subsection{Proof of Proposition \ref{Coretrilinear}} \

Integrating by parts if needed, one may assume without loss of generality that 
$ k_1 = \min(k_1,k_2,k_3) $. One may also assume that the Fourier transforms on $ \mb{R}\times \mb{R}^3 $ of the inputs are real and non-negative, as one then estimates
$$ 
\int_{\mb{R}\times \mb{R}^3 } 1_{[t_0,t_1]}(t)   {\bf N} (f_{k_1},g_{k_2}) h_{k_3}  \dd x \dd t 
$$
using Plancharel's identity by
$$
\int  \frac{ \vm{\xi_1 \wedge \xi_2} 
% (\vm{\xi_1}^{-1} + \vm{\xi_3}^{-1})
}{\vm{\xi_1} \jb{\tau_1+\tau_2+\tau_3}} 
\hat{f}_{k_1}(\tau_1,\xi_1)  \hat{g}_{k_2}(\tau_2,\xi_2)  \hat{h}_{k_3}(\tau_3,\xi_3) 
$$
since $ \vm{ \hat{1}_{[t_0,t_1]}(\tau) } \ls \jb{\tau}^{-1} $. The integral is taken over the subset of 
$$ (\tau_1,\xi_1, \tau_2,\xi_2,\tau_3,\xi_3) \in (\mb{R}\times \mb{R}^3)^3 $$ with $ \xi_1+\xi_2+\xi_3 = 0 $.  Denote the modulations by $ \lmd_i \defeq \jb{\vm{\xi}_i-\vm{\tau}_i} $. 

Denoting $ a_{\al}(t) $ the Fourier transform of $ \jb{\tau}^{-\al} $, we recall
\be \label{aalphabd}
a_{\al}(t) \in L^p_t, \qquad \forall \ p< (1-\al)^{-1}, \ \al \in (0,1]
\ee
from \cite[Lemma 15.2]{keel2011global}. We also recall 
\be \label{wedgebd}
\vm{\xi_1 \wedge \xi_2} \ls 2^{\frac{k_1}{2}} 2^{\frac{k_2}{2}} 2^{\frac{k_3}{2}} ( \jb{\tau_1+\tau_2+\tau_3} + \lmd_1 +\lmd_2+\lmd_3)^{\frac{1}{2}}
\ee
from \cite[Prop. 1]{klainerman1996estimates} or \cite[Lemma 15.1]{keel2011global}.

Then it suffices to consider three cases and bound \eqref{Est1}, \eqref{Est2}, \eqref{Est3} as follows.

{\bf (1)}  Bound

\be \label{Est1}
2^{k_2-\frac{k_1}{2}}  \int  \frac{1}{\jb{\tau_1+\tau_2+\tau_3}^{\frac{1}{2}}} 
\hat{f}_{k_1}(\tau_1,\xi_1)  \hat{g}_{k_2}(\tau_2,\xi_2)  \hat{h}_{k_3}(\tau_3,\xi_3) 
\ee
by the RHS of \eqref{CoreTrilEst}. Undoing the Fourier transform one proves
$$ 
2^{k_2-\frac{k_1}{2}} \vm{\int_{\mb{R}\times \mb{R}^3 } a_{\frac{1}{2}}(t)  f_{k_1} g_{k_2} h_{k_3} \dd x \dd t } \ls RHS \eqref{CoreTrilEst}
$$
which is bounded as $  L^{2-}_t L^{\infty}_x \times  L^{2+}_t L^{\infty}_x \times L^{\infty}_t L^2_x  \times L^{\infty}_t L^2_x $ using \eqref{aalphabd} and Strichartz estimates \eqref{Strichartzz}. 

{\bf (2)}
When $ \lmd_1 $ dominates in \eqref{wedgebd}, by dyadic decomposition and Cauchy-Schwarz, it suffices to integrate on the subregion $ \lmd_2, \lmd_3 \ls 2^j \simeq \lmd_1 $ and prove
\be \label{Est2}
\begin{aligned}
2^{-\frac{k_1}{2}} \int  \frac{1}{\jb{\tau_1+\tau_2+\tau_3}}  \hat{u}_{k_1,j}(\tau_1,\xi_1) & \hat{g}_{k_2}(\tau_2,\xi_2)   \hat{h}_{k_3}(\tau_3,\xi_3)  \ls  \\
 2^{ (\frac{1}{2}+)  k_{2} }  & \vn{ u_{k_1,j} }_{X^{0,0+ }} \vn{g_{k_2}}_{X^{0,\frac{1}{2}+}}  \vn{  h_{k_3}}_{X^{0,\frac{1}{2}+}}
\end{aligned}
\ee
where we denote $  \hat{u}_{k_1,j}=\lmd_1^{\frac{1}{2}} \hat{f}_{k_1} $. It suffices to prove
$$ 
\vm{\int_{\mb{R}\times \mb{R}^3 } a_1(t) u_{k_1,j} D^{-(\frac{1}{2}-)} (g_{k_2} h_{k_3}) \dd x \dd t } \ls RHS \eqref{Est2}
$$
Use \eqref{aalphabd} to place $ a_1(t) $ in $ L^{\infty-}_t L^{\infty}_x $, the embedding \eqref{X00emb} for $  u_{k_1,j} $  and the bilinear Strichartz estimate \eqref{BilStrichartz} for the product.

{\bf (3)}
In the remaining case, we can assume without loss of generality that $ \lmd_2 $ dominates and integrate on the subregion  $ \lmd_1, \lmd_3 \ls 2^j \simeq \lmd_2 $, proving 
\be \label{Est3}
\begin{aligned}
 \int  \frac{1}{\jb{\tau_1+\tau_2+\tau_3}}  \hat{f}_{k_1}(\tau_1,\xi_1)  \hat{v}_{k_2,j}(\tau_2,\xi_2)  &\hat{h}_{k_3}(\tau_3,\xi_3) \ls  \\
 2^{\frac{k_1}{2}} 2^{ (\frac{1}{2}+)  k_{2} }  & \vn{f_{k_1}}_{X^{0,\frac{1}{2}+ }} \vn{\hat{v}_{k_2,j}}_{ L^2 L^2 }  \vn{  h_{k_3}}_{X^{0,\frac{1}{2}+}}
 \end{aligned}
\ee
where we denote $  \hat{v}_{k_2,j}=\lmd_2^{\frac{1}{2}} \hat{g}_{k_2} $. Undoing the Fourier transform this is reduced to 
$$ 
\vm{\int_{\mb{R}\times \mb{R}^3 } a_1(t)  f_{k_1}  v_{k_2,j}h_{k_3} \dd x \dd t } \ls RHS \eqref{Est3}
$$
which is bounded as $  L^{\infty-}_t L^{\infty}_x \times L^4_t L^4_x \times L^2_t L^2_x  \times L^{4+}_t L^4_x $ using \eqref{aalphabd} and Strichartz estimates \eqref{Strichartzz}. 

%%%%%%%%%%%%%%%%%%%%%%%%%%%%%%%%%%%%%%%
%%%%%%%%%%%%%%%%%%%%%%%%%%%%%%%%%%%%%%%
%%%%%%%%%%%%%%%%%%%%%%%%%%%%%%%%%%%%%%%
%%%%%%%%%%%%%%%%%%%%%%%%%%%%%%%%%%%%%%%
%%%%%%%%%%%%%%%%%%%%%%%%%%%%%%%%%%%%%%%
%%%%%%%%%%%%%%%%%%%%%%%%%%%%%%%%%%%%%%%
%%%%%%%%%%%%%%%%%%%%%%%%%%%%%%%%%%%%%%%
%%%%%%%%%%%%%%%%%%%%%%%%%%%%%%%%%%%%%%%
%%%%%%%%%%%%%%%%%%%%%%%%%%%%%%%%%%%%%%%
%%%%%%%%%%%%%%%%%%%%%%%%%%%%%%%%%%%%%%%
%%%%%%%%%%%%%%%%%%%%%%%%%%%%%%%%%%%%%%%
%%%%%%%%%%%%%%%%%%%%%%%%%%%%%%%%%%%%%%%
%%%%%%%%%%%%%%%%%%%%%%%%%%%%%%%%%%%%%%%
%%%%%%%%%%%%%%%%%%%%%%%%%%%%%%%%%%%%%%%
%%%%%%%%%%%%%%%%%%%%%%%%%%%%%%%%%%%%%%%

\appendix 

\section{Modified energy for Maxwell-Klein-Gordon}

In this appendix we formulate the gauge invariant modified energy for the Maxwell-Klein-Gordon equation, we show that the same favorable balance of derivatives as for Yang-Mills is present when it is differentiated in time (see Remark \ref{ComparisonAC}) and we sketch how this can be used to increase the range of regularities for which global well-posedness is known to hold, from $ \sg > \frac{\sqrt{3}}{2} $ in \cite{keel2011global} to $ \sg> \frac{5}{6} $. 

\

The Yang-Mills equations in the case of the abelian group $ G=SO(2)=U(1) $, $ \mathfrak{g}= \mathfrak{so}(2) $  correspond to Maxwell's equations, where $ A_{\al} $ can be viewed as a real one-form $ A_{\al}: \mb{R}^{1+3} \to \mb{R} $. Coupling with a complex field $ \phi : \mb{R}^{1+3} \to \mb{C} $ one obtains the (massless) Maxwell-Klein-Gordon system \cite{klainerman1994maxwell},  \cite{machedon2004almost}, \cite{keel2011global},  
as the Euler-Lagrange equations for the Lagrangian 
$$
 \frac{1}{2}  \int_{\mb{R}^{1+3}}D_{\al} \phi \overline{D^{\al} \phi} +  \frac{1}{2} F_{\al \beta} F^{\al \beta}  \dd x \dd t 
$$
Here one denotes the covariant derivative $ D_{\al} $ and the curvature $ F_{\al \beta} $ by 
$$
 D_{\al} \phi \defeq (\pt_{\al}+i A_{\al}) \phi, \qquad   F_{\al \beta} \defeq \pt_{\al} A_{\beta} - \pt_{\beta} A_{\al}. 
$$
The system becomes
\be \tag{MKG} \label{MKG}
\begin{aligned}
\pt^{\beta} F_{\al \beta} & = \Im (\phi  \overline{D_{\al} \phi}) \\
\Box_A \phi  & = 0. 
\end{aligned}
\ee
where we denote $\Box_A \defeq D^{\al} D_{\al}$. 
The gauge invariance takes the form
$$
\phi \mapsto e^{i \chi} \phi, \qquad A_{\al} \mapsto A_{\al}- \pt_{\al} \chi, \qquad \chi :   \mb{R}^{1+3} \to \mb{R}
$$
while the energy is
\be \label{MKGEnergy}
H[A,\phi](t) \defeq \frac{1}{2} \int_{ \mb{R}^3} \sum_{\al < \beta } \vm{F_{\al \beta}}^2 +  \sum_{\al} \vm{D_{\al} \phi}^2 \dd x .
\ee
Initial data sets consist of $ (\underline{A}_i, \underline{E}_i, \phi_0, \phi_1) = (A_i, F_{0i}, \phi, D_t \phi)(t=0) $ satisfying the constraint equation $ \pt^i \underline{E}_i =  \Im (\phi_0 \overline{\phi_1})  $. 

\

In \cite{keel2011global} Keel, Roy and Tao prove global well-posedness in the (global) Coulomb gauge $ \pt^i A_i =0 $. In this gauge, $ A_0 $ is determined from the initial data. 
\begin{theorem}[\cite{keel2011global}]
Let $ \sg > \frac{\sqrt{3}}{2} $. The Maxwell-Klein-Gordon equation in the Coulomb gauge is globally well-posed for initial data in $ H^{\sg} \times H^{\sg-1} \times H^{\sg} \times H^{\sg-1} $. 
\end{theorem}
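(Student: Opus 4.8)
The plan is to run the I-method, in the form carried out by Keel--Roy--Tao and streamlined by the modified-energy bookkeeping of the present appendix. First I would perform the standard reductions. Fix $T>0$. By local well-posedness of \eqref{MKG} in the Coulomb gauge in $H^{\sg}\times H^{\sg-1}$ for $\sg>\tfrac12$ (e.g.\ \cite{machedon2004almost}), together with an approximation statement for admissible data (smooth data satisfying the MKG constraint $\pt^i\underline{E}_i=\Im(\phi_0\overline{\phi_1})$, proved by the same elliptic correction as in Proposition \ref{ApproxID}), it suffices to establish the a priori bound $\vn{(A_x,\phi)[T]}_{H^{\sg}\times H^{\sg-1}\times H^{\sg}\times H^{\sg-1}}\le C_{M,T}$ for regular global solutions. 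Rescaling $\Phi^{\lmd}(t,x)=\lmd^{-1}\Phi(t/\lmd,x/\lmd)$ with $\lmd$ chosen so that the rescaled data is small in $\dot H^{\sg}$ and in $\dot H^{\frac12+}$, the task is moved to the interval $[0,\lmd T]$.

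Second, I would introduce the smoothing multiplier $I=I_N$ from \eqref{Ioperator} and the modified energy $H[I\Phi](t)$, namely the MKG Hamiltonian \eqref{MKGEnergy} evaluated at $(IA,I\phi)$, up to the lower-order commutator terms which are better-behaved and can be discarded. As in Remark \ref{RkComparisonModEn}, $H[I\Phi](t)\approx \vn{P_{<N}\nabla_{t,x}\Phi}_{L^2}^2+N^{2(1-\sg)}\sum_{2^k\ge N}(2^k/N)^{-2(1-\sg)}\vn{\nabla_{t,x}P_k\Phi}_{L^2}^2$, so that $H[I\Phi]\ls N^{2(1-\sg)}\vn{\Phi}_{\dot H^{\sg}}^2$, while conversely control of $H[I\Phi]$ returns control of $\vn{(A_x,\phi)}_{H^{\sg}\times H^{\sg-1}\times\cdots}$ by Littlewood--Paley summation. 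With $\eta\ll1$ the scaling is fixed so that $H[I\Phi^{\lmd}](0)\le\eta^2$, which forces $\lmd\sim N^{\frac{1-\sg}{\sg-1/2}}(M/\eta)^{\frac{1}{\sg-1/2}}$.

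Third is the almost conservation law: for $t_1=t_0+1$ and data with $H[I\Phi](t_0)\le 2\eta^2$ one shows $\vm{H[I\Phi](t_1)-H[I\Phi](t_0)}\ls N^{\frac12-\sg}\eta^2$. For this I would first record a modified local well-posedness statement on $[t_0,t_1]$ --- in the Coulomb gauge $A=A^{df}$ (no curl-free part, unlike Yang--Mills in the temporal gauge), so $I\Phi\in\eta X^{1,\frac12+}$ with matching bounds on $I\pt_t\Phi$, obtained by the contraction/bootstrap of \cite{keel2011global}. Then I would differentiate $H[I\Phi](t)$, use the equations \eqref{MKG}, and isolate the leading contribution as the commutator trilinear integral
\[
\sum_{k,k_i}\int_{t_0}^{t_1}\!\!\int_{\mb{R}^3}\big(\pt_t I\Phi_k,\; I{\bf N}(\Phi_{k_1},\Phi_{k_2})-{\bf N}(I\Phi_{k_1},I\Phi_{k_2})\big)\,\dd x\,\dd t,
\]
cf.\ \eqref{MKGTril}. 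The gain $N^{\frac12-\sg}$ then comes from combining: the null structure of ${\bf N}$ (identities \eqref{Nullformidentities}, the bound \eqref{wedgebd}); a mean value estimate on the symbol of $I$ in the low--high regime, producing the weight $(N/2^k)^{2(1-\sg)}$ and an extra $2^{k_{\min}-k_{\max}}$ as in \eqref{MKGTril2}; and a sharp-cutoff multilinear estimate of the type of Proposition \ref{Coretrilinear} together with $X^{s,b}$ and bilinear Strichartz bounds. Dyadic summation then yields the stated error.

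Finally I would iterate: $H[I\Phi^{\lmd}](t)\le 2\eta^2$ is propagated on $[0,\lmd T]$ by $O(\lmd T)$ applications of the almost conservation law, which is admissible provided $\lmd T\cdot N^{\frac12-\sg}\ll1$, i.e.\ $T\ll N^{\sg-1/2}/\lmd=N^{(\sg-1/2)-\frac{1-\sg}{\sg-1/2}}$. The exponent of $N$ is positive precisely when $(\sg-\tfrac12)^2>1-\sg$, i.e.\ $\sg^2>\tfrac34$, i.e.\ $\sg>\tfrac{\sqrt3}{2}$; hence for such $\sg$ we may take $N=N(M,T)$ large, undo the scaling, and conclude. (As explained in Remark \ref{ComparisonAC}, replacing $I$ by the gauge-invariant modified energy and the parabolic smoothing of the main body relocates one derivative from high to low frequency, improving the per-step error to $O(N^{-1/2})$ and pushing the threshold down to $\sg>\tfrac56$.) I expect the main obstacle to be exactly that almost-conservation multilinear estimate: simultaneously exploiting the null structure, the frequency weights from $I$, and the rough time cutoff $\mathbf{1}_{[t_0,t_1]}$, while keeping all dyadic sums convergent uniformly up to the numerology threshold.
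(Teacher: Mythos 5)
This statement is quoted from \cite{keel2011global} and the paper does not reprove it; your sketch follows essentially the same I-method route that the paper attributes to that reference and recapitulates in Remarks \ref{RkComparisonModEn}--\ref{ComparisonAC} and the appendix (rescaling, modified energy $H[I\Phi]$, almost conservation with error $O(N^{\frac{1}{2}-\sg+})$ driven by the commutator trilinear null-form integral \eqref{MKGTril}, and the iteration yielding $(\sg-\tfrac12)^2>1-\sg$, i.e.\ $\sg>\tfrac{\sqrt{3}}{2}$). Your numerology and identification of the main term are correct; the only cosmetic divergence is the modified-LWP norm (the cited work controls $\nabla_{t,x}(IA_x,I\phi)$ in $X^{0,\sg-}+L^{\infty}L^3$ rather than your $X^{1,\frac12+}$), which does not affect the argument.
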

After reducing to global smooth solutions, rescaling and choosing $ N $ such that $ H[IA,I\phi](0) \leq 1 $, their proof proceeds by proving the almost conservation law 
\be \label{ACLawMKGwI}
H[I \Phi](t) = H[I \Phi](t_0) + O(N^{\frac{1}{2}-\sg+}) 
\ee
under the assumption $ H[I \Phi](t_0) \leq 2 $, where one denotes $ \Phi=(A_0,A_x, \phi) $. 

The spacetime control on small intervals is $ \nabla_{t,x}  (I A_x, I \phi) \in X^{0,\sg-} + L^{\infty} L^3 $ where the latter part consists of low frequencies. 

For arbitrary fields \cite{keel2011global}[Lemma 11.1] writes the differentiated Hamiltonian as
\be \label{DiffHam}
\frac{\dd}{\dd t} H[\Phi](t) = - \Re \int_{ \mb{R}^3} D_t \phi \overline{\Box_A \phi} + F_{\mu 0} (\pt^{\al} F_{\al}^{\ \mu} + \Im (\phi \overline{D^{\mu} \phi})  \dd x
\ee
The dominant part of $ H[I \Phi](t_1) - H[I \Phi](t_0) $ is proved in \cite[(106),(124)]{keel2011global} to be \eqref{MKGTril}. 

\

We now formulate the invariant modified energy for \eqref{MKG}. 

The gradient flow for the functional 
$$
\frac{1}{2} \int_{ \mb{R}^3} \sum_{i < j } \vm{F_{ij}}^2 +  \sum_{i} \vm{D_{i} \phi}^2 \dd x .
$$
gives rise to the equations $ \pt_s A_i = \pt^{\ell} F_{\ell i}+  \Im (\phi  \overline{D_{i} \phi}) $ and $ \pt_s \phi=D^{\ell} D_{\ell} \phi $. Adding temporal and parabolic connections $ A_0, A_s $ and writing the system in a gauge-invariant way one obtains a Maxwell-Klein-Gordon Heat Flow
\be \tag{MKG-HF} \label{MKGHF}
\begin{aligned}
& F_{s \al}  = \pt^{\ell} F_{\ell \al } + \Im (\phi  \overline{D_{\al} \phi}) \\
& (D_s - \Delta_A) \phi  = 0. 
\end{aligned}
\ee
The extent of failure of $ A_{\al} (s), \phi(s) $ to satisfy \eqref{MKG} for $ s>0 $ is measured by the tension fields 
$$
v \defeq \Box_A \phi, \qquad  w_{\beta} \defeq \pt^{\al} F_{\al \beta} +  \Im (\phi  \overline{D_{\beta} \phi}). 
$$
Up to lower order terms, the main components of the parabolic equations satisfied by the tension fields can be written as  
\be  \label{TnsEq}
\begin{aligned}
 (\pt_s - \Delta)  {\bf P}_j w & = -2  {\bf P}_j \Im ( \pt_t \phi  \overline{\pt_{x} \pt_t \phi})  + \dots      \\
 (D_s - \Delta_A) v & = 4i \pt_t  {\bf P}^{j} A \pt_j \pt_t \phi + \dots  
\end{aligned}
\ee
The dots consist of trilinear terms or terms with higher regularity, such as those involving $ A^{cf} $ or $ A_0 $. 
Assuming deTurck's gauge $ A_s = \pt^{\ell} A_{\ell} $, expanding and decomposing \eqref{TnsEq} similarly to Section \ref{SecDecAndEst} 
one obtains the leading terms
\be  \label{Decwv}
\begin{aligned}
{\bf P}_j w(s) & = -2  {\bf P}_j  \Im \mathbf{W}( \pt_t \phi , \overline{\pt_{x} \pt_t \phi})(s)  + \dots      \\
 v(s) & = 4i \mathbf{W} ( \pt_t  {\bf P}^{j} A, \pt_j \pt_t \phi)(s)+ \dots  
\end{aligned}
\ee
where $ \mathbf{W} $ is defined by \eqref{Wdef} and the inputs are taken at $ s=0 $. 

\

For a regular global solution of \eqref{MKG} we define the modified energy similarly to \eqref{ModifiedEnergy}. Denote the energies \eqref{MKGEnergy} defined by $ A_{\al} (t, \cdot, s), \phi(t, \cdot,s) $ by $ H(t,s) $. Then, for $ s_0=N^{-2} $, let 
\be \label{MKGModEn}
\mathcal{I H}(t) \defeq \sup_{s \in [0,s_0]} (N^2 s)^{1-\sigma} H(t,s) + \int_0^{s_0} (N^2 s)^{1-\sigma}  H(t,s) \frac{\dd s}{s} 
\ee
The goal is to have $ \vm{\mathcal{IH}(t) - \mathcal{IH}(t_0) } \ll N^{-\frac{1}{2}+}  $ like in Proposition \ref{AClaw} as this can be used to obtain global well-posedness for \eqref{MKG} as in Section \ref{RedMainThm}, with the name numerology. The $ N^{-\frac{1}{2}+} $ bound compared to \eqref{ACLawMKGwI} reduces the minimum initial data regularity from $ \sg > \frac{\sqrt{3}}{2} $ to $ \sg > \frac{5}{6} $.

\

Regarding the choices of gauge there are several options and we avoid making a definitive choice. One may set the temporal gauge $ A_0 =0 $. Working with local gauges and extensions like we did for Yang-Mills is probably not necessary, but can be a natural choice in view of potential extensions to Yang-Mills-Higgs (which generalizes both \eqref{YM} and \eqref{MKG}). This also avoids some low frequency technicalities. On the other hand, after fixing the interval $ [t_0,t_1] $, one could set the global Coulomb gauge at $ s=s_0, t=t_0 $ and obtain an $ H^1+N^{1-\sg} H^{\sg} $ bound at $s=0 $ from the caloric condition like in Section \ref{SecGauge}, but globally instead of locally. 

The key spacetime bound remains $ \nabla_{t,x}  (IA_x^{df}, I\phi) \in X^{0,\frac{3}{4}+} $, perhaps with a low frequency component. 

To estimate the variation of the modified energy one writes similarly to \eqref{DeltaE}, using \eqref{DiffHam}
\be \label{IntDiffHam}
H(t_1,s)- H(t_0,s)=  - \Re  \int_{t_0}^{t_1} \int_{ \mb{R}^3} D_t \phi \overline{v} + F_{j 0} w_j  \dd x \dd t 
\ee
As \eqref{quadt1}, \eqref{quadt2} show for quadrilinear terms and \eqref{dfdfcf}-\eqref{cfcfcf} show for terms involving $ A^{cf} $, in the case of temporal gauge + DeTurck gauge, 
all these terms can be fairly easily bounded using spacetime Lebesgue norms. In the case of the Coulomb gauge, sections 12 and 14 in \cite{keel2011global} show that such terms are not problematic. 

Up to error terms, \eqref{IntDiffHam} is written, from \eqref{Decwv} and the identities \eqref{Nullformidentities}, as 
\be
 \Re  \int_{t_0}^{t_1} \int_{ \mb{R}^3} 4i \pt_t \phi \mathbf{W} {\bf N} ( \pt_t  A^{df}, \overline{\pt_t \phi})(s)  + 2 \pt_t A^{df} \Im \mathbf{W}  {\bf N} ( \pt_t \phi , \overline{\pt_t \phi})(s)  \dd x \dd t  + \dots 
\ee
Schematically, these terms are exactly like \eqref{3dfNullTerm} together with \eqref{SpecNull} and are estimated using Proposition \ref{Coretrilinear}.

% ----------------------------------------------------------------

\nocite{*}
\bibliography{biblio}
\bibliographystyle{abbrv}

%\bibliography
% ----------------------------------------------------------------

\end{document}